\newcommand{\N}{\mathbb{N}}
\newcommand{\R}{\mathbb{R}}
\newcommand{\Q}{\mathbb{Q}}
\newcommand{\abs}[1]{\left|#1\right|}
\renewcommand{\set} [1] {\left\{#1\right\}}
\crefname{graph}{Graph}{Graphs}
\newtheorem*{theorem*}{Theorem}
\newtheorem{theorem}{Theorem}[section]
\newtheorem{proposition}[theorem]{Proposition}
\newtheorem{corollary}[theorem]{Corollary}
\newtheorem{lemma}[theorem]{Lemma}
\newtheorem{remark}[theorem]{Remark}
\newtheorem{definition}[theorem]{Definition}
\newtheorem{observation}[theorem]{Observation}
\newtheorem{open}[theorem]{Open Problem}
\definecolor{red1}{RGB}{230,25,75}
\definecolor{blue1}{RGB}{0,130,200}
\definecolor{yellow1}{RGB}{255, 225, 25}
\definecolor{green1}{RGB}{60,180,75}
\definecolor{orange1}{RGB}{245,130,48}
\definecolor{purple1}{RGB}{145,30,180}
\definecolor{cyan1}{RGB}{70,240,240}
\definecolor{magenta1}{RGB}{240,50,230}
\definecolor{lime1}{RGB}{210,245,60}
\definecolor{pink1}{RGB}{250,190,190}
\definecolor{teal1}{RGB}{0,128,128}
\definecolor{lavendar1}{RGB}{230,190,255}
\definecolor{brown1}{RGB}{170,10,40}
\definecolor{beige1}{RGB}{255,250,200}
\definecolor{maroon1}{RGB}{128,0,0}
\definecolor{mint1}{RGB}{170,255,195}
\definecolor{olive1}{RGB}{128,128,0}
\definecolor{apricot1}{RGB}{255,215,180}
\definecolor{navy1}{RGB}{0,0,128}
\definecolor{grey1}{RGB}{128,128,128}
\definecolor{darkgrey1}{RGB}{57, 59, 58}
\newsavebox\ideabox
\newenvironment{idea}[1][]
{\begin{equation*}
	\tag{#1}
	\begin{lrbox}{\ideabox}
	\begin{minipage}{\dimexpr\columnwidth-2\leftmargini}
	\vspace{3mm}
	\setlength{\leftmargini}{0pt}%
	\begin{quote}}
{\end{quote}
	\end{minipage}
	\end{lrbox}\makebox[0pt]{\usebox{\ideabox}}
	\end{equation*}}
\title{Geodesic Geometry on Graphs}
\author{Daniel Cizma\thanks{Department of Mathematics, Hebrew University, Jerusalem 91904,
		Israel. e-mail: daniel.cizma@mail.huji.ac.il.} \and  {Nati Linial\thanks{School of Computer Science and Engineering, Hebrew University, Jerusalem 91904,
			Israel. e-mail: nati@cs.huji.ac.il~. {Supported by BSF US-Israel Grant 2018313 "Between Topology and Combinatorics"}}}}
\begin{document}
	\maketitle
	
\begin{abstract}
We investigate a graph theoretic analog of geodesic geometry. In a graph $G=(V,E)$ we consider a system of paths $\mathcal{P}=\{P_{u,v}|u,v\in V\}$ where $P_{u,v}$ connects vertices $u$ and $v$. This system is {\em consistent} in that if vertices $y, z$ are in $P_{u,v}$, then the sub-path of $P_{u,v}$ between them coincides with $P_{y,z}$. A map $w: E\to(0,\infty)$ is said to {\em induce} $\mathcal{P}$ if for every $u, v\in V$ the path $P_{u,v}$ is $w$-geodesic. We say that $G$ is {\em metrizable} if every consistent path system is induced by some such $w$. As we show, metrizable graphs are very rare, whereas there exist infinitely many $2$-connected metrizable graphs.
\end{abstract}	
	
\section{Introduction}
The idea of viewing graphs from a geometric perspective has been immensely fruitful. We refer the reader to Lov\'asz' recent book \cite{Lo} for a beautiful exposition of many of these success stories. Most of the existing connections between graph theory and differential geometry concern the eigenvalues of graphs. Here we study graphs from the perspective of {\em geodesic geometry}. Our main discovery is that for the vast majority of graphs the geodesic theory is way richer than the metric one. The relevant literature seems rather sparse. Ore \cite{Ore} defines a graph to be {\em geodetic} if between any two vertices in it there is a unique shortest path. He sought to characterize geodetic graphs, and notwithstanding a considerable body of work on this problem (e.g., \cite{BKZ, Br, BlBr, PS} ) no definitive solution is in sight. Bodwin's beautiful recent work \Cite{Bo} is somewhat relevant, but as we explain below, his problems and results differ from ours.

In this work we introduce and study the notion of graph metrizability. A graph $G$ is said to be {\em metrizable} if every consistent path system $\mathcal{P}$ in $G$ is induced by a graph metric $d$, in that every path in $\mathcal{P}$ is $d$-geodetic. Graph metrics are defined in terms of positive edge weights. Here a path system is a collection of paths in $G$ which contains a unique path $P_{u,v}$ between every pair of vertices $u,v$. Consistency means that if vertices $y, z$ are in $P_{u,v}$, then the sub-path of $P_{u,v}$ between them coincides with $P_{y,z}$. Strict metrizability means that $d$ makes every $P_{u,v}$ the unique shortest $uv$ path.
	
Here are our main findings:
\begin{itemize}
\item 
Metrizability is rare: E.g., (i) Every large $2$-connected metrizable graph is planar, \Cref{thm:NonPlamarMet}, (ii) No large $3$-connected graph is metrizable, \Cref{cor:3ConnMet}.
\item
However, arbitrarily large $2$-connected metrizable graphs do exist: E.g., every outerplanar graph is metrizable, \Cref{cor:outerPlanarMet}.
\item
We reveal some of the structural underpinnings of metrizability. The class of metrizable graphs is closed under the topological minor relation and is characterized by finitely many forbidden topological minors, \Cref{thm:finiteMinGraphs}.
\item
		On the computational side, metrizability can be decided in polynomial time, \Cref{thm:graphMetPoly}.
	\end{itemize}
	
Our main focus is on metrizability as a property of graphs. In contrast, Bodwin \Cite{Bo} investigates metrizability as a property of path systems. His main question is which {\em partial} path systems are strictly\footnote{In his terminology {\em strongly metrizable}} induced by a metric. A partial path system is a collection of paths such that if the vertices $u$ and $v$ are in two of these paths, then their $uv$ subpaths must coincide. Bodwin has found an infinite family of intersection patterns such that a partial path system is strictly metrizable if and only if no such pattern occurs within the system. The difference between his work and ours goes deeper, since, as we show in \Cref{sec:basic}, not every partial path system can be extended to a full path system. Moreover, it is possible for a graph to be strictly metrizable and yet contain a partial path system which is not strictly metrizable.
	
\subsubsection*{The role of computers in this work:} Everything in here can be verified by hand, although this paper would not exist without our use of the computer. Although we are initially able to prove by hand that Petersen's graph \Cref{fig:nonMetPetersen} is non-metrizable, it quickly transpired that we need a larger supply of such graphs. To this end we wrote a brute-force search program that found eleven such graphs (\Cref{fig:MinimumGraphs}) and gave certificates that they are indeed non-metrizable. These certificates, see \Cref{append:certificates}, are easily verifiable by hand.
	
	\section{Definitions}
	Unless explicitly stated otherwise, paths that are mentioned throughout are {\em simple}.

	Let $G=(V,E)$ be a connected graph.
	\begin{itemize}
		\item 
		A {\em path system} $\mathcal P$ in $G$ is a collection of simple paths in $G$ such that for every $u,v\in V$  there is exactly one member $P_{u,v}\in\mathcal P$ that connects between $u$ and $v$.
		
		\item
		A {\em tree system} $\mathcal T$ in $G$ is a collection of spanning trees in $G$ such that for every $u\in V$  there is exactly one member $T_{u}\in\mathcal T$ which we think of as {\em rooted} at $u$.
	\end{itemize}

	\begin{itemize}
		\item
		Let $\mathcal P$ be a path system in $G$. We say that it is {\em consistent} if for every $P\in\mathcal{P}$ and two vertices $x,y$ in $P$, the $xy$ subpath of $P$ coincides with $P_{x,y}$.
		\item
		Let $\mathcal T$ be a tree system in $G$. We say that it is {\em consistent} if 
		for every two vertices $u,v\in V$ the $uv$ paths in $T_u$ and in $T_v$ are identical.
	\end{itemize}
	
	As we observe next, consistent path systems $\mathcal{P}$ and consistent tree systems  $\mathcal{T}$ in the same graph $G$ are in a simple one-to-one correspondence:\\ Given $\mathcal{P}$, we define the tree $T_u$ for every $u\in V$ via $E(T_u):=\cup_v E(P_{u,v})$. This is clearly a spanning subgraph of $G$ and it is acyclic due to the consistency of $\mathcal{P}$.\\
	Given $\mathcal{T}$, we let $P_{u,v}$ be the $vu$ path in $T_u$, or, what is the same, the $uv$ path in $T_v$. This yields a consistent path system, because the path between any two vertices in a tree is unique.\\
	Therefore we can and will interchangeably talk of consistent path systems and consistent tree systems. Unless otherwise stated, all path systems and tree systems mentioned henceforth are assumed to be consistent.\\
	Given a weight function $w: E(G) \to \R$, we assign to every subgraph $H$ of $G$ the weight $w(H) = \sum_{e\in E(H)} w(e)$. 
	
	\begin{definition}
		A path system $\mathcal{P}$ in $G=(V,E)$ is {\em induced} by $w:E(G) \to (0,\infty)$ if for each $u,v \in V$, $P_{u,v}$ is a $uv$ {\em geodesic}, i.e., $w(P_{u,v}) \leq w(Q)$ for every $uv$ path $Q$. A path system that is induced by a positive weight function is said to be {\em metrizable}\\
		A map $w:E(G) \to (0,\infty)$ {\em strictly induces} a path system $\mathcal{P}$ if for each $u,v \in V$, $P_{u,v}$ is the {\em unique} $uv$ geodesic, i.e., $w(P_{u,v}) < w(Q)$ for every $uv$ path $Q\neq P_{u,v}$. A path system that is strictly induced by some positive weigh function is said to be {\em strictly metrizable}.\\
		A graph $G$ is {\em (strictly) metrizable} if every path system in $G$ is (strictly) metrizable.
	\end{definition}
	
\begin{remark}\label{rem:2conn}
It clearly suffices to consider connected graphs. In a disconnected graph, we deal with each connected component separately. In fact, it suffices to consider only to $2$-connected graphs. If vertex $a$ separates a connected graph $G$, then $G\setminus\{a\}$ is the disjoint union of two (not necessarily connected) graphs $H_1$ and $H_2$, where $G_1:=H_1\cup \{a\}, G_2:=H_2\cup \{a\}$ are connected graphs, and any path system in $G$ is uniquely defined by its restrictions to $G_1, G_2$. Indeed, if $u$ and $v$ belong to the same $G_i$, then any simple $uv$ path is contained in $G_i$, while if $u\in G_1$ and $v\in G_2$, then any $uv$ path is the concatenation of a $ua$ path and an $av$ path.
\end{remark}

\section{Some Examples}
	
Not all path systems are metrizable.  \Cref{fig:nonMetPetersen} exhibits a non-metrizable path system in the Petersen graph $\Pi$.
\begin{figure}[h]
\centering
		\begin{subfigure}[t]{0.4\textwidth}
			\centering
			\begin{tikzpicture}[scale=0.6, every node/.style={scale=0.6}]

			\node[draw,circle,minimum size=.5cm,inner sep=1pt] (1) at (-0*360/5 +90: 5cm) [scale=1.6] {$1$};
			\node[draw,circle,minimum size=.5cm,inner sep=1pt] (2) at (-1*360/5 +90: 5cm) [scale=1.6]{$2$};
			\node[draw,circle,minimum size=.5cm,inner sep=1pt] (3) at (-2*360/5 +90: 5cm)[scale=1.6] {$3$};
			\node[draw,circle,minimum size=.5cm,inner sep=1pt] (4) at (-3*360/5 +90: 5cm) [scale=1.6]{$4$};
			\node[draw,circle,minimum size=.5cm,inner sep=1pt] (5) at (-4*360/5 +90: 5cm)[scale=1.6] {$5$};
			
			\node[draw,circle,minimum size=.5cm,inner sep=1pt] (6) at (-5*360/5 +90: 2.5cm) [scale=1.6]{$6$};
			\node[draw,circle,minimum size=.5cm,inner sep=1pt] (7) at (-6*360/5 +90: 2.5cm) [scale=1.6]{$7$};
			\node[draw,circle,minimum size=.5cm,inner sep=1pt] (8) at (-7*360/5 +90: 2.5cm) [scale=1.6]{$8$};
			\node[draw,circle,minimum size=.5cm,inner sep=1pt] (9) at (-8*360/5 +90: 2.5cm) [scale=1.6]{$9$};
			\node[draw,circle,minimum size=.5cm,inner sep=1pt] (10) at (-9*360/5 +90: 2.5cm)[scale=1.6] {$10$};
			
			\coordinate (c1) at ($(7)!0.5!(10)$);
			\coordinate (c2) at ($(6)!0.5!(8)$);
			\coordinate (c3) at ($(7)!0.5!(9)$);
			\coordinate (c4) at ($(8)!0.5!(10)$);
			\coordinate (c5) at ($(6)!0.5!(9)$);
			
			\draw [line width=3.1pt,-,red1] (1) -- (2);
			\draw [line width=3.1pt,-,blue1] (1) -- (5);
			\draw [line width=3.1pt,-,red1] (1) -- (6);
			\draw [line width=3.1pt,-,green1] (2) -- (3);
			\draw [line width=3.1pt,-,green1] (2) -- (7);
			\draw [line width=3.1pt,-,darkgrey1] (3) -- (4);
			\draw [line width=3.1pt,-,darkgrey1] (3) -- (8);
			\draw [line width=3.1pt,-,yellow1] (4) -- (5);
			\draw [line width=3.1pt,-,yellow1] (4) -- (9);
			\draw [line width=3.1pt,-,blue1] (5) -- (10);
			\draw [line width=3.1pt,-,red1] (6) -- (8);
			\draw [line width=3.1pt,-,yellow1] (6) -- (9);
			\draw [line width=3.1pt,-,green1] (7) -- (9);
			\draw [line width=3.1pt,-,blue1] (7) -- (10);
			\draw [line width=3.1pt,-,darkgrey1] (8) -- (10);
			
			\draw [line width=3.1pt,-,red1] (c2) -- (6);
			\draw [line width=3.1pt,-,yellow1] (c5) -- (9);

			\end{tikzpicture}
			\caption{Non-metrizable path system in the Petersen Graph}
			\label{fig:nonMetPetersen}
		\end{subfigure}
		\hspace{20mm}
		\begin{subfigure}[t]{0.4\textwidth}
			\centering
			\begin{tikzpicture}[scale=0.75, every node/.style={scale=0.75}]

			\node[draw,circle,minimum size=.7cm,inner sep=0pt] (x1) at (0*360/3 +90: 1cm) {$x_1$};
			\node[draw,circle,minimum size=.7cm,inner sep=0pt] (x2) at (1*360/3 +90: 1cm) {$x_2$};
			\node[draw,circle,minimum size=.7cm,inner sep=0pt](x3) at (2*360/3 +90: 1cm) {$x_3$};
			\node[draw,circle,minimum size=.7cm,inner sep=0pt] (y1) at (0*360/3 +90: 4cm) {$y_1$};
			\node[draw,circle,minimum size=.7cm,inner sep=0pt] (y2) at (1*360/3 +90: 4cm) {$y_2$};
			\node[draw,circle,minimum size=.7cm,inner sep=0pt](y3) at (2*360/3 +90: 4cm) {$y_3$};

			\draw [line width=2.5pt,-] (x1) -- (x2);
			\draw [line width=2.5pt,-] (x1) -- (x3);
			\draw [line width=2.5pt,-] (x2) -- (x3);
			\draw [line width=2.5pt,-] (y1) -- (y2);
			\draw [line width=2.5pt,-] (y1) -- (y3);
			\draw [line width=2.5pt,-] (y2) -- (y3);
			\draw [line width=2.5pt,-] (x1) -- (y1);
			\draw [line width=2.5pt,-] (x2) -- (y2);
			\draw [line width=2.5pt,-] (x3) -- (y3);

			\end{tikzpicture}
			\caption{A graph which has path system which is a metrizable but not strictly metrizable}
			\label{fig:notStrictlyMetExample}
		\end{subfigure}
		\caption{}
	\end{figure}
	If $uv\in E(\Pi)$, then the path $P_{uv}$ is comprised of the single edge $uv$. Between any two nonadjacent vertices $x,y \in V(\Pi)$ there is a unique path of length $2$. For most such pairs this is taken to be $P_{x,y}$. There are $5$ exceptional pairs of nonadjacent vertices, those which are connected by a colored path in \Cref{fig:nonMetPetersen}. For example, $P_{1,7}=1,5,10,7$. It is easily verified that this path system is consistent, and as we show next, this path system is nonmetrizable. If $w$ is a weight function that induces it, then by considering the colored paths, the following inequalities must hold:
	\begin{equation*}
	\begin{split}
	w_{1,2} + w_{1,6} + w_{6,8} & \leq w_{2,3} + w_{3,8}\\
	w_{2,3} + w_{2,7} + w_{7,9} & \leq w_{3,4} + w_{4,9}\\
	w_{3,4} + w_{3,8} + w_{8,10} & \leq w_{4,5} + w_{5,10}\\
	w_{4,5} + w_{4,9} + w_{6,9} & \leq w_{1,5} + w_{1,6}\\
	w_{1,5} + w_{5,10} + w_{7,10} & \leq w_{1,2} + w_{2,7}\\
	\end{split}
	\end{equation*}
	which implies
	$$w_{6,8} +w_{7,9} +w_{8,10} + w_{6,9} +w_{7,10} \leq 0,$$
	showing a weight function inducing these paths cannot be strictly positive. \\
	\Cref{fig:notStrictlyMetExample} shows a metrizable path system which is not strictly metrizable. Namely, every edge is the chosen path between its two vertices. For $i=1,2,3$, let $P_{x_i, y_{i+1 }} = x_i y_i y_{i+1}$ and  $P_{y_i, x_{i+1 }} = y_i x_i x_{i+1},$ with incides taken $\bmod~ 3$.
	It is easy to see that the constant weight function induces this path system. If a weight function $w$ strictly induces this system, then for $i=1,2,3$ the following inequalities must hold:
	$$w(x_i y_i) + w(y_{i} y_{i+1}) <w(x_i x_{i+1}) + w(x_{i+1i} y_{i+1}) $$ and 
	$$w(y_i x_i) + w(x_{i} x_{i+1}) <w(y_i y_{i+1}) + w(y_{i+1i} x_{i+1})$$
	Summing the first inequality for $i=1,2,3$ and canceling identical terms yields
	$$ \sum_{i=1}^{3}w(y_{i} y_{i+1}) <\sum_{i=1}^{3}w(x_i x_{i+1}).$$
	Similarly, adding up the second inequality gives
	$$ \sum_{i=1}^{3}w(x_{i} x_{i+1}) <\sum_{i=1}^{3}w(y_i y_{i+1}),$$
	a contradiction.\\
	
	\section{Basic Properties of Path and Tree Systems}\label{sec:basic}
	Before getting into graph metrizability we establish some basic properties of path and tree systems. The notion of consistency makes sense also for a partial system of paths and we ask when a consistent system of paths can be extended to a full consistent path system. We give an example of a consistent partial collection of paths for which this is impossible. We then go on to prove some conditions under which the answer is positive. Next we establish certain lemmas which help us better elucidate the structure of path systems. Aside of the inherent interest in these lemmas, they help us to fully describe path systems in cycles. Path systems in cycles play a key role in the study of metrizability (\Cref{thm:SuspendedPath}). 
	
	We start with the following easy observation.
	\begin{comment}
	\begin{lemma}\label{lem:uniqueGeodesics}
	Let $G=(V,E,w)$ a connected weighted graph such that each pair of vertices has a unique shortest path. Then the set of all shortest paths forms a consistent path system in $G$.
	\end{lemma}
	\begin{proof}
	Let $P$ be a  $uv$ path containing vertices $x$ and $y$ and let $R$ be the $xy$ subpath of $P$. Suppose that $R \neq Q_{x,y}$, where $Q_{x,y}$ is the $xy$ geodesic. Replacing the subpath $R$ by $Q_{x,y}$ in the path $P$ yields a (not necessarily simple) $uv$ path with weight strictly smaller than $P$, showing $P$ is not the shortest path between $u$ and $v$.  
	\end{proof}
	\end{comment}
	
	\begin{proposition}\label{prop:weightFuncInducesPS}
		Let $G=(V,E)$ be a connected graph. Then every weight function $w: E\to (0,\infty)$ induces a consistent path system on $G$.
	\end{proposition}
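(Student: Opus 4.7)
The plan is to define $\mathcal{P}$ by taking unique geodesics with respect to a tie-breaking perturbation of $w$. Concretely, I would fix a map $\delta\colon E\to(0,\infty)$ such that the sums $\sum_{e\in P}\delta(e)$ are pairwise distinct over the finitely many simple paths of $G$ (e.g.\ $\delta(e)=2^{\sigma(e)}$ for any injective labeling $\sigma$ of the edges). For $\epsilon>0$ smaller than the smallest positive gap between $w$-weights of simple paths in $G$, the perturbed weight $w'(e):=w(e)+\epsilon\delta(e)$ satisfies: (i) $w'(P)<w'(Q)$ whenever $w(P)<w(Q)$; and (ii) $w'(P)\neq w'(Q)$ whenever $P\neq Q$ are distinct simple paths. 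In particular every pair $u,v$ has a unique $w'$-shortest path; I declare this to be $P_{u,v}$.

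Two verifications remain. First, $P_{u,v}$ is a $w$-geodesic: any $uv$ path $Q$ with $w(Q)<w(P_{u,v})$ would by (i) satisfy $w'(Q)<w'(P_{u,v})$, contradicting $w'$-minimality of $P_{u,v}$. Second, the system is consistent: given $x,y\in P_{u,v}$, let $R$ be the $xy$-subpath of $P_{u,v}$ and suppose $R\neq P_{x,y}$. By (ii) and the $w'$-minimality of $P_{x,y}$, we have $w'(P_{x,y})<w'(R)$. Splicing $P_{x,y}$ into $P_{u,v}$ in place of $R$ produces a $uv$-walk $W$ with $w'(W)<w'(P_{u,v})$. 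Iteratively removing closed sub-walks from $W$ yields a simple $uv$-path $P^{*}$, and each such removal strictly decreases the $w'$-weight because $w'>0$. Hence $w'(P^{*})<w'(P_{u,v})$, contradicting the $w'$-minimality of $P_{u,v}$. Thus $R=P_{x,y}$, establishing consistency.

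The only step requiring a moment's thought is the shortcutting at the end: the spliced walk $W$ may revisit vertices, and one must extract a simple path from it without increasing weight. This is where strict positivity of $w$ (equivalently of $w'$) is used in an essential way; if weights were merely nonnegative, closed sub-walks could have zero cost and a shortcut would only give a weak inequality, breaking the contradiction. The existence of the perturbation, the choice of $\epsilon$, and the first verification are all routine.
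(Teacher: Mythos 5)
Your proof is correct and follows essentially the same route as the paper: the paper breaks ties among $w$-shortest paths lexicographically with respect to a fixed edge ordering, and your perturbation $w'=w+\epsilon\,2^{\sigma(\cdot)}$ is a numerical encoding of the same tie-break, after which you supply the splice-and-shortcut verification of consistency that the paper dismisses as easy to see. One small quantitative slip: taking $\epsilon$ smaller than the least positive gap between $w$-weights of paths does not by itself give property (i), since $\epsilon\sum_{e\in P}\delta(e)$ may exceed that gap; you should take $\epsilon<\mathrm{gap}/\max_{P}\sum_{e\in P}\delta(e)$, which still exists because $G$ has only finitely many simple paths.
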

	\begin{proof}
		
		If the $w$-shortest $u,v$-path is unique, then there is nothing to prove. What we need is a rule to break ties between $u,v$-paths of the same $w$ length. To this end fix some ordering $e_1, e_,\ldots$ on the edges of $G$ and break ties between two such paths by lexicographic comparison. It is easy to see that this guarantees consistency. 
	\end{proof}
	
	\begin{figure}[h]
		\centering
		\begin{subfigure}[t]{0.45\textwidth}
			\centering
			\begin{tikzpicture}[scale=0.85, every node/.style={scale=0.85}]

			\node[draw,circle,minimum size=.6cm,inner sep=0pt] (u) at (-4,0) {$u$};
			\node[draw,circle,minimum size=.4cm,fill] (x1) at (-2,2) {};
			\node[draw,circle,minimum size=.4cm,fill](x2) at (-2,-2) {};
			\node[draw,circle,minimum size=.4cm,fill] (y1) at (2,2) {};
			\node[draw,circle,minimum size=.4cm,fill] (y2) at (2,-2) {};
			\node[draw,circle,minimum size=.6cm,inner sep=0pt] (v) at (4,0) {$v$};

			\draw [line width=3pt,-,blue1] (u) -- (x1);
			\draw [line width=3pt,-,orange1] (u) -- (x2);
			\draw [line width=3pt,-,red1] (v) -- (y1);
			\draw [line width=3pt,-,green1] (v) -- (y2);
			\draw [line width=3pt,-,red1] (x1) -- (y1);
			\draw [line width=3pt,-,green1] (x2) -- (y2);
			\draw [line width=3pt,-,blue1] (x1) -- (y2);
			\draw [line width=3pt,-,orange1] (x2) -- (y1);

			\end{tikzpicture}
			\caption{A partial path system which cannot be extended to a full system}
			\label{fig:nonExtendableEx}
		\end{subfigure}
		\hfill
		\begin{subfigure}[t]{0.45\textwidth}
			\centering
			\begin{tikzpicture}[scale=.6, every node/.style={scale=.6}]

			\node[draw,circle, minimum size=.25cm,inner sep=0pt,fill, darkgrey1] (0) at (0*360/3 - 90: 0.5cm) {};
			\node[draw,circle, minimum size=.25cm,inner sep=0pt,fill, darkgrey1](1) at (1*360/3 - 90: 0.5cm) {};
			\node[draw,circle, minimum size=.25cm,inner sep=0pt,fill, darkgrey1] (2) at (2*360/3 - 90: 0.5cm) {};
			
			\node[draw,circle, minimum size=.25cm,inner sep=0pt,fill, darkgrey1](6) at (0*360/6 + 90: 1.5cm) {};
			\node[draw,circle, minimum size=.25cm,inner sep=0pt,fill, darkgrey1](7) at (1*360/6 + 90: 1.5cm) {};
			\node[draw,circle, minimum size=.25cm,inner sep=0pt,fill, darkgrey1] (8) at (2*360/6 + 90: 1.5cm) {};
			\node[draw,circle, minimum size=.25cm,inner sep=0pt,fill, darkgrey1] (3) at (3*360/6 + 90: 1.5cm) {};
			\node[draw,circle, minimum size=.25cm,inner sep=0pt,fill, darkgrey1] (4) at (4*360/6 + 90: 1.5cm) {};
			\node[draw,circle, minimum size=.25cm,inner sep=0pt,fill, darkgrey1](5) at (5*360/6 + 90: 1.5cm) {};
			\node[draw,circle, minimum size=.25cm,inner sep=0pt,fill, darkgrey1] (10) at (0*360/3 + 90: 5cm) {};
			\node[draw,circle, minimum size=.25cm,inner sep=0pt,fill, darkgrey1] (11) at (1*360/3 + 90: 5cm) {};
			\node[draw,circle, minimum size=.25cm,inner sep=0pt,fill, darkgrey1] (9) at (2*360/3 + 90: 5cm) {};
			
			\draw [line width=3pt, -,red1] (0) -- (1);
			\draw [line width=3pt, -,yellow1] (0) -- (2);
			\draw [line width=3pt, -,red1] (0) -- (3);
			\draw [line width=3pt, -,blue1] (1) -- (2);
			\draw [line width=3pt, -,blue1] (1) -- (5);
			\draw [line width=3pt, -,yellow1] (2) -- (7);
			\draw [line width=3pt, -,red1] (3) -- (4);
			\draw [line width=3pt, -,orange1] (3) -- (8);
			\draw [line width=3pt, -,purple1] (4) -- (5);
			\draw [line width=3pt, -,purple1] (4) -- (9);
			\draw [line width=3pt, -,blue1] (5) -- (6);
			\draw [line width=3pt, -,green1] (6) -- (10);
			\draw [line width=3pt, -,green1] (6) -- (7);
			\draw [line width=3pt, -,yellow1] (7) -- (8);
			\draw [line width=3pt, -,orange1] (8) -- (11);
			\draw [line width=3pt, -,purple1] (9) -- (10);
			\draw [line width=3pt, -,orange1] (9) -- (11);
			\draw [line width=3pt, -,green1] (10) -- (11);

			\end{tikzpicture}
			\caption{Another partial path system which cannot be extended to a full system}
			\label{fig:dodecahedronN3}
		\end{subfigure}
		\caption{}
	\end{figure}
	
	A {\em partial} path system $\Pi$ in a connected graph $G=(V,E)$ is a collection of paths between {\em some} pairs $u,v\in V$. We say $\Pi$ is a consistent partial path system, if for all paths $P, Q\in \Pi$ and vertices $u,v\in V(P)\cap V(Q)$, the $uv$ subpaths of $P$ and $Q$ coincide. As \Cref{fig:nonExtendableEx} shows, not every consistent partial path system can be extended to a full consistent path system. Here, the partial path system consists of the colored paths. It easy to see that the addition of any $uv$ path to this system makes it inconsistent. 
	Another example of a non-extendable partial path system is given by \Cref{fig:dodecahedronN3}. Here again the colored paths form our partial path system. It is not difficult to see that any path which connects the two triangles in this graph is inconsistent with this partial system.
	
	A partial path system $\Pi$ is said to be {\em strictly metrizable} if there exists a weight function $w$ such every path in $\Pi$ is the unique geodesic w.r.t.\ $w$ between its end vertices. We observe:
	\begin{observation}\label{obs:nonExtendnonStrict}
		A consistent partial path system $\Pi$ that cannot be extended to a full consistent path system is not strictly metrizable.
	\end{observation}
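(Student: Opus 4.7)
The plan is to prove the contrapositive: if a consistent partial path system $\Pi$ is strictly metrizable, then it extends to a full consistent path system. This is the natural direction, since strict metrizability directly supplies a weight function, and we already have a machine (\Cref{prop:weightFuncInducesPS}) that turns a weight function into a full consistent path system.

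First I would take a weight function $w: E \to (0,\infty)$ that strictly induces $\Pi$, so that for every path $P_{u,v} \in \Pi$, $P_{u,v}$ is the \emph{unique} $w$-geodesic between $u$ and $v$. Next I would apply \Cref{prop:weightFuncInducesPS} to $w$, obtaining a full consistent path system $\mathcal{Q} = \{Q_{u,v}\}$ on $G$ in which each $Q_{u,v}$ is a $w$-geodesic (with some tie-breaking rule used where the $w$-shortest path is not unique). Finally, I would observe that for every pair $u,v$ indexing a path of $\Pi$, there is no tie to break: the path $P_{u,v}$ is the \emph{only} $w$-geodesic between $u$ and $v$, so any such $Q_{u,v}$ must equal $P_{u,v}$. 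Hence $\mathcal{Q}$ agrees with $\Pi$ on the pairs in $\Pi$, i.e., $\mathcal{Q}$ is an extension of $\Pi$ to a full consistent path system, contradicting the hypothesis.

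There is no real obstacle here; the only thing one has to be careful about is that the tie-breaking procedure used in \Cref{prop:weightFuncInducesPS} does not override the paths in $\Pi$. This is immediate from strictness: tie-breaking is only invoked when there are multiple $w$-geodesics between a pair, and by assumption the paths in $\Pi$ are the unique $w$-geodesics between their endpoints. So the proof is essentially a one-line reduction to \Cref{prop:weightFuncInducesPS}, and the observation boils down to the statement that strict metrizability of a partial system already carries with it an extension to a full consistent path system.
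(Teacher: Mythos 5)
Your proof is correct and follows exactly the paper's own argument: apply \Cref{prop:weightFuncInducesPS} to the weight function strictly inducing $\Pi$ to obtain a full consistent system, and note that strictness forces that system to contain every path of $\Pi$. No further comment is needed.
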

	\begin{proof}
		Suppose that $\Pi$ is strictly induced by some weight function $w$. By \Cref{prop:weightFuncInducesPS} $w$ induces some full path system $\mathcal{P}$. Moreover, necessarily $\Pi \subseteq \mathcal{P}$ since every path in $\Pi$ is assumed to be a unique shortest path w.r.t.\ to $w$. It follows that $\Pi$ can be extended to a full system.
	\end{proof}
	
	We take this opportunity to explain how our work differs from Bodwin's. Although the basic concepts may seem similar, the difference is substantial. Thus, a path system in the language of \Cite{Bo}, is what we call a partial path system. By \Cref{obs:nonExtendnonStrict} the partial path system in \Cref{fig:nonExtendableEx} is not strictly metrizable.
	Therefore, from the point of view \Cite{Bo} (e.g., his Figure 6) \Cref{fig:nonExtendableEx} shows a path system which is not strictly metrizable. From our perspective, since we only consider full path systems, the graph in \Cref{fig:nonExtendableEx} is in fact an example of a strictly metrizable graph. (We use a computer to prove this claim. For more on this, see \Cref{subsec:praktisch}.) Bodwin lays out an infinite family of intersection patterns and proves that a consistent partial path system is strictly mertizable if and only if it does not contain any of these patterns. The above example illustrates that a graph may admit one of these intersection patterns, and still be strictly metrizable according to our definitions. 
	
	We discuss next some cases where an extension of a partial path system is possible. A path-system is called {\em neighborly} if the path between any two adjacent vertices is the edge between them.
	\begin{proposition}\label{prop:neighborly}
		Let $\mathcal{P}'$ be a consistent neighborly path system in $G' = (V',E')$, an induced subgraph of $G = (V,E)$. Then $\mathcal{P}'$ can be extended to a consistent neighborly path system in $G$.
	\end{proposition}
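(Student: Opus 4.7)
The plan is to induct on $|V \setminus V'|$, adding one vertex to $V'$ at a time. If $V' = \emptyset$ we simply take the consistent neighborly path system on $G$ induced by the constant weight $1$, which exists by \Cref{prop:weightFuncInducesPS} (every non-edge has weight at least $2$, so each edge is the unique shortest path between its endpoints); so assume $V' \neq \emptyset$ and $V' \subsetneq V$. By connectivity of $G$ there exists $v \in V \setminus V'$ with $N := N_G(v) \cap V'$ nonempty, and the induced subgraph $G'' := G[V' \cup \{v\}]$ is again connected. It suffices to extend $\mathcal{P}'$ to a consistent neighborly path system $\mathcal{P}''$ on $G''$: the inductive hypothesis applied to $G''$ then produces the desired extension to $G$.

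To construct $\mathcal{P}''$, I fix a reference neighbor $u^* \in N$, and for each $u \in V'$ let $\phi(u)$ be the first vertex of $N$ encountered while traversing $P'_{u, u^*}$ starting from $u$. This is well-defined because $u^* \in N$ terminates the walk, and $\phi(u) = u$ whenever $u \in N$. I set
\[
P_{u,v} := P'_{u, \phi(u)} \cdot \phi(u)v \qquad \text{for } u \in V',
\]
keep $P_{u, u'} := P'_{u, u'}$ for $u, u' \in V'$, and $P_{v,v} := v$. Each $P_{u, v}$ is simple since $v \notin V'$. Neighborliness is immediate: if $u \in N$ then $\phi(u) = u$, so $P_{u, v}$ is exactly the edge $uv$, while edges inside $V'$ are already handled by the neighborliness of $\mathcal{P}'$.

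Consistency is the substantive point, and reduces to the following claim: if $u' \in V'$ lies on $P'_{u, \phi(u)}$, then $\phi(u') = \phi(u)$. Granting this, the $u'v$ subpath of $P_{u, v}$ equals $P'_{u', \phi(u)} \cdot \phi(u)v$ by consistency of $\mathcal{P}'$ applied to the prefix, and this coincides with $P_{u', v}$; together with the consistency of $\mathcal{P}'$ itself, this covers every required check. For the claim, decompose $P'_{u, u^*} = P'_{u, \phi(u)} \cdot P'_{\phi(u), u^*}$; by consistency of $\mathcal{P}'$ the path $P'_{u', u^*}$ is the $u'$-to-$u^*$ suffix of $P'_{u, u^*}$, and since by definition of $\phi(u)$ no interior vertex of $P'_{u, \phi(u)}$ lies in $N$, the first $N$-vertex hit from $u'$ on this suffix is still $\phi(u)$, i.e., $\phi(u') = \phi(u)$.

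The main obstacle is ensuring that the locally chosen $\phi(u)$'s are mutually compatible: replacing each $u$ by a neighbor $\phi(u)$ of $v$ produces a consistent system of paths only if all $\phi$-values agree along common $\mathcal{P}'$-subpaths. What makes the construction work is precisely that $\phi(u)$ is defined via the \emph{single} reference path $P'_{u, u^*}$ rather than via independent local choices, so the required agreement $\phi(u') = \phi(u)$ is forced automatically by the consistency of $\mathcal{P}'$.
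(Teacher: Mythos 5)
Your proof is correct and follows essentially the same route as the paper: the same construction (route each $u$ to $v$ via the first $N_G(v)$-vertex on the reference path $P'_{u,u^*}$) and the same consistency argument via the key claim $\phi(u')=\phi(u)$. The only difference is organizational: by always adjoining a vertex adjacent to the current connected subgraph you keep every intermediate graph connected, which lets you skip the separate cut-vertex case that the paper treats explicitly.
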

	\begin{proof}
		We may and will assume that $G$ is connected. Also, by induction it suffices to consider the case where $G' = G \setminus \set{v}$ for some $v\in V$. Of course, we only need to specify the paths $P_{x,v}$ for all $x\neq v$.\\
		Let us start with the case where $G'$ is connected. We set $P_{v,u} \coloneqq vu$ for every neighbor $u$ of $v$. Now fix some $u_0\in N_G(v)$ and set the paths $P_{x,v}$ for $x\not\in N_G(v)$ according to the following intuitive rule: Seek a way from $x$ to $v$ via $u_0$, but if this path visits another neighbor of $v$, then hop to $v$ as early as possible. More formally, $P_{x,v} \coloneqq P'_{x,u_x}v$ where $u_x$ is the first neighbor of $v$ on the path $P'_{x,u_0}$. Since $\mathcal{P}'$ is consistent, it suffices to show that $P_{y,v}$ is a subpath of $P_{x,v}$ whenever $y\in P_{x,v}$. By construction $P_{x,v} = P'_{x,u_x}v$ and therefore $y\in P'_{x,u_x}$. But $P'_{y,u_x}$ is a subpath of $P'_{y,u_0}$, since $\mathcal{P}'$ is consistent. It follows that $u_x = u_y$ is the first neighbor of $v$ along $P'_{y,u_0}$. Therefore $P_{y,v} = P_{y,u_x}v$ is a subpath of $P_{x,v}$, as claimed. \\
		If $G'$ is disconnected, then $v$ is a cut-vertex. We apply the above procedure to every component $C=(V_C, E_C)$ of $G'$ with $V = V_C \cup \set{v}$ and $V' = V_C$, yielding a consistent system of paths $\mathcal{P}_C$ for the graph $G[V_C\cup \set{v}]$. For vertices $u$ and $w$ are in two distinct components $C_1$ and $C_2$, we let $P_{u,w} = P_{u,v} P_{v,w}$ with $P_{u,v} \in \mathcal{P}_{C_1},  P_{v,w} \in \mathcal{P}_{C_2}$ to determine a consistent system in $G$.
	\end{proof}
	
	We turn our attention to tree systems and draw significant structural consequences from the assumption that two trees in the system coincide.
	
	\begin{lemma}
		\label{lem:treePathSys1}
		Let $\mathcal{T}$ be a tree system in a graph $G=(V,E)$, and $\mathcal{P}$ its corresponding path system. For $u,v \in V$, the following are equivalent:
		\begin{enumerate}
			\item $T_u = T_v$
			\item Every tree $T_w\in \mathcal{T}$ contains the path $P_{u,v}$.
			\item $T_z = T_u$ for every $z\in P_{u,v}$. 
		\end{enumerate}
	\end{lemma}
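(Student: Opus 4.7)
The plan is to prove $(1) \Leftrightarrow (3)$ and $(1) \Leftrightarrow (2)$, leaning in every nontrivial direction on the \emph{tripod median} fact for trees: given any three vertices $a,b,c$ in a tree $T$, the three pairwise paths $P_{a,b}, P_{a,c}, P_{b,c}$ share a unique common vertex $y$, and the three sub-arcs from $y$ to $a, b, c$ are pairwise internally disjoint. Moreover $y$ is pinned down by any two of these three paths, for instance as the vertex where $P_{a,b}$ and $P_{a,c}$ diverge from $a$.

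The implication $(3) \Rightarrow (1)$ is immediate on taking $z = v$. For $(1) \Rightarrow (3)$, fix $z \in P_{u,v}$ and an arbitrary $x \in V$, and let $y$ be the median of $u, v, x$ inside $T_u = T_v$. Then $y$ lies on $P_{u,v}$, splitting it into $P_{u,y}$ and $P_{y,v}$; the vertex $z$ lies on one of these two halves, and correspondingly on $P_{u,x}$ or on $P_{v,x}$ by the tripod decomposition. Path-system consistency then identifies the $zx$-subpath of this containing path with $P_{z,x}$, so $E(P_{z,x}) \subseteq E(T_u)$. Running $x$ over $V$ exhausts $E(T_z)$, so $T_z \subseteq T_u$; both are spanning trees, hence $T_z = T_u$.

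For $(1) \Rightarrow (2)$, fix $w$ and let $y$ be the median of $u, v, w$ inside $T_u = T_v$. Tree-system consistency identifies $P_{u,w}$ and $P_{v,w}$, as sitting in $T_u$, with the $uw$- and $vw$-paths in $T_w$. The tripod decomposition gives $P_{u,v} = P_{u,y} \cup P_{y,v}$ with $P_{u,y} \subseteq P_{u,w}$ and $P_{y,v} \subseteq P_{v,w}$, so $P_{u,v} \subseteq T_w$. Conversely, for $(2) \Rightarrow (1)$, pick any $x$: since $P_{u,v} \subseteq T_x$, the $uv$-path in $T_x$ is exactly $P_{u,v}$, and the median of $u, v, x$ inside $T_x$ must therefore lie on $P_{u,v}$, giving the tripod decomposition $P_{u,x} = P_{u,y} \cup P_{y,x}$ and $P_{v,x} = P_{v,y} \cup P_{y,x}$ for some $y \in P_{u,v}$. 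An arbitrary edge of $T_u$ lies in some $P_{u,x}$, hence either in $P_{u,y} \subseteq P_{u,v} \subseteq T_v$ or in $P_{y,x} \subseteq P_{v,x} \subseteq T_v$; symmetry gives the reverse inclusion.

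I do not expect a real obstacle. The main bookkeeping care is to keep the two flavors of consistency separate: tree-system consistency transplants a pairwise path between different trees in $\mathcal{T}$, whereas path-system consistency says subpaths of a member of $\mathcal{P}$ are themselves members of $\mathcal{P}$. Both flavors are invoked in each of the three nontrivial implications above.
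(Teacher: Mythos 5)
Your proof is correct: every step checks out, including the two places where you pass from an edge-inclusion $E(T_z)\subseteq E(T_u)$ to equality using the fact that both are spanning trees. The key tool is the same as in the paper's proof, namely the median (tripod) decomposition of three vertices in a tree, and your argument for $(1)\Rightarrow(2)$ essentially coincides with the paper's. Where you diverge is in the logical architecture: the paper proves the cycle $(3)\Rightarrow(1)\Rightarrow(2)\Rightarrow(3)$, and its $(2)\Rightarrow(3)$ step is the most delicate, certifying $T_z=T_u$ by checking that the paths from the fixed root $z$ to every $w$ agree in both trees. You instead prove $(1)\Leftrightarrow(3)$ and $(1)\Leftrightarrow(2)$ separately, and in the directions $(1)\Rightarrow(3)$ and $(2)\Rightarrow(1)$ you certify tree equality via the identity $E(T_z)=\bigcup_x E(P_{z,x})$ together with an edge count. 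This buys you a somewhat more uniform and arguably cleaner argument (each nontrivial direction is a one-median computation followed by a containment of edge sets), at the cost of invoking the path-system-to-tree-system correspondence a little more heavily; the paper's route keeps everything phrased in terms of paths inside individual trees.
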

	\begin{proof}
		We recall the following standard fact: Every three (not necessarily distinct) vertices $a, b, c$ in a tree $T$ have a {\em median}. This is a vertex $\mu$ which is uniquely characterized by the property that every vertex other than $\mu$ is in at most one of the paths $\mu$-$a$, $\mu$-$b$, $\mu$-$c$ in $T$.\\
		Recall that for any two vertices $\gamma, \delta\in V$ the $\gamma\delta$ path in $T_{\gamma}$ is $P_{\gamma,\delta}$.\\
		$(3) \implies (1)$. This is obvious.\\
		$(1)\implies (2)$. $P_{u,v}$ is clearly contained in $T=T_u=T_v$. We wish to show that it is contained in $T_w$ for some $w\neq u,v$. Let $x$ be the median in $T$ of $u, v, w$. Note that $P_{u,w} =P_{u,x}P_{x,w}$ and $P_{v,w} =P_{v,x}P_{x,w}$ are both paths in $T_w$.
		Together with the fact that $T_{u} = T_{v}$ this implies that the paths $P_{u,x}, \ P_{v,x}$ are contained in $T_u, T_v$ and $T_w$. Therefore, $P_{u,v}$, the $uv$ path in $T_u = T_v$, is precisely the path $P_{u,x}P_{x,v}$. It follows $P_{u,v} = P_{u,x}P_{x,v}$ is contained in $T_w$ as well.\\
		$(2)\implies (3)$. When are two trees on the same vertex set $V$ (in our case $T_z$ and $T_u$) identical? Fix some $\alpha\in V$. Then two trees on $V$ are identical iff the $\alpha \beta$ paths in both trees coincide for every $\beta\in V$. We use this criterion with $\alpha=z$ and show that for every $w\in V$ the $zw$ paths in $T_z$ and $T_u$ are identical. When $w=u$ this is clearly true as the $zu$ path in both trees is $uz$ subpath of $ P_{u,v}$. The situation is similar when $w=v$. For other vertices $w$, by assumption, the path $P_{u,v}$ is contained in $T_w$. Therefore, the $wu$ path in $T_w$ is $P_{w,u} = P_{w,x}P_{x,u}$, where $x$ is the median in $T_w$ of the vertices $u, v, w$, see \Cref{fig:treeSysLemma}. Likewise the $wv$ path in $T_w$ is $P_{w,v} = P_{w,x}P_{x,v}$. 
		Since $x\in P_{u,v}$ it follows that $P_{u,v}=P_{u,x}P_{x,v}$. Also $z\in P_{u,v}$ so $z$ must belong to either $P_{u,x}$ or $P_{x,v}$ (or both, when $z=x$). In either case it we have that the $wz$ path in  $T_z$ is $P_{w,z} = P_{w,x}P_{x,z}$. Notice that $P_{w,x}$, a subpath of $P_{w,u}$, and $P_{x,z}$, a subpath of $P_{u,v}$, are both contained in $T_u$. It follows that $P_{w,z} = P_{w,x}P_{x,z}$ is also the $wz$ path in $T_u$.
	\end{proof}
	\begin{figure}[h]
		\centering
		\begin{subfigure}[t]{.45\textwidth}
			\centering
			\begin{tikzpicture}[ scale=0.85, every node/.style={scale=0.85}]
			
			\node[draw,circle, minimum size=.5cm,inner sep=0pt,inner sep=0pt] (v) at (0*360/3 + 150: 3cm) {$v$};
			\node[draw,circle, minimum size=.5cm,inner sep=0pt,inner sep=0pt](u) at (1*360/3 +150: 3cm) {$u$};
			\node[draw,circle, minimum size=.5cm,inner sep=0pt,inner sep=0pt] (w) at (2*360/3 +150: 3cm) {$w$};
			\node[draw,circle, minimum size=.5cm,inner sep=0pt,inner sep=0pt] (x) at (0,0) {$x$};
			\node[draw,circle, minimum size=.5cm,inner sep=0pt,inner sep=0pt] (z) at ($(u)!0.6!(x)$){$z$};

			\draw [line width=2pt,-] (u) -- (z) node [midway, left] {$P_{u,z}$};
			\draw [line width=2pt,-] (v) -- (x) node [midway, below left= -.03cm and -.07cm] {$P_{v,x}$};
			\draw [line width=2pt,-] (w) -- (x) node [midway, above left = -.1 and -.05] {$P_{x,w}$};
			\draw [line width=2pt,-] (z) -- (x) node [midway, right] {$P_{z,x}$};

			\end{tikzpicture}
			\caption{The paths $P_{v,x}$, $P_{x,w}$ and $P_{u,x}$ intersect only at $x$, the median of $v$, $u$ and $w$ in $T_{w}$.}
			\label{fig:treeSysLemma}
		\end{subfigure}
		\begin{subfigure}[t]{.45\textwidth}
			\centering
			\begin{tikzpicture} [scale=0.85, every node/.style={scale=0.85}]
			\def \n {7}
			\def \radius {2.4}
			\def \radiuss {2.85}
			
			\draw [-{Latex[length=5mm]},line width=2pt] (360/\n*1-90 -360/\n:\radius) -- (360/\n*1+90 -360/\n:\radius) {};
			\draw [-{Latex[length=5mm]},line width=2pt] (360/\n*3-90 -360/\n:\radius) -- (360/\n*3+90 -360/\n:\radius) {};
			
			\draw[line width =1.5pt] circle(\radius)
			
			foreach\s in{1,...,\n}{
				node[draw,circle,minimum size=.5cm,inner sep=0pt, fill = white] at 	(360/\n*\s-90 -360/\n:\radius) {$\s$}
			}
			
			foreach \s in{1,...,\n}{
				node at (360/\n*\s+90 -360/\n:\radiuss){$f(\s)$}
			}	;

			\end{tikzpicture}
			\caption{A crossing function $f$ in the $7$-Cycle.}
			
			\label{fig:CrossingFunction}
		\end{subfigure}
	\end{figure}
	We denote by $G/e$ the graph obtained from $G$ by contracting the edge $e$. Let $\mathcal{P}$ be a consistent path system in $G$. If $e=uv$ and $u,v$ are as in \Cref{lem:treePathSys1}, then $\mathcal{P}/e$ is a consistent path system in $G/e$. 
	
	An edge $e$ is said to be $\mathcal{T}$-{\em persistent} if it belongs to every tree in the tree system $\mathcal{T}$. It is $\mathcal{P}$-persistent for a path system $\mathcal{P}$ if for every vertex $u$ there is a vertex $v$ with $e\in P_{uv}$. It is easily verified that for a corresponding pair $\mathcal{T}$ and $\mathcal{P}$, as described above, the two conditions are equivalent. 
	
	\begin{proposition} \label{prop:TreeQuotient}
		Let $G=(V,E)$ be a connected graph, $\mathcal{T}$ be a consistent tree system in $G$ and $e$ a $\mathcal{T}$-persistent edge. Then $\mathcal{T}/ e \coloneqq \set{T/e : T\in \mathcal{T}}$ is a consistent tree system in $G/e$.
	\end{proposition}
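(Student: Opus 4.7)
The plan is to establish three things in order: that the persistent edge $e = ab$ forces $T_a = T_b$ (so the contracted trees can be unambiguously indexed by vertices of $G/e$), that each contracted tree is a spanning tree of $G/e$, and that the resulting tree system is consistent in $G/e$.

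First I would prove that $T_a = T_b$. Since $e$ is $\mathcal{T}$-persistent, $e \in T_a$, and because $T_a$ is a tree the unique $ab$ path in $T_a$ is the single edge $e$; equivalently $P_{a,b}$ consists just of the edge $e$. By persistence, this edge lies in every $T_w$, so the path $P_{a,b}$ is contained in $T_w$ for every $w \in V$. This is precisely condition (2) of \Cref{lem:treePathSys1} applied with $u = a,\ v = b$, so condition (1) yields $T_a = T_b$.

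Next I would set up the contracted tree system. Let $[ab]$ denote the merged vertex in $G/e$. For each $w \in V \setminus \{a,b\}$, contracting a tree edge yields a tree, so $T_w/e$ is a spanning tree of $G/e$ which I index by $w$; for the merged vertex, I index $T_a/e = T_b/e$ (well-defined by the previous step) by $[ab]$. This provides exactly one tree per vertex of $G/e$, so $\mathcal{T}/e$ is a bona fide tree system.

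Finally I would verify consistency by showing, for any two vertices $u', v'$ of $G/e$, that the $u'v'$ paths in $T_{u'}/e$ and $T_{v'}/e$ coincide. If neither of $u', v'$ is $[ab]$, both paths are obtained from the common $u'v'$ path $P_{u',v'}$ in $T_{u'} = T_{v'}$ (by consistency of $\mathcal{T}$) via contraction of $e$, and hence agree. In the remaining case $u' = [ab]$, $v' \neq [ab]$, the $[ab]v'$ path in $T_{[ab]}/e = T_a/e$ comes from contracting $e$ in $P_{a,v'}$ (or equivalently $P_{b,v'}$) viewed inside $T_a$, while the $[ab]v'$ path in $T_{v'}/e$ comes from contracting $e$ in $P_{a,v'}$ (or $P_{b,v'}$) viewed inside $T_{v'}$; by consistency of $\mathcal{T}$ in $G$ these two pairs of paths coincide in $G$, so the contracted paths coincide in $G/e$.

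The only real source of friction will be bookkeeping at the merged vertex $[ab]$: one has to check that in $T_a/e$ the unique $[ab]v'$ path is the same no matter which preimage ($a$ or $b$) we choose as the starting endpoint before contracting. This is routine: removing $e$ from $T_a$ splits it into two components, with $v'$ lying in exactly one, so exactly one of $P_{a,v'}, P_{b,v'}$ traverses $e$, and contracting $e$ in either of them yields the same path in $T_a/e$.
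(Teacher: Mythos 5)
Your proposal is correct and takes essentially the same route as the paper: invoke \Cref{lem:treePathSys1} (via the observation that persistence of $e=ab$ forces $P_{a,b}$ to be the edge $e$ itself, lying in every tree) to get $T_a=T_b$, note that trees are closed under contraction, and inherit consistency from $\mathcal{T}$; you simply spell out the path-by-path verification that the paper leaves as a one-line remark. One small slip: in the case where neither $u'$ nor $v'$ is $[ab]$ you write ``$T_{u'}=T_{v'}$,'' which is false in general --- consistency of $\mathcal{T}$ only gives that the $u'v'$ \emph{paths} in $T_{u'}$ and $T_{v'}$ coincide, which is all your argument actually uses.
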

	
	\begin{proof}
		Trees are closed under edge contraction, and so $\mathcal{T}/ e$ is a collection of trees. Write $e=uv$ and let $z$ be the vertex obtained from contracting $uv$. By \Cref{lem:treePathSys1}, $T_u = T_v$ so that $T_u/e = T_v /e$.
		For $w\in V\setminus \set{u,v}$ the tree rooted at $w$ is then $T_w/e$ and the tree rooted at $z$ is $T_u/e = T_v/e$. The consistency of such a system follows from the consistency of $\mathcal{T}$.
	\end{proof}
	
	\begin{lemma} \label{prop:StrictMetQuot}
		Let $\mathcal{P}$ be a path system in a connected graph $G=(V,E)$, and let $e\in E$ be a $\mathcal{P}$-persistent edge. If $\mathcal{P}/ e$ is strictly metrizable then so is $\mathcal{P}$.
	\end{lemma}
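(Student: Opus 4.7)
The plan is to lift the strictly inducing weight on $G/e$ back to $G$ by assigning $e$ a very small weight $\epsilon > 0$. Concretely, write $e = uv$ and let $z$ denote the contracted vertex. By hypothesis, $\mathcal{P}/e$ is strictly induced by some $w' \colon E(G/e) \to (0,\infty)$. For each edge $f \in E(G) \setminus \set{e}$ there is a corresponding edge $\bar{f}$ in $G/e$; define $w(f) = w'(\bar{f})$ and $w(e) = \epsilon$. I then want to show that for a suitably small $\epsilon > 0$, the function $w$ strictly induces $\mathcal{P}$.

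The key preliminary step is to set up the bijection between simple paths in $G$ and simple paths in $G/e$. For a simple path $Q$ in $G$, let $\bar{Q}$ be its image in $G/e$ (identical if $e \notin Q$; with $e$ contracted to the single vertex $z$ if $e \in Q$). Going the other way, at each visit of $z$ a simple path $\bar{Q}$ in $G/e$ enters and leaves via two edges whose preimages in $G$ each attach to a specific endpoint in $\set{u,v}$; if both attach to the same vertex the lift avoids $e$, otherwise the lift must traverse $e$. Hence the lift is uniquely determined by $\bar{Q}$, so $Q \mapsto \bar{Q}$ is a bijection on simple paths (it is clearly weight-preserving up to the additive term $\epsilon \cdot \mathbf{1}[e \in Q]$). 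In particular, if $Q \neq P_{x,y}$ then $\bar{Q} \neq \bar{P}_{x,y}$.

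With this in hand, for any $x,y \in V(G)$ and any $xy$-path $Q \neq P_{x,y}$, I would write
\[
w(Q) - w(P_{x,y}) \;=\; \bigl(w'(\bar{Q}) - w'(\bar{P}_{x,y})\bigr) \;+\; \epsilon\cdot\bigl(\mathbf{1}[e \in Q] - \mathbf{1}[e \in P_{x,y}]\bigr).
\]
Since $\bar{P}_{x,y}$ is by assumption the unique $w'$-shortest $\bar{x}\bar{y}$-path in $G/e$ and $\bar{Q} \neq \bar{P}_{x,y}$, the first term is strictly positive. The second term is at worst $-\epsilon$. There are only finitely many pairs $(x,y,Q)$ to consider, so choosing $\epsilon$ smaller than the minimum of the (positive) first-term gaps makes the whole expression positive. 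The only slightly special case is $\set{x,y} = \set{u,v}$, where $P_{u,v}$ is the single edge $e$ of weight $\epsilon$ and every competing $uv$-path uses at least two edges of $w'$-positive weight; this again holds once $\epsilon$ is small enough.

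The step I expect to require the most care is the bijectivity claim, and in particular the observation that distinct simple paths in $G$ never collapse to the same simple path in $G/e$. This is delicate because $G/e$ may be a multigraph (triangles of $G$ through $e$ produce parallel edges), so one must track carefully which endpoint $u$ or $v$ each edge of $G/e$ lifts to. Once that bookkeeping is done, the strict inequality from $w'$ transfers essentially for free to $w$.
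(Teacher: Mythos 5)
There is a genuine gap, and it sits exactly where you predicted the delicacy would be: the map $Q\mapsto\bar Q$ is \emph{not} a bijection between simple paths of $G$ and simple paths of $G/e$, and your strict inequality breaks precisely in the failure case. The hypothesis of the lemma gives you a weight function $w'$ on the \emph{simple} graph $G/e$, in which the parallel edges created by contraction are identified: if $a$ is a common neighbour of $u$ and $v$, then $ua$ and $va$ are distinct edges of $G$ with the same image $za$, and your rule $w(f)=w'(\bar f)$ forces $w(ua)=w(va)$. Now take $a,b$ both adjacent to both $u$ and $v$ and suppose $P_{a,b}=aub$ while $Q=avb$. These are distinct simple $ab$-paths in $G$ with $\bar Q=\bar P_{a,b}=azb$, so your ``first term'' $w'(\bar Q)-w'(\bar P_{a,b})$ is $0$, not strictly positive, and no choice of $\epsilon$ rescues strictness (indeed $w(Q)=w(P_{a,b})$ exactly). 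Your injectivity argument only works if you keep the parallel edges of the multigraph distinct, but then you need a weight function on the multigraph separating them, which is more than the hypothesis supplies. (A secondary, more easily patched defect: a simple path of $G$ through both $u$ and $v$ that does not traverse $e$ projects to a walk visiting $z$ twice, so the image need not even be a simple path; positivity of $w'$ handles this.)

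The paper's proof is built around exactly the case you miss. It shows that if $Q\neq P_{x,y}$ but $Q/e=P_{x,y}/e$, then every edge of $E(P_{x,y})\triangle E(Q)$ is incident to $e$; if all of these edges were used by some path of $\mathcal{P}$, each would be the chosen geodesic between its endpoints and hence would lie in $T_u=T_v$, forcing a cycle in that tree. So $Q$ must contain an edge appearing in \emph{no} path of $\mathcal{P}$, and the paper's weight function has a third class of values --- a large $N$ on all such unused edges --- which makes any such $Q$ strictly heavier. To repair your argument you would need to add this third weight class and this combinatorial claim; the two-value lift ($w'$ plus a small $\epsilon$ on $e$) is not enough.
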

	\begin{proof}
		Let $\tilde{w} : E/ e \to (0,\infty)$ be a weight function that strictly induces $\mathcal{P}/e$. Fix some small $\delta >0$ and large $N>0$. Define $w: E \to (0,\infty)$ by 
		$$w(e') \coloneqq \begin{cases}
		\tilde{w}(e'/e) & e'\neq e \text{~participates in some paths of~} \mathcal{P}\\
		N & e'\neq e  \text{~does not participate in any path of~}   \mathcal{P}\\
		\delta & e' =e
		\end{cases}.$$
		
		We argue that $w$ strictly induces $\mathcal{P}$. Let $P_{u,v} \in \mathcal{P}$ and $Q$ some other $uv$ path. We first consider the case where
		$P_{u,v}/e \neq Q/e$. By construction of $\mathcal{P}/ e$, $P_{u,v}/ e \in \mathcal{P}/ e$, so that $\tilde{w}(Q/e) - \tilde{w}(P/e) > 0 $. We take $\delta >0$ to be small small enough so that $w(Q) - w(P) \geq \tilde{w}(Q/e) - (\tilde{w}(P/e) +\delta) > 0$.
		
		Now suppose $P_{u,v}/e = Q/e$. We argue that $Q$ contains an edge not contained in $\mathcal{P}$. Indeed, first note that since $P_{u,v}$ and $Q$ are distinct $uv$ paths the set $E(P_{u,v})\triangle E(Q)$ contains a cycle. Moreover, all the edges in the set $E(P_{u,v})\triangle E(Q)$ must be incident to the edge $e$. To see this note that if $e',e''\in E$ are edges which are not incident to $e$ then $e'/e = e''/e$ implies $e = e''$. Since $P_{u,v}/e = Q/e$, it follows that for any edge $e'$ not incident with $e$, $e' \in P_{u,v}$ iff $e'\in Q$. Let $\mathcal{T}$ be the tree system corresponding to $\mathcal{P}$ and write $e=xy$. If $\mathcal{P}$ contains all the edges in $E(P_{u,v})\triangle E(Q)$ then these edges are either in $T_x$ or $T_y$. Since $e$ is a persistent edge, $T_x =T_y$, implying $T_x$ contains a cycle, a contradiction. Therefore $Q$ contains an edge not in $\mathcal{P}$ and $w(Q) - w(P_{u,v}) \geq N -w(P_{u,v})>0.$
	\end{proof}
	
	We note that the condition of strict metrizability cannot be removed from the statement of \Cref{prop:StrictMetQuot}, see \Cref{fig:metQuotient}.
	
	\begin{figure}[h]
		\centering
		\begin{subfigure}[t]{.4\textwidth}
			\centering
			\begin{tikzpicture}[ scale=0.45, every node/.style={scale=0.45}]

			\node[draw,circle,minimum size=.5cm,inner sep=1pt] (1) at (0*360/5 +90: 5cm) [scale=2]{$1$};
			\node[draw,circle,minimum size=.5cm,inner sep=1pt] (2) at (1*360/5 +90: 5cm)  [scale=2]{$2$};
			\node[draw,circle,minimum size=.5cm,inner sep=1pt] (3) at (2*360/5 +90: 5cm)  [scale=2]{$3$};
			\node[draw,circle,minimum size=.5cm,inner sep=1pt] (4) at (3*360/5 +90: 5cm)  [scale=2]{$4$};
			\node[draw,circle,minimum size=.5cm,inner sep=1pt] (5) at (4*360/5 +90: 5cm) [scale=2] {$5$};
			
			\node (m1) at ($(2)!.5! (3)$) {};
			\node (m2) at ($(4)!.5! (5)$) {};
			\node[draw,circle,minimum size=.5cm,inner sep=1pt] (6) at ($(m1)!.8!270:(2)$) [scale=2] {$6$};
			\node[draw,circle,minimum size=.5cm,inner sep=1pt] (7) at  ($(m2)!.8!90:(5)$) [scale=2] {$7$};

			\draw [line width=2pt,-] (1) -- (2);
			\draw [line width=2pt,-] (2) -- (3);
			\draw [line width=2pt,-] (3) -- (4);
			\draw [line width=2pt,-] (4) -- (5);
			\draw [line width=2pt,-] (5) -- (1);
			\draw [line width=2pt,-] (6) -- (7);
			\draw [line width=2pt,-] (2) -- (6);
			\draw [line width=2pt,-] (5) -- (7);
			\draw [line width=2pt,-] (3) -- (6);
			\draw [line width=2pt,-] (4) -- (7);

			\end{tikzpicture}
			
		\end{subfigure}
		\hfill
		\begin{subfigure}[m]{.55\textwidth}
			\vspace{-5cm}
			\hspace{-100mm}
			$$(12) , \ (123), \ (1234), \ (15),\  (126), \ (157),$$\vspace{-.25mm}	$$(23), \ (234), \ (215), \ (26), \ (2157), \ (34), \ (345), $$\vspace{-.25mm}$$(36), \ (367), \ (45), \ (476), \ (47), \  (5126), \ (57), \ (67)$$

		\end{subfigure}

		\caption{This non-metrizable path system becomes metrizable by contracting the persistent edge $12$.}
		\label{fig:metQuotient}
	\end{figure}	
	
	Let $F$ be the set of $\mathcal{P}$-persistent edges. It is possible to contract the edges in $F$ either sequentially or in parallel. It is easy to check that the resulting path system $\mathcal{P} / F$ is {\em reduced}, i.e., it is a path system with no persistent edges.
	If every edge in $\mathcal{P}$ is persistent then all trees in the corresponding tree system are identical,  and $\mathcal{P}$ is the path system of a tree. We call such a path system {\em trivial}, and observe that this happens
	if and only if $G/F$ is a single vertex. 
	
	Trees are geodetic graphs, so the simplest non-trivial graphs for us are cycles. We turn to investigate path systems in cycles. For odd $n$ the $n$-cycle $C_n$ is a geodetic graph. We denote by $\mathcal{S}_n$ the geodesics in $C_n$ w.r.t.\ unit weight edges and note that $\mathcal{S}_n$ is a consistent path system. We show that these are essentially the only consistent path systems of cycles. 
	
	\begin{proposition} \label{prop:CyclePathSystem}
		Let $\mathcal{P}$ be a path system in the cycle $C_n$, $n\geq 3$, and let $F\subseteq E(C_n)$ be the set of all $\mathcal{P}$-persistent edges. Then either $\mathcal{P}$ is trivial or $\mathcal{P}/F = \mathcal{S}_m$, for some odd $3\leq m\leq n$. 
	\end{proposition}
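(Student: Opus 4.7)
The plan is to encode $\mathcal{P}$ by a function $f\colon V(C_n)\to E(C_n)$, where $f(w)$ denotes the unique edge of $C_n$ absent from the spanning tree $T_w$; this is well defined since any spanning tree of $C_n$ is obtained by deleting exactly one edge. Under this encoding, $P_{w,v}$ is the $wv$-arc of $C_n$ that avoids $f(w)$, and an edge $e$ is $\mathcal{P}$-persistent iff $e\notin\mathrm{image}(f)$, so $F=E(C_n)\setminus\mathrm{image}(f)$. If $f$ is constant then $\mathcal{P}$ is trivial; otherwise $m:=|\mathrm{image}(f)|\geq 2$, and contracting $F$ (via \Cref{lem:treePathSys1} and \Cref{prop:TreeQuotient}) yields a consistent path system $\mathcal{P}/F$ on $C_m$ whose encoding $\bar f$ is a bijection $V(C_m)\to E(C_m)$. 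It therefore suffices to prove: any consistent path system on $C_m$ with bijective encoding forces $m$ to be odd and the system to equal $\mathcal{S}_m$.

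Next I would recast consistency geometrically. Picture the $m$ vertices and $m$ edges of $C_m$ as $2m$ interleaved points on a circle, with vertices at integer positions $0,1,\ldots,m-1$ and edges at the intermediate half-integer positions. Consistency of $\mathcal{P}/F$ is then equivalent to the statement that for every pair $u,v\in V(C_m)$, the chord $\{u,v\}$ and the chord $\{\bar f(u),\bar f(v)\}$ do not cross inside the disk; equivalently, $\bar f(u)$ and $\bar f(v)$ lie in a common arc of $C_m$ determined by $u$ and $v$. Applied to consecutive pairs $(w,w+1)$, this already yields $\bar f(w)\notin\{e_{w-1},e_w\}$, since the short arc between consecutive vertices is just $\{e_w\}$ and bijectivity precludes $\bar f(w)=\bar f(w+1)=e_w$.

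The heart of the proof, and where the main obstacle lies, is upgrading the pairwise non-crossing constraints into a full rigidity statement for $\bar f$. My plan is a sweep argument from a fixed vertex $u_0$: given $\bar f(u_0)$, the non-crossing condition applied to each pair $(u_0,v)$ specifies which of the two $u_0 v$-arcs must contain $\bar f(v)$; combining these constraints over all $v$, together with bijectivity and the local restriction $\bar f(w)\notin\{e_{w-1},e_w\}$, determines $\bar f$ uniquely up to the choice of $\bar f(u_0)$ and reveals it to be a cyclic shift $\bar f(w)=e_{w+k}$ for some $k\in\mathbb{Z}_m$. A final non-crossing test on a ``diameter'' pair such as $(0,\lfloor m/2\rfloor)$ then pins down $k=(m-1)/2$; since $k$ must be an integer, $m$ is forced to be odd, and for such $m$, $\bar f(w)=e_{w+(m-1)/2}$ is the antipodal map, exhibiting $\mathcal{P}/F$ as $\mathcal{S}_m$.
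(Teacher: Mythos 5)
Your setup---encoding the tree system by $f\colon V\to E$ with $T_w=C_n\setminus f(w)$, identifying $F=E(C_n)\setminus\mathrm{Im}(f)$, and contracting to reduce to a bijective encoding $\bar f$ on $C_m$---is exactly the paper's route, and your geometric restatement of consistency (for every pair $u,v$ the edges $\bar f(u),\bar f(v)$ lie in a common $uv$-arc) is a correct transcription of the requirement that the $uv$-paths in $T_u$ and $T_v$ agree. The gap is in the step you yourself flag as the heart of the proof. The constraints you propose to combine---the pairs $(u_0,v)$ for one fixed base vertex, bijectivity, and the local exclusion $\bar f(w)\notin\{e_{w-1},e_w\}$---do \emph{not} determine $\bar f$ and do not force it to be a cyclic shift: each pair $(u_0,v)$ contributes only one bit about $\bar f(v)$ (which of the two $u_0v$-arcs contains it). Concretely, on $C_7$ with vertices $0,\dots,6$, edges $e_i=\{i,i+1\}$, $u_0=0$ and $\bar f(0)=e_3$, the bijection $\bar f(1)=e_2$, $\bar f(2)=e_5$, $\bar f(3)=e_6$, $\bar f(4)=e_0$, $\bar f(5)=e_1$, $\bar f(6)=e_4$ satisfies every constraint your sweep uses yet is not a shift; it is ruled out only by the pair $(1,3)$, which does not involve $u_0$. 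Your ``sweep, then test $k$ on a diameter pair'' framing is also off in a second way: for $k\neq(m-1)/2$ the shift $w\mapsto e_{w+k}$ already violates your non-crossing condition for every cyclic distance $d$ strictly between $k+1$ and $m-k$ (or vice versa), so there is no consistent completion left for a final test to eliminate.

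What is missing is the strictly stronger, genuinely \emph{crossing} property that the paper isolates in \Cref{lem:CrossingCycle}: for every pair $x,y$ with $f(x)\neq f(y)$, the vertex $x$ and the edge $f(x)$ separate $y$ from $f(y)$, i.e.\ the chords $[x,f(x)]$ and $[y,f(y)]$ \emph{do} cross. For a single pair this is stronger than your condition: of the three matchings of $x,y,f(x),f(y)$ into two chords exactly one crosses, and your condition is also satisfied in the configuration where $[x,f(y)]$ and $[y,f(x)]$ cross (which is exactly what my $C_7$ example exploits). Deriving the crossing property requires applying consistency to subpaths such as $P_{x,w}$ for $w$ an endpoint of $f(v)$, as in the paper's proof of that lemma. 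Once it is in hand, the rigidity you are after is immediate by counting rather than by a sweep: writing $f(v)=wz$ so that $T_v=P_{v,w}\cup P_{v,z}$, every vertex $x\neq v$ of $P_{v,w}$ has $f(x)\in E(P_{v,z})$ and symmetrically, so injectivity of $f$ forces $|E(P_{v,w})|=|E(P_{v,z})|=k$, whence $m=2k+1$ is odd and $f(v)$ is the edge antipodal to $v$, i.e.\ $\mathcal{P}/F=\mathcal{S}_m$. You should either prove the crossing property and finish with this counting step, or supply a complete replacement for the sweep; as written it does not close.
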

	A spanning tree $T$ in $G=C_n$ has the form $E(C_n)\setminus\{e\}$ for some edge $e\in E(C_n)$. Therefore a tree system in $C_n$ is completely specified by a map $f:V(C_n)\to E(C_n)$ where $T_v = G \setminus f(v)$ for all $v\in V$. The following lemma says for which $f:V\to E$ the resulting path system is consistent:
	\begin{comment}
	It is useful to introduce another description of a path system $\mathcal{P}$ in a graph $G=(V,E)$ and its corresponding tree system $\mathcal{T}$. As usual, $\binom{A}{m}$ denotes the collection of all $m$-element subsets of a set $A$. A spanning tree $T$ of a graph $G=(V,E)$ is uniquely specified via $T = E \setminus F$ for some $F\in \binom{E}{|E| - |V| + 1}$. Hence, a tree system is specified via a mapping $f: V \to \binom{E}{|E| - |V| + 1}$, where $T_v = G \setminus f_{\mathcal{P}}(v)$ for all $v\in V$. When $G =C_n$ is a cycle things simplify a lot and $f$ is a maps $V \to E$. In fact, in this particular case we have the following characterization of the functions $f:V\to E$ which define a consistent path system:
	\end{comment}

	\begin{lemma}\label{lem:CrossingCycle}
		Let $C_n = (V,E)$, $n\geq 3$. A mapping $f:V\to E$ defines a consistent path system in $C_n$ if and only if for each $x,y \in C_n$ either $f(x) = f(y)$ or $x$ and $f(x)$ separate $y$ from $f(y)$. 
	\end{lemma}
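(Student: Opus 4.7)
The plan is to recast the consistency condition in the language of arcs and then establish each direction. For a map $f : V \to E$ the tree $T_u = C_n \setminus f(u)$ has a unique $uv$-path, namely the arc of $C_n$ joining $u$ to $v$ that avoids the edge $f(u)$. Consequently, the induced tree system is consistent exactly when, for every pair $u,v$, the edges $f(u)$ and $f(v)$ lie on a common $uv$-arc of $C_n$ (with the trivial case $f(u) = f(v)$ absorbed).

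For the ``if'' direction, assume the separation condition holds for every pair and fix $u, v$ with $f(u) \neq f(v)$. Treating each edge as its midpoint, the separation of $\{v, f(v)\}$ by $\{u, f(u)\}$ means these four points on the circle appear in cyclic order $u, v, f(u), f(v)$ (up to reflection). Both $f(u)$ and $f(v)$ therefore lie on the common arc from $u$ to $v$ passing through their midpoints, so the arc condition is met.

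For the ``only if'' direction, suppose the system is consistent yet some pair $(x,y)$ violates the condition (so $f(x) \neq f(y)$ and $\{x, f(x)\}$ does not separate $\{y, f(y)\}$). Pairwise consistency still puts $f(x), f(y)$ on a common $xy$-arc, so among the three pairings of four distinct points on a circle only the alternating pairing $\{x, f(y)\}$ vs.\ $\{y, f(x)\}$ remains, i.e.\ the cyclic order must be $x, y, f(y), f(x)$ (up to reflection). I now let $w$ be the endpoint of $f(y)$ that, in this cyclic order, lies between the midpoint of $f(y)$ and the midpoint of $f(x)$. Consistency for the pair $(x,w)$ forces the $xw$-path in $T_w$ to coincide with the $xw$-path in $T_x$, which is the arc from $x$ running through $y$ to $w$ (since it must avoid $f(x)$). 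Consistency for $(y,w)$ forces $T_w$ to also contain the arc from $y$ running through $x$ and past the midpoint of $f(x)$ to $w$ (this being the $yw$-path in $T_y$, which must avoid $f(y)$). A direct edge-count shows that the union of these two arcs exhausts all $n$ edges of $C_n$, contradicting the fact that $T_w$, being a spanning tree of $C_n$, has only $n-1$ edges.

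The main obstacle is the choice of auxiliary vertex $w$: its definition is dictated by the forbidden cyclic order, and the observation that the two forced arcs together cover all of $E(C_n)$ is what converts a local failure of separation into a global violation of the tree property. The mild degeneracies (e.g.\ $y$ being an endpoint of $f(y)$, or $f(x)$ and $f(y)$ sharing a vertex) are absorbed uniformly, since the edge-count argument is insensitive to them.
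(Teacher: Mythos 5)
Your proof is correct. The ``if'' direction coincides with the paper's: both observe that the crossing condition for the pair $(u,v)$ forces $f(u)$ and $f(v)$ onto a common $uv$-arc, so the $uv$-paths in $T_u=C_n\setminus f(u)$ and $T_v=C_n\setminus f(v)$ agree. Your ``only if'' direction, however, is a genuinely different argument. The paper proceeds directly: writing $f(v)=wz$, it splits the arm $P_{v,w}$ of $T_v$ containing $x$ as $P_{v,x}\cup P_{x,w}$, notes that both pieces lie in $T_x$ by consistency, and concludes that $f(x)$ must sit on the other arm --- a two-vertex argument leaning on the subpath property of the consistent path system. You instead argue by contradiction: pairwise consistency of $(x,y)$ together with the failure of crossing pins the cyclic order to $x,y,f(y),f(x)$ (via the observation that exactly one of the three pairings of four distinct points on a circle crosses), and then consistency of the pairs $(x,w)$ and $(y,w)$, for $w$ the endpoint of $f(y)$ on the $f(x)$ side, forces $T_w$ to contain two arcs whose union is all of $E(C_n)$, which is impossible for a spanning tree. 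Your route has the mild advantage of using only the pairwise tree conditions (the $xw$-paths in $T_x$ and $T_w$ agree; the $yw$-paths in $T_y$ and $T_w$ agree) and no subpath reasoning, at the price of the cyclic-order case analysis and an auxiliary third vertex. The one point you assert rather than prove is that the degeneracies are harmless; they are --- in particular $w\notin\{x,y\}$ because $w$ lies on the arc between the midpoints of $f(y)$ and $f(x)$ that avoids $x$ and $y$, and the edge count still closes up when $f(x)$ and $f(y)$ share the endpoint $w$ or when $y$ is itself an endpoint of $f(y)$ --- but a careful reader has to check this.
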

	A look at \Cref{fig:CrossingFunction} shows why it is called the {\em crossing} condition. This is further elaborated in \Cref{sec:continuous}.
	\begin{proof}
		Suppose that $f$ defines a consistent system $\mathcal{P}$, $\mathcal{T}$ as above. 
		If $f(v) = wz$, the the spanning tree $T_v$ is comprised of two path $P_{v,w}, P_{v,z}$ whose intersection is the vertex $v$. Thus any $x \neq v$ is in either $P_{v,z}$ or $P_{v,w}$. Suppose the latter and moreover $f(x) \neq wz$. We wish to show that $f(x) \not\in P_{v,w}$ which implies that $v$ and $f(v)$ separate $x$ from $f(x)$. But $P_{v,w} = P_{v,x} \cup P_{x,w}$ and by the consistency of $\mathcal{P}$ both $P_{v,x}$ and $P_{x,w}$ are paths in $T_x$. However, if $f(x) \in P_{v,w} $ then $f(x) \in T_x$, contrary to the definition of $f$ and we conclude that $f$ is indeed crossing.\\
		Now we show that when $f$ is crossing the $uv$ path in $T_v$ and $T_u$ coincide for all $u,v \in C_n$. As before $T_v = C_n \setminus f(v)$ is the union of two paths $P_1$, $P_2$ that share $v$ as an endpoint. The $uv$ path of $T_v$ is contained in either $P_1$ or $P_2$, say $u\in P_1$. By the crossing condition $f(u) \notin P_1$ and we see that $P_1$ is a subpath of $T_u$ implying the $uv$ paths of $T_u$ and $T_v$ coincide. 
	\end{proof}
	\begin{proof}{[\Cref{prop:CyclePathSystem}]}
		Let $\mathcal{T}$ be the tree system corresponding to $\mathcal{P}$ and $f:V\to E$ its crossing function. By \Cref{lem:treePathSys1} an edge $e$ is persistent iff it belongs to every tree in $\mathcal{T}$, i.e., the set of persistent edges of $\mathcal{T}$ is precisely $F = E(C_n) \setminus \text{Im} f$. But $|V(C_n)|=|E(C_n)|=n$, so $\mathcal{P}$ is reduced if and only if $f$ is a bijection. If $\mathcal{P}$ is not trivial, the graph $C_n/F$ is a cycle and  $\mathcal{P}/F$ is a reduced path system over $C_n/F$. \\
		We now argue if $\mathcal{P}$ is a reduced system then $\mathcal{P} = \mathcal{S}_n$. As before, $T_v$ is the union of the paths $P_{v,w}$ and $P_{v,z}$ with $V(P_{v,w}) \cap V(P_{v,z}) = \{v\}$, where $f(v) = wz$. Consider $x\in P_{v,w}$, $v\neq x$. As $\mathcal{P}$ is reduced $f$ is a bijection and therefore $f(x) \neq f(v)$, and by the crossing condition $f(x) \in P_{v,z}$. This gives an injection from $V(P_{v,w})\setminus \{v\}$ into $E(P_{v,z})$. By symmetry there is also an injection from $V(P_{v,z})\setminus \{v\}$ into $E(P_{v,w})$. This implies $|E(P_{v,w}) | = |E(P_{v,z})| = k$ and $n=2k+1$. Moreover, for each $v\in C_n$ the edge $f(v)$ is the the edge antipodal to $v$ in $C_n$. This is the precisely the path system $\mathcal{S}_n$.
	\end{proof}
	\section{Metrizable Graphs}
	In this section we seek to to improve our understanding of (strictly) metrizable graphs. Recall that a graph $H$ is a topological minor of a graph $G$ if $G$ contains a subgraph which is a subdivision of $H$. For more on this, see \Cref{sec:est_met} and \Cref{sec:std}.
	\begin{proposition}\label{prop:TopMinClosed}
		A topological minor of a (strictly) metrizable graph $G=(V,E)$ is (strictly) metrizable.
	\end{proposition}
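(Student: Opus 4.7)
The plan is to use a lifting--projecting strategy. Let $H^\ast \subseteq G$ be a subdivision of $H$ witnessing the topological minor relation, and for each edge $e = xy \in E(H)$ let $\pi_e$ denote the corresponding internally vertex-disjoint $xy$-path in $H^\ast$. Given a consistent path system $\mathcal{Q}$ in $H$, the goal is to exhibit a weight function $w_H \colon E(H) \to (0,\infty)$ that (strictly) induces $\mathcal{Q}$.

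The central step is to construct a consistent path system $\mathcal{P}$ in $G$ whose restriction to pairs in $V(H)$ agrees with the lift of $\mathcal{Q}$: for $u,v \in V(H)$, $P_{u,v}$ is the path obtained from $Q_{u,v}$ by replacing each edge $e$ with $\pi_e$. I would build $\mathcal{P}$ in two sub-steps. First, define paths involving subdivision vertices $s \in V(H^\ast) \setminus V(H)$: for such $s$ on $\pi_e$ with $e = xy$, the paths $P_{s,z}$ with $z \in \pi_e$ must be sub-paths of $\pi_e$, and for $w \in V(H) \setminus \{x,y\}$ the path $P_{s,w}$ must take the form $[\pi_e \colon s \to x] \cdot P_{x, w}$ or $[\pi_e \colon s \to y] \cdot P_{y, w}$. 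The direction is pinned down precisely when $xy$ is the first edge of $Q_{x,w}$---forcing one to go via $y$, since $P_{x,w}$ then already visits $s$---or the first edge of $Q_{y,w}$, forcing going via $x$; otherwise the direction is free. Second, extend to $V(G) \setminus V(H^\ast)$: since $G$ may be assumed connected, iteratively add each new vertex $v$ anchored to an already-processed neighbor $u$ by setting $P_{v,w} := vu \cdot P_{u,w}$, a prescription that preserves consistency since $v$ appears in no previously defined path.

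With $\mathcal{P}$ constructed, (strict) metrizability of $G$ yields $w_G \colon E(G) \to (0,\infty)$ that (strictly) induces $\mathcal{P}$. Define $w_H(e) := w_G(\pi_e)$ for each $e \in E(H)$. For $u,v \in V(H)$ and any $uv$-path $R$ in $H$, the lift $\tilde{R}$ in $G$ satisfies $w_G(\tilde{R}) = w_H(R)$, while $P_{u,v}$ is itself the lift of $Q_{u,v}$. Thus the $w_G$-geodesicity of $P_{u,v}$ in $G$ yields $w_H(Q_{u,v}) \le w_H(R)$. Strict inequalities propagate since distinct paths in $H$ have distinct lifts in $G$.

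The main obstacle will be verifying consistency of the extension in the first sub-step. The crux is that, along any $\pi_e$ and for fixed $w$, all subdivision vertices must be forced in the same direction: this follows from the observation that $Q_{x,w}$ and $Q_{y,w}$ cannot both start with the edge $xy$ (the concatenation would fail to be simple), so at most one direction is ever forced. A similar case analysis, leaning on the tree-system viewpoint of \Cref{lem:treePathSys1}, handles compatibility between subdivision vertices lying on different $\pi$-paths.
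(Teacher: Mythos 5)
Your strategy is sound and does reach the statement, but it is organized quite differently from the paper's proof, and the difference matters for where the work lands. The paper never builds the lifted path system in one shot: it decomposes the topological-minor relation into atomic steps (delete one edge; suppress one degree-$2$ vertex), observes that edge deletion is trivial because a path system of the smaller graph is literally a path system of the larger one, and for a single suppression of $z$ with neighbours $u,v$ gives an explicit local extension ($P_{z,w}=zP'_{u,w}$ if $v\notin P'_{u,w}$, else $zP'_{v,w}$) whose consistency check is a short case analysis; the weight is then pushed down by $w'(uv)=w(uz)+w(zv)$, which is the one-edge case of your $w_H(e)=w_G(\pi_e)$. Your ``all subdivision vertices at once'' construction is morally the composition of these steps, and your weight-transfer and strictness arguments at the end are correct (in particular, geodesicity in $G$ against \emph{all} paths handles deleted edges for free, and distinct paths in $H$ have distinct lifts).

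The genuine weak point is the consistency verification, which you rightly call the crux but only partially carry out. Two things are missing. First, the paths $P_{s,s'}$ between subdivision vertices lying on \emph{different} branch paths $\pi_e,\pi_{e'}$ are never defined, and this is where most of the bookkeeping lives; saying a ``similar case analysis'' handles it undersells the issue, since these paths are constrained simultaneously by the choices made at both ends. Second, ``otherwise the direction is free'' is not accurate as stated: the unforced choices are entangled, both along a single $\pi_e$ (if $P_{s,w}$ routes via $x$ it contains every subdivision vertex between $s$ and $x$, forcing the same direction for all of them, so the assignment along $\pi_e$ must be monotone) and across targets (if $w'\in Q_{x,w}$ and you route $P_{s,w}$ via $x$, then $P_{s,w'}$ is forced via $x$ as well). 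These secondary constraints happen never to conflict with the direct forcings --- your observation that $Q_{x,w}$ and $Q_{y,w}$ cannot both begin with $e$ is the right seed, and a rule such as ``route via $x$ unless forced via $y$'' in the spirit of \Cref{prop:neighborly} can be shown to work --- but none of this is in your write-up. The paper's inductive route exists precisely so that each consistency check is local and two lines long; if you keep the batched construction you must either supply the full global verification or reduce to the single-suppression case after all.
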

	\begin{proof}	
		We start with edge removals. Let $G'$ be a graph obtained by removing an edge from $G$. By remark \ref{rem:2conn}, we can assume that $G$ is $2$-connected, whence $G'$ is connected. Every path system in $G'$ is also a path system of $G$, and since $G$ is metrizable, so is $G'$. This applies verbatim to the strict case as well.
		
		Consider next what happens when we suppress a vertex $z$ of degree $2$ in $G$. Let $u, v$ be the two neighbors of $z$. We can assume w.l.o.g.\ that $uv\not\in E$, or else we first delete this edge from $G$ to obtain another metrizable graph. Let $G'$ be the graph obtained from $G$ by suppressing the vertex $z$. Given any path system $\mathcal{P}'$ in $G'$, we construct the following path system $\mathcal{P}$ in $G$. For $x,y\in V(G) \setminus \set{z}$, if $P'_{x,y}$ does not contain the edge $uv$ then $P_{x,y} := P'_{x,y}$. Otherwise, and if $P'_{x,y}=P'_{x,u}uvP'_{v,y}$, then $P_{x,y} := P'_{x,u}uzv P'_{v,y}$. Ditto when $P'_{x,y}=P'_{x,v}vuP'_{u,y}$. Next we need to define the paths $P_{z,w}$ for all $w\neq z$. If $P'_{u,w}$ does not contain the vertex $v$, set $P_{z,w} = z P'_{u,w}$. Otherwise, set $P_{z,w} \coloneqq z P'_{v,w}.$ 
		
		Note that $\mathcal{P}$ contains exactly one path between any two vertices, and we turn to show its consistency: I.e., that for every $P\in \mathcal{P}$ and every two vertices $x, y\in P$ the subpath of $P$ between $x$ and $y$ is also a member of $\mathcal{P}$. Since $\mathcal{P}'$ is consistent, it suffices to prove this when $z\in P$, i.e., when $P =P_1zP_2$ where $P_1$ is either empty or a path in $\mathcal{P}'$ that ends in $u$, and $P_2$ is either empty or a path in $\mathcal{P}'$ starting with $v$. If $x,y \in P_1$ or $x,y \in P_2$ then the consistency of $\mathcal{P}'$ implies that the $xy$ subpath of $P$ is in $\mathcal{P}$. If $x\in P_1, \ y\in P_2$ then both $P_1$ and $P_2$ are non-empty and the path $P'_{x,y}$ contains the edge $uv$. By construction $P_{x,y} = P'_{x,u}zP'_{v,y}$ and $P_{x,y}$ is a subpath of $P$. If $x\in P_1$ then $P'_{u,x}$ is a subpath of $P_1$ not containing $v$ so that $P_{z,x} = zP'_{u,x}$ is a subpath of $P$. Finally if $x\in P_2$ this means that the path $P'_{u,x} =P'_{u,v}P'_{v,x}$ contains $v$
		and therefore $P_{z,x} = zP'_{v,x}$ is again a subpath of $P$. Since $G$ is metrizable, there exists a weight function $w:E(G) \to (0,\infty)$ inducing $\mathcal{P}$. The weight function $w':E(G') \to (0,\infty)$ defined by $w'(uv) = w(uz) + w(zv)$ and $w'(e) = w(e)$, $e\neq uv$, induces $\mathcal{P}'$. Indeed, let $P'\in \mathcal{P}'$ be an $xy$ path and $Q$ another $xy$ path. If $P'$ contains the edge $uv$, then we can write $P' = R_1uvR_2$, for some subpaths $R_1$ and $R_2$ of $P'$, and we set $\tilde{P}' = R_1uzvR_2$. If $uv\notin P'$ we set $\tilde{P}' = P'$. We define $\tilde{Q}$ similarly. Notice that by construction of $\mathcal{P}$,  $\tilde{P}' \in \mathcal{P}$.  Since $w$ induces $\mathcal{P}$, by definition of $w'$ it follows, $w'(P') = w(\tilde{P}')\leq w(\tilde{Q}) =w'(Q)$. Moreover, if the original path system is strictly metrizable then $w'(P') = w(\tilde{P}')< w(\tilde{Q}) =w'(Q)$.
	\end{proof}
	
	A subgraph is also a topological minor, so
	
	\begin{corollary}\label{cor:subdiv}
		If a graph $G$ contains a subdivision of a non-metrizable graph then $G$ is non-metrizable. In particular, if $G$ has non-metrizable subgraph, then it is non-metrizable.
	\end{corollary}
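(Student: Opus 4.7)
The plan is to deduce both statements of the corollary directly from \Cref{prop:TopMinClosed} by contraposition, with essentially no new work needed. I would first recall that a graph $H$ is a topological minor of $G$ precisely when $G$ contains a subgraph isomorphic to a subdivision of $H$. Thus the hypothesis of the first sentence is exactly the assertion that some non-metrizable graph $H$ is a topological minor of $G$.

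Next I would argue by contradiction: suppose $G$ were metrizable. Then by \Cref{prop:TopMinClosed}, every topological minor of $G$ is metrizable, in particular $H$ would be metrizable, contradicting the assumption. Hence $G$ must be non-metrizable, proving the first claim. The same argument carries over to the strict case without change.

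For the in-particular clause, I would observe that every subgraph $H$ of $G$ is a subdivision of itself (equivalently, the trivial subdivision where no edge is subdivided), and therefore $H$ is automatically a topological minor of $G$. Hence the first part of the corollary applies, and a non-metrizable subgraph forces $G$ itself to be non-metrizable.

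There is no real obstacle here: the content of the corollary is entirely contained in \Cref{prop:TopMinClosed}, and the proof amounts to unpacking the definition of topological minor and contrapositive. The only thing to be careful about is the logical direction — \Cref{prop:TopMinClosed} says metrizability passes \emph{down} to topological minors, so we must contrapose to get that non-metrizability passes \emph{up} to graphs that contain the relevant subdivision.
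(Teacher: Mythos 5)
Your proposal is correct and matches the paper's own reasoning exactly: the paper derives this corollary immediately from \Cref{prop:TopMinClosed} via contraposition, prefacing it only with the remark that a subgraph is also a topological minor. Nothing further is needed.
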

	
	While the property of graph metrizability is closed under topological minors it is not closed under minors. In general, the graph obtained by contracting an edge of a metrizable graph is not necessarily metrizable. An example of this is given by \Cref{fig:EdgeContraction}. 
	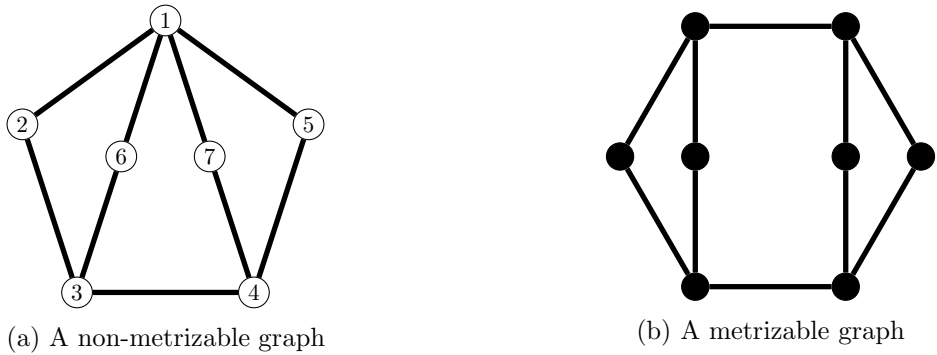
\begin{figure}[h]
		\centering
		\begin{subfigure}{0.33\textwidth}
			\centering
			\begin{tikzpicture}

			\node[draw,circle,minimum size=.5cm,inner sep=2pt] (1) at (0*360/5 +90: 2cm) [scale =.8] {$1$};
			\node[draw,circle,minimum size=.5cm,inner sep=2pt] (2) at (1*360/5 +90: 2cm) [scale =.8] {$2$};
			\node[draw,circle,minimum size=.5cm,inner sep=2pt] (3) at (2*360/5 +90: 2cm) [scale =.8] {$3$};
			\node[draw,circle,minimum size=.5cm,inner sep=2pt] (4) at (3*360/5 +90: 2cm) [scale =.8] {$4$};
			\node[draw,circle,minimum size=.5cm,inner sep=2pt] (5) at (4*360/5 +90: 2cm) [scale =.8] {$5$};
			
			\node[draw,circle,minimum size=.5cm,inner sep=2pt] (6) at ($(1)!0.5!(3)$) [scale =.8] {$6$};
			\node[draw,circle,minimum size=.5cm,inner sep=2pt] (7) at  ($(1)!0.5!(4)$) [scale =.8] {$7$};

			\draw [line width=2pt,-] (1) -- (2);
			\draw [line width=2pt,-] (2) -- (3);
			\draw [line width=2pt,-] (3) -- (4);
			\draw [line width=2pt,-] (4) -- (5);
			\draw [line width=2pt,-] (5) -- (1);
			\draw [line width=2pt,-] (1) -- (6);
			\draw [line width=2pt,-] (3) -- (6);
			\draw [line width=2pt,-] (1) -- (7);
			\draw [line width=2pt,-] (4) -- (7);
			
			\end{tikzpicture}
			\caption{A non-metrizable graph}
			\label{fig:edgeContractionNonMet}
		\end{subfigure}
		\hspace{20mm}
		\begin{subfigure}{0.33\textwidth}
			\centering
			\begin{tikzpicture}

			\node[draw,circle,fill] (1) at (0*360/6 +120: 2cm) {};
			\node[draw,circle,fill] (2) at (1*360/6 +120: 2cm) {};
			\node[draw,circle,fill] (3) at (2*360/6 +120: 2cm) {};
			\node[draw,circle,fill] (4) at (3*360/6 +120: 2cm) {};
			\node[draw,circle,fill] (5) at (4*360/6 +120: 2cm) {};
			\node[draw,circle,fill] (8) at (5*360/6 +120: 2cm) {};
			
			\node[draw,circle,fill] (6) at ($(1)!0.5!(3)$) {};
			\node[draw,circle,fill] (7) at  ($(4)!0.5!(8)$) {};

			\draw [line width=2pt,-] (1) -- (2);
			\draw [line width=2pt,-] (1) -- (6);
			\draw [line width=2pt,-] (1) -- (8);
			\draw [line width=2pt,-] (2) -- (3);
			\draw [line width=2pt,-] (3) -- (4);
			\draw [line width=2pt,-] (4) -- (5);
			\draw [line width=2pt,-] (5) -- (8);
			\draw [line width=2pt,-] (3) -- (6);
			\draw [line width=2pt,-] (7) -- (8);
			\draw [line width=2pt,-] (4) -- (7);
			
			\end{tikzpicture}
			
			\caption{A metrizable graph}
			\label{fig:edgeContractionMet}
		\end{subfigure}

		\caption{Contracting an edge of the metrizable graph in (b) yields the non-metrizable graph in (a).}
		\label{fig:EdgeContraction}
	\end{figure}
	
	Here is a non-metrizable path system in the graph \Cref{fig:edgeContractionNonMet}.
	Paths $P_{u,v}$ with $u<v$ are listed in lexicographic order. 
	\[
	(12); ~(163); ~(174); ~(15); ~(16); ~(17); ~(23); ~(234); ~(2345); ~(216); ~(217);
	\]
	\[
	(34); ~(345); ~(36); ~(347); ~(45); ~(436); ~(47); ~(5436); ~(517); ~(6347) 
	\]
	\begin{comment}
	\begin{equation*}
	\begin{aligned}
	P_{1,3}:\hspace{15mm}&1,6,3\\
	P_{1,4}:\hspace{15mm}&1,7,4\\
	P_{2,4}:\hspace{15mm}&2,3,4\\
	P_{2,5}:\hspace{15mm}&2,3,4,5\\
	P_{2,6}:\hspace{15mm}&2,1,6\\
	P_{2,7}:\hspace{15mm}&2,1,7\\
	\end{aligned}
	\hspace{20mm}
	\begin{aligned}
	P_{3,5}:\hspace{15mm}&3,4,5\\
	P_{3,7}:\hspace{15mm}&3,4,7\\
	P_{4,6}:\hspace{15mm}&4,3,6\\
	P_{5,6}:\hspace{15mm}&5,4,3,6\\
	P_{5,7}:\hspace{15mm}&5,1,7\\
	P_{6,7}:\hspace{15mm}&6,3,4,7\\
	\end{aligned}
	\end{equation*}
	\end{comment}
	
	A weight function $w$ that induces this system must satisfy:
	\begin{equation*}
	\begin{split}
	w_{2,3} + w_{3,4} +w_{4,5}& \leq w_{1,2} + w_{1,5}\\
	w_{1,2} + w_{1,6} & \leq w_{2,3} + w_{3,6}\\
	w_{1,5} + w_{1,7} & \leq w_{4,5} + w_{4,7}\\
	w_{3,6} + w_{3,4} + w_{4,7} & \leq w_{1,6} + w_{1,7}\\
	\end{split}
	\end{equation*}
	Adding these inequalities and cancelling terms yields $w_{3,4} \leq 0$. At present, we can only verify that \Cref{fig:edgeContractionMet} is metrizable by using a computer program to check that all of the possible path systems in this graph is metrizable. For more on this, see \Cref{subsec:praktisch}.
	\section{Metrizable Graphs are Rare} \label{sec:metRare}
	
	By \Cref{prop:TopMinClosed} metrizability can be characterized by a set of forbidden topological minors. Of course it suffices to consider the minimal such forbidden graphs. As we will see (\Cref{thm:finiteMinGraphs}), there are only finitely many such graphs. With the help of a computer we found several such graphs, \Cref{fig:MinimumGraphs}, but we suspect that this list is not exhaustive. For a discussion on how these graphs were found see \Cref{subsec:praktisch}. \Cref{prop:TopMinClosed} and the graphs in \Cref{fig:MinimumGraphs} already imply that metrizability is a rare property of graphs.
	\begin{theorem}\label{thm:4Conn}
		Every $4$-connected metrizable graph has at most $6$ vertices. 
	\end{theorem}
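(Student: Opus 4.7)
The plan is to use \Cref{prop:TopMinClosed}, reducing the theorem to showing that every 4-connected graph $G$ with $|V(G)| \geq 7$ contains a subdivision of some non-metrizable graph---in particular, one of those in \Cref{fig:MinimumGraphs}. Closure of metrizability under topological minors then forces such a $G$ to be non-metrizable.

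First I would dispose of the small cases by a finite check: every 4-connected graph on at most $6$ vertices (up to isomorphism a short list, including $K_5$, $K_6$, the octahedron $K_{2,2,2}$, and a handful of others on $6$ vertices) is metrizable, which establishes tightness of the bound and can be done either by hand or via the computer-assisted procedure in \Cref{subsec:praktisch}. For $n := |V(G)| \geq 7$, I would single out a 4-connected graph $H$ on $7$ vertices from \Cref{fig:MinimumGraphs} (a natural candidate being $K_{4,3}$ or a near relative) and try to embed a subdivision of $H$ in $G$. The construction proceeds by selecting $7$ branch vertices in $G$ whose degrees (in $G$) allow them to play the roles of the corresponding vertices of $H$, and then invoking Menger's theorem---which by 4-connectivity supplies $4$ internally vertex-disjoint paths between any two $4$-element sets of vertices---to route internally vertex-disjoint paths in $G$ that realise the edges of $H$.

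The main obstacle is ensuring that the routed paths are \emph{simultaneously} internally vertex-disjoint, not merely pairwise. The standard remedy is an iterative routing argument: at each step, route a new path through the subgraph obtained by deleting the interiors of the already-routed paths, using the excess connectivity of $G$ to guarantee enough slack remains to connect the next pair of branch vertices. The $\geq 7$ vertex count together with 4-connectivity should ensure that neither the branch-vertex selection nor the successive routing ever breaks down. Once a subdivision of $H$ is exhibited, \Cref{prop:TopMinClosed} immediately yields non-metrizability of $G$, completing the proof.
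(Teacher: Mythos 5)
Your high-level frame matches the paper's: reduce via \Cref{prop:TopMinClosed} to showing that every $4$-connected graph on at least $7$ vertices contains a subdivision of a non-metrizable graph. But the method you propose for producing that subdivision has a genuine gap, in two respects. First, there is no single graph $H$ that works for all $G$: none of the graphs in \Cref{fig:MinimumGraphs} is $4$-connected (each has vertices of degree $2$), and more importantly, if you fix a non-planar candidate such as $K_{4,3}$, then a large $4$-connected \emph{planar} graph $G$ contains no subdivision of it, so the embedding must fail for some $G$. Second, and more fundamentally, $4$-connectivity is far too weak to support the iterative routing you describe. Menger's theorem gives you four internally disjoint paths between \emph{one} pair of vertex sets; to realize all the subdivided edges of a $7$-vertex graph you would need on the order of a dozen \emph{simultaneously} internally disjoint paths with prescribed endpoints, i.e., something like $k$-linkedness for large $k$, which requires connectivity growing with the number of paths (and even $k$-connected graphs need not be $\lfloor k/2\rfloor$-linked). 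Deleting the interiors of already-routed paths can destroy all the connectivity you started with, so "excess connectivity" is not available: $4$ is exactly the number of paths you already spent on a single pair.

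The paper's actual argument avoids this by a structured case analysis rather than a generic embedding. If $G$ is complete it contains $K_7$ and is done; otherwise pick non-adjacent $u,v$ and apply Menger once to get four internally disjoint $u$--$v$ paths, each of length at least $2$. If any has length at least $3$, these four paths alone form a subdivision of \Cref{fig:graph1} (a planar generalized theta graph $\Theta_{2,2,2,3}$), and no further routing is needed. Otherwise $u$ and $v$ have four common neighbors $A$, and the configuration is so rigid that a single application of the fan lemma (to a neighbor of $u$ outside $A$, or to a vertex outside $A\cup\{u,v\}$) either recreates a length-$3$ path and hence \Cref{fig:graph1}, or forces a $K_{4,4}$ subgraph, or forces an edge inside $A$ yielding \Cref{fig:graph3} as a subgraph. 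The lesson is that the forbidden configurations used are small, planar, and essentially already present after one application of Menger or the fan lemma; the proof never needs to route many disjoint paths simultaneously. To repair your argument you would need to replace the generic "embed a fixed $H$" step with this kind of case split on which non-metrizable pattern the local structure around $u$ and $v$ actually provides.
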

	\begin{figure}[h]
		\centering
		\begin{subfigure}[t]{0.3\textwidth}
			\centering
			\begin{tikzpicture}[scale=0.85, every node/.style={scale=0.85}]

			\node[draw,circle,minimum size=.6cm,inner sep=0pt] (u) at (-2,0) {$u$};
			\node[draw,circle,minimum size=.6cm,inner sep=0pt] (1) at (0,1.5) {$w_1$};
			\node[draw,circle,fill] (2) at (0,.5) {};
			\node[draw,circle,fill] (3) at (0,-.5) {};
			\node[draw,circle,fill] (4) at (0,-1.5) {};
			\node[draw,circle,minimum size=.6cm,inner sep=0pt] (v) at (2,0) {$v$};
			\node[draw,circle,minimum size=.6cm,inner sep=0pt] (z) at (-2,1.5) {$z$};

			\draw [line width=2pt,-] (u) -- (1);
			\draw [red1,line width=2pt,-] (u) -- (2);
			\draw [red1,line width=2pt,-] (u) -- (3);
			\draw [red1,line width=2pt,-] (u) -- (4);
			\draw [red1,line width=2pt,-] (v) -- (1);
			\draw [red1,line width=2pt,-] (v) -- (2);
			\draw [red1,line width=2pt,-] (v) -- (3);
			\draw [red1,line width=2pt,-] (v) -- (4);
			\draw [red1,line width=2pt,-] (u) -- (z) ;
			\draw [red1,line width=1.5pt,loosely dashed] (z) -- (1) node [scale = 1.2,black, midway, above] {\large$P$};

			\end{tikzpicture}
			\caption{The red edges form a subdivision of \Cref{fig:graph1}.}
			\label{fig:4ConnFig1}
		\end{subfigure}
		\hfill
		\begin{subfigure}[t]{0.3\textwidth}
			\centering
			\begin{tikzpicture}[scale=0.85, every node/.style={scale=0.85}]

			\node[draw,circle,minimum size=.6cm,inner sep=0pt] (u) at (-2,0) {$u$};
			\node[draw,circle,fill](1) at (0,1.5) {};
			\node[draw,circle,fill] (2) at (0,.5) {};
			\node[draw,circle,fill] (3) at (0,-.5) {};
			\node[draw,circle,fill] (4) at (0,-1.5) {};
			\node[draw,circle,minimum size=.6cm,inner sep=0pt] (v) at (2,0) {$v$};
			\node[draw,circle,minimum size=.6cm,inner sep=0pt] (z) at (2,1.5) {$z$};

			\draw [line width=2pt,-] (u) -- (1);
			\draw [line width=2pt,-] (u) -- (2);
			\draw [line width=2pt,-] (u) -- (3);
			\draw [line width=2pt,-] (u) -- (4);
			\draw [line width=2pt,-] (v) -- (1);
			\draw [line width=2pt,-] (v) -- (2);
			\draw [line width=2pt,-] (v) -- (3);
			\draw [line width=2pt,-] (v) -- (4);
			\draw [line width=1pt,-] (z) -- (1);
			\draw [line width=1pt,-] (z) -- (2);
			\draw [line width=1pt,-] (z) -- (3);
			\draw [line width=1pt,-] (z) -- (4);

			\end{tikzpicture}
			\caption{If $u$ and $v$, as in the proof of \Cref{thm:4Conn}, are both of degree $4$ then $G$ contains a $K_{3,4}$ subgraph.}
			\label{fig:4ConnFig2}
		\end{subfigure}
		\hfill
		\begin{subfigure}[t]{0.3\textwidth}
			\centering
			\begin{tikzpicture}[scale=0.85, every node/.style={scale=0.85}]

			\node[draw,circle,minimum size=.6cm,inner sep=0pt] (u) at (-2,0) {$u$};
			\node[draw,circle,minimum size=.6cm,inner sep=0pt] (1) at (0,1.5) {$w_1$};
			\node[draw,circle,minimum size=.6cm,inner sep=0pt] (2) at (0,.5) {$w_2$};
			\node[draw,circle,fill] (3) at (0,-.5) {};
			\node[draw,circle,fill] (4) at (0,-1.5) {};
			\node[draw,circle,minimum size=.6cm,inner sep=0pt] (v) at (2,0) {$v$};
			\node[draw,circle,minimum size=.6cm,inner sep=0pt] (z) at (2,1.5) {$z$};

			\draw [line width=2pt,-] (u) -- (2);
			\draw [line width=2pt,-] (u) -- (3);
			\draw [line width=2pt,-] (u) -- (4);
			\draw [line width=2pt,-] (v) -- (2);
			\draw [line width=2pt,-] (v) -- (3);
			\draw [line width=2pt,-] (v) -- (4);
			\draw [line width=2pt,-] (z) -- (1);
			\draw [line width=1pt,-] (z) -- (3);
			\draw [line width=1pt,-] (z) -- (4);
			\draw [line width=2pt,-] (1) -- (2);

			\end{tikzpicture}
			\caption{This graph is a copy of \Cref{fig:graph3}.}
			\label{fig:4ConnFig3}
		\end{subfigure}
		\caption{}
	\end{figure}
	\begin{proof}
		Let $G$ be a $4$-connected graph with at least $7$ vertices. In view of \Cref{cor:subdiv} is suffices to show that $G$ contains a subgraph or subdivision of a non-metrizable graph.
		Since there exist non-metrizable graphs on $7$ vertices, $K_n$ is metrizable only if $n\leq 6$. Therefore, it suffices to consider the case where $G$ contains a pair of non-adjacent vertices, say $u$ and $v$. There exist at least four disjoint paths of length at least $2$ connecting $u$ and $v$. If any of these paths has length at least $3$, then $G$ is not metrizable, since it contains \Cref{fig:graph1} as a topological minor. Therefore, $u$ and $v$ have at least four common neighbors, say $A=\set{w_1,w_2,w_3,w_4}$ is a set of four common neighbors. Suppose that $u$ has a neighbor $z \notin A$. By the fan lemma there exist four openly disjoint paths from $z$ to $A$. At least two of these paths contain neither $u$ nor $v$. Say $P$ is a $z$-$w_1$ path with this property, then the paths $uzPw_1v, \ uw_2v, \ uw_3v, \ uw_4v$ are four disjoint $u$-$v$ paths, \Cref{fig:4ConnFig1}. Therefore $G$ is not metrizable, since it contains \Cref{fig:graph1} as a t.m. There remains the case where $N(u)=A$. Every vertex $z \notin A\cup \set{u,v}$ has an $A$-fan. As argued above this fan must simply be comprised of four edges, i.e., $N(z) \supseteq A$, \Cref{fig:4ConnFig2}. If $G$ contains another vertex $y$ then again $N(y) \supseteq A$ and $G$ hss a $K_{4,4}$ subgraph. But $K_{4,4}$ is not metrizable as it contains a subdivision of \Cref{fig:graph3}. What if $G$ has only seven vertices? Since $G$ is $4$-connected, the set $\{u,v,z\}$ cannot disconnect it, and there must be edges within $A$, say, $w_1w_2\in E$. But then $G$ contains \Cref{fig:graph3} as a subgraph, \Cref{fig:4ConnFig2}, and is therefore not metrizable.
	\end{proof}
	\begin{figure}[h]
		\centering
		\begin{subfigure}[t]{0.3\textwidth}
			\centering
			\begin{tikzpicture}[scale=0.85, every node/.style={scale=0.85}]
			
			\node[draw,circle,minimum size=.6cm,inner sep=0pt] (v1) at (0*360/5 +90: 2.5cm) {$v_1$};
			\node[draw,circle,minimum size=.6cm,inner sep=0pt] (v5) at (1*360/5 +90: 2.5cm) {$v_5$};
			\node[draw,circle,minimum size=.6cm,inner sep=0pt] (v3) at (2*360/5 +90: 2.5cm) {$v_3$};
			\node[draw,circle,minimum size=.6cm,inner sep=0pt] (v2) at (3*360/5 +90: 2.5cm) {$v_2$};
			\node[draw,circle,minimum size=.6cm,inner sep=0pt] (v4) at (4*360/5 +90: 2.5cm) {$v_4$};
			\node[draw,circle,minimum size=.2cm,fill] (w1) at ($(v1)!0.5!(v2)$) {};
			\node[draw,circle,minimum size=.2cm,fill] (w2) at ($(v1)!0.5!(v3)$) {};

			\draw [line width=2pt,-,red1] (v1) -- (v4);
			\draw [line width=2pt,-,red1] (v1) -- (v5);
			\draw [line width=2pt,-,red1] (v2) -- (v3);
			\draw [line width=2pt,-,red1] (v2) -- (v4);
			\draw [line width=2pt,-,darkgrey1] (v2) -- (v5);
			\draw [line width=2pt,-,darkgrey1] (v3) -- (v4);
			\draw [line width=2pt,-,red1] (v3) -- (v5);
			\draw [line width=2pt,-,darkgrey1] (v4) -- (v5);
			
			\draw [line width=2pt,-,red1] (v1) -- (w1);
			\draw [line width=2pt,-,red1] (w1) -- (v2);
			\draw [line width=2pt,-,red1] (v1) -- (w2);
			\draw [line width=2pt,-,red1] (w2) -- (v3);
			
			\end{tikzpicture}
			\caption{If $H$, as in the proof of \Cref{thm:NonPlamarMet}, contains two incident subdivided edges then $G$ contains a subdivision of \Cref{fig:graph2}.}
			\label{fig:2ConnNonPlanarFig1}
		\end{subfigure}
		\hfill
		\begin{subfigure}[t]{0.3\textwidth}
			\centering
			\begin{tikzpicture}[scale=0.85, every node/.style={scale=0.85}]
			
			\node[draw,circle,minimum size=.6cm,inner sep=0pt] (v1) at (2,-2) {$v_1$};
			\node[draw,circle,minimum size=.6cm,inner sep=0pt] (v2) at (0,-2) {$v_2$};
			\node[draw,circle,minimum size=.6cm,inner sep=0pt] (v3) at (-2,-2) {$v_3$};
			\node[draw,circle,minimum size=.6cm,inner sep=0pt] (v4) at (0,2) {$v_4$};
			\node[draw,circle,minimum size=.6cm,inner sep=0pt] (v5) at (2,2) {$v_5$};
			\node[draw,circle,minimum size=.6cm,inner sep=0pt] (w) at (-2,2) {$w$};
			\node[draw,circle,minimum size=.6cm,inner sep=0pt] (u) at (-2,0) {$u$};
			
			\draw [line width=2pt,-,red1] (v1) -- (w);
			\draw [line width=2pt,-,red1] (u) -- (w);
			\draw [line width=2pt,-,red1] (v2) -- (w);
			\draw [line width=2pt,-] (v1) to[out=-150,in=-30] (v3);
			\draw [line width=2pt,-,red1] (v1) -- (v4);
			\draw [line width=2pt,-,red1] (v1) -- (v5);
			\draw [line width=2pt,-] (v2) -- (v3);
			\draw [line width=2pt,-,red1] (v2) -- (v4);
			\draw [line width=2pt,-,red1] (v2) -- (v5);
			\draw [line width=2pt,-,red1] (v3) -- (v4);
			\draw [line width=2pt,-,red1] (v3) -- (u);
			\draw [line width=2pt,-,red1] (v3) -- (v5);
			\draw [line width=2pt,-] (v4) -- (v5);
			
			\end{tikzpicture}
			\caption{This graph is not metrizable since it contains a \Cref{fig:graph3} subgraph.}
			\label{fig:2ConnNonPlanarFig2}
		\end{subfigure}
		\hfill
		\begin{subfigure}[t]{0.3\textwidth}
			\centering
			\begin{tikzpicture}[scale=0.85, every node/.style={scale=0.85}]
			
			\node[draw,circle,minimum size=.6cm,inner sep=0pt] (v1) at (0*360/5 +90: 2.5cm) {$v_1$};
			\node[draw,circle,minimum size=.6cm,inner sep=0pt] (v5) at (1*360/5 +90: 1.75cm) {$v_5$};
			\node[draw,circle,minimum size=.6cm,inner sep=0pt] (v3) at (2*360/5 +90: 2.5cm) {$v_3$};
			\node[draw,circle,minimum size=.6cm,inner sep=0pt] (v2) at (3*360/5 +90: 2.5cm) {$v_2$};
			\node[draw,circle,minimum size=.6cm,inner sep=0pt] (v4) at (4*360/5 +90: 1.75cm) {$v_4$};
			\node[draw,circle,minimum size=.6cm,inner sep=0pt] (u1) at (1*360/5 +90: 3cm) {$u$};
			\node[draw,circle,minimum size=.6cm,inner sep=0pt] (u2) at (4*360/5 +90: 3cm) {$\tilde{u}$};

			\draw [line width=2pt,-,darkgrey1] (v1) -- (v2);
			\draw [line width=2pt,-,darkgrey1] (v1) -- (v3);
			\draw [line width=2pt,-,red1] (v1) -- (v4);
			\draw [line width=2pt,-,red1] (v1) -- (v5);
			\draw [line width=2pt,-,red1] (v2) -- (v3);
			\draw [line width=2pt,-,red1] (v2) -- (v4);
			\draw [line width=2pt,-,darkgrey1] (v2) -- (v5);
			\draw [line width=2pt,-,darkgrey1] (v3) -- (v4);
			\draw [line width=2pt,-,red1] (v3) -- (v5);
			\draw [line width=2pt,-,darkgrey1] (v4) -- (v5);
			
			\draw [line width=2pt,-,red1] (v1) -- (u2);
			\draw [line width=2pt,-,red1] (v2) -- (u2);
			\draw [line width=2pt,-,red1] (v1) -- (u1);
			\draw [line width=2pt,-,red1] (v3) -- (u1);

			\end{tikzpicture}
			\caption{The red edges form a copy of \Cref{fig:graph2}.}
			\label{fig:2ConnNonPlanarFig3}
		\end{subfigure}
		\caption{}
	\end{figure}
	Next we show most non-planar graphs are not metrizable.
	\begin{theorem}\label{thm:NonPlamarMet}
		Every metrizable $2$-connected non-planar graph has at most $7$ vertices.
	\end{theorem}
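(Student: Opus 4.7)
The plan is to apply Kuratowski's theorem to locate a subdivision $H\subseteq G$ of $K_5$ or $K_{3,3}$, and then to use the ``excess'' vertices forced by $|V(G)|\ge 8$ to realize a subdivision of one of the minimal non-metrizable graphs in \Cref{fig:MinimumGraphs}. By \Cref{cor:subdiv}, this implies $G$ is non-metrizable. Since $G$ is $2$-connected and non-planar, Kuratowski's theorem yields such a subgraph $H$ with branch vertex set $X$ of size $5$ or $6$. Since $|V(G)|\ge 8>|X|$, there are at least two ``extra'' vertices, each either serving as an internal vertex of a super-edge of $H$ or lying in $V(G)\setminus V(H)$. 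For each $z\in V(G)\setminus V(H)$, the $2$-connectivity of $G$ together with Menger's theorem produces two internally disjoint paths from $z$ to distinct vertices of $V(H)$, which is the tool for ``attaching'' $z$ to $H$.

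Consider first the $K_5$ case, with branch vertices $v_1,\dots,v_5$. If some super-edge $P_{ij}$ of $H$ has length $\ge 3$, then $P_{ij}$ together with the length-$2$ paths $v_iv_kv_j$ for $k\ne i,j$ forms four internally disjoint $v_iv_j$-paths, one of length $\ge 3$ and the rest of length $2$, which I expect to be a subdivision of \Cref{fig:graph1}. Otherwise every super-edge has at most one internal vertex. If two super-edges sharing a common endpoint $v_i$ each contain an internal vertex, then the configuration of \Cref{fig:2ConnNonPlanarFig1} is realized as a subdivision of \Cref{fig:graph2}. If not, the set of subdivided super-edges is a matching in $K_5$, which has at most two edges, so $|V(H)|\le 7$; hence $|V(G)\setminus V(H)|\ge 1$, and I would exploit the Menger attachment paths of an outside vertex (together with additional outside vertices if necessary) to build the pattern of \Cref{fig:2ConnNonPlanarFig3}, again yielding \Cref{fig:graph2}.

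The $K_{3,3}$ case proceeds along parallel lines: an excessively long super-edge, or two incident long super-edges, forces one of the small forbidden topological minors from \Cref{fig:MinimumGraphs}; in the remaining subcase $|V(H)|\le 7$ and one routes the Menger attachments of an outside vertex, using the bipartition of $K_{3,3}$, to realize the configuration of \Cref{fig:2ConnNonPlanarFig2}, which contains \Cref{fig:graph3} as a subgraph. The main obstacle I foresee is precisely this last step: in the ``extras lie outside $V(H)$'' subcases, one must show that the two Menger endpoints of some outside vertex (after possibly re-routing through other outside vertices) can always be placed to match the specific incidence pattern required by \Cref{fig:2ConnNonPlanarFig2} or \Cref{fig:2ConnNonPlanarFig3}. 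This should follow from a fan-lemma argument analogous to the one used in the proof of \Cref{thm:4Conn}, exploiting the fact that any outside vertex must either attach to two branch vertices that are adjacent in the underlying Kuratowski graph or else produce a lengthened $uv$-path between two such branch vertices.
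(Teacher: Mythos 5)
Your overall strategy is the paper's: invoke Kuratowski's theorem to find a subdivision $H$ of $K_5$ or $K_{3,3}$, dispose of long or incident subdivided edges via \Cref{fig:graph1} and \Cref{fig:graph2}, and then attach the outside vertices forced by $|V(G)|\ge 8$ using $2$-connectivity and the fan lemma. The opening moves are right. The gap sits exactly where you flag the main obstacle: the dichotomy you propose to close it --- every outside vertex either attaches to two branch vertices of the Kuratowski graph or lengthens a super-edge --- is not exhaustive, and even where it holds it does not always produce a forbidden configuration. Concretely: (i) an outside vertex $u$ may attach by an edge to an internal degree-$2$ vertex $w$ of a subdivided edge $P_{12}$ and to a branch vertex $v_3\notin\{v_1,v_2\}$; this neither lengthens any super-edge nor joins two branch vertices, and it requires a separate argument extracting a subdivision of \Cref{fig:graph3} (see \Cref{fig:2ConnNonPlanarFig2}); (ii) $u$ may attach to internal vertices of two \emph{different} subdivided edges, which is handled in the paper by a $7$-cycle argument producing \Cref{fig:graph6}; (iii) in the $K_5$ case with $|V(H)|=5$ there are no subdivided edges at all, so attaching a single outside vertex to two branch vertices yields nothing --- one must count (three outside vertices, each with two neighbours among five branch vertices) to force two outside vertices to share a neighbour and only then realize \Cref{fig:graph1} or \Cref{fig:graph2}. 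None of these cases is reached by your dichotomy, and each needs a different member of the forbidden catalogue.

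The $K_{3,3}$ half has a parallel but larger gap. You miss the shortcut that any \emph{proper} subdivision of $K_{3,3}$ already contains a subdivision of \Cref{fig:graph3} (that graph is precisely $K_{3,3}$ with one edge subdivided), which disposes of all subdivided-edge subcases at once; more importantly, your plan only treats an outside vertex whose two attachment points lie in different parts of the bipartition. If both attachment points lie in the same part $A$, no forbidden subgraph arises from that vertex alone, and one must bring in the second outside vertex and split on whether its attachments lie in $A$ or in $B$ and whether they coincide with those of the first. The terminal subcase --- one outside vertex attached to two vertices of $A$, the other to two vertices of $B$ --- is resolved only by exhibiting a subdivision of \Cref{fig:graph5} (see \Cref{fig:2ConnNonPlanarFig6}), a graph your argument never invokes. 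Without these cases the proof does not close.
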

	\begin{proof}
		Let $G$ be a $2$-connected non-planar graph with at least $8$ vertices. By Kuratowski's theorem $G$ contains a subgraph $H$ that is a subdivision of either $K_5$ or $K_{3,3}$. Consider first the case where $H$ is a subdivision of $K_5$.
		Let $v_1,v_2,v_3,v_4, v_5$ be the vertices of $H$ with degree $4$ and $P_{ij}$ the $v_iv_j$ path in $H$ that is the subdivided edge $v_iv_j$.\\
		{\bf Comment:} Below we arbitrarily and without further notice break symmetries between $v_1,\dots, v_5$.\\
		We may assume that every $P_{ij}$ has length $1$ or $2$, for if, say, $P_{12}$, has length at least $3$, then the union of paths $P_{12}, [P_{13} P_{32}], [P_{14} P_{42}], [P_{15} P_{52}]$ is a subdivision of \Cref{fig:graph1}. If, say, $P_{12}$ and $P_{13}$ have length $2$, then the union of the paths $P_{12}, P_{13}, \ P_{14} P_{42},  \ P_{15} P_{53}, P_{23}$ is a subdivision of \Cref{fig:graph2}, see \Cref{fig:2ConnNonPlanarFig1}. Henceforth we can and will assume that distinct paths $P_{ij}$ of length $2$ are disjoint. This, in particular means that $H$ has at most seven vertices and there is some vertex, say $u\in V(G)\setminus V(H)$. Next we show that some specific conditions imply that $G$ is not metrizable. 
		\begin{enumerate}[(1)]
			\item There is an edge $uw \in E(G)$, where $w$ is a degree $2$ vertex in $H$.
			\begin{proof}
				W.l.o.g.\ $w$ is the middle vertex of $P_{12}$. Since $G$ is $2$-connected there is another path from $u$ to $H$ which doesn't contain $w$. It suffices to consider the case where this other path is also an edge $e=u\tilde{w}$. If $d_H(\tilde{w})=2$, say it is the middle vertex of $P_{34}$, then $\tilde{w}v_4v_2v_5v_1w_1u\tilde{w}$ is a $7$-cycle in $G$. Along with the edges $v_3\tilde{w}$, $v_3v_1$ and $v_3v_2$ this gives a copy of \Cref{fig:graph6}, see \Cref{fig:2ConnNonPlanarFig4}.\\
				Now suppose that $e=uv_i$. If $i=1,2$, say $i=1$, then we redefine $P_{12}$ to be the path $v_1uwv_2$, yielding a subdivision of $K_5$ where $P_{12}$ has length $3$. It follows that $G$ is not metrizable. So we can assume $i=3,4,5$, say $i=3$. In this case, the union of the paths $wuv_3, wv_1, \ wv_2, \ P_{14} , \ P_{24}, \ P_{34}, \ P_{15}, \ P_{25}, \ P_{35} $ forms a subdivision of \Cref{fig:graph3}, see \Cref{fig:2ConnNonPlanarFig2}.
			\end{proof}
			\item $uv_1, uv_2\in E(G)$ and some path $P_{1k}$ has length $2$.
			\begin{proof}
				If $k=2$ then the paths $v_1uv_2, \ P_{12}, P_{13}P_{23} , \ P_{14} P_{45} P_{25}$ form a subdivision of \Cref{fig:graph1}. Otherwise, say $k=3$. The paths $v_1uv_3, \ P_{12}P_{23}, \ P_{14}P_{34}, \ P_{15} P_{35}$ form a subdivision of \Cref{fig:graph1}. 
			\end{proof}
			\item $\tilde{u} \neq u\in V(G)\setminus V(H)$ and
			$uv_1$, $uv_2, \tilde{u}v_1, \tilde{u}v_k \in E(G)$, and $k\neq 1$.
			\begin{proof}
				If $k=2$, then the paths $v_1uv_2, \ v_1\tilde{u}v_2, \ P_{13}P_{23} , \ P_{14}P_{45}P_{25}$ form a subdivision of \Cref{fig:graph1}. Otherwise, say $k=3$ and the paths $v_1uv_2, \ v_1\tilde{u}v_3, \ P_{14}P_{24}, \ P_{15}P_{35}, \ P_{2,3}$ form a subdivision of \Cref{fig:graph2}, see \Cref{fig:2ConnNonPlanarFig3}. 
			\end{proof}
			\item There is an edge $e=u_1u_2$ in $G$ that is disjoint from $H$
			\begin{proof}
				By Menger's theorem there are disjoint $u_1-H$, $u_2-H$ paths $Q_1$, $Q_2$. By $(1)$, we can assume that $Q_1, Q_2$ end at $v_i\neq v_j$ respectively. Replacing the path $P_{ij}$ with $v_iQ_1u_1u_2Q_2v_j$ we get another subdision of $K_5$ with $P_{ij}$ having length $3$.
			\end{proof}
		\end{enumerate}
		Our analysis of the general case is based on the above conditions and is parametrized by $|V(H)|$.
		Since $G$ is $2$-connected for every $u\in V(G) \setminus V(H)$ there are two disjoint $u-H$ paths in $G$. By condition $(4)$ we can assume that these paths are in fact edges.
		
		We have previously dealt with the case $|V(H)|\ge 8$ and move now to assume $|V(H)|=7$. This means that two paths, say $P_{12}$ and $P_{34}$, have length $2$ and there is a vertex $u\in V(G)\setminus V(H)$ with two neighbors in $H$. We may assume that these two neighbors are $v_i$ and $v_j$ for some $i\neq j$. For if $u$ has a neighbor in $H$ whose degree in $H$ is $2$, then case $(1)$ applies. Therefore the neighbors of $u$ must be $v_i$ and $v_j$, $i\neq j$. Since $\set{i,j} \cap \set{1,2,3,4}\neq\emptyset$, $G$ is not metrizable by condition $(2)$. 
		\begin{figure}[h]
			\centering
			\begin{subfigure}[t]{0.3\textwidth}
				\centering
				\begin{tikzpicture}[scale=0.85, every node/.style={scale=0.85}]
				
				\node[draw,circle,minimum size=.6cm,inner sep=0pt] (v2) at (0*360/7 +90: 2.5cm) {$v_2$};
				\node[draw,circle,minimum size=.6cm,inner sep=0pt] (v5) at (1*360/7 +90: 2.5cm) {$v_5$};
				\node[draw,circle,minimum size=.6cm,inner sep=0pt] (v1) at (2*360/7 +90: 2.5cm) {$v_1$};
				\node[draw,circle,minimum size=.6cm,inner sep=0pt] (w1) at (3*360/7 +90: 2.5cm) {$w_1$};
				\node[draw,circle,minimum size=.6cm,inner sep=0pt] (u) at (4*360/7 +90: 2.5cm) {$u$};
				\node[draw,circle,minimum size=.6cm,inner sep=0pt] (w2) at (5*360/7 +90: 2.5cm) {$w_2$};
				\node[draw,circle,minimum size=.6cm,inner sep=0pt] (v4) at (6*360/7+90: 2.5cm) {$v_4$};
				\node[draw,circle,minimum size=.6cm,inner sep=0pt] (v3) at (0,0) {$v_3$};

				\draw [line width=2pt,-,darkgrey1] (v1) -- (v4);
				\draw [line width=2pt,-,red1] (v1) -- (v5);
				
				\draw [line width=2pt,-,red1] (v2) -- (v4);
				\draw [line width=2pt,-,red1] (v2) -- (v5);
				\draw [line width=2pt,-,darkgrey1] (v3) -- (v4);
				\draw [line width=2pt,-,darkgrey1] (v3) -- (v5);
				\draw [line width=2pt,-,darkgrey1] (v4) -- (v5);
				
				\draw [line width=2pt,-,red1] (v1) -- (w1);
				\draw [line width=2pt,-,red1,darkgrey1] (w1) -- (v2);
				\draw [line width=2pt,-,red1] (w2) -- (v3);
				\draw [line width=2pt,-,red1] (v1) -- (v3);
				\draw [line width=2pt,-,red1] (v2) -- (v3);
				\draw [line width=2pt,-,red1] (w1) -- (u);
				\draw [line width=2pt,-,red1] (w2) -- (u);
				\draw [line width=2pt,-,red1] (v4) -- (w2);
				
				\end{tikzpicture}
				\caption{The colored edges form a \Cref{fig:graph6}  subgraph.}
				\label{fig:2ConnNonPlanarFig4}
			\end{subfigure}
			\hfill
			\begin{subfigure}[t]{0.3\textwidth}
				\centering
				\begin{tikzpicture}[scale=0.85, every node/.style={scale=0.85}]
				
				\node[draw,circle,minimum size=.6cm,inner sep=0pt] (v1) at (-2,2) {$v_1$};
				\node[draw,circle,minimum size=.6cm,inner sep=0pt] (v2) at (0,2) {$v_2$};
				\node[draw,circle,minimum size=.6cm,inner sep=0pt] (v3) at (2,2) {$v_3$};
				\node[draw,circle,minimum size=.6cm,inner sep=0pt] (u1) at (-2,-2) {$u_1$};
				\node[draw,circle,minimum size=.6cm,inner sep=0pt] (u2) at (0,-2) {$u_2$};
				\node[draw,circle,minimum size=.6cm,inner sep=0pt] (u3) at (2,-2) {$u_3$};
				\node[draw,circle,minimum size=.6cm,inner sep=0pt] (w1) at (-3,0) {$w_1$};

				\draw [line width=2pt,-,darkgrey1] (v1) -- (u1);
				\draw [line width=2pt,-,red1] (v1) -- (u2);
				\draw [line width=2pt,-,red1] (v1) -- (u3);
				\draw [line width=2pt,-,red1] (v2) -- (u1);
				\draw [line width=2pt,-,red1] (v2) -- (u2);
				\draw [line width=2pt,-,red1] (v2) -- (u3);
				\draw [line width=2pt,-,red1] (v3) -- (u1);
				\draw [line width=2pt,-,red1] (v3) -- (u2);
				\draw [line width=2pt,-,red1] (v3) -- (u3);
				\draw [line width=2pt,-,red1] (v1) -- (w1) node [midway, above left,black] {$Q_1$};
				\draw [line width=2pt,-,red1] (u1) -- (w1) node [midway,below left,black ] {$Q_2$};

				\end{tikzpicture}
				\caption{A proper subdivision of $K_{3,3}$ is a subdivision of \Cref{fig:graph3}.}
				\label{fig:2ConnNonPlanarFig5}
			\end{subfigure}
			\hfill
			\begin{subfigure}[t]{0.3\textwidth}
				\centering
				\hspace*{-.6cm}
				\begin{tikzpicture}[scale=0.85, every node/.style={scale=0.85}]
				\node[draw,circle,minimum size=.6cm,inner sep=0pt] (w1) at (0*360/5 +90: 2.25cm) {$w_1$};
				\node[draw,circle,minimum size=.6cm,inner sep=0pt] (v1) at (1*360/5 +90: 2.25cm) {$v_1$};
				\node[draw,circle,minimum size=.6cm,inner sep=0pt] (u1) at (2*360/5 +90: 2.25cm) {$u_1$};
				\node[draw,circle,minimum size=.6cm,inner sep=0pt] (u2) at (3*360/5 +90: 2.25cm) {$u_2$};
				\node[draw,circle,minimum size=.6cm,inner sep=0pt] (v2) at (4*360/5 +90: 2.25cm) {$v_2$};
				\node[draw,circle,minimum size=.6cm,inner sep=0pt] (u3) at ($(v1)!.5!(v2)+(0,-.5)$) {$u_3$};
				
				\draw [line width=2pt,-,darkgrey1] (v1) to [out=-65,in=160, distance=1cm]  (u2);
				\draw [line width=2pt,-,darkgrey1] (v2) to [out=245,in=20, distance=1cm] (u1);
				\node[draw,circle,minimum size=.6cm,inner sep=0pt,fill = white]  (v3) at ($(v1)!.5!(v2) + (0,-1.5)$) {$v_3$};
				
				\node[draw,circle,minimum size=.6cm,inner sep=0pt] (w2) at ($(u1) !.5!(u2)$) {$w_2$};
				
				\draw [line width=2pt,-,red1] (v1) -- (u1);
				
				\draw [line width=2pt,-,red1] (v1) -- (u3);
				\draw [line width=2pt,-,red1] (v2) -- (u2);
				\draw [line width=2pt,-,red1] (v2) -- (u3);
				\draw [line width=2pt,-,red1] (v3) -- (u1);
				\draw [line width=2pt,-,red1] (v3) -- (u2);
				\draw [line width=2pt,-,red1] (v3) -- (u3);
				\draw [line width=2pt,-,red1] (v1) -- (w1);
				\draw [line width=2pt,-,red1] (v2) -- (w1);
				\draw [line width=2pt,-,red1] (u1) -- (w2);
				\draw [line width=2pt,-,red1] (u2) -- (w2);

				\end{tikzpicture}
				\caption{This graph contains a \Cref{fig:graph5} subgraph and is therefore not metrizable. }
				\label{fig:2ConnNonPlanarFig6}
			\end{subfigure}
			\caption{}
		\end{figure}

		We next assume $|V(H)|=6$ in which case $P_{12} = v_1wv_2$ is a path of length $2$ and there are two vertices $u_1, u_2 \not \in V(H)$. Again by $(1)$ we can assume that $w$ is neither a neighbor of $u_1$ nor $u_2$. If $u_1$ or $u_2$ share a neighbor in $H$ then condition $(3)$ holds. Otherwise the neighbors of $u_1$ and $u_2$ in $H$ are all distinct. This implies at least one of these neighbors is either $v_1$ or $v_2$ and $G$ is not metrizable by $(2)$.
		
		The last case to check is $|V(H)|=5$. Then there exists three vertices $u_1,u_2,u_3 \in V(G) \setminus V(H)$, each having at least $2$ neighbors in $H$. But $|V(H)|=5$, so these neighbors cannot all be distinct and condition $(3)$ applies.
		
		Now we consider the case where $G$ contains a subdivision $H$ of $K_{3,3}$. If $H$ contains any vertices of degree $2$ then $G$ contains a subdivision of \Cref{fig:graph3}. Therefore we can assume that $H=K_{3,3}$ with vertex bipartition $A=\set{v_1,v_2,v_3}$, $B=\set{u_1,u_2,u_3}$ and $w_1,w_2 \in V(G) \setminus V(H)$. By the fan lemma there exists two disjoint $w_1-H$ paths $Q_1$ and $Q_2$. If $Q_1$ is a $w_1-A$ path and $Q_2$ a $w_1 - B$ path then $G$ again contains a subdivision of \Cref{fig:graph3}, see \Cref{fig:2ConnNonPlanarFig5}. So we can assume that $Q_1$ and $Q_2$ are both $u_1-A$ paths and have endpoints $v_1$ and $v_2$, respectively. If either $Q_1$ or $Q_2$ have length greater than $1$ then the paths  $v_1Q_1w_1Q_2v_2, \ v_1u_1v_2, \ v_1u_2v_2, \ v_1u_3v_2$ form a subdivision of \Cref{fig:graph1}. Therefore $Q_1 = w_1v_1$ and $Q_2= w_1v_2$. Likewise $w_2$ has two neighbors in $H$ which both belong to either $A$ or $B$. Say both are in $A$. If the neighbors of $w_1$ and $w_2$ coincide then the paths $v_1w_1v_2, v_1w_2v_2, \ v_1u_1v_2, \ v_1u_2v_3u_3v_2$ form a subdivision of \Cref{fig:graph1}. Otherwise, $w_2$ shares only one neighbor with $w_1$, say $v_2$. Again, the paths $v_1 w_1v_2w_2v_3, \ v_1u_1v_3, \ v_1u_2v_3, \ v_1u_3v_3$ form a subdivision of \Cref{fig:graph1}. Now suppose the neighbors of $w_2$ are $u_1,\ u_2\in B$. Then the graph $H' = (H- \set{v_1u_2, v_2u_1}) \cup \set{w_1v_1,w_1v_2, w_2u_1, w_2u_2} $ is a subdivision of \Cref{fig:graph5}, see \Cref{fig:2ConnNonPlanarFig6}.
	\end{proof}
	
	Next we consider the planar case, starting with some technical lemmas. Recall the $n$-wheel $W_n$, which is the graph obtained by joining an $n$-cycle with a single vertex called its {\em pivot}.

	\begin{lemma}\label{lem:list3minConnGraphs}
		The only minimally $3$-connected graph on $5$ vertices is $W_4$.
		The minimally $3$-connected planar graphs on $6$ vertices are precisely $W_5$ and the $3$-prism $Y_3$, \Cref{fig:min3ConnPlane6Vtxs}. 	The minimally $3$-connected planar graphs on $7$ vertices are precisely those in \Cref{fig:min3ConnPlane7Vtxs}.
	\end{lemma}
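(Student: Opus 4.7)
The plan is to proceed by case analysis on $n\in\{5,6,7\}$, using two standard facts repeatedly: \textbf{(i)} every $3$-connected graph has minimum degree at least $3$, so $|E|\geq \lceil 3n/2\rceil$; \textbf{(ii)} every planar graph on $n\geq 3$ vertices satisfies $|E|\leq 3n-6$. Combined with the handshake parity constraint, these bounds leave only a handful of possible degree sequences in each case, and the proof amounts to sifting through them.

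For $n=5$, parity forces at least one vertex of degree $4$, so $|E|\geq 8$. When $|E|=8$ the degree sequence is $(4,3,3,3,3)$; fixing the degree-$4$ vertex $p$, its four neighbors must induce a $4$-cycle (since each has one further incidence, and any other pattern contradicts $3$-connectivity), so the graph is exactly $W_4$. If $|E|\geq 9$, then some edge not incident to a degree-$3$ vertex is present, and I would show directly that deleting it preserves $3$-connectivity, contradicting minimality.

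For $n=6$, we have $|E|\geq 9$, with equality forcing $3$-regularity; the $3$-connected cubic graphs on $6$ vertices are exactly $K_{3,3}$ and the prism $Y_3$, and only $Y_3$ is planar. For $|E|=10$ the degree sequence is $(4,4,3,3,3,3)$ or $(5,3,3,3,3,3)$; a short analysis on the high-degree vertex or vertices, using $3$-connectivity and planarity, singles out $W_5$. For $|E|\geq 11$, I would find a removable edge among the high-degree vertices by a brief case analysis.

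For $n=7$, $|E|\geq 11$ and planarity gives $|E|\leq 15$. Rather than brute-forcing the degree sequences, the cleaner approach is Tutte's wheel theorem: every $3$-connected graph is obtained from some wheel by a sequence of edge additions and vertex splittings, and minimally $3$-connected graphs are those with no superfluous edges. Consequently, every minimally $3$-connected graph on $7$ vertices arises by a single admissible vertex-split of a minimally $3$-connected graph on $6$ vertices (i.e.\ $W_5$, $Y_3$, or their non-planar cousin $K_{3,3}$). I would enumerate these splittings, keep only the planar outcomes, and check that they are precisely the graphs in \Cref{fig:min3ConnPlane7Vtxs}. The main obstacle is ensuring exhaustiveness of this enumeration; Tutte's theorem is the key organizing tool that keeps the casework finite and tractable, since without it one would face a much larger search over degree sequences and embeddings.
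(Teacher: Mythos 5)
Your handling of $n=5$ and $n=6$ by degree sequences is workable in spirit (for $n=5$, any graph with minimum degree $3$ is automatically $3$-connected, which makes your "removable edge" step immediate; for $n=6$ the analogous step needs a little more care), but the $n=7$ step contains a genuine gap. Tutte's wheel theorem does not yield the claim on which your enumeration rests, namely that every minimally $3$-connected graph on $7$ vertices is a single vertex-split of a \emph{minimally} $3$-connected graph on $6$ vertices. What Tutte gives is: if $G$ is $3$-connected, not a wheel, and no edge deletion preserves $3$-connectivity (which is exactly minimality), then some edge $e$ has $G/e$ $3$-connected. But $G/e$ need not be minimally $3$-connected --- contraction can create superfluous edges. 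To be exhaustive you would therefore have to split every $3$-connected planar graph on $6$ vertices (there are several beyond $W_5$ and $Y_3$, e.g.\ the octahedron and wheels with added chords) and only afterwards filter the $7$-vertex outcomes for minimality. As written, your enumeration could silently miss graphs, and "ensuring exhaustiveness," which you correctly identify as the main obstacle, is precisely what Tutte's theorem does not supply.

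The paper instead invokes Dawes' generation theorem, which is tailored to minimal $3$-connectivity: every minimally $3$-connected graph other than $K_4$ is obtained from a \emph{smaller minimally $3$-connected} graph by one of three operations (add a vertex joined to three old vertices; subdivide an edge and join the new vertex to a non-incident old vertex; subdivide two edges and join the two new vertices), together with necessary and sufficient conditions for the result to remain minimally $3$-connected. Note that the last operation adds two vertices at once, so a $7$-vertex minimal graph may in principle be generated directly from a $5$-vertex one; this is a structural reason why a single-vertex-split scheme rooted at the $6$-vertex minimal graphs is not automatically complete. If you want to keep your more elementary route, either enlarge the set of $6$-vertex graphs you split to all $3$-connected planar ones and add a minimality check, or replace Tutte's theorem by Dawes' (or Halin's results on minimally $3$-connected graphs, e.g.\ that every edge is incident with a vertex of degree $3$, which also streamlines your "removable edge" steps). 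Verifying after the fact that the three graphs in \Cref{fig:min3ConnPlane7Vtxs} do contract to $W_5$ or $Y_3$ confirms the answer but does not establish that the list is complete.
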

	\begin{proof}
		This can be shown using the procedure outlined in \cite{Da}. Explicitly,  if $G\neq K_4$  is a minimally $3$-connected graph then there exists a minimally $3$-connected graph $G'$, of order strictly less than $G$, such that $G$ can be obtained from $G'$ by applying one of the following operations:
		
		\begin{enumerate}
			\item  Take a vertex $u$ and a non-incident edge $e$ and subdivide $e$ to include a new vertex $z$. Then add the new edge $uz$.
			\item Subdivide two edges $e_1$ and $e_2$ with new vertices $u_1$ and $u_2$, respectively, and add the new edge $u_1 u_2$.
			\item Introduce a new vertex $v$ and make it a neighbor of three distinct vertices $x$, $y$ and $z$.
		\end{enumerate}
		Dawes \cite{Da} gives a sufficient and necessary condition for when one the above operations, applied to  minimally $3$-connected, yields another minimally $3$-connected graph. Moreover, any of the above operations yields a planar graph iff all the involved vertices and/or edges lie in the same face. 
		Therefore, starting with $K_4$ one can generate all the $3$-minimally connected planar graphs. 
	\end{proof}
	\begin{figure}[h]
		\centering
		\begin{subfigure}{0.33\textwidth}
			\centering
			\begin{tikzpicture}[scale=0.85, every node/.style={scale=0.85}]

			\node[draw,circle,fill] (1) at (0*360/5 +90: 2cm) {};
			\node[draw,circle,fill] (2) at (1*360/5 +90: 2cm) {};
			\node[draw,circle,fill] (3) at (2*360/5 +90: 2cm) {};
			\node[draw,circle,fill] (4) at (3*360/5 +90: 2cm) {};
			\node[draw,circle,fill] (5) at (4*360/5 +90: 2cm) {};
			\node[draw,circle,fill] (6) at (0,0) {};

			\draw [line width=2pt,-] (1) -- (2);
			\draw [line width=2pt,-] (1) -- (5);
			\draw [line width=2pt,-] (1) -- (6);
			\draw [line width=2pt,-] (2) -- (3);
			\draw [line width=2pt,-] (2) -- (6);
			\draw [line width=2pt,-] (3) -- (4);
			\draw [line width=2pt,-] (3) -- (6);
			\draw [line width=2pt,-] (4) -- (5);
			\draw [line width=2pt,-] (4) -- (6);
			\draw [line width=2pt,-] (5) -- (6);

			\end{tikzpicture}
			\caption{The $5$-wheel, $W_5$}
			\label{fig:Wheel5}
		\end{subfigure}
		\hspace{20mm}
		\begin{subfigure}{0.33\textwidth}
			\centering
			\begin{tikzpicture}[scale=0.85, every node/.style={scale=0.85}]

			\node[draw,circle,fill] (1) at (0*360/3 +90: .75cm) {};
			\node[draw,circle,fill] (2) at (1*360/3 +90: .75cm) {};
			\node[draw,circle,fill] (3) at (2*360/3 +90: .75cm) {};
			\node[draw,circle,fill] (4) at (0*360/3 +90: 2.5cm) {};
			\node[draw,circle,fill] (5) at (1*360/3 +90: 2.5cm) {};
			\node[draw,circle,fill] (6) at (2*360/3 +90: 2.5cm) {};
			
			\draw [line width=2pt,-] (1) -- (2);
			\draw [line width=2pt,-] (1) -- (3);
			\draw [line width=2pt,-] (1) -- (4);
			\draw [line width=2pt,-] (2) -- (3);
			\draw [line width=2pt,-] (2) -- (5);
			\draw [line width=2pt,-] (3) -- (6);
			\draw [line width=2pt,-] (4) -- (5);
			\draw [line width=2pt,-] (4) -- (6);
			\draw [line width=2pt,-] (5) -- (6);
			
			\end{tikzpicture}
			\caption{The $3$-prism, $Y_3$.}
			\label{fig:3Prism}
		\end{subfigure}
		\caption{Minimally $3$-connected planar graphs on $6$ vertices.}
		\label{fig:min3ConnPlane6Vtxs}
	\end{figure}
	
	\begin{figure}[h]
		\centering
		\begin{subfigure}{0.32\textwidth}
			\centering
			\begin{tikzpicture}[scale=0.8, every node/.style={scale=0.8}]

			\node[draw,circle,fill] (1) at (0*360/6 +120: 2cm) {};
			\node[draw,circle,fill] (2) at (1*360/6 +120: 2cm) {};
			\node[draw,circle,fill] (3) at (2*360/6 +120: 2cm) {};
			\node[draw,circle,fill] (4) at (3*360/6 +120: 2cm) {};
			\node[draw,circle,fill] (5) at (4*360/6 +120: 2cm) {};
			\node[draw,circle,fill] (6) at (5*360/6 +120: 2cm) {};
			\node[draw,circle,fill] (7) at (0,0) {};

			\draw [line width=2pt,-] (1) -- (2);
			\draw [line width=2pt,-] (1) -- (6);
			\draw [line width=2pt,-] (1) -- (7);
			\draw [line width=2pt,-] (2) -- (3);
			\draw [line width=2pt,-] (2) -- (7);
			\draw [line width=2pt,-] (3) -- (4);
			\draw [line width=2pt,-] (3) -- (7);
			\draw [line width=2pt,-] (4) -- (5);
			\draw [line width=2pt,-] (4) -- (7);
			\draw [line width=2pt,-] (5) -- (6);
			\draw [line width=2pt,-] (5) -- (7);
			\draw [line width=2pt,-] (6) -- (7);

			\end{tikzpicture}
			\caption{}
			\label{fig:Wheel6}
		\end{subfigure}
		\hfill
		\begin{subfigure}{0.32\textwidth}
			\centering
			\begin{tikzpicture}[scale=0.8, every node/.style={scale=0.8}]

			\node[draw,circle,fill] (1) at (0*360/3 +90: .75cm) {};
			\node[draw,circle,fill] (2) at (1*360/3 +90: .75cm) {};
			\node[draw,circle,fill] (3) at (2*360/3 +90: .75cm) {};
			\node[draw,circle,fill] (4) at (0*360/3 +90: 2.5cm) {};
			\node[draw,circle,fill] (5) at (1*360/3 +90: 2.5cm) {};
			\node[draw,circle,fill] (6) at (2*360/3 +90: 2.5cm) {};
			\node[draw,circle,fill] (7) at ($(2)!0.5!(3)$) {};
			
			\draw [line width=2pt,-] (1) -- (2);
			\draw [line width=2pt,-] (1) -- (3);
			\draw [line width=2pt,-] (1) -- (4);
			\draw [line width=2pt,-] (1) -- (7);
			\draw [line width=2pt,-] (2) -- (7);
			\draw [line width=2pt,-] (2) -- (5);
			\draw [line width=2pt,-] (3) -- (6);
			\draw [line width=2pt,-] (3) -- (7);
			\draw [line width=2pt,-] (4) -- (5);
			\draw [line width=2pt,-] (4) -- (6);
			\draw [line width=2pt,-] (5) -- (6);
			
			\end{tikzpicture}
			\caption{}
			\label{fig:3minConn7Vtx1}
		\end{subfigure}
		\hfill
		\begin{subfigure}{0.32\textwidth}
			\centering
			\begin{tikzpicture}[scale=0.8, every node/.style={scale=0.8}]
			
			\coordinate (a) at (0,1.5);
			\node[draw,circle,fill] (4) at ($(a) +(60+90: .75cm)$){};
			\node[draw,circle,fill] (7) at ($(a) +(-60+90: .75cm)$){};
			\node[draw,circle,fill] (1) at (0*360/3 +90: .75cm) {};
			\node[draw,circle,fill] (2) at (1*360/3 +90: .75cm) {};
			\node[draw,circle,fill] (3) at (2*360/3 +90: .75cm) {};
			\node[draw,circle,fill] (5) at (1*360/3 +90: 2.5cm) {};
			\node[draw,circle,fill] (6) at (2*360/3 +90: 2.5cm) {};
			
			\draw [line width=2pt,-] (1) -- (2);
			\draw [line width=2pt,-] (1) -- (3);
			\draw [line width=2pt,-] (1) -- (4);
			\draw [line width=2pt,-] (1) -- (7);
			\draw [line width=2pt,-] (2) -- (3);
			\draw [line width=2pt,-] (2) -- (5);
			\draw [line width=2pt,-] (3) -- (6);
			\draw [line width=2pt,-] (4) -- (5);
			\draw [line width=2pt,-] (4) -- (7);
			\draw [line width=2pt,-] (5) -- (6);
			\draw [line width=2pt,-] (6) -- (7);

			\end{tikzpicture}
			\caption{}
			\label{fig:3minConn7Vtx2}
		\end{subfigure}
		\caption{Minimally $3$-connected planar graphs on $7$ vertices.}
		\label{fig:min3ConnPlane7Vtxs}
	\end{figure}
	
	\begin{lemma}\label{lem:5wheel}
		A metrizable $2$-connected graph $G$ which contains a subdivision of $W_5$ has at most $7$ vertices. 
	\end{lemma}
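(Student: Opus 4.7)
The argument will closely parallel that of \Cref{thm:NonPlamarMet}: I would assume for contradiction that $|V(G)|\geq 8$ and exhibit a subdivision of one of the graphs in \Cref{fig:MinimumGraphs}, then invoke \Cref{prop:TopMinClosed} to contradict metrizability of $G$. Fix a subdivision $H\subseteq G$ of $W_5$, and denote by $p$ the pivot, by $v_1,\dots,v_5$ the degree-$4$ rim vertices in cyclic order, by $S_i$ the subdivided spoke (the $p$-$v_i$ path in $H$), and by $R_i$ the subdivided rim arc (the $v_i$-$v_{i+1}$ path in $H$, indices mod $5$).

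The first step would be to reduce to the case $H=W_5$, i.e.\ no spoke or rim edge is subdivided. This is analogous to the opening moves of \Cref{thm:NonPlamarMet}: if some $S_i$ has length $\geq 2$, then together with the three other spokes $S_j$ ($j\neq i,i\pm 1$) and the two rim half-arcs joining $v_i$ to those $v_j$'s one collects four openly disjoint $p$-$v_i$ paths of length $\geq 2$, one of which has length $\geq 3$, producing a subdivision of \Cref{fig:graph1}. An analogous argument, combining a long rim arc with two adjacent spokes and with the remaining rim path, handles subdivided rim arcs. After this reduction I may assume $H=W_5$, so $|V(H)|=6$, and therefore $|V(G)\setminus V(H)|\geq 2$.

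Next, pick $u\in V(G)\setminus V(H)$. Using $2$-connectivity and the fan lemma, $u$ admits two openly disjoint paths to $V(H)$. The same trick as condition $(4)$ in the proof of \Cref{thm:NonPlamarMet} allows me to assume that both paths are single edges: if instead some internal edge of such a path were disjoint from $H$, one could reroute through $u$ to lengthen a spoke or rim arc of $H$ and invoke the previous reduction. Thus $u$ has (at least) two neighbors in $V(H)$. I would then split into cases on the neighbors of $u$: (a) $N(u)\cap V(H)$ contains $p$ and some rim vertex $v_i$, or (b) $N(u)\cap V(H)$ contains two rim vertices $v_i,v_j$, split by cyclic distance $1$ or $2$. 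In each case the combination of the new $u$-path with the pivot's five spokes and the rim $5$-cycle supplies enough openly disjoint paths between a suitable pair of vertices to display a subdivision of one of the forbidden graphs in \Cref{fig:MinimumGraphs} (typically \Cref{fig:graph1} when $u$ creates a fourth long path between two vertices already joined by three short ones, or \Cref{fig:graph2}/\Cref{fig:graph3} in the denser subcases). The second extra vertex $u'\in V(G)\setminus V(H)$ is used to close any remaining case in which a single extra vertex does not yet suffice, by an argument analogous to conditions $(2)$ and $(3)$ of \Cref{thm:NonPlamarMet}.

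The main obstacle is not any single clever step but rather the bookkeeping of the case analysis: $W_5$ is already extraordinarily well-connected (the pivot has five spokes, every pair of rim vertices lies in $K_4$-like patterns of paths), so almost any configuration of extra vertices and edges creates one of the forbidden graphs. The delicate part is verifying the reductions in the first step rigorously, and matching each resulting subcase to the correct member of \Cref{fig:MinimumGraphs}; conceptually the proof is routine, but combinatorially it requires patience.
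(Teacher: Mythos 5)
Your opening reduction --- ``reduce to the case $H=W_5$'' --- does not work, and this breaks the whole plan. You propose to handle a subdivided spoke $S_i$ by assembling \emph{four} openly disjoint $p$--$v_i$ paths of length at least $2$ to get a subdivision of \Cref{fig:graph1}. But in a subdivision of $W_5$ each rim vertex $v_i$ has degree $3$ (two rim arcs and one spoke), so at most three openly disjoint $p$--$v_i$ paths exist in $H$; likewise only three openly disjoint $v_i$--$v_{i+1}$ paths exist, so the analogous claim for a subdivided rim arc fails too. (Also note that for $i$ fixed there are only \emph{two} indices $j\neq i,i\pm1$, not three.) More fundamentally, the reduction cannot be repaired: a subdivision of $W_5$ with a single degree-$2$ vertex is a $7$-vertex graph, which the lemma explicitly allows to be metrizable --- if one subdivision point already forced a forbidden topological minor, the bound in the statement would be $6$, not $7$. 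The actual proof must therefore show non-metrizability only once there are \emph{two} degree-$2$ vertices in $H$ (so $|V(H)|=8$), and this is the combinatorial heart of the paper's argument: a nine-subcase analysis, by the positions of the two subdivision points, locating copies of \Cref{fig:graph6}, \Cref{fig:graph7}, \Cref{fig:graph8} and \Cref{fig:graph9} (the $8$-vertex minimal graphs), none of which appear in your sketch.

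Because the first step fails, the rest of the architecture is also off. You cannot assume $|V(H)|=6$ with two external vertices; the cases $|V(H)|=7$ (one subdivision point plus one external vertex) and $|V(H)|=8$ must be treated separately, and the interaction between an external vertex $z$ and an already-subdivided spoke or rim arc is exactly where the delicate conditions live (e.g.\ in the paper, $z$ adjacent to $p$ and $v_i$ is only immediately fatal when $Q_i$ \emph{already} has length $2$, giving four $p$--$v_i$ paths of lengths $2,2,2,\geq 2$ with one of length $\ge 3$; otherwise one must first re-route to create an extra subdivision point and recurse into the two-degree-$2$-vertex case). Your case (a)/(b) split for a single external vertex, with the unsubdivided edge $pv_i$ still present, does not by itself produce a subdivision of \Cref{fig:graph1}. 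The correct strategy is the reverse of yours: rather than stripping subdivision points away, one \emph{accumulates} them (by re-routing through external vertices) until $H$ has two degree-$2$ vertices, and separately handles external vertices whose attachments to $H$ cannot be absorbed this way.
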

	\begin{proof}
		Let $H$ be a subdivision of $W_5$ that is a subgraph of $G$, where $u$ is $H$'s pivot, and
		$v_1,v_2,v_3,v_4,v_5$ its degree $3$ vertices in cyclic order. Let $P_{i}$ be the path that is the possibly subdivided edge $v_{i}v_{i+1}$ and $Q_{i}$ the $uv_i$ path which is the possibly subdivided edge $uv_{i}$. We first prove that $G$ is not metrizable under certain conditions.
		\begin{enumerate}[(1)]
			\item $H$ has exactly two vertices of degree $2$.
			\begin{proof}
				We consider all the possible cases up to symmetries.
				\begin{itemize}
					\item $Q_{1}$ has length $3$. The graph $H - \set{uv_5,u_3}$ is a copy of \Cref{fig:graph8}.
					\item $Q_{1}$ and $Q_{2}$ have length $2$. The graph $H - \set{uv_3, uv_5}$ is a copy of \Cref{fig:graph9}.
					\item $Q_{1}$ and $Q_{3}$ have length $2$. The graph $H - \set{uv_4, uv_5}$ is a copy of \Cref{fig:graph6}.   
					\item $P_{1}$ has length $3$. The graph $H - \set{uv_3, uv_5}$ is a copy of \Cref{fig:graph6}.
					\item $P_{1}$ and $P_{2}$ have length $2$. The graph $H - \set{uv_4, uv_5}$ is a copy of \Cref{fig:graph6}.
					\item $P_{1}$ and $P_{3}$ have length $2$. The graph $H - \set{uv_3, uv_5}$ is a copy of \Cref{fig:graph6}.
					\item $P_{1}$ and $Q_{1}$ have length $2$. The graph $H - \set{uv_3, uv_5}$ is a copy of \Cref{fig:graph7}.
					\item $P_{1}$ and $Q_{2}$ have length $2$. The graph $H - \set{uv_3, uv_5}$ is a copy of \Cref{fig:graph7}.
					\item $P_{1}$ and $Q_{3}$ have length $2$. The graph $H - \set{uv_2, uv_4}$ is a copy of \Cref{fig:graph8}.
				\end{itemize}
			\end{proof}
			\item $H$ has exactly one vertex of degree $2$, $w$, and $wz \in E(G)$ for some $z\in V(G)\setminus V(H)$
			\begin{proof}
				Since $G$ is $2$-connected there exists another $z-H$ path not containing $w$. It suffices to consider the case where this path is a single edge. We go through all the possibilities up to symmetries.
				\begin{itemize}
					\item $w$ in on the path $P_{1}$
					\begin{itemize}
						\item $v_1 \in N(z)$. Redefine $P_{1}$ to be $v_1zwv_2$ a path of length $3$, and return to case $(1)$ above.
						\item $v_5 \in N(z)$. The paths $v_5zw, \ v_5v_1w, \ v_5uv_2, \ v_5v_4v_3v_2, \ v_2w$ form a subdivision of \Cref{fig:graph2}, \Cref{fig:W5NotMetFig1}.
						\item $v_4\in N(z)$. The paths $v_4zw, \ v_4v_5v_1w,\ v_4uv_2, \ v_4v_3v_2, \ wv_2$ form a subdivision of \Cref{fig:graph2}.
						\item $u \in N(z)$. The cycle $v_1wv_2v_3v_4v_5v_1$ along with the paths $uzw, \ uv_3, \  u v_5$ form a copy of \Cref{fig:graph7}, \Cref{fig:W5NotMetFig2}
					\end{itemize}
					\item $w$ in on the path $Q_{1}$
					\begin{itemize}
						\item $v_1 \in N(z)$. Redefine $Q_{1}$ to be the path of length $3$ $uwzv_1$, and return to case $(1)$ above.
						\item $u \in N(z)$. Again redefine 
						$Q_{1}$ to be the path of length $3$ $uzwv_1$.
						\item $v_2 \in N(z)$. The cycle $v_1v_2v_3v_4v_5v_1$, along with the paths $uwzv_2, \ uv_4, \ uv_5$ form a copy of \Cref{fig:graph8}, \Cref{fig:W5NotMetFig3}.
						\item $v_3 \in N(z)$.  Then the graph $(H- \set{uv_2, uv_3}) \cup \set{zw, zv_3}$ is a subdivision of \Cref{fig:graph5}, \Cref{fig:W5NotMetFig4}.
					\end{itemize}
				\end{itemize}
			\end{proof}
			\item $z\in V(G)\setminus V(H)$ and $zv_i, \ zv_j \in E(G)$ for some non-adjacent $v_i,\ v_j$, say $i=2$ and $j=5$.
			\begin{proof}
				The paths $v_5zv_2, \ P_5P_1, \ Q_5Q_3, \ P_4P_3, \ P_2$ form a subdivision of \Cref{fig:graph2}, \Cref{fig:W5NotMetFig5}.
			\end{proof}
			\item $P_1$ has length $2$ and $v_1, v_2$ are both neighbors of some $z\in V(G)\setminus V(H)$.
			\begin{proof}
				The path $P_1, \ v_1zv_2, \ Q_1Q_2, \ P_5P_4P_3P_2$ form a subdivsion of \Cref{fig:graph1}.
			\end{proof}
			\item $Q_1$ has length $2$ and $v_1, u$ are both neighbors of some $z\in V(G)\setminus V(H)$.
			\begin{proof}
				The paths $Q_1, \ uzv_1,\ P_5Q_5, \ P_1P_2P_3P_4Q_4$ form a subdivision of \Cref{fig:graph1}.
			\end{proof}
		\end{enumerate}
		\begin{figure}[h]
			\centering
			\begin{subfigure}{0.32\textwidth}
				\centering
				\begin{tikzpicture}[scale=0.85, every node/.style={scale=0.85}]
				
				\node[draw,circle,minimum size=.6cm,inner sep=0pt] (z) at (0*360/5 +90: 2.5cm){$z$};
				\node[draw,circle,minimum size=.6cm,inner sep=0pt] (v1) at (0*360/5 +90: 1.25cm) {$v_1$};
				\node[draw,circle,minimum size=.6cm,inner sep=0pt] (v2) at (-1*360/5 +90: 2.5cm) {$v_2$};
				\node[draw,circle,minimum size=.6cm,inner sep=0pt] (v3) at (-2*360/5 +90: 2.5cm) {$v_3$};
				\node[draw,circle,minimum size=.6cm,inner sep=0pt] (v4) at (-3*360/5 +90: 2.5cm) {$v_4$};
				\node[draw,circle,minimum size=.6cm,inner sep=0pt] (v5) at (-4*360/5 +90: 2.5cm) {$v_5$};
				\node[draw,circle,minimum size=.6cm,inner sep=0pt] (u) at (0,0) {$u$};
				\node[draw,circle,minimum size=.6cm,inner sep=0pt] (w) at ($(v2)!0.5!(z)$) {$w$};

				\draw [line width=2pt,-,red1] (v1) -- (w);
				\draw [line width=2pt,-,red1] (v1) -- (v5);
				\draw [line width=2pt,-] (v1) -- (u);
				\draw [line width=2pt,-,red1] (v2) -- (v3);
				\draw [line width=2pt,-,red1] (v2) -- (u);
				\draw [line width=2pt,-,red1] (v2) -- (w);
				\draw [line width=2pt,-,red1] (v3) -- (v4);
				\draw [line width=2pt,-] (v3) -- (u);
				\draw [line width=2pt,-,red1] (v4) -- (v5);
				\draw [line width=2pt,-] (v4) -- (u);
				\draw [line width=2pt,-,red1] (v5) -- (u);
				\draw [line width=2pt,-,red1] (v5) -- (z);
				\draw [line width=2pt,-,red1] (z) -- (w);

				\end{tikzpicture}
				\caption{This graph contains the non-metrizable \Cref{fig:graph2}.}
				\label{fig:W5NotMetFig1}
				
			\end{subfigure}
			\hfill
			\begin{subfigure}{0.32\textwidth}
				\centering
				\begin{tikzpicture}[scale=0.85, every node/.style={scale=0.85}]

				\node[draw,circle,minimum size=.6cm,inner sep=0pt] (v1) at (0*360/5 +90: 2.5cm) {$v_1$};
				\node[draw,circle,minimum size=.6cm,inner sep=0pt] (v2) at (-1*360/5 +90: 2.5cm) {$v_2$};
				\node[draw,circle,minimum size=.6cm,inner sep=0pt] (v3) at (-2*360/5 +90: 2.5cm) {$v_3$};
				\node[draw,circle,minimum size=.6cm,inner sep=0pt] (v4) at (-3*360/5 +90: 2.5cm) {$v_4$};
				\node[draw,circle,minimum size=.6cm,inner sep=0pt] (v5) at (-4*360/5 +90: 2.5cm) {$v_5$};
				\node[draw,circle,minimum size=.6cm,inner sep=0pt] (u) at (0,0) {$u$};
				\node[draw,circle,minimum size=.6cm,inner sep=0pt] (w) at ($(v1)!0.5!(v2)$) {$w$};
				\node[draw,circle,minimum size=.6cm,inner sep=0pt] (z) at ($(u)!0.5!(w)$) {$z$};

				\draw [line width=2pt,-,red1] (v1) -- (w);
				\draw [line width=2pt,-,red1] (v2) -- (w);
				\draw [line width=2pt,-,red1] (v1) -- (v5);
				\draw [line width=2pt,-] (v1) -- (u);
				\draw [line width=2pt,-,red1] (v2) -- (v3);
				\draw [line width=2pt,-] (v2) -- (u);
				\draw [line width=2pt,-,red1] (v3) -- (v4);
				\draw [line width=2pt,-,red1] (v3) -- (u);
				\draw [line width=2pt,-,red1] (v4) -- (v5);
				\draw [line width=2pt,-] (v4) -- (u);
				\draw [line width=2pt,-,red1] (v5) -- (u);
				\draw [line width=2pt,-,red1] (z) -- (u);
				\draw [line width=2pt,-,red1] (z) -- (w);

				\end{tikzpicture}
				\caption{The graph $W_7$ with a subdivided spoke contains \Cref{fig:graph2}.}
				\label{fig:W5NotMetFig2}
				
			\end{subfigure}
			\hfill
			\begin{subfigure}{0.32\textwidth}
				\centering
				\begin{tikzpicture}[scale=0.85, every node/.style={scale=0.85}]

				\node[draw,circle,minimum size=.6cm,inner sep=0pt] (v1) at (0*360/5 +90: 2.5cm) {$v_1$};
				\node[draw,circle,minimum size=.6cm,inner sep=0pt] (v2) at (-1*360/5 +90: 2.5cm) {$v_2$};
				\node[draw,circle,minimum size=.6cm,inner sep=0pt] (v3) at (-2*360/5 +90: 2.5cm) {$v_3$};
				\node[draw,circle,minimum size=.6cm,inner sep=0pt] (v4) at (-3*360/5 +90: 2.5cm) {$v_4$};
				\node[draw,circle,minimum size=.6cm,inner sep=0pt] (v5) at (-4*360/5 +90: 2.5cm) {$v_5$};
				\node[draw,circle,minimum size=.6cm,inner sep=0pt] (u) at (0,0) {$u$};
				\node[draw,circle,minimum size=.6cm,inner sep=0pt] (w) at ($(u)!0.5!(v1)$) {$w$};
				\node[draw,circle,minimum size=.6cm,inner sep=0pt] (z) at ($(w)!0.5!(v2)$) {$z$};

				\draw [line width=2pt,-,red1] (v1) -- (v2);
				\draw [line width=2pt,-,red1] (v1) -- (v5);
				\draw [line width=2pt,-] (v1) -- (w);
				\draw [line width=2pt,-,red1] (u) -- (w);
				\draw [line width=2pt,-,red1] (v2) -- (v3);
				\draw [line width=2pt,-] (v2) -- (u);
				\draw [line width=2pt,-,red1] (v3) -- (v4);
				\draw [line width=2pt,-] (v3) -- (u);
				\draw [line width=2pt,-,red1] (v4) -- (v5);
				\draw [line width=2pt,-,red1] (v4) -- (u);
				\draw [line width=2pt,-,red1] (v5) -- (u);
				\draw [line width=2pt,-,red1] (w) -- (z);
				\draw [line width=2pt,-,red1] (v2) -- (z);

				\end{tikzpicture}
				\caption{A graph which contains a copy of  \Cref{fig:graph8}.}
				\label{fig:W5NotMetFig3}
				
			\end{subfigure}
			\caption{}
		\end{figure}
		We show $G$ is not metrizable using the above conditions. Our analysis is split to cases according to the number of vertices in $H$. We show first that $G$ is not metrizable when $\abs{V(H)} = 8$, and derive the same conclusion  whenever $\abs{V(H)}\geq 8$, using \Cref{cor:subdiv}. When $\abs{V(H)} = 8$ there are exactly two vertices of degree $2$ in $H$ and this is dealt with in case $(1)$.
		
		Next assume $\abs{V(H)} =7$. Then there exists $w\in V(H)$, $\text{deg}_H(w) = 2$, and $z \in V(G) \setminus V(H)$. Since $G$ is $2$-connected there exist two openly disjoint $z-H$ paths. It is enough to assume that these paths are edges in $G$. If one of these edges is $zw$ then by condition $(2)$ $G$ is not metrizable.
		Suppose these edges are $zv_i, \ zv_j$. If $v_i$ and $v_j$ are not adjacent then $G$ is not metrizable by case $(3)$. If  $v_i$ and $v_j$ are adjacent with $j=i+1$, then either $P_i$ has length $2$ and  case $(4)$ applies or we can redefine $P_i$ to be $v_izv_{i+1}$ and case $(1)$ applies.
		Suppose these edges are $zv_i, \ zu$. If $Q_i$ has length $2$ then $G$ is not metrizable by case $(5)$. Otherwise we can redefine $Q_i$ to be $uzv_i$ and condition $(1)$ applies.\\
		Lastly assume $\abs{V(H)} =6$. Then there exists $z \in V(G) \setminus V(H)$. Since $G$ is $2$-connected there exists two openly disjoint $z-H$ paths, $R_1$ and $R_2$. If the ends of $R_1$ and $R_2$ are not adjacent then $G$ is not metrizable by condition $(3)$. So we can assume that ends of these paths are adjacent and we can replace one of the edges in $H$ by the path $R_1R_2$ so that $\abs{V(H)} \geq 7$. In this case $G$ was already shown to be not metrizable. 
	\end{proof}
	\begin{figure}[h]
		\centering
		\begin{subfigure}{0.28\textwidth}
			\centering
			\begin{tikzpicture}[scale=0.8, every node/.style={scale=0.8}]

			\node[draw,circle,minimum size=.6cm,inner sep=0pt] (v1) at (0*360/5 +90: 2.5cm) {$v_1$};
			\node[draw,circle,minimum size=.6cm,inner sep=0pt] (v2) at (-1*360/5 +90: 2.5cm) {$v_2$};
			\node[draw,circle,minimum size=.6cm,inner sep=0pt] (v3) at (-2*360/5 +90: 2.5cm) {$v_3$};
			\node[draw,circle,minimum size=.6cm,inner sep=0pt] (v4) at (-3*360/5 +90: 2.5cm) {$v_4$};
			\node[draw,circle,minimum size=.6cm,inner sep=0pt] (v5) at (-4*360/5 +90: 2.5cm) {$v_5$};
			\node[draw,circle,minimum size=.6cm,inner sep=0pt] (u) at (0,0) {$u$};
			
			\draw [line width=2pt,-] (v2) to [out=160,in=50, distance=1cm] (u);
			
			\node[draw,circle,minimum size=.6cm,inner sep=0pt] (w) at ($(u)!0.5!(v2)$) {$w$};

			\draw [line width=2pt,-,red1] (v1) -- (v2);
			\draw [line width=2pt,-,red1] (v1) -- (v5);
			\draw [line width=2pt,-,red1] (v1) -- (w);
			\draw [line width=2pt,-,red1] (u) -- (w);
			\draw [line width=2pt,-,red1] (v2) -- (v3);
			\draw [line width=2pt,-,red1] (v3) -- (v4);
			\draw [line width=2pt,-] (v3) -- (u);
			\draw [line width=2pt,-,red1] (v4) -- (v5);
			\draw [line width=2pt,-,red1] (v4) -- (u);
			\draw [line width=2pt,-,red1] (v5) -- (u);
			\node[draw,circle,minimum size=.6cm,inner sep=0pt] (z) at ($(w)!0.5!(v3)$) {$z$};
			\draw [line width=2pt,-,red1] (w) -- (z);
			\draw [line width=2pt,-,red1] (v3) -- (z);

			\end{tikzpicture}
			\caption{A graph which contains a subdivision of \Cref{fig:graph5}.}
			\label{fig:W5NotMetFig4}
			
		\end{subfigure}
		\hfill
		\begin{subfigure}{0.28\textwidth}
			\centering
			\begin{tikzpicture}[scale=0.8, every node/.style={scale=0.8}]

			\node[draw,circle,minimum size=.6cm,inner sep=0pt] (v1) at (0*360/5 +90: 1.25cm) {$v_1$};
			\node[draw,circle,minimum size=.6cm,inner sep=0pt] (v2) at (-1*360/5 +90: 2.5cm) {$v_2$};
			\node[draw,circle,minimum size=.6cm,inner sep=0pt] (v3) at (-2*360/5 +90: 2.5cm) {$v_3$};
			\node[draw,circle,minimum size=.6cm,inner sep=0pt] (v4) at (-3*360/5 +90: 2.5cm) {$v_4$};
			\node[draw,circle,minimum size=.6cm,inner sep=0pt] (v5) at (-4*360/5 +90: 2.5cm) {$v_5$};
			\node[draw,circle,minimum size=.6cm,inner sep=0pt] (u) at (0,0) {$u$};
			\node[draw,circle,minimum size=.6cm,inner sep=0pt] (z) at (0*360/5 +90: 2.5cm) {$z$};
			
			\draw [line width=2pt,-,red1] (v1) -- (v2);
			\draw [line width=2pt,-,red1] (v1) -- (v5);
			\draw [line width=2pt,-] (v1) -- (u);
			\draw [line width=2pt,-,red1] (v2) -- (v3);
			\draw [line width=2pt,-] (v2) -- (u);
			\draw [line width=2pt,-,red1] (v3) -- (v4);
			\draw [line width=2pt,-,red1] (v3) -- (u);
			\draw [line width=2pt,-,red1](v4) -- (v5);
			\draw [line width=2pt,-] (v4) -- (u);
			\draw [line width=2pt,-,red1] (v5) -- (u);
			\draw [line width=2pt,-,red1] (v5) -- (z);
			\draw [line width=2pt,-,red1] (v2) -- (z);

			\end{tikzpicture}
			\caption{The red edges form a subgraph isomorphic to \Cref{fig:graph2}.}
			\label{fig:W5NotMetFig5}
		\end{subfigure}
		\hfill
		\begin{subfigure}{0.28\textwidth}
			\centering
			\begin{tikzpicture}[scale=0.8, every node/.style={scale=0.8}]

			\node[draw,circle,fill] (1) at (0*360/3 +90: .75cm) {};
			\node[draw,circle,fill] (2) at (1*360/3 +90: .75cm) {};
			\node[draw,circle,fill] (3) at (2*360/3 +90: .75cm) {};
			\node[draw,circle,fill] (4) at (0*360/3 +90: 3cm) {};
			\node[draw,circle,fill] (5) at (1*360/3 +90: 3cm) {};
			\node[draw,circle,fill] (6) at (2*360/3 +90: 3cm) {};
			\node[draw,circle,fill] (7) at ($(2) ! 0.5 !(3)+ (-.5,-.75)$ ) {};
			
			\draw [line width=2pt,-,red1] (1) -- (2);
			\draw [line width=2pt,-,red1] (1) -- (3);
			\draw [line width=2pt,-,red1] (1) -- (4);
			\draw [line width=2pt,-] (2) -- (3);
			\draw [line width=2pt,-,red1] (2) -- (5);
			\draw [line width=2pt,-,red1] (3) -- (6);
			\draw [line width=2pt,-,red1] (3) -- (7);
			\draw [line width=2pt,-,red1] (4) -- (5);
			\draw [line width=2pt,-] (4) -- (6);
			\draw [line width=2pt,-,red1] (5) -- (6);
			\draw [line width=2pt,-,red1] (5) -- (7);
			
			\end{tikzpicture}
			\caption{A path which connects two non-adjacent vertices in $Y_3$ creates a subdivision of \Cref{fig:graph2}.}
			\label{fig:3PrismNotMet}
		\end{subfigure}
		\caption{}
	\end{figure}
	\begin{lemma}\label{lem:3prism}
		A $2$-connected graph which contains a subdivision of $Y_3$ and has at least $7$ vertices is not metrizable.
	\end{lemma}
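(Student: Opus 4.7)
The plan is to parallel the proof of Lemma \ref{lem:5wheel}. Let $H\subseteq G$ be a subdivision of $Y_3$, and label its principal vertices $v_1,v_2,v_3$ and $u_1,u_2,u_3$ so that $v_1v_2v_3$ and $u_1u_2u_3$ are its two triangles and $v_iu_i$ its rungs, with $P_e$ the path in $H$ representing edge $e$ of $Y_3$. The driving observation, depicted in \Cref{fig:3PrismNotMet}, is that if a graph contains $Y_3$ together with a further internally disjoint path between two vertices that are non-adjacent in $Y_3$, then it contains a subdivision of \Cref{fig:graph2}. Since the latter is non-metrizable, \Cref{cor:subdiv} forces any such host graph to be non-metrizable.

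First I would dispose of the case $|V(H)|=6$, so that $H=Y_3$. Since $|V(G)|\geq 7$, there is some $z\in V(G)\setminus V(H)$, and by $2$-connectivity of $G$ together with Menger's theorem there are two internally disjoint $z$-to-$H$ paths whose union through $z$ is a path $R$ with endpoints $a,b\in V(Y_3)$ and interior disjoint from $V(H)$. If $a,b$ are non-adjacent in $Y_3$, the driving observation finishes the proof. Otherwise $ab\in E(Y_3)$, and replacing the edge $ab$ by $R$ yields a subdivision $H'$ of $Y_3$ in $G$ with $|V(H')|\geq 7$, reducing to the remaining case.

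For $|V(H)|\geq 7$, at least one edge $e$ of $Y_3$ is subdivided in $H$, and up to the action of $\operatorname{Aut}(Y_3)$ on $E(Y_3)$ we need only consider the two orbits: $e$ a triangle edge, say $v_1v_2$, or a rung, say $v_1u_1$. For each configuration, the plan is to extract inside $H$ a copy of $Y_3$ together with a path between two of its non-adjacent vertices by picking a subdivision vertex $w$ on $P_e$ and promoting $w$ to a principal vertex of a relabelled $Y_3$, using the remainder of $P_e$ (possibly concatenated with some $P_{e'}$) as the required non-adjacent path. The driving observation then produces a subdivision of \Cref{fig:graph2} in $G$, and \Cref{cor:subdiv} finishes the argument.

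The main obstacle is the bookkeeping in the last step: patiently verifying, for each of the two edge orbits of $\operatorname{Aut}(Y_3)$ and each possible configuration of subdivision vertices on $P_e$, that the promised ``$Y_3$ plus non-adjacent path'' structure can in fact be read off inside $H$. The crux is the base case $|V(H)|=7$ (a single edge subdivided by a single vertex); once that is settled, configurations with more subdivision vertices are handled by passing to a smaller subdivision of $Y_3$ still contained in $H$ and invoking \Cref{cor:subdiv}.
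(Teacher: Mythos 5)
Your treatment of the case $|V(H)|=6$ matches the paper's: either the extra path through a new vertex $z$ lands on two non-adjacent vertices of $Y_3$, giving a subdivision of \Cref{fig:graph2}, or it lands on adjacent ones and you enlarge $H$. The gap is in the case $|V(H)|\geq 7$. The structure you propose to extract there --- a subdivision of $Y_3$ together with an internally disjoint path between two of its non-adjacent branch vertices --- cannot exist inside $H$: the endpoints of the extra path already have degree $3$ in $Y_3$, so the combined structure has two branch vertices of degree $4$, whereas $H$, being a subdivision of a cubic graph, has maximum degree $3$. (For the base case one can also count edges: the target needs at least $9+2=11$ edges, while a once-subdivided $Y_3$ has only $10$.) The ``promotion'' of a subdivision vertex $w$ to a principal vertex of a relabelled $Y_3$ is likewise impossible, since $w$ has degree $2$ in $H$ and a principal vertex of $Y_3$ needs degree $3$. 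More decisively, the once-subdivided $Y_3$ is precisely \Cref{fig:graph4} or \Cref{fig:graph5} (according to whether a rung or a triangle edge is subdivided), and these appear in \Cref{fig:MinimumGraphs} as topologically \emph{minimal} non-metrizable graphs; in particular they contain no subdivision of \Cref{fig:graph2} or of any other non-metrizable graph, so no reduction of the kind you envisage can succeed.

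What the paper actually does for $|V(H)|\geq 7$ is to observe that such an $H$ contains a subdivision of \Cref{fig:graph4} or \Cref{fig:graph5}, whose non-metrizability is certified directly by the explicit path systems and inequalities in \Cref{append:certificates}; \Cref{cor:subdiv} then finishes the argument. This external input --- the certificates for those two graphs --- is genuinely needed here and cannot be recovered from the ``$Y_3$ plus a non-adjacent path'' observation alone.
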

	\begin{proof}
		Let $G$ be a graph with at least $7$ vertices containing a subdivision of $Y_3$.
		Subdividing any edge in $Y_3$ we either get a copy of \Cref{fig:graph4} or \Cref{fig:graph5}. So we need only consider the case where there is path of  length $2$ between two non-adjacent vertices of $Y_3.$ In this case $G$ contains a subdivision of \Cref{fig:graph2}, see \Cref{fig:3PrismNotMet}.
	\end{proof}
	We use the following lemma from \cite{Di}:
	\begin{lemma} \label{lem:3ConnEdgeRem}
		Every $3$-connected graph $G\neq K_4$ contains an edge $e$ such that the graph $G-e$ is $3$-connected after suppressing any vertices of degree $2$.
	\end{lemma}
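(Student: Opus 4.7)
The plan is a contradiction argument driven by minimality of a topological 2-cut. Suppose that for every edge $e = xy$ of $G$, the graph $G^e$ obtained from $G-e$ by suppressing all degree-$2$ vertices fails to be $3$-connected, so $G^e$ carries a 2-cut $\{a_e,b_e\}$ whose elements have degree $\geq 3$ in $G-e$. Pick an edge $e$, together with such a 2-cut, for which the smaller side $A$ of the partition of $V(G^e)\setminus\{a_e,b_e\}$ has the minimum possible number of vertices (breaking ties by, e.g., total edge count). Let $B$ be the other side. Because $G$ itself is $3$-connected, $\{a_e,b_e\}$ does not separate $G$, so the only way that $G-e$ becomes 2-separated after suppression is that $e$ bridges $A$ to $B$ in $G$, or an endpoint of $e$ of degree exactly $3$ in $G$ gets suppressed in $G-e$ and thereby yields the cut.

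Next, choose a second edge $e'$ lying inside $A$, or between $A$ and $\{a_e,b_e\}$; since $a_e,b_e$ have degree $\geq 3$ in $G^e$ and $G$ is $3$-connected, such an edge exists once $|A|\geq 1$. By the standing assumption there is an associated 2-cut $\{a_{e'},b_{e'}\}$ of $G^{e'}$ with sides $A',B'$. A standard crossing-cuts / submodularity argument for $3$-connected graphs, of the kind used in the classical fragment theory behind Tutte's wheel theorem, shows that $\{a_{e'},b_{e'}\}$ must split $A$ further: intersecting the two partitions and using $3$-connectivity of $G$ to rule out empty pieces, one of the four intersection blocks is a smaller candidate side than $A$, contradicting the extremal choice of $e$ and $\{a_e,b_e\}$.

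The main obstacle in executing this plan is bookkeeping the interaction between edge deletion and the suppression of degree-$2$ vertices. If an endpoint of $e$ or $e'$ has degree exactly $3$ in $G$, it disappears when passing to $G^e$, so a 2-cut of $G^e$ does not translate verbatim into a 2-cut of $G-e$; a short topological-path may hide the suppressed vertex, and one must reinterpret the cut inside $G$ by reconstructing the suppressed path. A case analysis on the degrees in $G$ of the endpoints of $e$ and $e'$, and on whether $a_e, b_e$ coincide with neighbors of those endpoints, is required so that the submodular inequality $|A'\cap A| + |A'\cup A|\leq |A'|+|A|$ and its variants carry over, and so that no side of the refined cut inadvertently becomes empty. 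Once this careful translation is in place, the extremal argument closes the proof.
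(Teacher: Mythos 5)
The paper does not actually prove this lemma: it is quoted from Diestel's book (\cite{Di}), where it appears in the build-up to Tutte's wheel theorem. So your argument has to stand on its own, and as written it does not. It is a plan whose decisive step is deferred to ``a standard crossing-cuts / submodularity argument,'' and that step is exactly where the entire difficulty of the lemma lives. Submodularity compares the corners of two crossing separations of the \emph{same} graph; here the two cuts $\{a_e,b_e\}$ and $\{a_{e'},b_{e'}\}$ live in two \emph{different} graphs $G^{e}$ and $G^{e'}$, each obtained from $G$ by deleting a different edge and suppressing a different set of degree-$2$ vertices. Before any corner can be measured against $|A|$, both cuts must be re-expressed as separations of a common object (essentially $G$ with the suspended paths tracked), and one must show that a nonempty corner again yields a $2$-separation of the relevant suppressed graph whose two cut vertices are branch vertices. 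You acknowledge this as ``bookkeeping,'' but it is not routine: it is precisely why the known proofs of this statement (and of the companion contraction lemma) do not take $e'$ generically from $A$, but choose a very specific second edge --- typically one joining a cut vertex to the minimal fragment --- so that the location of the new separator can be pinned down directly and a strictly smaller fragment exhibited.

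Two further points would need attention even if the crossing step were supplied. First, your description of how a $2$-cut of $G^{e}$ arises (``$e$ bridges $A$ to $B$, or an endpoint of $e$ of degree exactly $3$ gets suppressed'') is asserted, not proved; since $G$ is $3$-connected, $G-e$ is $2$-connected, and a $2$-separator of $G^{e}$ lifts to a pair of branch vertices of $G-e$ which, together with the interiors of the suspended paths between them, separate $G-e$ --- this correspondence has to be established and then carried through the whole argument. Second, the descent needs a terminating base case: if the minimal side satisfies $|A|=1$, any further cut would have to produce an empty side, and one must check that this is a genuine contradiction rather than a degenerate separation, bearing in mind that suppression can create parallel edges. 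In short, the strategy is right in spirit, but the proof is not there; either carry out the explicit choice of the second edge and the full descent as in \cite{Di}, or simply cite that source as the paper does.
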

	An immediate consequence of \Cref{lem:3ConnEdgeRem} is
	\begin{lemma}
		Every $3$-connected graph $G$ on $n>4$ vertices contains a subdivision of a $3$-connected graph $H$ on either $n-1$ or $n-2$ vertices. 
	\end{lemma}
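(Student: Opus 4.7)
The plan is to apply \Cref{lem:3ConnEdgeRem} iteratively, viewing the whole process as strong induction on $|E(G)|$. Given a $3$-connected graph $G$ on $n>4$ vertices, since $n>4$ we have $G\neq K_4$, so \Cref{lem:3ConnEdgeRem} produces an edge $e=uv$ such that $G-e$ becomes $3$-connected after suppressing every degree-$2$ vertex. Because $G$ is $3$-connected every vertex has degree at least $3$, so in $G-e$ the only potential degree-$2$ vertices are $u$ and $v$ themselves; each of them actually has degree $2$ in $G-e$ precisely when its degree in $G$ was exactly $3$. Let $H$ denote the $3$-connected graph obtained after the suppression; then $|V(H)|=n-k$, where $k\in\{0,1,2\}$ counts how many endpoints of $e$ had degree $3$ in $G$.

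If $k\in\{1,2\}$, we are done immediately. Indeed, suppressing degree-$2$ vertices of $G-e$ is the inverse of subdividing edges of $H$, so $G-e$ is a subdivision of $H$. Since $G-e\subseteq G$, the graph $G$ contains a subdivision of the $3$-connected graph $H$, which has either $n-1$ or $n-2$ vertices.

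If instead $k=0$, then no vertex gets suppressed, so $H=G-e$ is a $3$-connected graph on $n$ vertices with $|E(H)|=|E(G)|-1$. Since $n>4$, we may invoke the inductive hypothesis on $H$ to obtain a subdivision of some $3$-connected graph $H'$ on $n-1$ or $n-2$ vertices inside $H$, and this subdivision lies inside $G$ as well.

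The one thing to check is that the induction is well-founded, but this is automatic: the $k=0$ case strictly reduces $|E|$, while a $3$-connected graph on $n$ vertices has at least $\lceil 3n/2\rceil$ edges, so the $k=0$ branch cannot recur forever. I do not expect any serious obstacle here; the substantive content is packaged into \Cref{lem:3ConnEdgeRem}, and the proof above is essentially bookkeeping about which endpoints of the removed edge drop to degree $2$.
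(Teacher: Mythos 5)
Your proof is correct and follows exactly the route the paper intends: the paper states this lemma as an immediate consequence of \Cref{lem:3ConnEdgeRem} without writing out a proof, and your argument is the natural fleshing-out, using that only the endpoints of the removed edge can drop to degree $2$. Your extra care in handling the case where neither endpoint is suppressed (iterating, with termination guaranteed by the lower bound on the edge count of a $3$-connected graph) is a detail the paper glosses over but is needed for a complete argument.
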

	\begin{lemma}\label{lem:3ConnTM}
		Let $G$ be $3$-connected (planar) graph on $n>4$ vertices. Then $G$ contains a subdivision of a graph $H$, where $H$ is a $3$-connected (planar) graph which has either $n-1$ or $n-2$ vertices. 
	\end{lemma}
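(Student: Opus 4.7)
The plan is to apply \Cref{lem:3ConnEdgeRem} to $G$, obtaining an edge $e = uv \in E(G)$ such that the graph $H$ formed from $G - e$ by suppressing any resulting vertices of degree $2$ is $3$-connected. I will take this $H$ as the desired graph, and will verify (a) that $G$ contains a subdivision of $H$, (b) that $|V(H)| \in \{n-1, n-2\}$, and (c) that $H$ inherits planarity from $G$.

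For (a), note that $G - e$ is a subgraph of $G$, and by construction $G - e$ is a subdivision of $H$: each suppressed degree-$2$ vertex is an internal vertex on a subdivided edge of $H$. For (b), the only vertices whose degree can change upon the deletion of $e$ are $u$ and $v$, each losing exactly one from their $G$-degree; hence the suppressed vertices form a subset of $\{u, v\}$, so $|V(H)|$ differs from $n$ by $0$, $1$, or $2$. Since $G$ is $3$-connected, $\deg_G(u), \deg_G(v) \geq 3$, and the precise formulation of \Cref{lem:3ConnEdgeRem} in \cite{Di} ensures the selected edge $e$ has an endpoint of degree exactly $3$ in $G$. That endpoint has degree $2$ in $G - e$ and is suppressed, so $|V(H)| \in \{n-1, n-2\}$.

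For (c), both edge deletion and the suppression of a degree-$2$ vertex preserve planarity: the former trivially as a subgraph operation, the latter because a degree-$2$ vertex together with its two incident edges form a path in any planar embedding, which can simply be replaced by a single edge drawn along the same curve. Hence if $G$ is planar, so is $H$. The main subtlety is guaranteeing that $|V(H)| < n$, which hinges on the selected edge $e$ having at least one endpoint of degree $3$; this is built into Diestel's formulation of \Cref{lem:3ConnEdgeRem}, and could alternatively be enforced by iterating the lemma, since the process strictly reduces the edge count while preserving $3$-connectedness and must therefore eventually force a suppression.
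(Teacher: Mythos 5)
Your proposal follows the same route the paper intends: the paper offers no explicit proof, presenting the statement as an immediate consequence of \Cref{lem:3ConnEdgeRem}, and your argument fills in exactly the details one would expect (the suppressed vertices lie in $\{u,v\}$, suppression inverts subdivision, and both operations preserve planarity). One correction, though: your claim that the formulation of \Cref{lem:3ConnEdgeRem} in \cite{Di} ``ensures the selected edge $e$ has an endpoint of degree exactly $3$'' is false. Take $G=K_5$: it is $3$-connected and not $K_4$, so the lemma applies, yet every edge has both endpoints of degree $4$, and $G-e$ is $3$-connected with no vertex suppressed at all. So the possibility $|V(H)|=n$ is real, and the iteration you describe is not an optional alternative but the necessary step. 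Fortunately your iteration argument is sound: as long as no suppression occurs, each application of \Cref{lem:3ConnEdgeRem} deletes an edge from a $3$-connected graph on $n>4$ vertices while keeping it $3$-connected on the same vertex set; since such a graph has at least $\lceil 3n/2\rceil$ edges, a suppression must eventually occur, at which point the resulting graph has $n-1$ or $n-2$ vertices and the subgraph just before suppression is a subdivision of it sitting inside $G$. With that reading, your proof is correct.
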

	We can now prove the following:
	\begin{theorem}\label{thm:2Conn3ConnMet}
		If $G$ is a $2$-connected graph of order at least $8$ which contains a subdivision of a $3$-connected graph other than $K_4, \ W_4$ and $K_5-e$, then $G$ is not metrizable.
	\end{theorem}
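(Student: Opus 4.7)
The plan is to reduce to cases already handled. First, if the 3-connected graph $H$ in the hypothesis is non-planar, then $G$, which contains a subdivision of $H$, is itself non-planar; since $G$ is 2-connected with $|V(G)| \geq 8 > 7$, \Cref{thm:NonPlamarMet} immediately gives that $G$ is not metrizable. Henceforth I may assume $H$ is planar.

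The key claim I would establish is that every 3-connected planar graph $H \notin \{K_4, W_4, K_5 - e\}$ contains $W_5$ or $Y_3$ as a topological minor. Granted this, since subdivisions compose, $G$ contains a subdivision of $W_5$ or $Y_3$; together with $G$ being 2-connected on at least $8$ vertices, \Cref{lem:5wheel} or \Cref{lem:3prism} then yields the desired conclusion.

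I would prove the claim by induction on $n = |V(H)|$. The base case $n = 6$ follows from \Cref{lem:list3minConnGraphs}: the minimally 3-connected planar graphs on 6 vertices are precisely $W_5$ and $Y_3$, so $H$ contains one of them as a (spanning) subgraph, hence as a topological minor. For $n \geq 8$, \Cref{lem:3ConnTM} supplies a 3-connected planar graph $H'$ (planarity is preserved under subdivision) on $n - 1$ or $n - 2 \geq 6$ vertices such that $H$ contains a subdivision of $H'$; since $|V(H')| \geq 6$, the exceptions $K_4, W_4, K_5 - e$ are avoided, so the inductive hypothesis applies to $H'$, and subdivisions of subdivisions are subdivisions.

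The one delicate case, and the main obstacle, is $n = 7$: here \Cref{lem:3ConnTM} might descend only to a 3-connected graph on 5 vertices, which is forced to be $W_4$ or $K_5 - e$ (the non-planar $K_5$ being excluded by planarity of $H$), neither of which helps directly. My workaround is to delete edges from $H$ while preserving 3-connectivity until reaching a minimally 3-connected planar graph on the same 7 vertices, which by \Cref{lem:list3minConnGraphs} must be one of the three graphs in \Cref{fig:min3ConnPlane7Vtxs}. It then suffices to verify by hand that each of these three graphs contains $W_5$ or $Y_3$ as a topological minor: in $W_6$ (\Cref{fig:Wheel6}) the pivot together with five of the six rim vertices form the $W_5$-branch vertices, with the sixth rim vertex serving as the unique internal vertex of the subdivided rim edge that closes the 5-cycle; in each of the remaining two graphs no vertex has degree at least 5, so $W_5$ cannot be a topological minor, but one can exhibit a $Y_3$-subdivision whose two disjoint triangles (one formed by actual edges, the other closed by a length-2 path through the unused vertex) are linked by a matching of direct edges. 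These three small verifications complete the argument.
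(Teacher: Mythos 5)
Your proposal is correct and follows essentially the same route as the paper: reduce to the planar case via \Cref{thm:NonPlamarMet}, descend via \Cref{lem:3ConnTM} to $3$-connected planar graphs on $6$ or $7$ vertices, pass to minimally $3$-connected graphs, and invoke \Cref{lem:list3minConnGraphs} together with \Cref{lem:5wheel} and \Cref{lem:3prism}. Your explicit verification that each $7$-vertex graph in \Cref{fig:min3ConnPlane7Vtxs} contains a subdivision of $W_5$ or $Y_3$ just spells out what the paper leaves as ``easy to see.''
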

	
	\begin{proof}
		By \Cref{thm:NonPlamarMet} it suffices to prove the planar case. By \Cref{lem:3ConnTM} if the theorem holds for $3$-connected planar graphs on $6$ and $7$ vertices then it holds for all such graphs with at least $6$ vertices. Moreover, it suffices to consider minimally $3$-connected graphs as a subdivision of a $3$-connected graph always contains a subdivision of a minimally $3$-connected graph. It follows from \Cref{lem:list3minConnGraphs}, \Cref{lem:5wheel}, \Cref{lem:3prism} that the theorem holds for $3$-connected planar graphs on $6$ vertices. The theorem is also true for minimally $3$-connected planar graph on $7$ vertices as it is easy to see from \Cref{lem:list3minConnGraphs} that such graphs contain a subdivision of $W_5$ or $Y_3$. Lastly we note that the only $3$-connected planar graphs on less than $6$ vertices are $K_4, \ W_4$ and $K_5-e$. 
	\end{proof}
	
	\begin{corollary}\label{cor:3ConnMet}
		A $3$-connected graph with at least $8$ vertices is not metrizable.
	\end{corollary}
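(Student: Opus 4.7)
The plan is to derive this as an essentially immediate consequence of \Cref{thm:2Conn3ConnMet}, which has just been established. Let $G$ be a $3$-connected graph on $n \geq 8$ vertices. Since $3$-connectivity implies $2$-connectivity, $G$ satisfies the connectivity hypothesis of \Cref{thm:2Conn3ConnMet}.

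Next I would observe that every graph trivially contains a subdivision of itself, so $G$ contains a subdivision of the $3$-connected graph $G$. The three exceptional graphs excluded in \Cref{thm:2Conn3ConnMet} are $K_4$, $W_4$, and $K_5-e$, which have $4$, $5$, and $5$ vertices respectively; since $|V(G)| \geq 8$, $G$ is not isomorphic to any of these exceptions. Therefore the hypotheses of \Cref{thm:2Conn3ConnMet} are fulfilled, and we conclude that $G$ is not metrizable.

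There is no real obstacle here: the corollary is essentially a restatement of \Cref{thm:2Conn3ConnMet} in the special case where the subdivided $3$-connected subgraph is taken to be $G$ itself. The entire content is absorbed into the theorem it follows from.
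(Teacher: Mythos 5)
Your proof is correct and matches the paper's intent exactly: the corollary is stated without proof as an immediate consequence of \Cref{thm:2Conn3ConnMet}, obtained by taking the $3$-connected subdivided graph to be $G$ itself and noting that the exceptions $K_4$, $W_4$, $K_5-e$ all have at most $5$ vertices.
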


	\begin{figure}[h]
		\centering
		\begin{subfigure}{0.32\textwidth}
			\centering
			\begin{tikzpicture}
			
			\node[draw,circle,minimum size=.6cm,inner sep=0pt] (x1) at (0,2) {$x_1$};
			\node[draw,circle,minimum size=.6cm,inner sep=0pt] (y1) at (1.375,-2) {$y_1$};
			\node[draw,circle,minimum size=.6cm,inner sep=0pt] (y2) at (-1.375,-2) {$y_2$};
			\node[draw,circle,minimum size=.6cm,inner sep=0pt] (a2) at (-2,0) {$a_2$};
			\node[draw,circle,minimum size=.6cm,inner sep=0pt] (b2) at (-.75,0) {$b_2$};
			\node[draw,circle,minimum size=.6cm,inner sep=0pt] (a1) at (2,0) {$a_1$};
			\node[draw,circle,minimum size=.6cm,inner sep=0pt] (b1) at (.75,0) {$b_1$};
			
			\draw [line width=2pt,-] (x1) -- (a2);
			\draw [line width=2pt,-] (x1) -- (b2);
			\draw [line width=2pt,-] (y2) -- (a2);
			\draw [line width=2pt,-] (y2) -- (b2);
			
			\draw [line width=2pt,-] (x1) -- (a1);
			\draw [line width=2pt,-] (x1) -- (b1);
			\draw [line width=2pt,-] (y1) -- (a1);
			\draw [line width=2pt,-] (y1) -- (b1);
			\draw [line width=1pt,-, dashed] (y1) -- (y2) node [midway, below] {$R$};

			\end{tikzpicture}
			
			\caption{A subdivision of \Cref{fig:graph2}.}
			\label{fig:fourDisjointPaths1}
		\end{subfigure}
		\hfill
		\begin{subfigure}{0.33\textwidth}
			\centering
			\begin{tikzpicture}
			
			\node[draw,circle,minimum size=.6cm,inner sep=0pt] (x1) at (-1.5,2) {$x_1$};
			\node[draw,circle,minimum size=.6cm,inner sep=0pt] (y1) at (-1.5,-2) {$y_1$};
			\node[draw,circle,minimum size=.6cm,inner sep=0pt] (a1) at (-2.25,0) {$a_1$};
			\node[draw,circle,minimum size=.6cm,inner sep=0pt] (b1) at (-0.75,0) {$b_1$};
			\node[draw,circle,minimum size=.6cm,inner sep=0pt] (x2) at (1.5,2) {$x_2$};
			\node[draw,circle,minimum size=.6cm,inner sep=0pt] (y2) at (1.5,-2) {$y_2$};
			\node[draw,circle,minimum size=.6cm,inner sep=0pt] (a2) at (2.25,0) {$a_2$};
			\node[draw,circle,minimum size=.6cm,inner sep=0pt] (b2) at (0.75,0) {$b_2$};
			
			\draw [line width=2pt,-] (x2) -- (a2);
			\draw [line width=2pt,-] (x2) -- (b2);
			\draw [line width=2pt,-] (y2) -- (a2);
			\draw [line width=2pt,-] (y2) -- (b2);
			
			\draw [line width=2pt,-] (x1) -- (a1);
			\draw [line width=2pt,-] (x1) -- (b1);
			\draw [line width=2pt,-] (y1) -- (a1);
			\draw [line width=2pt,-] (y1) -- (b1);
			
			\draw [line width=1pt,-, dashed] (x1) -- (x2);
			\draw [line width=1pt,-, dashed] (y1) -- (y2);

			\end{tikzpicture}
			
			\caption{Disjoint paths between $x_1$ and $x_2$ and $y_1$ and $y_2$.}
			\label{fig:fourDisjointPaths2}
		\end{subfigure}
		\hfill	
		\begin{subfigure}{0.33\textwidth}
			\centering
			\begin{tikzpicture}
			
			\node[draw,circle,minimum size=.6cm,inner sep=0pt] (x1) at (-1.5,2) {$x_1$};
			\node[draw,circle,minimum size=.6cm,inner sep=0pt] (y1) at (-1.5,-2) {$y_1$};
			\node[draw,circle,minimum size=.6cm,inner sep=0pt] (a1) at (-2.15,0) {$a_1$};
			\node[draw,circle,minimum size=.6cm,inner sep=0pt] (b1) at (-0.85,0) {$b_1$};
			\node[draw,circle,minimum size=.6cm,inner sep=0pt] (x2) at (1.5,2) {$x_2$};
			\node[draw,circle,minimum size=.6cm,inner sep=0pt] (y2) at (1.5,-2) {$y_2$};
			\node[draw,circle,minimum size=.6cm,inner sep=0pt] (a2) at (2.15,0) {$a_2$};
			\node[draw,circle,minimum size=.6cm,inner sep=0pt] (b2) at (0.85,0) {$b_2$};
			\node[draw,circle,minimum size=.5cm,inner sep=0pt] (u) at (0,0) {$u$};
			
			\draw [line width=2pt,-] (x2) -- (a2);
			\draw [line width=2pt,-] (x2) -- (b2);
			\draw [line width=2pt,-] (y2) -- (a2);
			\draw [line width=2pt,-] (y2) -- (b2);
			
			\draw [line width=2pt,-] (x1) -- (a1);
			\draw [line width=2pt,-] (x1) -- (b1);
			\draw [line width=2pt,-] (y1) -- (a1);
			\draw [line width=2pt,-] (y1) -- (b1);
			
			\draw [line width=1pt,-, dashed] (x1) -- (x2);
			\draw [line width=1.5pt,-, dashed] (y1) -- (y2);
			\draw [line width=1.5pt,-, dashed] (x1) -- (u);
			\draw [line width=1.5pt,-, dashed] (u) -- (y2);

			\end{tikzpicture}
			
			\caption{This graph contains a subdivision of \Cref{fig:graph2}.}
			\label{fig:fourDisjointPaths3}
		\end{subfigure}

		\caption{}
	\end{figure}
	
	We now state and prove more results regarding the non-metrizability of certain graphs that will be useful to us later. A vertex in a graph is called {\em essential} if its degree is greater than $2$.
	
	\begin{lemma}\label{lem:fourDisjointPaths}
		Let $G=(V,E)$ be a $2$-connected graph which contains internally disjoint paths $P_1$, $Q_1$, $P_2$,  and $Q_2$, each of length at least $2$. Suppose that for each $i=1,2$, $P_i$ and $Q_i$ share endpoints $x_i, y_i \in V$  such that $x_i$ and $y_i$ separate the vertices of $P_i$ and $Q_i$ from the rest of $G$. If $G$ contains an essential vertex not contained in $P_1, \ Q_1, \ P_2$, or $Q_2$ then $G$ is not metrizable.
		
	\end{lemma}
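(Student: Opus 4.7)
The plan is to locate a subdivision of a non-metrizable graph (specifically \Cref{fig:graph2}) inside $G$ and conclude by \Cref{cor:subdiv}.

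I would begin by applying the 2-connectivity of $G$ together with Menger's theorem to produce two vertex-disjoint paths $A$ and $B$ between the endpoint sets $\{x_1, y_1\}$ and $\{x_2, y_2\}$, chosen to avoid the internal vertices of the four paths $P_1, Q_1, P_2, Q_2$. This is possible because any $\{x_1,y_1\}$--$\{x_2,y_2\}$ path that enters the interior of $\Theta_i \coloneqq P_i \cup Q_i$ does so via $x_i$ or $y_i$, and such a path can always be rerouted to avoid the internal vertices of $\Theta_i$. After relabeling $x_2, y_2$ if necessary, denote the two paths $A\colon x_1 \rightsquigarrow x_2$ and $B\colon y_1 \rightsquigarrow y_2$. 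The degenerate cases where some endpoints coincide (so that $A$ or $B$ may reduce to a single vertex, as in \Cref{fig:fourDisjointPaths1}) are handled analogously.

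Next, I would use the essential vertex $v^*$. Since $v^*$ lies on none of the paths $P_i, Q_i$, in particular $v^* \notin \{x_1, y_1, x_2, y_2\}$. If $v^* \in A \cup B$, say $v^* \in A$, then since $\deg(v^*) \geq 3$ there is a neighbor $w$ of $v^*$ not on $A$, and by 2-connectivity the edge $v^*w$ extends (avoiding $v^*$) to a path $R^*$ returning to the structure $\Theta_1 \cup \Theta_2 \cup A \cup B$. Otherwise $v^* \notin A \cup B$, and then by 2-connectivity of $G$ there exist two internally disjoint paths from $v^*$ to this structure. In either case we obtain a ``handle'' consisting of $v^*$ together with paths attaching it to two distinct points of $\Theta_1 \cup \Theta_2 \cup A \cup B$.

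Finally, the subgraph $\Theta_1 \cup \Theta_2 \cup A \cup B$ is a cycle $C$ (formed from $P_1, A, P_2, B$) carrying two attached chord paths $Q_1$ and $Q_2$; the handle through $v^*$ supplies a third, independent attached arc. A case analysis on where this arc attaches---whether each endpoint lies on $A$, on $B$, or inside one of the thetas---shows that in every configuration $G$ contains a subdivision of \Cref{fig:graph2}, as exemplified by \Cref{fig:fourDisjointPaths1} and \Cref{fig:fourDisjointPaths3}. The main obstacle is the bookkeeping of these sub-cases, but the doubled-path structure of each theta greatly simplifies rerouting: whenever the handle attaches inside one of $P_i$ or $Q_i$, one may use the other path of $\Theta_i$ to reach the endpoints $x_i, y_i$ and so reduce to the canonical configurations displayed in the referenced figures. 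The conclusion then follows from \Cref{cor:subdiv}.
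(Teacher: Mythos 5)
Your overall strategy---reduce to a search for a forbidden topological minor, build connectors between the two thetas via Menger, and use the essential vertex as a handle---is the same as the paper's. But there is a genuine gap: the claim that \emph{every} configuration yields a subdivision of \Cref{fig:graph2} is false, and the cases you defer as ``bookkeeping'' are exactly where the real work lies. The clearest failure is the degenerate case $x_1=x_2$, $y_1=y_2$, which you dismiss as ``handled analogously.'' There, after shrinking all four paths to length $2$, the union $P_1\cup Q_1\cup P_2\cup Q_2$ is a $K_{2,4}$, and if the essential vertex $u$ attaches to it by the two single edges $ux_1$, $uy_1$ the resulting graph is $K_{2,5}$. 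Both graphs are metrizable (the paper proves $K_{2,n}$ is metrizable for all $n$), and neither contains a subdivision of \Cref{fig:graph2}: that graph has one branch vertex of degree $4$ and two of degree $3$, whereas $K_{2,5}$ has only two vertices of degree $\geq 3$. The paper escapes this trap by invoking a \emph{second} essential vertex (a neighbor $v$ of $u$) to manufacture a fourth $x_1y_1$ path of length $3$, which produces \Cref{fig:graph1}---not \Cref{fig:graph2}. Your proposal never produces a path of length $\geq 3$ between the theta endpoints, so it cannot close this case.

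Even in the non-degenerate cases, a single target graph does not suffice. When $\{x_1,y_1\}\cap\{x_2,y_2\}=\emptyset$ and one of the two connectors has length $\geq 2$ (or the handle re-attaches at $x_1$ and $x_2$, lengthening a connector), the paper obtains a subdivision of \Cref{fig:graph10}, whose maximum degree is $3$, so it is not a subdivision of \Cref{fig:graph2}; and when the handle attaches at $x_1$ and $y_1$ the paper again falls back on \Cref{fig:graph1}. So the case analysis you postpone genuinely requires three distinct forbidden graphs and, in one sub-case, an extra essential vertex; as written, the proposal's concluding claim would fail on concrete configurations.
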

	\begin{proof}
		It suffices to prove this when all these paths have length $2$. If some path is longer, we can suppress a vertex of degree $2$ in it and refer to \Cref{cor:subdiv}. We write $P_{i} = x_ia_iy_i$ and $Q_i = x_i b_i y_i$, and let $A=\{x_1,a_1,b_1,y_1,x_2,a_2,b_2,y_2\}$, and take $u\notin A$ a vertex of degree at least $3$. There are several cases to consider.
		\begin{itemize}
			\item $x_1 = x_2$ and $y_1 = y_2$.
			
			By the fan lemma there exist $u-A$ paths $R_1$ and $R_2$ which intersect only at $u$. As $x_1$ and $y_1$ separate $A$ from the rest of $G$ these paths must terminate at $x_1$ and $y_1$. If either $R_1$ or $R_2$ has length $2$ or more, then the graph comprised of the $x_1y_1$ paths $P_1$, $Q_1$, $P_2$ and $R_1R_2$ is a subdivision of \Cref{fig:graph1}. So we can assume these paths are the edges $ux_1$ and $uy_1$. Since $u$ has degree at least $3$ it has another neighbor $v$. We now apply to $v$ the same argument previously applied to $u$ and conclude that $v$ is a neighbour of both $x_1$ and $y_1$. But then the $x_1y_1$ paths $P_1$, $Q_1$, $P_2$  and $x_1uvy_2$ form a subdivision of \Cref{fig:graph1}.
			\item $x_1 = x_2$ and $y_1\neq y_2$.
			
			Let $R$ be a $y_1y_2$ path not containing $x_1$. Note that $R$ contains none of the vertices in $\{a_1, b_1, a_2,b_2\}$. This is because $x_1$ and $y_1$ separate  $\{a_1, b_1\}$ from $y_2$ while $x_1$ and $y_2$ separate $\{a_2, b_2\}$ from $y_1$. Therefore the paths $P_1$, $Q_1$, $P_2$ , $Q_2$ and $R$ form a subdivision of \Cref{fig:graph2}, see \Cref{fig:fourDisjointPaths1}
			\item $\{x_1, y_1\} \cap \{x_2,y_2\} = \emptyset$.
			
			By Menger's theorem there are disjoint paths between $\{x_1,y_1\}$ and $\{x_2,y_2\}$, say, from $x_1$ to $x_2$ and $y_1$ to $y_2$, see \Cref{fig:fourDisjointPaths3}. Again these paths contain none of the vertices in $\{a_1, b_1, a_2,b_2\}$. Therefore, if either the $x_1x_2$ path or the $y_1y_2$ path have length greater than $1$ we obtain a subdivision of \Cref{fig:graph10} by taking the union of these two paths along with $P_1,\ P_2, \ Q_1, \ Q_2$ . So we can assume these paths are the edges $x_1x_2$ and $y_1y_2$.
			
			Again, by the fan lemma there are two $u-A$ paths $R_1, \ R_2$  which only intersect at $u$. Since the vertices  $\{x_1,y_1,x_2,y_2\}$ separate $A$ from the rest of $G$ the endpoints of these paths must be in the set $\{x_1,y_1,x_2,y_2\}$. If the endpoint are $x_1$ and $x_2$ then replacing the edge $x_1x_2$ with the path $R_1R_2$ we get a subdivision of \Cref{fig:graph10}, as above. If the endpoints are $x_1$ and $y_1$ then the four $x_1y_1$ paths $P_1$, $Q_1$, $R_1R_2$ and $x_1x_2P_2y_2y_1$ form a subdivision of \Cref{fig:graph1}. If the endpoints are $x_1$ and $y_2$ then the paths $P_1$, $Q_1$, $R_1R_2$, $P_2$ and $y_1y_2$ form a subdivision of \Cref{fig:graph2}, see \Cref{fig:fourDisjointPaths3}. Every other case is identical to one of the previous ones.
		\end{itemize}
	\end{proof}
	
	As shown by Dirac \cite{Dir} a $2$-connected graph with minimum degree $3$ contains a subdivision of $K_4$. We need the
	following variation of this result.
	\begin{lemma}\label{lem:3MinDegree3Conn}
		Let $G=(V,E)$ be a graph of connectivity $2$ and $S \subseteq V$, $\abs{S}=2$, a separating set. If $C$ is a component of $G\setminus S$ such that that each vertex in $C$ is essential in $G$ then there is a set $V'\subseteq V(C) $ and two vertices $u,v \in (V(C) \cup S) \setminus V'$ such that  the graph $G[V'\cup \set{u,v}] + uv$ is $3$-connected and $G[V']$ is a component of the graph $G\setminus\set{u,v}.$
	\end{lemma}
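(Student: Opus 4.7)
The strategy is to choose a minimal ``witness pair'' and then show that minimality forces 3-connectivity. Call a pair $(V',\{u,v\})$ \emph{admissible} if $V'\subseteq V(C)$ is nonempty, $u,v$ are two distinct vertices of $(V(C)\cup S)\setminus V'$, and $G[V']$ is a connected component of $G\setminus\{u,v\}$. The pair $(V(C),S)$ is admissible, so we may select an admissible pair $(V',\{u,v\})$ with $|V'|$ minimum. Set $H':=G[V'\cup\{u,v\}]+uv$ (adding the edge $uv$ if not already present). I will show $H'$ is 3-connected, giving the lemma.

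First, $|V'|\geq 2$: if $V'=\{w\}$, then since $\{u,v\}$ separates $w$ from the rest of $G$, all $G$-neighbors of $w$ lie in $\{u,v\}$, giving $\deg_G(w)\leq 2$ and contradicting the hypothesis that every vertex of $C$ is essential. In particular $|V(H')|\geq 4$. Now suppose for contradiction that $\{a,b\}$ separates $H'$. I split by $|\{a,b\}\cap\{u,v\}|$. If $\{a,b\}=\{u,v\}$, then $H'\setminus\{u,v\}=G[V']$, which is connected because $V'$ is (the vertex set of) a component of $G\setminus\{u,v\}$; contradiction. If $\{a,b\}\cap\{u,v\}=\emptyset$, then the edge $uv\in E(H')$ survives, so $u$ and $v$ lie in a common component of $H'\setminus\{a,b\}$; pick any other component $D$. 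If $|\{a,b\}\cap\{u,v\}|=1$, say $a=u$ and $b\in V'$, pick a component $D$ of $H'\setminus\{u,b\}$ not containing $v$. In both subcases $D\subseteq V'\setminus\{b\}$ (with $b$ interpreted as $\emptyset$ in the first), so $|D|<|V'|$.

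The crux is to verify $(D,\{a,b\})$ is again admissible, which will contradict minimality. Since $V'$ is a component of $G\setminus\{u,v\}$, every vertex of $V'$ has all of its $G$-neighbors inside $V'\cup\{u,v\}$. Hence the $G$-edges incident to $D$ are exactly the $H'$-edges incident to $D$ (the extra edge $uv$ in $H'$ does not touch $D$ since $u,v\notin D$). Consequently the component structure of $D$ in $G\setminus\{a,b\}$ matches that in $H'\setminus\{a,b\}$, so $G[D]$ is a component of $G\setminus\{a,b\}$. Moreover $D\subseteq V(C)$ and $\{a,b\}\subseteq V'\cup\{u,v\}\subseteq V(C)\cup S$ with $\{a,b\}\cap D=\emptyset$, so the admissibility conditions hold. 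This contradicts minimality of $|V'|$, completing the proof.

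The main obstacle is the mixed case $|\{a,b\}\cap\{u,v\}|=1$: one must make sure that, after swapping one of $u,v$ for a vertex of $V'$, the resulting component $D$ is genuinely a component of $G\setminus\{a,b\}$ and not just of the auxiliary graph $H'\setminus\{a,b\}$. The observation that vertices of $V'\subseteq V(C)$ have no $G$-neighbors outside $V'\cup\{u,v\}$, together with the fact that the virtual edge $uv$ is disjoint from $D$, is exactly what makes this transfer between $H'$ and $G$ work.
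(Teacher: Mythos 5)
Your proof is correct and follows essentially the same strategy as the paper's: choose the pair $\{u,v\}$ minimizing the size of the component inside $C$, then rule out a $2$-separator $\{a,b\}$ of $H'$ by the same three-case analysis on $|\{a,b\}\cap\{u,v\}|$, each time producing a strictly smaller admissible component. Your handling of $|V'|\geq 2$ via the degree of a lone vertex $w$ is a slight simplification of the paper's corresponding step, and your explicit verification that a component of $H'\setminus\{a,b\}$ transfers to a component of $G\setminus\{a,b\}$ makes precise a point the paper treats more briefly.
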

	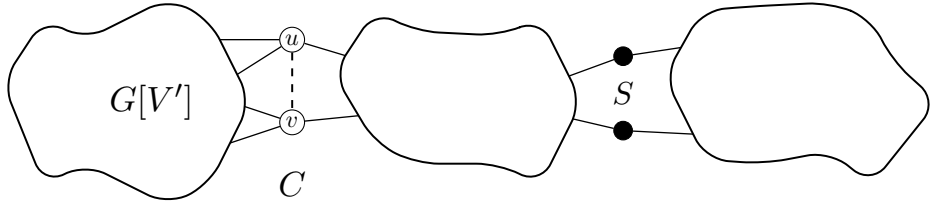
\begin{figure}[h]
		\centering
		\begin{tikzpicture}[scale=1.1, every node/.style={scale=1.1}]
		%\draw[line width = 1pt] (3.5,0) ellipse (5cm and 1.5cm);
		\node[scale = .75, draw,circle,minimum size=.4cm,inner sep=0pt] (u) at (3.5, .75) {$u$};
		\node[scale = .75, draw,circle,minimum size=.4cm,inner sep=0pt] (v) at (3.5, -.25) {$v$};
		\node[scale = .75, draw,circle,minimum size=.3cm,inner sep=0pt, fill] (x) at (7.5, .55) {};
		\node[scale = .75, draw,circle,minimum size=.3cm,inner sep=0pt, fill] (y) at (7.5, -.35) {};
		\draw [line width=.5pt,-] (u) -- (2,-.2);
		\draw [line width=.5pt,-] (u) -- (2,.75);
		\draw [line width=.5pt,-] (u) -- (6,0);
		\draw [line width=.5pt,-] (v) -- (2,.25);
		\draw [line width=.5pt,-] (v) -- (2,-.75);
		\draw [line width=.75pt,-,dashed] (u) -- (v);
		\draw [line width=.5pt,-] (v) -- (6,0);
		\draw [line width=.5pt,-] (x) -- (6,0);
		\draw [line width=.5pt,-] (y) -- (6,0);
		\draw [line width=.5pt,-] (x) -- (8.5,.7);
		\draw [line width=.5pt,-] (y) -- (8.5,-.4);
		\draw [line width = .75pt, scale =1,
		rounded corners=3mm, fill = white] 
		(0,.24) --(.5,1)-- (1,.75)-- (2,1.25)-- (2.5,1)-- (3, 0)-- (2.5, -1) -- (2,-1.25) -- (1,-.75) --(.5,-1) --cycle;
		
		\node[scale = 1.1] at (1.8,0) {$G[V']$};
		\node[scale = 1.1] at (3.5,-1) {$C$};
		\node[scale = 1] at ($(x)!0.5!(y)$) {$S$};

		\draw [line width = .75pt, scale =1,
		rounded corners=3mm,fill = white ]
		(4,.34) --(4.5,1.1)-- (5,.85)-- (6,.8)-- (6.5,1)-- (7, 0)-- (6.5, -1) -- (6,-.75) -- (5,-.75) --(4.5,-.5) --cycle;
		
		\draw [line width = .75pt, scale =1,
		rounded corners=3mm,fill = white ]
		(8,.3) --(8.45,1.1)-- (9,1.15)-- (10,1.2)-- (10.5,1)--(10.75,.5)-- (11.3, 0)-- (10.5, -1) -- (10,-.6) -- (9.2,-.75) --(8.5,-.7) --cycle;

		\end{tikzpicture}

		\caption{An illustration of a graph satisfying the conditions of \Cref{lem:3MinDegree3Conn}.}
	\end{figure}
	\begin{proof}
		We choose $u,v \in V(C) \cup S$ so as to minimize the size of the smallest component, $C'$, of $G\setminus\set{u,v}$ such that $V(C') \subseteq V(C)$. Such pairs of vertices exist, e.g., the two vertices of $S$. Set $V' = V(C')$ and $\tilde{V} = V' \cup \set{u,v}$. We argue that the graph $G[\tilde{V}] + uv$ is $3$-connected. First we show that $|\tilde{V}| \geq 4$. Indeed, both $u$ and $v$ have at least two neighbors in $C'$. To see this we note that since $G$ is $2$-connected both $u$ and $v$ must have at least one neighbor in $C'$ for otherwise the removal of only one vertex disconnects $G$. Suppose that $u$ has only one neighbor in $C'$, say $w$. Since $d_G(w) \geq 3$, it has at least one neighbor $z\in C'$ so that $C'- \set{w}$ is not empty. Then removing $w$ and $v$ from $G$ separates $C'- \set{w}$ from $u$. Moreover, $C'-\set{w}$ has a component that is strictly contained in $C'$, contradicting our choice of $u$ and $v$. Now suppose  $H=G[\tilde{V}] \cup  \set{uv}$ is not $3$-connected. Then there exists $x$ and $y$ whose removal disconnects $H$. If $\{x,y\} = \{u,v\}$ then $H \setminus \{x,y\} = C'$, which is a connected graph. If $\{x,y\} \cap \{u,v\} =\emptyset$ then both $u$ and $v$ are in the same connected component since they are connected by an edge. Let $C''$ be the component $H \setminus \{x,y\}$ which does not contain $u$ and $v$. Then any path from $u$ or $v$ to $C''$ must contain either $x$ or $y$. Since any path connecting a vertex in $ V\setminus \tilde{V}$ to $C'$ must contain either $u$ or $v$, it follows that $x$ and $y$ disconnects $C''$ from the rest of $G$. This implies $C''$ is a component of $G\setminus\{x,y\}$ which is strictly contained in $C'$, a contradiction. Lastly suppose that w.l.o.g. $x=u$ and $y\neq v$. Then again we have that removing $x$ and $y$ disconnects $G$. Indeed, if $C''$ is a component of $H\setminus\set{x,y}$ which does not contain $v$ then  removing $x$ and $y$ from $G$ we find there is no path connecting $C''$ to $ V\setminus \tilde{V}$. Therefore $C''$ is a component of $G \setminus \set{x,y}$ which is strictly contained in $C'$, a contradiction.
	\end{proof}
	
	Using \Cref{lem:3MinDegree3Conn} we prove this next metrizability lemma. 
	\begin{lemma}\label{lem:susPathMinDeg3}
		Let $G= (V,E)$ be a graph of connectivity $2$ containing two internally disjoint paths $P_1$ and $P_2$, each of length at least $2$, which share endpoints $x$ and $y$ such that $x$ and $y$ separate $P_1$ and $P_2$ from the rest of $G$.  If $|V \setminus (V(P_1) \cup V(P_2))|\geq 6$ and every vertex in $V \setminus (V(P_1) \cup V(P_2))$ is essential in $G$ then $G$ is not metrizable.
	\end{lemma}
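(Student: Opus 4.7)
The plan is to invoke Lemma~\ref{lem:fourDisjointPaths} with $P_1, P_2$ as one pair of paths (sharing endpoints $x, y$) and a second pair of $uv$-paths $Q_1, Q_2$ built inside the ``opposite side'' of the cut $\{x, y\}$. Set $V^* := V \setminus (V(P_1) \cup V(P_2))$. Since $\{x, y\}$ separates $V^*$ from $V(P_1) \cup V(P_2)$, the set $V^*$ is a union of components of $G \setminus \{x, y\}$. Pick any such component $C$; every vertex of $V(C) \subseteq V^*$ is essential in $G$ by hypothesis, so Lemma~\ref{lem:3MinDegree3Conn} applies with $S = \{x, y\}$ and produces $V' \subseteq V(C)$ and $u, v \in (V(C) \cup \{x, y\}) \setminus V'$ such that $H := G[V' \cup \{u, v\}] + uv$ is $3$-connected and $G[V']$ is a component of $G \setminus \{u, v\}$.

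Menger's theorem in $H$ supplies three internally disjoint $uv$-paths; as they are openly disjoint, at most one of them is the single edge $uv$, so we may select two of them, $Q_1$ and $Q_2$, of length at least $2$ and lying entirely in $G$, with interiors in $V'$. The cut $\{u, v\}$ therefore separates $V(Q_1) \cup V(Q_2)$ from the rest of $G$. Moreover, $P_1, P_2, Q_1, Q_2$ are pairwise internally disjoint, since $P_i$ has interior in $V(P_1 \cup P_2) \setminus \{x, y\}$ while $Q_j$ has interior in $V^*$, and these two sets are disjoint. Thus all structural hypotheses of Lemma~\ref{lem:fourDisjointPaths} are met.

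To conclude via Lemma~\ref{lem:fourDisjointPaths} it remains to exhibit an essential vertex of $G$ on none of the four paths, i.e., a vertex in $V^* \setminus V(Q_1 \cup Q_2)$. If $V^* \not\subseteq V' \cup \{u, v\}$ (for instance, when $V^*$ contains a component of $G \setminus \{x, y\}$ besides $C$, or when $G \setminus \{u, v\}$ has a further component inside $V(C)$), any vertex of $V^* \setminus (V' \cup \{u, v\})$ is essential and works. Otherwise $V^* \subseteq V' \cup \{u, v\}$, which forces $|V'| \geq |V^*| - 2 \geq 4$ and hence $|V(H)| \geq 6$; the plan in this case is to refine the choice of the Menger triple so that $V(Q_1) \cup V(Q_2)$ omits at least one vertex of $V'$, exploiting both $3$-connectivity and order $\geq 6$ to rule out that every pair of openly disjoint length-$\geq 2$ $uv$-paths covers $V(H)$. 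The main obstacle is precisely this refinement, most acutely when $uv \in E(G)$ so that the short edge-path $uv$ can appear in every Menger triple; one handles it by working in $H - uv$, which remains sufficiently connected in a $3$-connected graph of order $\geq 6$ to yield a pair of openly disjoint $uv$-paths whose union is a proper subset of $V(H)$. Once such $Q_1, Q_2$ are in hand, Lemma~\ref{lem:fourDisjointPaths} delivers the non-metrizability of $G$.
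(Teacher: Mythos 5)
Your skeleton---apply \Cref{lem:3MinDegree3Conn} to obtain $H=G[V'\cup\{u,v\}]+uv$, extract two openly disjoint $uv$-paths $Q_1,Q_2$ of length at least $2$ from $H$, and feed $P_1,P_2,Q_1,Q_2$ into \Cref{lem:fourDisjointPaths}---is sound up to the point where you must produce an essential vertex off all four paths, and your first sub-case (some vertex of $V^*$ lies outside $V'\cup\{u,v\}$) is fine. The gap is the second sub-case. There you assert that a $3$-connected graph of order at least $6$ always admits two openly disjoint $uv$-paths of length at least $2$ whose union is a proper subset of $V(H)$, possibly after deleting the edge $uv$. This is false: take $H=K_{3,3}$ with $u$ and $v$ in opposite parts. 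Every $uv$-path has odd length, so two openly disjoint such paths of length at least $2$ must both have length $3$, and their interiors then exhaust the four remaining vertices. Deleting the edge $uv$ does not help, since in $K_{3,3}$ every $4$-cycle through both $u$ and $v$ uses the edge $uv$, so every cycle through $u$ and $v$ in $K_{3,3}-uv$ is Hamiltonian; moreover $u$ has degree $2$ in $K_{3,3}-uv$, so there is no third path to discard. Nothing in your argument prevents \Cref{lem:3MinDegree3Conn} from handing you such an $H$ with $V^*=V'\cup\{u,v\}$, and in that situation $V(P_1)\cup V(P_2)\cup V(Q_1)\cup V(Q_2)=V$, leaving no candidate vertex at all.

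The paper avoids this by splitting on the isomorphism type of $H$ rather than on how $Q_1\cup Q_2$ sits inside $V^*$. If $H\notin\{K_4,W_4,K_5-e\}$, then $G$ (which has at least $10\ge 8$ vertices) contains a subdivision of the $3$-connected graph $H$ and is non-metrizable directly by \Cref{thm:2Conn3ConnMet}; no spare essential vertex is needed, and this branch absorbs exactly the troublesome large $H$ such as $K_{3,3}$. Only when $H\in\{K_4,W_4,K_5-e\}$ does the paper invoke \Cref{lem:fourDisjointPaths}, and there $|V(H)|\le 5<6\le|V^*|$ hands you an essential vertex of $V^*$ outside $P_1\cup P_2\cup H$ for free. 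You need this dichotomy (or something equivalent); the path-refinement route cannot be completed as stated.
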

	
	\begin{proof}
		This graph conforms with the assumptions of \Cref{lem:3MinDegree3Conn}. Indeed, the set $\{x,y\}$ separates $G$, and every vertex in every connected component $C$ of $G\setminus (V(P_1) \cup V(P_2))$ has degree at least $3$ in $G$. Using \Cref{lem:3MinDegree3Conn} we find some $V' \subseteq V(C)$ and vertices $u,v \in V(C) \cup \{x,y\}$ such that $H\coloneqq G[V' \cup \{u,v\}] + uv$ is $3$-connected and $G[V']$ is a component of $G \setminus \{u,v\}$.
		\begin{comment}
		Set $\tilde{V} \coloneqq V \setminus (V(P_1) \cup V(P_2))$. As $x$ and $y$ separate $\tilde{V}$ from $P_1$ and $P_2$,  there exists a component $C \subseteq G[\tilde{V}]$ of the graph $G -\{x,y\}$. By \Cref{lem:3MinDegree3Conn} there is a set $V' \subset V(C)$ and vertices $u,v \in V(C) \cup \{u,v\}$ such that $H\coloneqq G[V' \cup \{u,v\}] + uv$ is $3$-connected and $G[V']$ is a component of $G - \{u,v\}$.
		\end{comment}
		If $H\notin \{K_4,W_4, K_5-e\}$ then $G$ is non-metrizable by \Cref{thm:2Conn3ConnMet}. Otherwise $H\in \{K_4,W_4, K_5-e\}$ and $H$ has at most $5$ vertices. As $|V \setminus (V(P_1) \cup V(P_2))|\geq 6$, there exists at least one more vertex $u\notin P_1\cup P_2\cup H$ of degree at least $3$. Since $H$ is $3$-connected, there exists two disjoint $uv$ paths $Q_1$ and $Q_2$ of length at least $2$. These paths are also paths in $G$ since they do not contain the edge $uv$. 
		Thus $P_1$, $P_2$, $Q_1$ and $Q_2$ are internally disjoint paths of length at least $2$, where $P_1$, $P_2$ share endpoints, $Q_1$, $Q_2$ share endpoints and some vertex not in any of these paths has degree at least $3$. By \Cref{lem:fourDisjointPaths} $G$ is not metrizable .
	\end{proof}
	\begin{theorem}\label{thm:deg3Met}
		A metrizable $2$-connected graph with all vertex degrees at least $3$ has at most $12$ vertices. 
	\end{theorem}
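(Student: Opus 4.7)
The plan is to combine Corollary \ref{cor:3ConnMet} and Lemma \ref{lem:susPathMinDeg3} via a carefully chosen 2-separation. Assume for contradiction that $G$ is $2$-connected, metrizable, has minimum degree at least $3$, and $|V(G)| \geq 13$. Corollary \ref{cor:3ConnMet} rules out $G$ being $3$-connected (else $|V(G)| \leq 7$), so $G$ has connectivity exactly $2$. I then choose a $2$-separation $\{x,y\}$ of $G$ and a component $C$ of $G \setminus \{x,y\}$ so as to minimize $|V(C)|$. The min-degree hypothesis forces $|V(C)| \geq 2$, since a singleton component would give a vertex of degree at most $2$.

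Set $A := V(C)\cup\{x,y\}$, $R := V(G)\setminus A$, and let $G_A := G[A] + xy$ (adjoining the virtual edge $xy$ if absent). Since $G$ is $2$-connected, $R$ carries an $xy$-path, so $G_A$ is a topological minor of $G$ and hence metrizable by Proposition \ref{prop:TopMinClosed}. I next argue that $G_A$ is $3$-connected: any $2$-separator $\{x',y'\} \neq \{x,y\}$ of $G_A$ would isolate, inside $G_A - \{x',y'\}$, a piece $P \subseteq V(C) \setminus \{x',y'\}$ from the virtual edge $xy$, and since $R$ attaches to $V(C)$ only through $x,y$, the same $P$ would be a component of $G \setminus \{x',y'\}$ of size at most $|V(C)|-1$, contradicting the minimality of $|V(C)|$. (The case $\{x',y'\} = \{x,y\}$ is vacuous as $G_A - \{x,y\} = G[V(C)]$ is connected.) Corollary \ref{cor:3ConnMet} applied to $G_A$ now gives $|V(G_A)| \leq 7$, hence $|V(C)| \leq 5$ and $|R| = |V(G)|-|A| \geq 13 - 7 = 6$.

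To invoke Lemma \ref{lem:susPathMinDeg3}, I then exhibit two internally disjoint $xy$-paths $P_1, P_2$ inside $G[A] = G_A - xy$, each of length at least $2$, whose union spans $A$---equivalently, a Hamilton cycle in $G_A$ that avoids the virtual edge $xy$. Given such paths, $\{x,y\}$ separates $V(P_1\cup P_2)=A$ from $R$, $|R|\geq 6$, and every vertex of $R$ is essential by the minimum-degree hypothesis; Lemma \ref{lem:susPathMinDeg3} then yields that $G$ is not metrizable, the desired contradiction.

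The required Hamilton cycle is obtained by a finite case check on $G_A$: by Theorem \ref{thm:NonPlamarMet}, $G_A$ must be planar (else $|V(G)|\leq 7$), and by Lemmas \ref{lem:5wheel} and \ref{lem:3prism}, $G_A$ contains no subdivision of $W_5$ or $Y_3$ (again forcing $|V(G)|\leq 7$). Combined with the classification in Lemma \ref{lem:list3minConnGraphs}, the possibilities for $G_A$ form a short explicit list on at most $7$ vertices, and each is readily seen to admit a Hamilton cycle through $x$ and $y$ that avoids the edge $xy$. Carrying out this finite verification---especially when $|V(G_A)|=7$, where more configurations arise---will be the main obstacle.
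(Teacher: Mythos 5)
Your argument is correct and follows the same skeleton as the paper's proof: reduce to connectivity $2$ via \Cref{cor:3ConnMet}, extract a $3$-connected graph across a $2$-separation, bound its order by \Cref{cor:3ConnMet} again, produce two internally disjoint paths of length at least $2$ between the separating pair, and conclude with \Cref{lem:susPathMinDeg3}. Two implementation choices differ. Where you obtain $3$-connectivity of $G_A$ from a globally minimal component $C$, the paper invokes \Cref{lem:3MinDegree3Conn}; your direct minimality argument is a correct (and arguably cleaner) substitute. More substantively, you require $P_1\cup P_2$ to span $A$ --- a Hamilton cycle of $G_A$ avoiding the edge $xy$ --- so that the separation hypothesis of \Cref{lem:susPathMinDeg3} holds in $G$ itself, whereas the paper takes two arbitrary disjoint paths and deletes all remaining edges of the $3$-connected piece before applying that lemma to the resulting subgraph. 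Your route buys a cleaner application of the lemma at the cost of the finite verification you flag as the main obstacle; but that obstacle dissolves. Since $G$ contains a subdivision of the $3$-connected graph $G_A$ and $|V(G)|\geq 13$, \Cref{thm:2Conn3ConnMet} already forces $G_A\in\{K_4,\,W_4,\,K_5-e\}$ --- equivalently, the $6$- and $7$-vertex cases you worry about cannot occur, because by \Cref{lem:list3minConnGraphs} every $3$-connected planar graph on $6$ or $7$ vertices contains a subdivision of $W_5$ or $Y_3$ --- and each of $K_4$, $W_4$ and $K_5-e$ visibly has a Hamilton cycle avoiding any prescribed edge. The only point worth making explicit is that non-planarity of $G_A$ lifts to non-planarity of $G$ through the subdivision, which is how \Cref{thm:NonPlamarMet} enters your argument.
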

	With some more work the upper bound on the number of vertices can be reduced from $12$ to $8$.
	\begin{proof}
		
		Let $G=(V,E)$ be a $2$-connected graph with $\delta(G) \geq 3$ and at least $13$ vertices. \Cref{cor:3ConnMet} allows us to assume that that $G$ has connectivity $2$. If $\{x,y\}$ is a cut set of $G$ and $C$ is the smallest connected component of $G \setminus\{x,y\}$, then $|V\setminus (\{x,y\} \cup V(C))| \geq 6$. Let $V' \subseteq V(C)$ and $u,v\in (V(C) \cup \{x,y\}) \setminus V'$ be as in \Cref{lem:3MinDegree3Conn}. Namely, $G[V'\cup  \{u,v\}] +uv$ is $3$-connected and $G[V']$ is a component of $G \setminus \{u,v\}$. Since $G[V' \cup \{u,v\}] +uv$ is $3$-connected there are two disjoint $uv$ paths of length at least $2$. Fix two such paths $P_1$ and $P_2$. Let $E'$ denote all the edges in $G[V'\cup  \{u,v\}] +uv$ which do not appear in $P_1$ or $P_2$.  After deleting every edge in $E'$ from $G$ we get a subgraph $G'$ which satisfies all the requirements of \Cref{lem:susPathMinDeg3}. Therefore $G'$ and hence $G$ is not metrizable.
	\end{proof}
	
	\section{Establishing Metrizability}\label{sec:est_met}
	
	Up until this point we have shown that metrizable graphs are rare. Can we find a large class graphs which {\em are} metrizable? Trivially, trees are strictly metrizable. As we show next:
	
	\begin{proposition}\label{prop:cycleStrictlyMet}
		Cycles are strictly metrizable.
	\end{proposition}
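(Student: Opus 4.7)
The plan is to invoke the classification of consistent path systems on cycles provided by \Cref{prop:CyclePathSystem}. Let $\mathcal{P}$ be a consistent path system in $C_n$ and let $F$ be its set of persistent edges. Then either $\mathcal{P}$ is trivial, or $\mathcal{P}/F = \mathcal{S}_m$ on $C_n / F = C_m$ for some odd $m$ with $3 \leq m \leq n$. I will handle these two cases separately.

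In the trivial case, every tree in the corresponding tree system coincides with a fixed spanning tree $T$ of $C_n$. Since a spanning tree of a cycle has the form $C_n \setminus \{e\}$ for some edge $e$, every path in $\mathcal{P}$ avoids $e$. I would assign $w(e) = n$ and $w(e') = 1$ for every other edge $e'$. Then each $P_{u,v} \in \mathcal{P}$ has weight at most $n-1$, whereas the unique alternative $uv$ path in $C_n$ uses $e$ and has weight at least $n$, so $P_{u,v}$ is the unique $w$-geodesic and $w$ strictly induces $\mathcal{P}$.

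In the non-trivial case, I would first observe that $\mathcal{S}_m$ is strictly induced on $C_m$ by the all-ones weight function: writing $m = 2k+1$, every pair of vertices $u,v \in C_m$ is joined by exactly two arcs whose lengths sum to $m$, and the shorter arc has at most $k$ edges while the longer arc has at least $k+1$ edges. By \Cref{prop:CyclePathSystem}, $\mathcal{S}_m$ is precisely the collection of these shorter arcs, so each such arc is the unique unit-weight geodesic between its endpoints. Hence $\mathcal{P}/F$ is strictly metrizable.

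Finally I would reconstruct $\mathcal{P}$ from $\mathcal{P}/F$ by un-contracting the edges of $F$ one at a time and appealing to \Cref{prop:StrictMetQuot} at each stage. The only bookkeeping to check is that the edge being un-contracted is persistent in the intermediate path system, which is immediate: the edges of $F$ lie on every tree of the tree system corresponding to $\mathcal{P}$, and this property is preserved under contracting other persistent edges. After finitely many applications of the lemma we obtain a weight function on $E(C_n)$ that strictly induces $\mathcal{P}$. The two cases together establish that every consistent path system on $C_n$ is strictly metrizable, so $C_n$ is strictly metrizable. There is no real obstacle here beyond carefully threading together the classification in \Cref{prop:CyclePathSystem} and the persistence-contraction machinery of \Cref{prop:StrictMetQuot}.
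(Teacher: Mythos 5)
Your argument is correct and follows essentially the same route as the paper: reduce via \Cref{prop:CyclePathSystem} to the trivial case and to $\mathcal{S}_m$ on an odd cycle, observe that $\mathcal{S}_m$ is strictly induced by unit weights, and lift back through the persistent-edge contractions using \Cref{prop:StrictMetQuot}. The paper states the trivial case, the strict metrizability of $\mathcal{S}_m$, and the iteration over $F$ without detail, so your explicit weight function in the trivial case and your check that persistence is preserved under contracting other persistent edges are just welcome elaborations of the same proof.
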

	\begin{proof}
		Let $\mathcal{P}$ be path system in $C_n$ for some $n\geq 3$. If $\mathcal{P}$ is trivial in the sense of \Cref{sec:basic}, it is strictly metrizable. As \Cref{prop:CyclePathSystem} shows $\mathcal{P}/F=\mathcal{S}_{m}$ for some odd $m\geq 3$, where $F$ is the set of persistent edges of $\mathcal{P}$. But $\mathcal{S}_{m}$ is strictly metrizable, and by \Cref{prop:StrictMetQuot} so is $\mathcal{P}$.
	\end{proof}
	
	Our next goal is to show, moreover that {\em all outerplanar graphs are strictly metrizable}. Recall that a graph is outerplanar if it can be drawn in the plane with all vertices residing in the outer face. Equivalently, a graph is outerplanar iff it contains no subdivision of $K_{2,3}$ or $K_{4}$. We start with some preparations before we can embark on the proof.
	
	A path in a graph $G$ is said to be {\em suspended} if all the vertices, except possibly the endpoints, have degree $2$ in $G$.
	\begin{theorem}\label{thm:SuspendedPath}
		If a metrizable graph $G$ has a suspended path with endpoints $x$ and $y$, then $G+xy$ is also metrizable. Similarly, if $G$ is strictly metrizable then so is $G+xy$.
	\end{theorem}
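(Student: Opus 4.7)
The plan is to associate to each consistent path system $\mathcal{P}'$ in $G' := G + xy$ a consistent path system $\mathcal{P}$ in $G$, apply the metrizability of $G$ to obtain weights $w : E(G) \to (0,\infty)$ inducing $\mathcal{P}$, and then extend $w$ to $w'$ on $E(G')$ by assigning an appropriate positive value to the new edge $xy$. Write $P = x, z_1, \ldots, z_k, y$ for the suspended path and let $C = P \cup \{xy\}$ be the resulting cycle in $G'$.

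To construct $\mathcal{P}$, I apply a complementary-traversal rule: if $P'_{u,v}$ does not use the edge $xy$, set $P_{u,v} := P'_{u,v}$; otherwise, the portion of $P'_{u,v}$ lying inside $V(C)$ is a sub-arc of $C$ containing the edge $xy$, and we replace it by the unique alternative $C$-arc between the same endpoints, which lies entirely in $P$. That this yields a simple $G$-path uses the key structural fact that the internal vertices of $P$ have degree $2$ in $G'$, so any simple path in $G'$ that enters the interior of $P$ must traverse a contiguous sub-path of $P$; combined with the simplicity of $P'_{u,v}$, this prevents vertex repetitions in the substitution. The main technical step is verifying that $\mathcal{P}$ is consistent. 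The delicate case arises when the substitution introduces an interior vertex $z_j$ of $P$ into $P_{u,v}$ that did not appear in $P'_{u,v}$, requiring $P_{u, z_j}$ to equal the $uz_j$-subpath of $P_{u,v}$. Here consistency of $\mathcal{P}'$, applied via the median characterization of \Cref{lem:treePathSys1} in the tree $T'_u$, forces $P'_{u, z_j}$ into one of two admissible shapes (distinguished by which endpoint of $P$ is reached first), each of which produces the required subpath after applying the complementary-traversal rule.

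Given $w$ inducing $\mathcal{P}$ in $G$, I set $w'|_{E(G)} = w$ and pick a positive value $w'(xy)$ making every $P'_{u,v} \in \mathcal{P}'$ a $w'$-geodesic. Unpacking this into inequalities yields two types of constraints: a global condition $w'(xy) \leq w(P)$ ensuring that paths traversing $xy$ are not undercut by their $P$-routed substitutes, and local conditions of the form $w'(xy) \leq w(P[z_j \to y]) - w(P[z_j \to x])$ coming from paths $P'_{u, z_j}$ that cross the cycle via the chord $xy$. A valid strictly positive $w'(xy)$ exists provided the relevant local differences are strictly positive; when the chosen $w$ does not already achieve this, a small perturbation of $w$ along the edges of $P$ preserves all $\mathcal{P}$-inducing inequalities while making the required differences positive, since tight geodesic constraints remain satisfied under perturbation. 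The strict metrizability version proceeds identically but with strict inequalities throughout and $w'(xy)$ placed strictly inside the resulting open interval. The main obstacles are the consistency verification for $\mathcal{P}$ in the subtle case above, and confirming that the weight perturbation can simultaneously maintain the $\mathcal{P}$-inducing constraints and make the local differences strictly positive.
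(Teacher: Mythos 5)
Your reduction is genuinely different from the paper's (which splits the path system between $H=G\setminus E(Q)+xy$ and the cycle $C=Q+xy$ and glues two separately constructed weight functions), and the pullback construction itself, together with its consistency, appears sound. The gap is in the extension step, and it sits exactly where the real difficulty of the theorem lives. An arbitrary $w$ inducing the pulled-back system $\mathcal{P}$ may admit \emph{no} positive choice of $w'(xy)$, because the pullback erases the crossing structure of $\mathcal{P}'$ on the cycle $C$: every within-$C$ chosen path is sent to a subpath of the suspended path $Q$, so the requirement that $w$ induce $\mathcal{P}$ places essentially no constraint on how weight is distributed along $Q$, whereas the existence of a valid $w'(xy)$ forces $w|_{E(Q)}$ to satisfy the cycle conditions of \Cref{lem:cyclePathCharac} for $\mathcal{P}'|_C$ --- both your upper bounds and the lower bounds requiring $w'(xy)$ to be at least the difference of the $w$-lengths of the two segments of $Q$ on either side of each vertex $z$ with $f(z)=xy$, which your list omits. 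Concretely, let $Q=xz_1z_2z_3y$ and let $\mathcal{P}'$ restrict to $\mathcal{S}_5$ on $C$, so that $P'_{z_1,y}=z_1xy$. Comparing it with the alternative $z_1z_2z_3y$ forces $w'(xy)\le w(z_1z_2)+w(z_2z_3)+w(z_3y)-w(xz_1)$; but the pullback of $P'_{z_1,y}$ is just the subpath $z_1z_2z_3y$ of $Q$, and a weight function with $w(xz_1)$ huge relative to the other three edges of $Q$ (and all of $H$ heavier still) induces $\mathcal{P}$ perfectly well while making that right-hand side negative.

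Repairing this is not a matter of a small perturbation: the weight along $Q$ may have to be redistributed by a large amount, and your justification that ``tight geodesic constraints remain satisfied under perturbation'' is false in the non-strict setting --- a tight equality $w(P_{u,v})=w(R)$ in which exactly one of the two paths uses edges of $Q$ is destroyed by moving those edge weights in the wrong direction, which is why \Cref{lem:strictWeightPerturb} is stated only for strictly inducing weights. Showing that \emph{some} $w$ inducing the pullback simultaneously satisfies the cycle conditions is precisely what the paper's three-stage construction of $w_1$, $w_2$, $w$ accomplishes, by forcing the contracted blocks of $C$ to carry the uniform weight $\tfrac{K}{2}+r$; your proposal does not supply a substitute for that argument. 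On the positive side, your pullback disposes of the paper's hardest gluing case --- paths from the block $U$ into $H$ --- for free, since those paths survive the pullback unchanged and are handled by the metrizability of $G$ itself; the idea is worth keeping, but the extension step still needs essentially the full force of the paper's weight construction.
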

	We start with some easy lemmas.
	\begin{lemma}\label{lem:strictWeightPerturb}
		Let $\mathcal{P}$ be a path system in an $n$-vertex graph $G=(V,E)$. If $\mathcal{P}$ is strictly induced by a weight function $w:E(G) \to [0,\infty)$, then there exists $\varepsilon >0$ such that for any function $\delta : E(G) \to [0,\varepsilon]$, $w+\delta$ also strictly induces $\mathcal{P}$
	\end{lemma}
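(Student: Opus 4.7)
The plan is to exploit finiteness: there are only finitely many pairs $u,v\in V$ and, for each pair, only finitely many simple $uv$ paths in $G$. Since $w$ strictly induces $\mathcal{P}$, for every pair $u,v$ and every simple $uv$ path $Q\neq P_{u,v}$ we have the strict inequality $w(Q)-w(P_{u,v})>0$. Taking the minimum over this finite collection yields a positive gap
\[
\gamma \;:=\;\min_{u,v\in V}\ \min_{Q\neq P_{u,v}}\bigl(w(Q)-w(P_{u,v})\bigr)\;>\;0,
\]
where the inner minimum ranges over all simple $uv$ paths $Q$ distinct from $P_{u,v}$.

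The key step is then a routine perturbation estimate. For any $\delta:E(G)\to[0,\varepsilon]$ and any simple path $R$ with at most $n-1$ edges (since $G$ has $n$ vertices), we have $0\le\delta(R)\le\varepsilon(n-1)$. Hence, for any pair $u,v$ and any simple $uv$ path $Q\neq P_{u,v}$,
\[
(w+\delta)(Q)-(w+\delta)(P_{u,v})\;=\;\bigl(w(Q)-w(P_{u,v})\bigr)+\bigl(\delta(Q)-\delta(P_{u,v})\bigr)\;\ge\;\gamma-\varepsilon(n-1).
\]
Choosing $\varepsilon:=\gamma/n$ makes the right-hand side strictly positive, so $w+\delta$ still strictly induces $\mathcal{P}$.

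There is no real obstacle here: the argument is a one-line finiteness/continuity observation, and the only thing to be slightly careful about is to bound the number of edges on any simple path by $n-1$ uniformly (so that the perturbation of any path by $\delta$ is bounded by $\varepsilon(n-1)$, independently of which path we examine). The condition $w\ge 0$ plus $\delta\ge 0$ ensures that $w+\delta$ stays nonnegative, matching the setting of the statement.
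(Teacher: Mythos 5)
Your proposal is correct and follows essentially the same route as the paper: both extract a uniform positive gap $\gamma$ (the paper's $\varepsilon'$) from the finitely many strict inequalities, bound the total perturbation of any simple path by $\varepsilon$ times the number of edges, and choose $\varepsilon$ on the order of $\gamma/n$. The only cosmetic difference is that the paper bounds path lengths by $n$ and takes $\varepsilon=\varepsilon'/(2n)$, while you use the sharper bound $n-1$ and $\varepsilon=\gamma/n$; both choices work.
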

	\begin{proof}
		Since $w$ strictly induces $\mathcal{P}$ there is some $\varepsilon' > 0$ such that $w(Q)-w(P_{u,v})> \varepsilon'$ for every $u,v \in V$ and every $uv$ path $Q\neq P_{u,v}$. The claim clearly holds with $\varepsilon = \frac{\varepsilon'}{2n}$, since for $w' = w + \delta$, $w(P)\leq w'(P)< w(P) + n\varepsilon = w(P) +  \frac{\varepsilon'}{2}$ for any path $P$ in $G$. Consequently, $w'(Q) - w'(P_{u,v}) > w(Q) - w(P_{u,v}) - \frac{\varepsilon'}{2} \geq \frac{\varepsilon'}{2} > 0$ for every two vertices $u,v$ and every $uv$ path $Q\neq P_{u,v}$.
	\end{proof}
	
	\begin{lemma}\label{lem:cyclePathCharac}
		Let $f : V\to E$ be a crossing function corresponding to a path system $\mathcal{P}$ in the cycle $C_n= (V,E)$, $n\geq 3$, as in \Cref{lem:CrossingCycle}. Then $w:E \to [0,\infty)$ strictly induces $\mathcal{P}$ if and only if for every $x\in V$, $|w(P_{x,a_x}) -w(P_{x,b_x}) | < w(a_xb_x)$, where $f(x) = a_xb_x$.
	\end{lemma}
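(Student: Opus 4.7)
The plan is to exploit the special structure of a cycle: for any two vertices $u,v \in C_n$, there are exactly two $uv$ paths, and they together traverse every edge of $C_n$ exactly once. So if $Q$ denotes the $uv$ path other than $P_{u,v}$, then $w(P_{u,v}) + w(Q) = w(C_n)$, which means that $w$ strictly induces $\mathcal{P}$ if and only if $w(P_{u,v}) < \tfrac{1}{2}w(C_n)$ for every pair $u,v$. I would take this reformulation as the starting point.

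For the forward direction, fix $x \in V$ and write $f(x) = a_xb_x$. Recall $T_x = C_n \setminus \{a_xb_x\}$ is the union of $P_{x,a_x}$ and $P_{x,b_x}$, meeting only at $x$. The two paths from $x$ to $a_x$ in $C_n$ are precisely $P_{x,a_x}$ and the concatenation $P_{x,b_x} \cdot (b_xa_x)$, whose weight is $w(P_{x,b_x}) + w(a_xb_x)$. Strict inducement applied to the pair $(x,a_x)$ gives $w(P_{x,a_x}) < w(P_{x,b_x}) + w(a_xb_x)$, i.e., $w(P_{x,a_x}) - w(P_{x,b_x}) < w(a_xb_x)$. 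The same argument with $(x,b_x)$ yields the reverse inequality, and the two together produce $|w(P_{x,a_x}) - w(P_{x,b_x})| < w(a_xb_x)$.

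For the backward direction, the key observation is that the stated inequality forces $w(P_{x,a_x}) < \tfrac{1}{2}w(C_n)$ (and the same for $P_{x,b_x}$) for every $x$. Indeed, using the edge partition of $C_n$ we have $w(P_{x,a_x}) + w(P_{x,b_x}) = w(C_n) - w(a_xb_x)$, and adding this to $w(P_{x,a_x}) - w(P_{x,b_x}) < w(a_xb_x)$ yields $2w(P_{x,a_x}) < w(C_n)$. Now for an arbitrary pair $u,v$, the path $P_{u,v}$ is a subpath of $T_u = P_{u,a_u} \cup P_{u,b_u}$, so $w(P_{u,v}) \le \max\{w(P_{u,a_u}), w(P_{u,b_u})\} < \tfrac{1}{2}w(C_n)$. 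By the reformulation above, $w$ strictly induces $\mathcal{P}$.

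There is no substantial obstacle here; the whole proof reduces to the observation that a cycle has exactly two paths between any pair of vertices, followed by a one-line linear algebra manipulation of the equation $w(P_{x,a_x}) + w(P_{x,b_x}) + w(a_xb_x) = w(C_n)$. The only thing to watch out for is that $w$ takes values in $[0,\infty)$ rather than $(0,\infty)$, but the argument is insensitive to this since all the relevant inequalities are strict regardless of whether individual edges have weight zero.
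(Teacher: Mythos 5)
Your proof is correct and follows essentially the same route as the paper: the forward direction is identical (apply strict inducement to the pairs $(x,a_x)$ and $(x,b_x)$), and the backward direction rests on the same two facts, namely that the crossing condition forces $P_{u,v}$ to be a subpath of $P_{u,a_u}$ or $P_{u,b_u}$ and that non-negative weights make subpaths no heavier. The only cosmetic difference is that you route the converse through the equivalent criterion $w(P_{u,v})<\tfrac12 w(C_n)$, whereas the paper directly bounds $w(Q)-w(P_{u,v})$ from below by $w(P_{u,b_u})-w(P_{u,a_u})+w(a_ub_u)$.
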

	\begin{proof}
		Suppose that $w$ strictly induces $\mathcal{P}$. Then for any $x\in V$,
		$$0 < w(P_{x,b_x} b_x a_x) - w(P_{x,a_x}) = w(P_{x,b_x}) - w(P_{x,a_x})  + w(a_xb_x)$$
		and
		$$0 < w(P_{x,a_x} a_x b_x) - w(P_{x,b_x}) = w(P_{x,a_x}) - w(P_{x,b_x})  + w(a_xb_x),$$
		implying 
		$$|w(P_{x,a_x}) -w(P_{x,b_x}) | < w(a_xb_x).$$
		Now suppose that $|w(P_{x,a_x}) -w(P_{x,b_x}) | < w(a_xb_x)$ for all $x\in V$. We need to show that $w(Q) - w(P_{u,v}) >0$, where $P_{u,v} \in \mathcal{P}$ and $Q$ the other $uv$ path in $C_n$. W.l.o.g.\ $v\in P_{u,a_u}$, where $f(u)=a_ub_u$. By definition of $f$, it must be that $P_{u,v}$ is a subpath of $P_{u,a_u}$ and that $P_{u,b_u}b_ua_u$ is a subpath of $Q$. As $|w(P_{u,a_u}) -w(P_{u,b_u}) | < w(a_ub_u)$, this implies
		$$w(Q) - w(P_{u,v}) \geq w(P_{u,b_u}b_ua_u) - w(P_{u,a_u}) = w(P_{u,b_u})- w(P_{u,a_u}) + w(a_ub_u)  >0.$$
	\end{proof}
	Let $\mathcal{P}$ a path system in a graph $G$ and let $H$ be a subgraph of $G$. We say $\mathcal{P}$ {\em restricts} to $H$ if for every two vertices $u,v\in H$ the path $P_{u,v}$ is contained in $H$. We now prove \Cref{thm:SuspendedPath}.
	\begin{proof}{[\Cref{thm:SuspendedPath}]}
		We only deal with the metrizable case, since essentially the same arguments applies to the strictly metrizable case as well.
		We can assume $xy \notin G$, otherwise there is nothing to show.
		Let $Q$ be a suspended path between $x$ and $y$ and let $C = Q + xy$ be the cycle formed by $Q$ and the edge $xy$. Let $H\coloneqq G\setminus E(Q) +xy$ be the graph obtained by removing $Q$'s edges from $G$ and adding the edge $xy$. By \Cref{prop:TopMinClosed} $H$ is metrizable, being a topological minor of $G$.

		We need to show that every path system $\mathcal{P}$ in $G + xy$ is metrizable. We first reduce the problem to the case where $\mathcal{P}$ includes every edge in $C$. If $xy\notin \mathcal{P}$ then $\mathcal{P}$ is just a path system of $G$ and is metrizable by assumption. On the other hand, if $e'\notin \mathcal{P}$ for some edge $e' \in Q$, then $\mathcal{P}$ is a path system of a graph whose biconnected components include those of $H$ and the rest of the edges of $Q$. The metrizability of $G$ then follows from \Cref{rem:2conn}.
		
		Next we claim that if $\mathcal{P}$ contains every edge in $C$, then it restricts to both $H$ and $C$. Indeed, if the path $P_{u,v}$ is not contained in $H$ for some $u,v \in H$, then it must contain vertices from $V(Q) \setminus \{x,y\}$. But $Q$ is a suspended path, and is separated from $H$ by $x$ and $y$. Therefore any path between two vertices in $H$ which meets $V(Q) \setminus \{x,y\}$ must contain all of $Q$. Since $\mathcal{P}$ is consistent, this implies $P_{x,y} = Q$, contradicting that $xy\in \mathcal{P}$. In the same way $\mathcal{P}$ restricts to a path system in $C$.
		
		Let the restrictions of $\mathcal{P}$ to $H$ and $C$ be called path systems $\mathcal{P}_H$ and $\mathcal{P}_C$, respectively.
		By \Cref{lem:CrossingCycle}, there corresponds to the path system $\mathcal{P}_C$ a crossing function $f:V(C) \to E(C)$. Namely, for each $z\in C$, $f(z) = uv$ is the unique edge such that $V(P_{z,u}) \cap V(P_{z,v}) = \{z\}.$  
		
		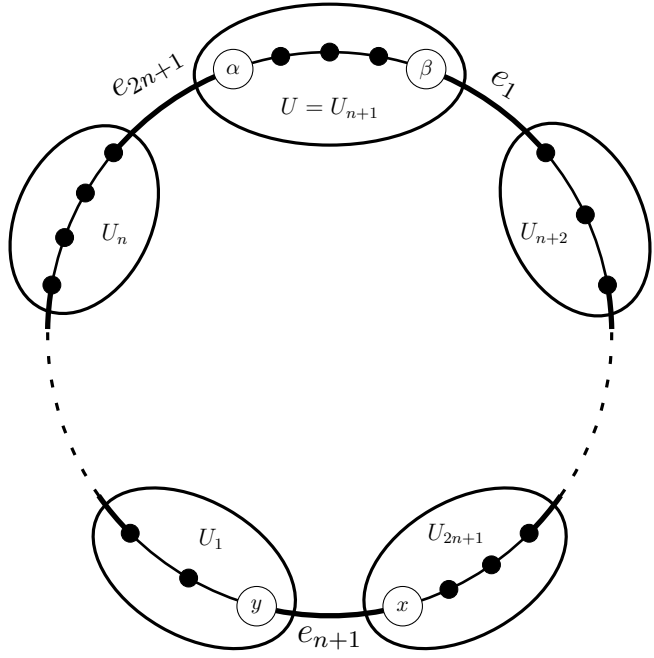
\begin{figure}[h]
		\centering
		\begin{tikzpicture}	[scale=0.75, every node/.style={scale=0.75}]
				
				\def \radius {5}
				\def \startAngle {70}
				\def \endAngle {110}
				\def \startAngleII {140}
				\def \endAngleII {170}
				\def \startAngleIII {10}
				\def \endAngleIII {40}
				\def \startAngleIV {205}
				\def \endAngleIV {225}
				\def \startAngleV {-45}
				\def \endAngleV {-25}
				\def \startAngleVI {-105}
				\def \endAngleVI {-75}

				\draw[black, line width = 1pt] ([shift={(0,0)}] \startAngle :\radius ) arc[radius=\radius , start angle= \startAngle , end angle= \endAngle];
				
				\draw[black, line width = 2pt] ([shift={(0,0)}] \endAngleIII :\radius ) arc[radius=\radius , start angle= \endAngleIII , end angle=  \startAngle];
				\draw[black, line width = 2pt] ([shift={(0,0)}]\endAngle:\radius ) arc[radius=\radius , start angle=\endAngle, end angle= \startAngleII];

				\node[draw,circle,minimum size=.7cm,inner sep=0pt,fill=white] at (\endAngle: \radius ) {$\alpha$};
				\node[draw,circle,minimum size=.7cm,inner sep=0pt,fill=white] at (\startAngle: \radius ) {$\beta$};
				\node[draw,circle,inner sep=0pt,minimum size=9pt,fill=black] at (\endAngle/4 - \startAngle/4+ \startAngle: \radius ) {};
				\node[draw,circle,inner sep=0pt,minimum size=9pt,fill=black] at  (\endAngle /4*2- \startAngle/4 *2+ \startAngle: \radius ) {};
				\node[draw,circle,inner sep=0pt,minimum size=9pt,fill=black] at  (\endAngle/4*3 - \startAngle/4*3 + \startAngle: \radius )  {};
				\node at (\endAngle/2 - \startAngle/2 + \startAngle: \radius-1 ) {\large$U=U_{n+1}$};
				\node[scale=1.5, rotate = 30] at (\startAngleII/2 - \endAngle/2 + \endAngle: \radius+.6 ) {\large$e_{2n+1}$};
				
				\draw[line width = 1.2] (\endAngle/2 - \startAngle/2 + \startAngle: \radius-.4) ellipse (2.4cm and 1.25cm);
				
				\draw[black, line width = 1pt] ([shift={(0,0)}] \startAngleIII :\radius ) arc[radius=\radius , start angle= \startAngleIII , end angle= \endAngleIII];
				
				\draw[black, line width = 1.2pt, loosely dashed] ([shift={(0,0)}]\endAngleII:\radius ) arc[radius=\radius , start angle=\endAngleII, end angle= \endAngleIV ];
				\draw[black, line width = 1.2pt, loosely dashed] ([shift={(0,0)}]\startAngleIII:\radius ) arc[radius=\radius , start angle=\startAngleIII, end angle= \startAngleV];
				
				\draw[black, line width = 2pt] ([shift={(0,0)}]\endAngleII:\radius ) arc[radius=\radius , start angle=\endAngleII, end angle= \endAngleII+9 ];
				\draw[black, line width = 2pt] ([shift={(0,0)}]\startAngleIII:\radius ) arc[radius=\radius , start angle=\startAngleIII, end angle= \startAngleIII-9];

				\node[draw,circle,inner sep=0pt,minimum size=9pt,fill=black] at (\endAngleIII: \radius ) {};
				\node[draw,circle,inner sep=0pt,minimum size=9pt,fill=black] at (\startAngleIII: \radius ) {};
				\node[draw,circle,inner sep=0pt,minimum size=9pt,fill=black] at (\endAngleIII/2 + \startAngleIII/2: \radius ) {};
				\node at (\endAngleIII/2 - \startAngleIII/2 + \startAngleIII: \radius-.8 ) {\large$U_{n+2}$};
				
				\node[scale=1.5, rotate = -30] at (\startAngle/2 - \endAngleIII/2 + \endAngleIII: \radius+.4 ) {\large$e_{1}$};
				
				\draw[line width = 1.2, rotate around={90+\endAngleIII/2 - \startAngleIII/2 + \startAngleIII:(\endAngleIII/2 - \startAngleIII/2 + \startAngleIII: \radius-.2) }] (\endAngleIII/2 - \startAngleIII/2 + \startAngleIII: \radius-.2) ellipse (1.8cm and 1.2cm);
				
				\draw[black, line width = 1pt] ([shift={(0,0)}] \startAngleII :\radius ) arc[radius=\radius , start angle= \startAngleII , end angle= \endAngleII];
				\node[draw,circle,inner sep=0pt,minimum size=9pt,fill=black] at (\startAngleII: \radius ) {};
				\node[draw,circle,inner sep=0pt,minimum size=9pt,fill=black] at (\endAngleII: \radius ) {};
				\node[draw,circle,inner sep=0pt,minimum size=9pt,fill=black] at (\endAngleII/3 - \startAngleII/3+ \startAngleII: \radius ) {};
				\node[draw,circle,inner sep=0pt,minimum size=9pt,fill=black] at ((\endAngleII/3*2 - \startAngleII/3*2+ \startAngleII: \radius ) {};
				\node at (\endAngleII/2 - \startAngleII/2 + \startAngleII: \radius-.8 ) {\large$U_{n}$};
				
				\draw[line width = 1.2, rotate around={90+\endAngleII/2 - \startAngleII/2 + \startAngleII:(\endAngleII/2 - \startAngleII/2 + \startAngleII: \radius-.2) }] (\endAngleII/2 - \startAngleII/2 + \startAngleII: \radius-.2) ellipse (1.75cm and 1.2cm);
				%			
				%			\draw[black, line width = 3pt] ([shift={(0,0)}] \endAngleIV :\radius ) arc[radius=\radius , start angle= \endAngleIV , end angle=  \startAngleIV];
				%			\draw[black, line width = 3pt] ([shift={(0,0)}]\endAngleV:\radius ) arc[radius=\radius , start angle=\endAngleV, end angle= \startAngleV];
				
				\draw[black, line width = 2pt] ([shift={(0,0)}] \endAngleIV :\radius ) arc[radius=\radius , start angle= \endAngleIV , end angle=  \endAngleIV-10];
				\draw[black, line width = 2pt] ([shift={(0,0)}]\startAngleV:\radius ) arc[radius=\radius , start angle=\startAngleV, end angle= \startAngleV + 10];

				\draw[black, line width = 1pt] ([shift={(0,0)}] \startAngleV :\radius ) arc[radius=\radius , start angle= \startAngleV , end angle= \endAngleVI];
				\draw[black, line width = 1pt] ([shift={(0,0)}] \startAngleVI :\radius ) arc[radius=\radius , start angle= \startAngleVI , end angle= \endAngleIV-360];
				
				%			\draw[black, line width = 1pt] ([shift={(0,0)}] \startAngleIV:\radius ) arc[radius=\radius , start angle= \startAngleIV , end angle= \startAngleIV - 6];
				%			\draw[black, line width = 1pt] ([shift={(0,0)}] \endAngleV :\radius ) arc[radius=\radius , start angle= \endAngleV , end angle=\endAngleV+6];
				
				\draw[black, line width = 2pt] ([shift={(0,0)}] \startAngleVI :\radius ) arc[radius=\radius , start angle= \startAngleVI , end angle=  \endAngleVI];

				\node[draw,circle,inner sep=0pt,minimum size=9pt,fill=black] at (\endAngleVI/3*2 - \startAngleV/3*2+ \startAngleV: \radius ) {};
				\node[draw,circle,inner sep=0pt,minimum size=9pt,fill=black] at (\endAngleVI/3 - \startAngleV/3+ \startAngleV: \radius ) {};
				\node[draw,circle,inner sep=0pt,minimum size=9pt,fill=black] at (\endAngleIV/2 - \startAngleVI/2+ \startAngleVI + 180: \radius ) {};
				
				\node at (\endAngleVI/2 - \startAngleV/2+ \startAngleV +2.25: \radius-.8 ) {\large$U_{2n+1}$};
				\node at (\endAngleIV/2 - \startAngleVI/2+ \startAngleVI + 180: \radius-.8 ) {\large$U_{1}$};
				
				\draw[line width = 1.2, rotate around={90+\endAngleVI/2 - \startAngleV/2+ \startAngleV:(\endAngleVI/2 - \startAngleV/2+ \startAngleV: \radius-.2) }] (\endAngleVI/2 - \startAngleV/2+ \startAngleV: \radius-.2) ellipse (1.95cm and 1.2cm);
				\draw[line width = 1.2, rotate around={90+\endAngleIV/2 - \startAngleVI/2+ \startAngleVI + 180:(\endAngleIV/2 - \startAngleVI/2+ \startAngleVI + 180: \radius-.2) }] (\endAngleIV/2 - \startAngleVI/2+ \startAngleVI + 180: \radius-.2) ellipse (1.95cm and 1.2cm);
				%			\node[draw,circle,minimum size=1cm,inner sep=0pt,fill=white] at (\startAngleIV: \radius ) {$\alpha_{2}$};
				%			\node[draw,circle,minimum size=1cm,inner sep=0pt,fill=white] at (\endAngleV: \radius ) {$\beta_{2n}$};
				\node[scale=1.5] at (-90 :\radius+.4 ) {\large$e_{n+1}$};
				\node[draw,circle,inner sep=0pt,minimum size=9pt,fill=black]  at (\startAngleV: \radius ) {};
				\node[draw,circle,inner sep=0pt,minimum size=9pt,fill=black] at (\endAngleIV: \radius ) {};
				\node[draw,circle,minimum size=.7cm,inner sep=0pt,fill=white] at (\endAngleVI: \radius ) {$x$};
				\node[draw,circle,minimum size=.7cm,inner sep=0pt,fill=white] at (\startAngleVI: \radius ) {$y$};

				\end{tikzpicture}
				
				\caption{The path system $\mathcal{P}_C$ partitions $C$.}
				\label{fig:SusPathFig1}
		\end{figure}
		
		We take a closer look at the path system $\mathcal{P}_C$. In \Cref{prop:CyclePathSystem}  we saw that by contracting each set $f^{-1}(e)$, $e\in \text{Im}f$, to a vertex we obtain an odd cycle equipped with its canonical path system. It follows that for some $n\geq 1$, $\abs{\text{Im}f} = 2n+1$ and that there exists an ordering of the edges $\text{Im}f = \{e_1,\cdots,e_{2n+1} \}$ so that $U_i = C[f^{-1} (e_i)]$ satisfy the following, for $1\leq i \leq 2n+1$,
		\begin{itemize}
			\item $U_i$ is a subpath of $C$
			\item $U_i$ and $U_{i+1}$ are connected by the edge $e_{n+i+1}$, with indices taken mod $2n+1$, and $e_{n+1} = xy$, with $y\in U_{1}$ and $x\in U_{2n+1}$.
			\item  For $1\leq i \leq 2n+1$, $f(u) = e_i$ for all $u \in U_i$
		\end{itemize}
		This means we can express $C$ as $C = yU_{1}U_2\cdots U_{2n}U_{2n+1}xy$, \Cref{fig:SusPathFig1}.
		Let $\alpha$ and $\beta$ denote the end vertices of the path $U_{n+1}$ so that $e_1$ is incident with $\beta$ and $e_{2n+1}$ is incident with $\alpha$. Also, we set $U\coloneqq U_{n+1}$.
		
		  Let $\tilde{w} : E(G+xy) \to [0,\infty)$ any non-negative weight function. If a $uv$ geodesic contains a vertex $z$, then clearly if  $R_1$ is a $uz$ geodesic and $R_2$ is a $zv$ geodesic then the path $R_1R_2$ is also a $uv$ geodesic. We make the following simple observation:
		\begin{idea}[$\ast_1$]
			\label{idea2}
			Suppose that $P_{u,x}$, $P_{u,y}$, $P_{x,v}$ and $P_{y,v}$ are $\tilde{w}$-geodesics, where $u\in C, v\in H$. If, w.r.t.\ $\tilde{w}$, some $uv$ geodesic contains $x$, resp. $y$,  then $P_{u,x}P_{x,v}$, resp.\ $P_{u,y}P_{y,v}$, is a $\tilde{w}$-geodesic. Consequently, either $P_{u,x}P_{x,v}$ or $P_{u,y}P_{y,v}$ is a $uv$ geodesic w.r.t.\ $\tilde{w}$.
		\end{idea}
		The last part follows from the fact that $x$ and $y$ separates $C$ from $H$ so that every $uv$ path contains either $x$ or $y$. Next we prove an important claim which roughly says that a weight function which induces both $\mathcal{P}_C$ and $\mathcal{P}_H$ {\em almost} induces $\mathcal{P}$.
		\begin{idea}[$\ast_2$]
			\label{idea1}
			Suppose that $\tilde{w}$ induces both $\mathcal{P}_C$ and $\mathcal{P}_H$. Then every path in $\mathcal{P}$ is a $\tilde{w}$-geodesic, with the possible exception of those $uv$ paths
			where $v\in H$ and $u\in U$, (whence $f(u)= xy$). 
		\end{idea}
	
		Paths in $\mathcal{P}$ with both endpoints in $C$ or both endpoints in $H$ are clearly $\tilde{w}$-geodetic, since $\tilde{w}$ induces both $\mathcal{P}_C$ and $\mathcal{P}_H$. 
		An exceptional path must connect some $u\in C$ to some $v\in H$.  Let $P_{u,v}$ be such an exceptional path. Since the paths $P_{u,x}$ and $P_{u,y}$ are in $\mathcal{P}_C$ these paths are  $\tilde{w}$-geodesics. Similarly, $P_{x,v}$ and  $P_{y,v}$ are path in $\mathcal{P}_H$ and therefore also $\tilde{w}$-geodesics. As $x$ and $y$ separate $C$ from $H$ we can assume w.l.o.g. $y\in P_{u,v}$. By consistency we get $P_{u,v} = P_{u,y}P_{y,v}$. Since $P_{u,v}= P_{u,y}P_{y,v}$ is not $\tilde{w}$-geodetic, by \eqref{idea2} it follows that $P_{u,x}P_{x,v}$ is a $\tilde{w}$-geodesic. Note that this implies $x\notin P_{u,v}$, for otherwise by consistency $P_{u,v} = P_{u,x}P_{x,v}$, which contradicts that $P_{u,v}$ is not a geodesic. Similarly, $y\notin P_{u,x}P_{x,v}$ or else $P_{u,y}P_{y,v}=P_{u,v}$ is a $\tilde{w}$-geodesic by $\eqref{idea2}$.  It follows that $y\notin P_{u,x}$ and $x \notin P_{u,y}$. Since these are two paths in $C$ it must be that $V(P_{u,y})\cap V(P_{u,x}) = \{u\}$, which, by definition of $f$, yields $f(u) = xy$.

		Since both $C$ and $H$ are metrizable, there exist weight functions $w_C:E(C)\to (0,\infty)$ and $w_H: E(H) \to (0,\infty)$ which induce $\mathcal{P}_C$ and $\mathcal{P}_H$, respectively. Clearly $E(H)\cap E(C) = \{xy\}$, and we rescale these weight functions if necessary to guarantee that $w_C(xy) = w_H(xy)$. With this normalization there is a uniquely defined $\tilde{w}:E(G + xy) \to (0,\infty)$ such that $\tilde{w}|_{E(C)} = w_C$ and $\tilde{w}|_{E(H)} = w_H$. If $f^{-1}(xy) = \emptyset$ then by \eqref{idea1} this weight function actually induces $\mathcal{P}$.
		Therefore, we can and will henceforth assume $f^{-1}(xy) \neq \emptyset$.
		
		To construct our desired weight function we first find a function which induces those paths in $\mathcal{P}$ which have endpoints in $H$ and $U$. After acquiring such a function, we see from \eqref{idea1} that it is then enough to focus all our attention more locally on $C$ and to adjust this weight function so that it also induces the path system $\mathcal{P}_C$.
		More concretely, we construct in three steps a weight function $w:E(G+xy) \to (0,\infty)$ that induces $\mathcal{P}$.
		\begin{enumerate}
			\item Find a weight function $w_1: E(G+xy) \to [0,\infty)$ that induces $\mathcal{P}_H$, as well as every $P_{u,v}\in \mathcal{P}$ with $u\in U$ and $v\in H$.
			\item Modify $w_1$ to a weight function $w_2: E(G+xy) \to [0,\infty)$ that induces $\mathcal{P}_C$ as well.
			\item Alter $w_2$ into a strictly positive $w: E(G+xy) \to (0,\infty)$ while maintaining the above properties.
		\end{enumerate}
		The resulting positive weight function $w$ induces both $\mathcal{P}_C$ and $\mathcal{P}_H$ and in addition every path $P_{u,v}\in \mathcal{P}$ with  $u\in U$ and $v\in H$ is $w$-geodetic. By \eqref{idea1} this weight function induces $\mathcal{P}$.

		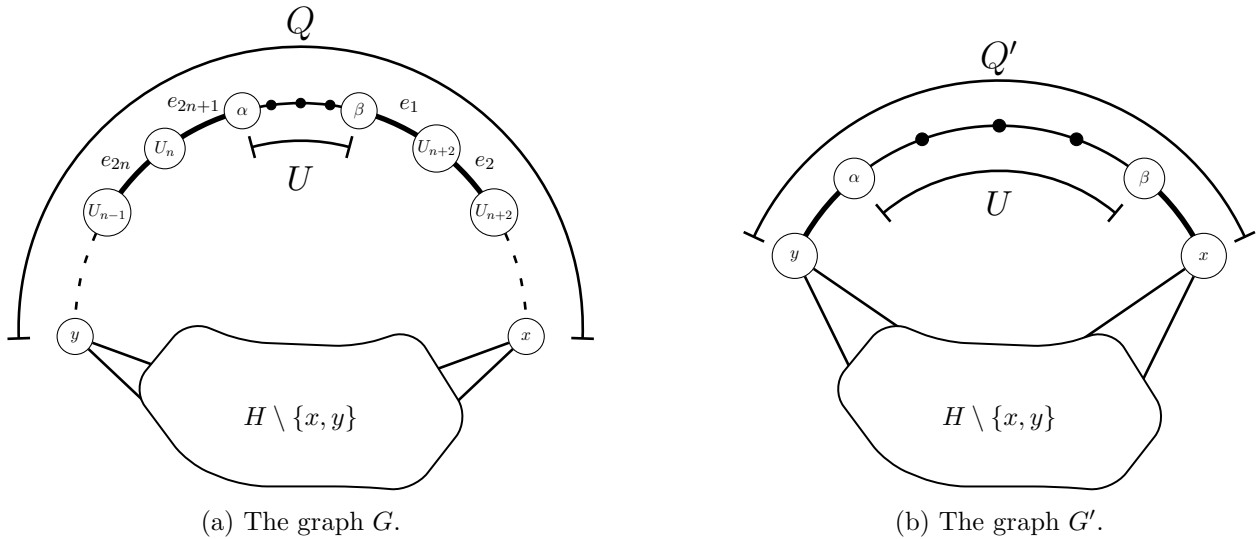
\begin{figure}[h]
			\centering
			\begin{subfigure}[t]{0.47\textwidth}
				\centering
				
				\begin{tikzpicture}	[scale=0.6, every node/.style={scale=0.6}]
				\def \inc {22}
				\def \radius {5}
				\def \rightAngle {75}
				\def \leftAngle {105}
				\pgfmathsetmacro\rightAngleII {\rightAngle- \inc}
				\pgfmathsetmacro \leftAngleII {\leftAngle + \inc}
				\pgfmathsetmacro\rightAngleIIHalf {\rightAngle- .8*\inc}
				\pgfmathsetmacro \leftAngleIIHalf {\leftAngle + .8*\inc}
				\pgfmathsetmacro\rightAngleIII {\rightAngle - 2 * \inc}
				\pgfmathsetmacro\leftAngleIII {\leftAngle + 2*\inc}
				\pgfmathsetmacro\rightAngleIV {\rightAngle - 3.5 * \inc}
				\pgfmathsetmacro\leftAngleIV {\leftAngle + 3.5*\inc}
				
				\pgfmathsetmacro\downRight {-90+ \inc/2}
				\pgfmathsetmacro\downLeft {-90- \inc/2}
				\pgfmathsetmacro\downRightII {-90+ \inc/2+ \inc}
				\pgfmathsetmacro\downLeftII {-90- \inc/2- \inc}
				\pgfmathsetmacro\downRightIII {-90+ \inc/2+ 2*\inc}
				\pgfmathsetmacro\downLeftIII {-90- \inc/2- 2*\inc}

				\draw[black, line width = 1pt] ([shift={(0,0)}] \rightAngleIV:\radius + \radius/4 ) arc[radius=\radius + \radius/4 , start angle= \rightAngleIV , end angle= \leftAngleIV];
				\draw[black, line width = 1pt] (\rightAngleIV :\radius + \radius/4 - \radius/20) -- (\rightAngleIV :\radius + \radius/4 +\radius/20);
				\draw[black, line width = 1pt] (\leftAngleIV :\radius + \radius/4 - \radius/20) -- (\leftAngleIV :\radius + \radius/4 +\radius/20);
				\node[scale =2] at (90: \radius+ \radius/4 +\radius/10 ) {\large$Q$};
				
				\draw[black, line width = 1pt] ([shift={(0,0)}] \rightAngle :\radius ) arc[radius=\radius , start angle= \rightAngle , end angle= \leftAngle];
				
				\draw[black, line width = 2pt] ([shift={(0,0)}] \leftAngle :\radius ) arc[radius=\radius , start angle= \leftAngle , end angle=  \leftAngleII];
				\draw[black, line width = 2pt] ([shift={(0,0)}]\rightAngleII:\radius ) arc[radius=\radius , start angle=\rightAngleII, end angle= \rightAngle];

				\draw[black, line width = 1pt] ([shift={(0,0)}] \rightAngle:\radius - \radius/6 ) arc[radius=\radius - \radius/6 , start angle= \rightAngle , end angle= \leftAngle];
				\draw[black, line width = 1pt] (\rightAngle :\radius - \radius/6 - \radius/20) -- (\rightAngle :\radius - \radius/6 +\radius/20);
				\draw[black, line width = 1pt] (\leftAngle :\radius - \radius/6 - \radius/20) -- (\leftAngle :\radius - \radius/6 +\radius/20);
				
				\node[scale =2] at (90: \radius - \radius/3 ) {$U$};
				
				\node[draw,circle,minimum size=.8cm,inner sep=0pt,fill=white] at (\leftAngle: \radius ) {$\alpha$};
				\node[draw,circle,minimum size=.8cm,inner sep=0pt,fill=white] at (\rightAngle: \radius ){$\beta$};
				\node[draw,circle,inner sep=0pt,minimum size=6pt,fill=black] at (\leftAngle/4 - \rightAngle/4+ \rightAngle: \radius ) {};
				\node[draw,circle,inner sep=0pt,minimum size=6pt,fill=black] at  (\leftAngle /4*2- \rightAngle/4 *2+ \rightAngle: \radius ) {};
				\node[draw,circle,inner sep=0pt,minimum size=6pt,fill=black] at  (\leftAngle/4*3 - \rightAngle/4*3 + \rightAngle: \radius )  {};

				\draw[black, line width = 2pt] ([shift={(0,0)}] \leftAngleII :\radius ) arc[radius=\radius , start angle= \leftAngleII , end angle=  \leftAngleIII];
				\draw[black, line width = 2pt] ([shift={(0,0)}]\rightAngleIII:\radius ) arc[radius=\radius , start angle=\rightAngleIII, end angle= \rightAngleII];
				
				\draw[black, line width = 1pt, loosely dashed] ([shift={(0,0)}] \leftAngleIII :\radius ) arc[radius=\radius , start angle= \leftAngleIII , end angle= \leftAngleIV];
				\draw[black, line width = 1pt, loosely dashed] ([shift={(0,0)}] \rightAngleIII :\radius ) arc[radius=\radius , start angle= \rightAngleIII , end angle= \rightAngleIV];
				
				\node[scale =1.3] at ({\rightAngle/2 - \rightAngleII/2 + \rightAngleII}: \radius + \radius/10 ) {$e_1$};
				\node[scale =1.3] at ({\leftAngleII/2-\leftAngle/2+\leftAngle -.5}: \radius +\radius/10 ) {$e_{2n+1}$};
				\node[scale =1.3] at ({\rightAngleII/2 - \rightAngleIII/2 + \rightAngleIII}: \radius + \radius/10 ) {$e_2$};
				\node[scale =1.3] at ({\leftAngleIII/2-\leftAngleII/2+\leftAngleII }: \radius +\radius/10 ) {$e_{2n}$};

				\node[draw,circle,minimum size=.9cm,inner sep=1.5pt,fill=white] at (\leftAngleII: \radius ) [scale =1]{$U_{n}$};
				\node[draw,circle,minimum size=.9cm,inner sep=1.5pt,fill=white] at (\rightAngleII: \radius )[scale =1] {$U_{n+2}$};
				
				\node[draw,circle,minimum size=.9cm,inner sep=1.5pt,fill=white] at (\leftAngleIII: \radius )[scale =1] {$U_{n-1}$};
				\node[draw,circle,minimum size=.9cm,inner sep=1.5pt,fill=white] at (\rightAngleIII: \radius )[scale =1] {$U_{n+2}$};

				\node[draw,circle,minimum size=.8cm,inner sep=0pt,fill=white] (y)at (\rightAngleIV: \radius ) {$x$};

				\node[draw,circle,minimum size=.8cm,inner sep=0pt,fill=white] (x) at (\leftAngleIV: \radius ) {$y$};
				
				\draw[black, line width = 1pt, -]  (x) --  (0,-2);
				\draw[black, line width = 1pt, -]  (x) --  (-2,-3);
				\draw[black, line width = 1pt, -]  (y) --  (0,-2);
				\draw[black, line width = 1pt, -]  (y) --  (2,-3);
				
				\begin{scope}[shift={(-13.75,-2)}, xscale = 2.5, yscale = 2 ]
				\draw [line width = .75pt,
				rounded corners=3mm,fill = white]
				(4,.34) --(4.5,1.1)-- (5,.85)-- (6,.8)-- (6.5,1)-- (7, 0)-- (6.5, -.8) -- (6,-.75) -- (5,-.75) --(4.5,-.5) --cycle;
				\end{scope}
				
				\node[scale = 1.5] at (0,-2) {$H\setminus\{x,y\}$};
				\end{tikzpicture}
				
				\caption{The graph $G$.}
				\label{fig:SusPathFig3}
				
			\end{subfigure}
			\hfill
			\begin{subfigure}[t]{.47\textwidth}
				\centering
				\begin{tikzpicture}	[scale=0.6, every node/.style={scale=0.6}]
				\def \inc {25}
				\def \radius {5}
				\def \rightAngle {75}
				\def \leftAngle {105}
				\pgfmathsetmacro\rightAngleII {\rightAngle- \inc}
				\pgfmathsetmacro \leftAngleII {\leftAngle + \inc}
				\pgfmathsetmacro\rightAngleIII {\rightAngle - 2 * \inc}
				\pgfmathsetmacro\leftAngleIII {\leftAngle + 2*\inc}
				\pgfmathsetmacro\rightAngleIV {\leftAngleIII- 180}
				
				\pgfmathsetmacro\downRight {-90+ \inc/2}
				\pgfmathsetmacro\downLeft {-90- \inc/2}
				\pgfmathsetmacro\downRightII {-90+ \inc/2+ \inc}
				\pgfmathsetmacro\downLeftII {-90- \inc/2- \inc}
				\pgfmathsetmacro\downRightIII {-90+ \inc/2+ 2*\inc}
				\pgfmathsetmacro\downLeftIII {-90- \inc/2- 2*\inc}

				\draw[black, line width = 1pt] ([shift={(0,0)}] \rightAngleIII :\radius + \radius/5 ) arc[radius=\radius + \radius/5 , start angle= \rightAngleIII , end angle= \leftAngleIII];
				\draw[black, line width = 1pt] (\rightAngleIII :\radius + \radius/5 - \radius/20) -- (\rightAngleIII :\radius + \radius/5 +\radius/20);
				\draw[black, line width = 1pt] (\leftAngleIII :\radius + \radius/5 - \radius/20) -- (\leftAngleIII :\radius + \radius/5 +\radius/20);
				\node[scale =2] at (90: \radius+ \radius/5 +\radius/10 ) {\large$Q'$};

				\draw[black, line width = 1pt] ([shift={(0,0)}] \rightAngleII :\radius ) arc[radius=\radius , start angle= \rightAngleII , end angle= \leftAngleII];
				
				\draw[black, line width = 2pt] ([shift={(0,0)}] \leftAngleII :\radius ) arc[radius=\radius , start angle= \leftAngleII , end angle=  \leftAngleIII];
				\draw[black, line width = 2pt] ([shift={(0,0)}]\rightAngleII:\radius ) arc[radius=\radius , start angle=\rightAngleII, end angle= \rightAngleIII];

				\draw[black, line width = 1pt] ([shift={(0,0)}] \rightAngleII :\radius - \radius/5 ) arc[radius=\radius - \radius/5 , start angle= \rightAngleII , end angle= \leftAngleII];
				\draw[black, line width = 1pt] (\rightAngleII :\radius - \radius/5 - \radius/20) -- (\rightAngleII :\radius - \radius/5 +\radius/20);
				\draw[black, line width = 1pt] (\leftAngleII :\radius - \radius/5 - \radius/20) -- (\leftAngleII :\radius - \radius/5 +\radius/20);
				\node[scale =2] at (90: \radius - \radius/5 -\radius/7 ) {$U$};

				\node[draw,circle,minimum size=.9cm,inner sep=0pt,fill=white] at (\leftAngleII: \radius ) {$\alpha$};
				\node[draw,circle,minimum size=.9cm,inner sep=0pt,fill=white] at (\rightAngleII: \radius ) {$\beta$};
				\node[draw,circle,inner sep=0pt,minimum size=8pt,fill=black] at (\leftAngleII/4 - \rightAngleII/4+ \rightAngleII: \radius ) {};
				\node[draw,circle,inner sep=0pt,minimum size=8pt,fill=black] at  (\leftAngleII /4*2- \rightAngleII/4 *2+ \rightAngleII: \radius ) {};
				\node[draw,circle,inner sep=0pt,minimum size=8pt,fill=black] at  (\leftAngleII/4*3 - \rightAngleII/4*3 + \rightAngleII: \radius )  {};

				\node[draw,circle,minimum size=1cm,inner sep=0pt,fill=white] (x)at (\leftAngleIII: \radius ) {$y$};
				\node[draw,circle,minimum size=1cm,inner sep=0pt,fill=white] (y) at (\rightAngleIII: \radius ) {$x$};

				\draw[black, line width = 1pt, -]  (x) --  (0,-1);
				\draw[black, line width = 1pt, -]  (x) --  (-3,-1);
				\draw[black, line width = 1pt, -]  (y) --  (0,-1);
				\draw[black, line width = 1pt, -]  (y) --  (3,-1);
				
				\begin{scope}[shift={(-13.75,-1.5)}, xscale = 2.5, yscale = 2 ]
				\draw [line width = .75pt,
				rounded corners=3mm,fill = white]
				(4,.34) --(4.5,1.1)-- (5,.85)-- (6,.8)-- (6.5,1)-- (7, 0)-- (6.5, -.8) -- (6,-.75) -- (5,-.75) --(4.5,-.5) --cycle;
				\end{scope}
				
				\node[scale = 1.5] at (0,-1.5) {$H\setminus\{x,y\}$};
				
				\end{tikzpicture}

				\caption{The graph $G'$.}
				\label{fig:SusPathFig4}
			\end{subfigure}
			
			\caption{The graph $G$ vs.\ $G'$.}
			\label{fig:SusPathFig5}
		\end{figure}
		Before getting into the construction of $w_1$, we make the following simple observation concerning paths between $U$ and $H$
		\begin{idea}[$\ast_3 $]
		\label{idea3}
		If $u\in U$ and $v\in H$ then exactly one of the vertices $x$ and $y$ is in $P_{u,v}$.   
		\end{idea}
		
		It is clear that such a path $P_{u,v}$ contains at least one of $x$ or $y$ since these vertices separate $U$ from $H$. We argue that if $P \in \mathcal{P}$ is a path containing some $u\in U$ then it cannot also contain both $x$ and $y$. Otherwise, by the consistency of $\mathcal{P}$, the $xy$ subpath of $P$ is the edge $xy$, while the $ux$ and $uy$ subpaths of $P$ are $P_{u,x}$ and $P_{u,y}$, respectively. Therefore one of the paths $P_{u,x}$ or $P_{u,y}$ contains the edge $xy$, which contradicts that $u\in U$, with $f(u) =xy$. 
    	 
From \eqref{idea3} we see that the paths $P_{u,v}$, with $u\in U$ and $v\in H$, actually form a partial path system in $G$, since none of these paths use the edge $xy$. This may suggest that we extend this partial system to a full system in $G$ in a way that mirrors $\mathcal{P}$. Our approach is similar, but rather than working with a path system in $G$ we construct a path system in $G'$, a topological minor of $G$. 
    	
We wish for $w_1$ to induce $\mathcal{P}_H$, as well as the paths $P_{u,v}\in \mathcal{P}$ with $u\in U$ and $v\in H$. But since the other vertices in $Q$ do not concern us at the moment, we take $G'$ to be the graph obtained from $G$ by suppressing the vertices in $\big(\bigcup_{i=1, i\neq n+1}^{2n+1}U_{i} \big)\setminus \{x,y\} $, \Cref{fig:SusPathFig5}. Equivalently, $G'$ is obtained from $G$ by replacing the path $Q$ with the path $Q' \coloneqq y\alpha U \beta x$.  
    	 
To obtain $w_1$ we equip $G'$ with a path system $\mathcal{P}'$. Since $G'$ is metrizable (\Cref{prop:TopMinClosed}), there exists some $w':E(G')\to (0,\infty)$ which induces $\mathcal{P}'$. We then extend $w'$ to a function in $G+xy$ to get our desired $w_1$.
		
The path system $\mathcal{P}'$ in $G'$, that we seek to define should be analogous to $\mathcal{P}$. The analog in $G'$ of $P_{u,v}$ for $u \in U$ and $v\in H$ naturally suggests itself. Namely, the path obtained from $P_{u,v}$ by suppressing the vertices in $\big(\bigcup_{i=1, i\neq n+1}^{2n+1}U_{i} \big)\setminus \{x,y\}$. For $u,v \in U$ the path $P_{u,v}$ is just a subpath of $U$ and is therefore a path in $G'$. For $u,v \in H$ then there are two cases to consider. If $xy\notin P_{u,v}$ then $P_{u,v}$ is also a path in $G'$. If $xy\in P_{u,v}$ the $G'$-analog of $P_{u,v}$ is the path obtained from $P_{u,v}$ by replacing the edge $xy$ by $Q'$. In particular, whereas the $xy$ geodesic in $\mathcal{P}$ is the edge $xy$, in $\mathcal{P}'$ this geodesic is the path $Q'$. 
Thus in constructing $\mathcal{P}'$ we take every path in $\mathcal{P}$ with endpoints in $G'$, suppress all vertices in $\big(\bigcup_{i=1, i\neq n+1}^{2n+1}U_{i} \big)\setminus \{x,y\}$ and replace every occurrence of the edge $xy$ with the path $Q'$. The formal definition of  $\mathcal{P}'$ follows: 
		\begin{itemize}
			\setlength{\itemindent}{2em}
			\item[Case $1.$] $u,v \in G' \setminus U$.
			\begin{enumerate}[$(i)$]
				\item If $xy \in P_{u,v}$, i.e., $P_{u,v} = P_{u,x}xyP_{y,v}$, we set $P_{u,v}' \coloneqq P_{u,x}Q'P_{y,v}$. 
				\item If $xy \notin P_{u,v}$ then $P_{u,v}' \coloneqq P_{u,v}$
			\end{enumerate}
			\item[Case $2.$] $u,v \in  U$. We set $P_{u,v}' \coloneqq P_{u,v}$.
			\item[Case $3.$] $u\in U$ and  $v\in  G' \setminus U$.
			\begin{enumerate}[$(i)$]
				\item If $x\in P_{u,v}$ then $P_{u,v}' \coloneqq  P_{u,\beta}\beta x P_{x,v}$.
				\item If $y\in P_{u,v}$ then $P_{u,v}' \coloneqq  P_{u,\alpha}\alpha yP_{y,v}$.
			\end{enumerate}
		\end{itemize}
We claim that this definition is valid. We need only elaborate on paths from case $3$, whose definition is valid by \eqref{idea3}. Namely, $x\in P_{u,v}$ iff $y \notin P_{u,v}$ for all $u\in U_{n+1}$ and  $v\in  G' \setminus U$. 
		
We now prove that $\mathcal{P}'$ is consistent, i.e., that if $a,b\in P_{u,v}'$ for some path $P_{u,v}' \in \mathcal{P}'$, then $P_{a,b}'$ is a subpath of $P_{u,v}'$. Our analysis follows the various cases in the definition of $\mathcal{P}'$. In most cases this property is a simple consequence of the definition of $\mathcal{P}'$ and the consistency of $\mathcal{P}$. We only need to elaborate on case $1(i)$ where $u,v \in G' \setminus U$ and $P_{u,v}$ traverses the edge $xy$, namely $P_{u,v} =P_{u,x}xyP_{y,v}$, whereas by construction, $P_{u,v}' =P_{u,x}Q'P_{y,v}$.
Let $a,b\in P_{u,v}'$. If either $a,b \in G' \setminus U$ or $a,b \in U$, then the definition of $\mathcal{P}'$ and the consistency of $\mathcal{P}$ readily imply that $P_{a,b}'$ is subpath of $P_{u,v}'$. So let us assume that $a\in U$ and $b\in G' \setminus U$.
W.l.o.g.\ $b \in P_{u,x}$ so that $P_{u,v} = P_{u,b}P_{b,x}xyP_{y,v}$ and $P_{u,v}' = P_{u,b}P_{b,x}Q'P_{y,v}$. By construction of $\mathcal{P}'$, the path $P_{a, b}'$ is either $P_{a,\beta}\beta xP_{x,b}$ or $P_{a,\alpha}\alpha yP_{y,b}$.  We claim that $P_{a,b}'$ is in fact $P_{a,\beta}\beta xP_{x,b}$, which is the $ab$ subpath of $P_{u,v}'$. Otherwise $P_{a,b}' =P_{a,\alpha}\alpha yP_{y,b}$ which, by the construction of $\mathcal{P}',$ implies $y\in P_{a,b}$ and $P_{a,b} = P_{a,y}P_{y,b}$. As $P_{u,v} = P_{u,b}P_{b,x}xyP_{y,v}$ and $\mathcal{P}$ is consistent, the $yb$ subpath $yxP_{x,b}$ of $P_{u,v}$ coincides with the path $P_{y,b},$ \Cref{fig:SusPathFig6}. It follows that  $P_{a,b} = P_{a,y}P_{y,b} = P_{a,y}yxP_{x,b}$, and the path $P_{a,b}$ contains the edge $xy$. From \eqref{idea3} we see this contradicts the fact that $a\in U$, with $f(a) = xy$.
		It follows that $\mathcal{P}'$ is consistent and there exists some $w':E(G') \to (0,\infty)$ which induces it.
		
		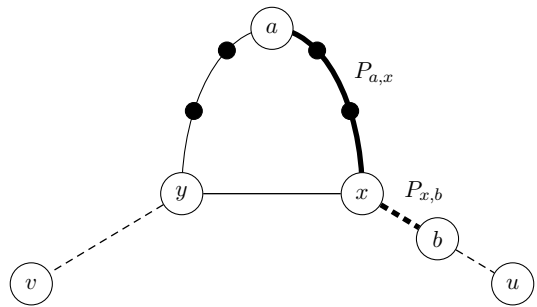
\begin{figure}[h]
			\centering
			\begin{tikzpicture}	[scale=.8, every node/.style={scale=.8}]

			\draw [line width = 2pt] (1.5,0) arc [x radius=1.5, y radius=2.75, start angle=0, end angle=90];
			\draw (1.5,0) arc [x radius=1.5, y radius=2.75, start angle=0, end angle=180]
			\foreach \t in {0.1666, .3333, 0.6666, .8333} { node[draw,circle,inner sep=0pt,minimum size=8pt,fill=black, pos=\t] {} };
			
			\node[draw,circle,minimum size=.7cm,inner sep=2pt,fill=white] at  (0, 2.75 ) {$a$};
			\node[draw,circle,minimum size=.7cm,inner sep=2pt,fill=white] (x) at (1.5,0 ) {$x$};
			\node[draw,circle,minimum size=.7cm,inner sep=2pt,fill=white] (y) at (-1.5,0 ) {$y$};
			\draw[black, line width = .5pt, -] (x)  --  (y) {}; 
			
			\node[draw,circle,minimum size=.7cm,inner sep=2pt,fill=white] (u) at (4,-1.5 ) {$u$};
			\node[draw,circle,minimum size=.7cm,inner sep=2pt,fill=white] (b) at ($(x) !0.5!(u)$) {$b$};
			\node[draw,circle,minimum size=.7cm,inner sep=2pt,fill=white] (v) at (-4,-1.5 ) {$v$};
			
			\draw[black, line width = 2pt, densely dashed,  -] (x)  --  (b); 
			\draw[black, line width = .5pt, densely dashed,  -] (b)  --  (u); 
			\draw[black, line width = .5pt, densely dashed, -] (y)  --  (v); 
			
			\node at ($(x) !0.5!(b) + (.4,.4)$) {$P_{x,b}$};
			\node at (1.7,2 ) {$P_{a,x}$};
			
			\end{tikzpicture}
			
			\caption{The path $P_{a,b}=P_{a,x}P_{x,b}$ contains $x$ but not $y$.}
			\label{fig:SusPathFig6}
		\end{figure}

Using $w'$ we wish to construct a weight function $w_1 : E(G+xy) \to [0,\infty)$ that induces $\mathcal{P}_H$ and every path $P_{u,v} \in \mathcal{P}$, with $u\in U$, $v\in H$. To motivate our definition of $w_1$, consider what happens when $\big(\bigcup_{i=1, i\neq n+1}^{2n+1}U_{i} \big)\setminus \{x,y\} =\emptyset$ so that $G'$ coincides with $G$ and $Q'$ coincides with $Q$. In this case, $w'$ is a weight function on $G$ and to define $w_1$ we need only extend this function to the edge $xy$. Also, in this scenario, the path system $\mathcal{P}'$ is obtained by taking the paths in $\mathcal{P}$ and replacing any occurrence of the edge $xy$ with $Q$.  It is not too difficult to see that if we take  $w_1(xy)\coloneqq w'(Q)$ and $w_1(e) \coloneqq w'(e)$, $e\neq xy$, then this weight function $w_1$ induces $\mathcal{P}$. Indeed, since $Q$ is a geodesic w.r.t.\ $w'$ the edge $xy$ is also a geodesic w.r.t.\ $w_1$. Thus, taking a $w_1$-geodesic and replacing any occurrence of the path $Q$ with the edge $xy$ yields another $w_1$-geodesic. The general definition of $w_1$ is similar, but slightly more care is needed.  
		
		Set $K=w'(Q)$ and fix some $0 < r < \min(w'(x\beta), w'(y\alpha) ) \leq \frac{K}{2}$. 
		We define $w_1 : E(G+xy) \to [0,\infty)$ as follows:
		\begin{itemize}
			\item[-] For $e\in H$, $e\neq xy$, $w_1(e)\coloneqq w'(e)$
			\item[-] $w_1(xy) \coloneqq K$
			\item[-] $w_1(e_1) \coloneqq w'(x\beta) + r$
			\item[-] $w_1(e_{2n+1}) \coloneqq  w'(y\alpha) +r$
			\item[-] For $e\in U$, $w_1(e) \coloneqq  w'(e)$
			\item[-] For all other $e\in C$, $w_1(e)\coloneqq 0$
		\end{itemize}
		Consider the subpath $L \coloneqq e_{2n+1}U e_1$ of $Q$.  Observe that 
		\begin{equation*}
		\begin{split}
		w_1(L) &= w_1(e_{2n+1}) + w_1(U) + w_1(e_1) \\
		&= (w'(y\alpha) +r) + w'(U) +(w'(x\beta) +r)\\
		&=  w'(Q) +2r\\
		&= K +2r.
		\end{split}
		\end{equation*}
		In some sense, we identify the path $L$ in $G$ with the path $Q'$ in $G'$. Note that the edges in $L$ and $xy$ are the only edges of $C$ with positive weight. Also, we could take $r=0$ in the definition of $w_1$ and maintain its desired properties, but then we would have $w_1(Q) = w_1(xy)$. The choice of $r>0$ will make it so that $w_1(xy)<w_1(Q)$ and will allow us to construct a weight function where the paths in $\mathcal{P}_C$ are {\em strictly} induced. This will in turn give us more flexibility in defining $w$.
		
We show that $w_1$ induces $\mathcal{P}_H$. 
Any $w_1$-geodesic between two vertices in $H$ contains no vertices in $Q$. Otherwise, it contains all of $Q$ and, since $w_1(xy) =K< K+2r = w_1(L) = w_1(Q)$, replacing $Q$ with the edge $xy$ gives a path of strictly smaller weight. For a path $P$ in $H$ let $P'$ be the path in $G'$ obtained by replacing any occurrence of the edge $xy$ with the path $Q'$. The mapping $P\mapsto P'$ is a one-to-one correspondence between the paths in $H$ and the paths in $G'$ with endpoints in $V(H)$. Since $w_1(xy) = w'(Q)$ these two paths have the same weight, i.e. $w_1(P) = w'(P')$. It follows that $P$ is a $w_1$-geodesic iff $P'$ is a $w'$-geodesic. By construction of $\mathcal{P}$, for any path $P$ in $H$ we have $P \in \mathcal{P}$ iff $P'\in \mathcal{P}'$. Since $w'$ induces $\mathcal{P}'$ the claim follows.
		
Next we show that for $u\in U$ and $v\in H$, the path $P_{u,v} \in \mathcal{P}$ is a $w_1$-geodesic. Since $\mathcal{P}$ is consistent, then either $P_{u,v} = P_{u,x} P_{x,v}$ or $P_{u,v} = P_{u,y} P_{y,v}$. We first check that both $P_{u,y}$ and $P_{u,x}$ are $w_1$-geodesics. As $u$, $x$ and $y$ are all in the cycle $C$ it suffices to show the inequality \mbox{ $|w_1(P_{u,y})  -  w_1(P_{u,x}) |<  w_1(xy)$} holds.
The only edges in $C$ with non-zero weight  are the  edges in $L =e_{2n+1}P_{\alpha, u}P_{u,\beta} e_1$ and $xy$ so that,
\begin{equation*}
\label{eg:ref1}
\tag{$\ast_4 $}
w_1(P_{u,y}) =w_1(P_{u,\alpha} \  e_{2n+1}) = w'(P_{u,y}')  + r, \hspace{15mm} w_1(P_{u,x})=w_1(P_{u,\beta} \  e_{1}) = w'(P_{u,x}') + r,
\end{equation*}
where $w'(P_{u,y}') + w'(P_{u,x}')  = w(Q') = K. $ Therefore,
		\begin{equation*}
		\label{eg:ineq1}
		\tag{$\ast_5$}
		|w_1(P_{u,y})  -  w_1(P_{u,x}) |=|w_1(P_{u,\alpha} \  e_{2n+1}) -  w_1(P_{u,\beta} \ e_{1}) | = |w'(P_{u,y}')  - w'(P_{u,x}') | <K = w_1(xy)
		\end{equation*}
		and both $P_{u,y}$ and $P_{u,x}$ are $w_1$-geodetic.  Since the paths $P_{x,v}$ and $P_{y,v}$ are in $\mathcal{P}_H$ they are also $w_1$-geodesics. So by \eqref{idea2} either $P_{u,x} P_{x,v}$ or $P_{u,y} P_{y,v}$ is a $uv$ geodesic w.r.t.\ $w_1$.
		Also, as $P_{x,v} \in \mathcal{P}_H$,
		from what we saw before, $w_1(P_{x,v}) = w'(P_{x,v}')$ and so
		$$w_1(P_{u,x} P_{x,v}) =w_1(P_{u,x}) + w_1(P_{x,v})  = w'(P_{u,x}')+ w'(P_{x,v}') +r = w'(P_{u,x}'P_{x,v}') +r.$$
		Similarly, $w_1(P_{y,v}) = w'(P_{y,v}')$
		and
		$$w_1(P_{u,y} P_{y,v}) =w_1(P_{u,y}) + w_1(P_{y,v})  = w'(P_{u,y}')+ w'(P_{y,v}') +r= w'(P_{u,y}'P_{y,v}') +r.$$
		Suppose $P_{u,v} = P_{u,x}P_{x,v}$.
		By construction of $\mathcal{P}'$,  $P_{u,v}' = P_{u,x}' P_{x,v}'$ and since $P_{u,v}'$ is a $w'$-geodesic,  $w'(P_{u,x}' P_{x,v}')=w'(P_{u,v})\leq w'(P_{u,y}' P_{y,v}')$, which implies
		$$w_1(P_{u,v})=w_1(P_{u,x} P_{x,v}) = w'(P_{u,x}' P_{x,v}') +r\leq w'(P_{u,y}' P_{y,v}') + r\leq w_1(P_{u,y} P_{y,v})$$
		and $P_{u,v}$ is a $w_1$-geodesic. The argument when $P_{u,v} = P_{u,y}P_{y,v}$ is identical.
		
We next establish some inequalities that we will need below. By construction
$r < \min(w'(x\beta), w'(y\alpha) )$, and $w_1(e_1) =w'(x\beta) +r$ and $w_1(e_{2n+1}) =w'(y\alpha) +r$ so that
\begin{equation*}
\label{eg:ineq2}
\tag{$\ast_6$}
|w_1(e_1) - 2r| = w'(x\beta) - r < w_1(e_1), \hspace{20mm}|w_1(e_{2n+1}) - 2r| = w'(y\alpha) -r < w_1(e_{2n+1})
\end{equation*}
		
		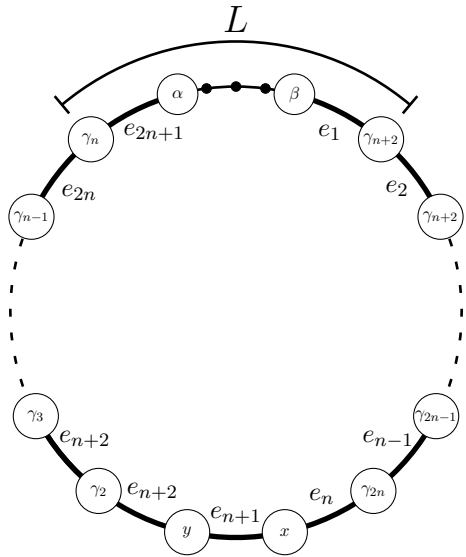
\begin{figure}[h]
				\centering
				\begin{tikzpicture}	[scale=0.6, every node/.style={scale=0.6}]
				\def \inc {25}
				\def \radius {5}
				\def \rightAngle {75}
				\def \leftAngle {105}
				\pgfmathsetmacro\rightAngleII {\rightAngle- \inc}
				\pgfmathsetmacro \leftAngleII {\leftAngle + \inc}
				\pgfmathsetmacro\rightAngleIII {\rightAngle - 2 * \inc}
				\pgfmathsetmacro\leftAngleIII {\leftAngle + 2*\inc}
				\pgfmathsetmacro\rightAngleIV {\leftAngleIII- 180}
				
				\pgfmathsetmacro\downRight {-90+ \inc/2}
				\pgfmathsetmacro\downLeft {-90- \inc/2}
				\pgfmathsetmacro\downRightII {-90+ \inc/2+ \inc}
				\pgfmathsetmacro\downLeftII {-90- \inc/2- \inc}
				\pgfmathsetmacro\downRightIII {-90+ \inc/2+ 2*\inc}
				\pgfmathsetmacro\downLeftIII {-90- \inc/2- 2*\inc}

				\draw[black, line width = 1pt] ([shift={(0,0)}] \rightAngleII :\radius + \radius/5 ) arc[radius=\radius + \radius/5 , start angle= \rightAngleII , end angle= \leftAngleII];
				\draw[black, line width = 1pt] (\rightAngleII :\radius + \radius/5 - \radius/20) -- (\rightAngleII :\radius + \radius/5 +\radius/20);
				\draw[black, line width = 1pt] (\leftAngleII :\radius + \radius/5 - \radius/20) -- (\leftAngleII :\radius + \radius/5 +\radius/20);
				\node[scale =2] at (90: \radius+ \radius/5 +\radius/10 ) {\large$L$};
				
				\node[scale =1.5] at ({\rightAngle/2 - \rightAngleII/2 + \rightAngleII}: \radius - \radius/10 ) {$e_1$};
				\node[scale =1.5] at ({\leftAngleII/2-\leftAngle/2+\leftAngle -3}: \radius -\radius/8 ) {$e_{2n+1}$};
				\node[scale =1.5] at ({\rightAngleII/2 - \rightAngleIII/2 + \rightAngleIII}: \radius - \radius/10 ) {$e_2$};
				\node[scale =1.5] at ({\leftAngleIII/2-\leftAngleII/2+\leftAngleII }: \radius -\radius/8 ) {$e_{2n}$};
				
				\draw[black, line width = 1pt] ([shift={(0,0)}] \rightAngle :\radius ) arc[radius=\radius , start angle= \rightAngle , end angle= \leftAngle];
				
				\draw[black, line width = 2pt] ([shift={(0,0)}] \leftAngle :\radius ) arc[radius=\radius , start angle= \leftAngle , end angle=  \leftAngleII];
				\draw[black, line width = 2pt] ([shift={(0,0)}]\rightAngleII:\radius ) arc[radius=\radius , start angle=\rightAngleII, end angle= \rightAngle];

				\node[draw,circle,minimum size=.9cm,inner sep=0pt,fill=white] at (\leftAngle: \radius ) {$\alpha$};
				\node[draw,circle,minimum size=.9cm,inner sep=0pt,fill=white] at (\rightAngle: \radius ) {$\beta$};
				\node[draw,circle,inner sep=0pt,minimum size=6pt,fill=black] at (\leftAngle/4 - \rightAngle/4+ \rightAngle: \radius ) {};
				\node[draw,circle,inner sep=0pt,minimum size=6pt,fill=black] at  (\leftAngle /4*2- \rightAngle/4 *2+ \rightAngle: \radius ) {};
				\node[draw,circle,inner sep=0pt,minimum size=6pt,fill=black] at  (\leftAngle/4*3 - \rightAngle/4*3 + \rightAngle: \radius )  {};

				\draw[black, line width = 2pt] ([shift={(0,0)}] \leftAngleII :\radius ) arc[radius=\radius , start angle= \leftAngleII , end angle=  \leftAngleIII];
				\draw[black, line width = 2pt] ([shift={(0,0)}]\rightAngleIII:\radius ) arc[radius=\radius , start angle=\rightAngleIII, end angle= \rightAngleII];
				
				\draw[black, line width = 1pt, loosely dashed] ([shift={(0,0)}] \leftAngleIII :\radius ) arc[radius=\radius , start angle= \leftAngleIII , end angle=  360+\downLeftIII];
				\draw[black, line width = 1pt, loosely dashed] ([shift={(0,0)}] \rightAngleIII :\radius ) arc[radius=\radius , start angle= \rightAngleIII , end angle=  \downRightIII];
				
				\draw[black, line width = 2pt] ([shift={(0,0)}] \downLeftIII :\radius ) arc[radius=\radius , start angle=\downLeftIII , end angle=  \downLeftII];
				\draw[black, line width = 2pt] ([shift={(0,0)}] \downRightIII :\radius ) arc[radius=\radius , start angle=\downRightIII , end angle=  \downRightII];
				\draw[black, line width = 2pt] ([shift={(0,0)}] \downLeftII :\radius ) arc[radius=\radius , start angle=\downLeftII , end angle=  \downLeft];
				\draw[black, line width = 2pt] ([shift={(0,0)}] \downRightII :\radius ) arc[radius=\radius , start angle=\downRightII , end angle=  \downRight];
				\draw[black, line width = 2pt] ([shift={(0,0)}] \downLeftIII :\radius ) arc[radius=\radius , start angle=\downLeftIII , end angle=  \downLeftII];
				\draw[black, line width = 2pt] ([shift={(0,0)}] \downRight :\radius ) arc[radius=\radius , start angle=\downRight , end angle=  \downLeft];
				
				\node[draw,circle,minimum size=1cm,inner sep=0pt,fill=white] at (\leftAngleII: \radius ) {$\gamma_{n}$};
				\node[draw,circle,minimum size=1cm,inner sep=0pt,fill=white] at (\rightAngleII: \radius ) {$\gamma_{n+2}$};
				
				\node[draw,circle,minimum size=1cm,inner sep=0pt,fill=white] at (\leftAngleIII: \radius ) {$\gamma_{n-1}$};
				\node[draw,circle,minimum size=1cm,inner sep=0pt,fill=white] at (\rightAngleIII: \radius ) {$\gamma_{n+2}$};

				\node[draw,circle,minimum size=1cm,inner sep=0pt,fill=white] at (\downRightIII: \radius ) {$\gamma_{2n-1}$};
				\node[draw,circle,minimum size=1cm,inner sep=0pt,fill=white] at (\downRightII: \radius ) {$\gamma_{2n}$};
				\node[draw,circle,minimum size=1cm,inner sep=0pt,fill=white] at (\downRight: \radius ) {$x$};
				
				\node[draw,circle,minimum size=1cm,inner sep=0pt,fill=white] at (\downLeftIII: \radius ) {$\gamma_{3}$};
				\node[draw,circle,minimum size=1cm,inner sep=0pt,fill=white] at (\downLeftII: \radius ) {$\gamma_{2}$};
				\node[draw,circle,minimum size=1cm,inner sep=0pt,fill=white] at (\downLeft: \radius ) {$y$};
				
				\node[scale =1.5] at ({\downRightII/2 - \downRightIII/2 + \downRightIII}: \radius - \radius/10 -.1) {$e_{n-1}$};
				\node[scale =1.5] at ({\downLeftIII/2-\downLeftII/2+\downLeftII }: \radius -\radius/8 ) {$e_{n+2}$};
				\node[scale =1.5] at ({\downRight/2 - \downRightII/2 + \downRightII}: \radius - \radius/10 ) {$e_{n}$};
				\node[scale =1.5] at ({\downLeftII/2-\downLeft/2+\downLeft }: \radius -\radius/8 ) {$e_{n+2}$};
				\node[scale =1.5] at ({\downRight/2 - \downLeft/2 + \downLeft}: \radius - \radius/10 ) {$e_{n+1}$};
				
				\end{tikzpicture}
				
				\caption{The cycle $C$, with $U_i=\{\gamma_i\}$, $i\neq n+1$.}
				\label{fig:SusPathFig2}
		\end{figure}
		
The next step is to modify $w_1$ to $w_2: E(G+xy) \to [0,\infty)$. At this point we only need to find appropriate weights for each $e_i$, $i\neq 1,n+1,2n+1$, so that $w_2$ strictly induces $\mathcal{P}_C$. The canonical path system of an odd cycle is induced by uniform edge weights. So, in search of a weight function which strictly induces $\mathcal{P}_C$, it is suggestive to assign the same positive weight to every edge $e_i$, and zero weight to all edges in $U_i$ for $1\leq i \leq 2n+1$. However, we need to deal with the fact that $w_1(e_1)$, $w_1(e_{n+1})$, and $w_1(e_{2n+1})$ are already defined and not necessarily equal. To solve the problem we use the fact that these three edge weights roughly equal $\frac{K}{2} +r$.
		
		We define $w_2$ as follows:
		\begin{itemize}
			\item[-] For $i\neq 1,n+1, 2n+1$, $w_2(e_i) \coloneqq  \frac{K}{2} + r$
			\item[-] Otherwise, $w_2(e) \coloneqq w_1(e)$
		\end{itemize}
		We recall that for $u\in U_i$, $f(u) = e_i$. To prove that $w_2$ strictly induces $\mathcal{P}_C$, by \Cref{lem:cyclePathCharac}, it suffices to show that \mbox{$|w_2(P_{u,\alpha_i}) - w_2(P_{u,\beta_i}) | < w_2(e_{i})$} for every $u\in U_{i}$, where $e_i = \alpha_i \beta_i $. Moreover, for $i\neq n+1$ the edges in $U_i$ have zero $w_2$-weight. So if the inequality holds for some vertex in $U_{i}$, it holds for all of them. It therefore suffices to show that $\mathcal{P}_C$ is strictly induced by $w_2$ in the case where $U_i = \{\gamma_i\}$ is a singleton, $i\neq n+1$ with $\gamma_{1}=y$ and $\gamma_{2n+1}=x$, \Cref{fig:SusPathFig2}. In this scenario, $L =e_{2n+1} U_{n+1} e_{1}= \gamma_{n}\alpha U_{n+1} \beta\gamma_{n+2}=  \gamma_{n}P_{\alpha,\beta}\gamma_{n+2}$.
		
		We start with the range $1<i \leq n$, so $e_i = \gamma_{n+i}\gamma_{n+i+1}$. To calculate $w_2(P_{\gamma_i, \gamma_{n+i}})$ and $w_2(P_{\gamma_i,\gamma_{n+i+1}})$ we write
		$$P_{\gamma_i,\gamma_{n+i}} = \gamma_{i} \gamma_{i+1}\cdots \gamma_{n}L\gamma_{n+2}\cdots \gamma_{n+i},\hspace{15mm} P_{\gamma_{i},\gamma_{n+i+1}} = \gamma_{i}\gamma_{i-1}\cdots \gamma_{1}\gamma_{2n+1}\cdots \gamma_{n+i+2}\gamma_{n+i+1}.$$
		In $E(P_{\gamma_i,\gamma_{n+i}}) \setminus E(L)$ there are $n-2$ edges, and each such edge $e$ satisfies $w_2(e) = \frac{K}{2} + r$. Therefore, $w_2(P_{\gamma_i,\gamma_{n+i}}) = (n-2) \big(\frac{K}{2} + r\big) + w_2(L)$. Similarly, $w_2(e) = \frac{K}{2} + r$ for every edge $e\in P_{\gamma_{i},\gamma_{n+i+1}} $, where $e\neq\gamma_{1}\gamma_{2n+1}= xy$.
		The number of such edges is $n-1$, and therefore $w_2(P_{\gamma_{i},\gamma_{n+i+1}}) = (n-1) \big(\frac{K}{2} + r\big) + w_2(xy)$. Also, by construction $w_2(L) = w_1(L) =K + 2r$ and $w_2(xy)=w_1(xy) = K$. Since $0 < r < \frac{K}{2}$, there holds
		$$|w_2(P_{\gamma_i,\gamma_{n+i}})  - w_2(P_{\gamma_i,\gamma_{n+i+1}})  | = \frac{K}{2} - r < \frac{K}{2} + r = w_2(e_i).$$
		Now we consider the case $i=1$,  $e_1 = \beta\gamma_{n+2}$. Since $\gamma_1 = y$ we can write
		$$P_{y,\beta} =y \gamma_{2}\cdots \gamma_{n}P_{\alpha,\beta},  \hspace{20mm} P_{y,\gamma_{n+2}} = yx\gamma_{2n}\cdots \gamma_{n+3}\gamma_{n+2}.$$
		Clearly, $$w_2(P_{y,\gamma_{n+2}} ) = (n+1) \frac{K}{2} + (n-1)r,$$ since every edge in $P_{y,\gamma_{n+2}} $ other than $xy$ weighs $\frac{K}{2} + r$.\\
		We turn to calculate $w_2(P_{y,\beta})$. We first observe:
		$$w_2(\gamma_{n}P_{\alpha,\beta} ) = w_2(\gamma_{n}P_{\alpha,\beta}\gamma_{n+2}) - w_2(\beta\gamma_{n+2}) =  w_2(L) - w_2(\beta\gamma_{n+2}) = K+2r- w_2(e_1).$$ 
		But aside of the edges in $\gamma_{n}P_{\alpha,\beta}$ there are $n-1$ edges in $P_{y,\beta}$ with weight $\frac{K}{2} + r$. Therefore:
		$$w_2(P_{y,\beta} ) = (n+1) \frac{K}{2} + (n+1)r - w_2(e_1).$$
		Since $w_2(e_1) = w_1(e_1)$, from \eqref{eg:ineq2} we get
		$$|w_2(P_{y,\beta} ) - w_2(P_{y,\gamma_{n+2}} )| =|w_2(e_1) -2r| =|w_1(e_1) -2r|  <w_1(e_1) = w_2(e_1).$$
		The argument when $n+2\leq i \leq 2n+1$ is similar. There remains the case $i= n+1$, $e_{n+1} =xy$. For $u \in U_{n+1}$ we can write 
		$$P_{u,x} = P_{u,\beta} \gamma_{n+2} \gamma_{n+3}\cdots \gamma_{2n}x \hspace{10mm} \text{ and } \hspace{10mm} P_{u,y} = P_{u,\alpha} \gamma_{n}\gamma_{n-1}\cdots \gamma_{2}y.$$
		Note that
		$$w_2(P_{u,x}) = (n-1) \big(\frac{K}{2} + r\big) + w_2( P_{u,\beta} \gamma_{n+2} ),$$
		since $P_{u,x}$ has $n-1$ edges of weight $\frac{K}{2} + r$ in addition
		to the edges contains in $P_{u,\beta} \gamma_{n+2} $. Similarly,   $w_2(P_{u,y}) = (n-1) \big(\frac{K}{2} + r\big) + w_2(P_{u,\alpha} \gamma_{n})$. Since $w_2(P_{u,\beta} \gamma_{n+2}) = w_1(P_{u,\beta}e_1)$ and $w_2(P_{u,\alpha} \gamma_{n})   = w_1(P_{u,\alpha} e_{2n+1})  ,$
		by \eqref{eg:ineq1} it follows
		$$|w_2(P_{u,x}) - w_2(P_{u,y}) | = |w_1(P_{u,\alpha} e_{2n+1})- w_1(P_{u,\beta}e_1)|  < w_1(xy) = w_2(xy).$$

Finally, we construct a strictly positive $w:E(G+xy) \to (0,\infty)$ which induces $\mathcal{P}$. This is accomplished by perturbing $w_2$. Using \eqref{idea1}, we see that $w_2$ is a non-negative weight function which induces $\mathcal{P}$. The strictly metrizable version of \Cref{thm:SuspendedPath} already follows now, using \Cref{lem:strictWeightPerturb}. However, the non-strictly metrizable statement requires a bit more work. 
		
		Set $N_1 \coloneqq \sum_{i=1}^{n} |E(U_i) |$ and $N_2 \coloneqq \sum_{i=n+2}^{2n+1} |E(U_i) |$ and fix some small $\delta >0$. We construct $w$ as follows:
		\begin{itemize}
			\item[-] For $e\in U_i$, $i\neq n+1$, $w(e) \coloneqq \delta$
			\item[-] $w(e_{1}) \coloneqq w_2(e_{1})  + N_1\delta$
			\item[-] $w(e_{2n+1}) \coloneqq w_2(e_{2n+1}) +N_2\delta$
			\item[-] Otherwise, $w(e) \coloneqq w_2(e) $
		\end{itemize}
		Since $w_2$ strictly induces $\mathcal{P}_C$, by \Cref{lem:strictWeightPerturb}, $w$ also strictly induces $\mathcal{P}_C$. Moreover, as $w_1$ induces $\mathcal{P}_H$ and $w$ and $w_1$ agree over $H$, this implies that $w$ also induces $\mathcal{P}_H$. If every path in $\mathcal{P}$ between $U_i$ and $H$ is $w$-geodetic then by \eqref{idea1}, $w$ induces $\mathcal{P}$. Let $u \in U_i$ and $v \in H$. By \eqref{idea2}, either  $P_{u,x} P_{x,v}$ or $P_{u,y} P_{y,v}$ is a $w$-geodesic. From \eqref{eg:ref1}, $w_2(P_{u,\beta} \ e_{1})= w_1(P_{u,\beta} \ e_{1}) =  w_1(P_{u,x})$.  Also, we saw that
		\mbox{$w_2(P_{u,x})  =  (n-1) \big(\frac{K}{2} + r\big) + w_2(P_{u,\beta} \  e_{1}).$}
		The definition of $w$ yields 
		$$w(P_{u,x}) = w_2(P_{u,x}) + N_1\delta +N_2\delta = w_1(P_{u,x}) + (n-1) \big(\frac{K}{2} + r\big) + N_1\delta +N_2\delta=  w_1(P_{u,x}) +\tilde{K},$$
		where $\tilde{K} =(n-1) \big(\frac{K}{2} + r\big) + N_1\delta +N_2\delta$.
		Since $w$ and $w_1$ agree on $H$ 
		$$ w(P_{u,x} P_{x,v}) = w(P_{u,x})  + w(P_{x,v}) = w_1(P_{u,x})+ \tilde{K} +w_1(P_{x,v}) = w_1(P_{u,x} P_{x,v}) + \tilde{K}.$$
		In the same way,
		$$w(P_{u,y} P_{y,v}) = w_1(P_{u,y} P_{y,v}) + \tilde{K}.$$
		Since either $P_{u,v} = P_{u,x} P_{x,v}$ or $P_{u,v} = P_{u,y} P_{y,v}$, this implies $P_{u,v}$ is $w$-geodetic if it is $w_1$-geodetic. But it was already shown that $P_{u,v}$ is $w_1$-geodetic.

	\end{proof}

	Based on \Cref{thm:SuspendedPath} we conclude:
	\begin{corollary}\label{cor:outerPlanarMet}
		Every outerplanar graph $G$ is strictly metrizable.
	\end{corollary}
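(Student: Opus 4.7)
The plan is to induct on $|E(G)|$, using Remark~\ref{rem:2conn} to reduce to the $2$-connected case, Proposition~\ref{prop:cycleStrictlyMet} as the base, and Theorem~\ref{thm:SuspendedPath} to add back one chord at a time. First I would invoke Remark~\ref{rem:2conn}: a blockwise decomposition of $G$ shows that $G$ is (strictly) metrizable if and only if each of its biconnected components is, and every block of an outerplanar graph is itself outerplanar. Hence it suffices to treat $2$-connected outerplanar graphs $G$. For such $G$ I would use the standard structural fact that some (equivalently, every) outerplanar embedding has the outer face bounded by a Hamilton cycle $C$, and all remaining edges are pairwise non-crossing chords of $C$.

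Now I would induct on $|E(G)|$. If $G$ has no chord then $G=C$ is a cycle, and Proposition~\ref{prop:cycleStrictlyMet} gives strict metrizability. Otherwise, the chords of $C$ partition the interior of $C$ into inner faces whose weak dual is a tree. I would pick a leaf of this tree, i.e., an inner face $F$ whose boundary consists of a single chord $xy$ together with a sub-path $P$ of $C$ joining $x$ and $y$. The key observation is that every internal vertex $z$ of $P$ has $\deg_G(z)=2$: such a $z$ lies on $C$ (contributing $2$ to its degree), and any further chord incident to $z$ would cut $F$ into two inner faces, contradicting that $F$ is a face of the embedding. Consequently $P$ is a suspended path of $G$, and its internal vertices still have degree $2$ in $G':=G-xy$ since deleting $xy$ does not touch them.

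Next I would check that $G'$ remains $2$-connected outerplanar: it inherits the outerplanar embedding and still contains the Hamilton cycle $C$, so it is $2$-connected, and its edge count is $|E(G)|-1$. By the inductive hypothesis, $G'$ is strictly metrizable. Since $P$ is a suspended $x$-$y$ path in $G'$, Theorem~\ref{thm:SuspendedPath} (strict version) yields that $G'+xy=G$ is strictly metrizable, completing the induction.

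The only non-routine ingredient is the extremal-chord step in the middle paragraph: verifying that a $2$-connected outerplanar graph with at least one chord always contains a chord $xy$ bounding a face $F$ together with a sub-path of $C$ all of whose interior vertices have degree exactly $2$ in $G$. Beyond this mild planarity argument, the remainder of the proof is a direct assembly of Remark~\ref{rem:2conn}, Proposition~\ref{prop:cycleStrictlyMet}, and Theorem~\ref{thm:SuspendedPath}.
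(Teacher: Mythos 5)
Your proposal is correct and follows essentially the same route as the paper: reduce to the $2$-connected case, use cycles as the base case via \Cref{prop:cycleStrictlyMet}, and in the inductive step delete an internal edge $xy$ whose endpoints are joined by a suspended path, restoring it with \Cref{thm:SuspendedPath}. The only difference is cosmetic — you induct on $|E(G)|$ rather than $|E(G)|-|V(G)|$ and spell out the leaf-face argument for why such a chord exists, which the paper leaves implicit.
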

	
	\begin{proof}
		As usual we can assume w.l.o.g.\ that $G$ is $2$-connected. We argue by induction on $|E(G)|-|V(G)|$. In the base case $G$ is a cycle. For the induction step we apply \Cref{thm:SuspendedPath} to $G\setminus e$, where $e=uv$ is an internal edge such that $u$ and $v$ are connected by a suspended path.
	\end{proof}
	
	Next we show that all small graphs are metrizable.
	\begin{proposition}
		Every graph of order at most $4$ is strictly metrizable.
	\end{proposition}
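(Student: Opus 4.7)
I would first apply \Cref{rem:2conn} to reduce to the 2-connected case; the block decomposition carries over verbatim to the strict setting, since after scaling each block's weight function to a common value on the cut vertex the pieces can be glued into one weight that strictly induces the whole system. The 2-connected graphs of order at most $4$ are then easy to enumerate: $K_2$, $C_3$, $C_4$, the diamond $K_4-e$, and the complete graph $K_4$.

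The edge $K_2$, the triangle $C_3$, and the 4-cycle $C_4$ are strictly metrizable by \Cref{prop:cycleStrictlyMet}, and $K_4-e$ is outerplanar (draw it as two triangles glued on an edge with every vertex on the outer face), so it is strictly metrizable by \Cref{cor:outerPlanarMet}. This leaves $K_4$ as the only remaining case, and it is here that I expect the main obstacle to lie: $K_4$ is not outerplanar, and \Cref{thm:SuspendedPath} gives no traction from $K_4-e$, because in $K_4-e$ the two endpoints of the missing edge have no suspended path between them (every internal vertex on every such path has degree $3$).

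I would therefore handle $K_4$ by a direct case analysis on a consistent path system $\mathcal{P}$ on the vertex set $\{1,2,3,4\}$. If some $P_{u,v}\in\mathcal{P}$ has length $3$, then consistency forces all six paths of $\mathcal{P}$, and a weight assigning $1$ to the three edges traversed by this length-$3$ path and a large constant $M$ to the remaining three edges strictly induces $\mathcal{P}$ (easy to verify by summing weights along each of the five paths between any two fixed vertices). So we may assume every path of $\mathcal{P}$ has length at most $2$. If all paths are edges then $\mathcal{P}$ is the neighborly system, strictly induced by the all-ones weight. Otherwise some length-$2$ path occurs, say $P_{1,2}$ passes through vertex $3$; consistency then pins $P_{1,3}$ and $P_{2,3}$ to be edges and restricts each of $P_{1,4},P_{2,4},P_{3,4}$ to a short list of admissible options, leaving only a handful of configurations up to the $S_4$-action on $K_4$. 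For each of these I would exhibit an explicit weight (small on the edges used by the chosen length-$2$ paths of $\mathcal{P}$, larger on the remaining ``direct'' edges) and verify by a finite computation that each $P_{u,v}$ is the unique geodesic between its endpoints.
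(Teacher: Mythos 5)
Your argument is correct in outline, but it takes a substantially longer route than the paper's, and its core --- the case analysis of path systems on $K_4$ --- is only promised, not carried out. The paper's proof is two sentences: every graph on at most four vertices other than $K_4$ is outerplanar, hence strictly metrizable by \Cref{cor:outerPlanarMet}, and on $K_4$ only the neighborly system needs separate attention, which unit weights strictly induce. The observation you are missing, and which collapses your entire $K_4$ analysis, is this: if $P_{u,v}\neq uv$ for some edge $uv$, then consistency forces the edge $uv$ to appear in \emph{no} path of $\mathcal{P}$ (any path traversing the edge $uv$ has $u$ and $v$ consecutive, so its $uv$ subpath is the edge itself, forcing $P_{u,v}=uv$); hence $\mathcal{P}$ is a path system of $K_4-uv$, which is outerplanar, and a weight strictly inducing it there extends to $K_4$ by assigning $uv$ a weight larger than the sum of all other edge weights. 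This disposes of your length-$3$ case and your length-$2$ case simultaneously, leaving only the neighborly system. What you did write is sound: the length-$3$ configuration is indeed forced by consistency and your weight (with $M>3$) strictly induces it, and the block reduction of \Cref{rem:2conn} does carry over to the strict setting --- though no rescaling at the cut vertex is needed, since a $uv$ path through a cut vertex $a$ is a concatenation of a $ua$ piece and an $av$ piece, and strict optimality of each piece gives strict optimality of the concatenation. As submitted, however, the enumeration of the length-$2$ configurations and their inducing weights is a plan rather than a proof; you should either complete that finite check or, better, replace it with the unused-edge observation above.
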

	\begin{proof}
		Every graph other than $K_4$ with at most four vertices is outerplanar and therefore strictly metrizable by \Cref{cor:outerPlanarMet}. There remains the neighborly path systems of $K_4$ which is strictly induced by unit edge weights. 
	\end{proof}
	\begin{corollary}
		Every (strictly) non-metrizable graph contains a subdivision of $K_{2,3}$.
	\end{corollary}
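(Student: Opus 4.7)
The plan is to prove the contrapositive: if a graph $G$ contains no subdivision of $K_{2,3}$, then $G$ is strictly metrizable. The key observation is that the absence of a $K_{2,3}$-subdivision is actually stronger than it looks, and already forces $G$ to be outerplanar, at which point \Cref{cor:outerPlanarMet} finishes the argument.

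The only real step is the following easy remark: every subdivision of $K_4$ contains a subdivision of $K_{2,3}$. Indeed, if $H$ is a subdivision of $K_4$ with branch vertices $a,b,c,d$, then fixing any two of them, say $a$ and $b$, the three subdivided edges of $K_4$ corresponding to $ab$, $acb$ and $adb$ provide three internally disjoint $a$--$b$ paths in $H$. By definition this is a subdivision of $K_{2,3}$, with $a$ and $b$ playing the role of the two degree-$3$ vertices.

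Combining this remark with the classical topological characterization of outerplanar graphs (a graph is outerplanar iff it contains neither a subdivision of $K_4$ nor a subdivision of $K_{2,3}$, as recalled just before \Cref{thm:SuspendedPath}), the contrapositive follows: if $G$ has no $K_{2,3}$-subdivision, then $G$ has no $K_4$-subdivision either, so $G$ is outerplanar, so by \Cref{cor:outerPlanarMet} $G$ is strictly metrizable, hence in particular metrizable. Therefore every non-metrizable, and a fortiori every strictly non-metrizable, graph contains a subdivision of $K_{2,3}$.

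There is no genuine obstacle here: both ingredients (the observation about $K_4$-subdivisions and the Kuratowski-type characterization of outerplanarity) are elementary, and all the heavy lifting — namely strict metrizability of outerplanar graphs — has already been carried out in \Cref{thm:SuspendedPath} and \Cref{cor:outerPlanarMet}.
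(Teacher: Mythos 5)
There is a genuine gap: your ``easy remark'' that every subdivision of $K_4$ contains a subdivision of $K_{2,3}$ is false, and the three paths you exhibit do not witness it. A subdivision of $K_{2,3}$ consists of two branch vertices joined by three internally disjoint paths \emph{each of length at least $2$} (each must contain one of the three original degree-$2$ vertices). Your paths $ab$, $acb$, $adb$ include the single edge $ab$, so their union is the theta graph $\Theta_{1,2,2}=K_4-cd$, which is outerplanar and certainly contains no $K_{2,3}$-subdivision. The cleanest counterexample to the remark is $K_4$ itself: it has only $4$ vertices, so it cannot contain a topological $K_{2,3}$ (which needs $5$), yet it is not outerplanar. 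Hence the implication ``no $K_{2,3}$-subdivision $\Rightarrow$ outerplanar'' that your contrapositive rests on is simply wrong, with $K_4$ as the offending case.

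The argument is repairable, and the repair is essentially what the paper does. First, $K_4$ (and every graph on at most $4$ vertices) is strictly metrizable by the proposition immediately preceding this corollary, so it causes no harm to the conclusion. Second, for any subdivision of $K_4$ \emph{other than} $K_4$ itself, some edge, say $cd$, is subdivided into a path of length at least $2$; taking $c$ and $d$ as branch vertices, the three internally disjoint $c$--$d$ paths (the subdivided edge, $c$-$a$-$d$, and $c$-$b$-$d$) all have length at least $2$ and do form a $K_{2,3}$-subdivision. More generally, a $2$-connected non-outerplanar graph on at least $5$ vertices always contains a $K_{2,3}$-subdivision; this refined Kuratowski-type statement (``every $2$-connected non-outerplanar graph other than $K_4$ contains a subdivision of $K_{2,3}$'') is exactly the one-line proof the paper gives, combined with \Cref{cor:outerPlanarMet} and the order-at-most-$4$ proposition. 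As written, though, your proof does not go through.
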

	\begin{proof}
		Every $2$-connected non-outerplanar graph other than $K_4$ contains a subdivision of $K_{2,3}$.
	\end{proof}
	We next introduce infinitely many graphs which are metrizable but not strictly metrizable.
	\begin{proposition}
		The graph $K_{2,n}$, $n\geq 4$, is metrizable but not strictly metrizable.
	\end{proposition}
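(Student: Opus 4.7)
The plan is to separate the two claims. Denote the bipartition of $K_{2,n}$ by $\{a,b\}$ and $\{c_1,\dots,c_n\}$. Since every edge of $K_{2,n}$ joins $\{a,b\}$ to $\{c_i\}$, the only paths that need to be specified in a path system are $P_{a,b}=a c_k b$ for some $k$ (since any $ab$-path passes through exactly one $c_i$) and, for each pair $i\neq j$, $P_{c_i,c_j}\in \{c_i a c_j,\, c_i b c_j\}$ (the diameter is $2$ and these are the only $c_ic_j$-paths of length $\le 2$). I would first observe that every such choice yields a consistent system: any subpath of a $P\in \mathcal{P}$ with endpoints $x,y$ is either a single edge or of length $2$ with specified endpoints, and consistency holds trivially.

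For metrizability, the plan is to verify that the constant weight $w\equiv 1$ induces every consistent path system. Indeed with unit weights, every $P_{c_i,c_j}$ has weight $2$ regardless of whether it goes through $a$ or $b$, the path $P_{a,b}=a c_k b$ has weight $2$ for any $k$, and each $P_{x,y}$ with $x,y$ adjacent is the unique shortest path of weight $1$ (competitors have length $\ge 3$). Thus every specified path is a geodesic, and $K_{2,n}$ is metrizable for every $n\ge 2$.

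For the failure of strict metrizability when $n\ge 4$, I would exhibit an explicit path system that is not strictly induced by any positive weight. Using the four vertices $c_1,c_2,c_3,c_4$, define
\begin{equation*}
P_{c_1,c_2}=c_1 a c_2,\quad P_{c_3,c_4}=c_3 a c_4,\quad P_{c_1,c_3}=c_1 b c_3,\quad P_{c_2,c_4}=c_2 b c_4,
\end{equation*}
and set the remaining $P_{c_i,c_j}$ and $P_{a,b}$ arbitrarily. If $w$ strictly induces this system, write $d_i := w(a c_i)-w(b c_i)$. The four strict inequalities become $d_1+d_2<0$, $d_3+d_4<0$, $d_1+d_3>0$, $d_2+d_4>0$. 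Summing the first two gives $d_1+d_2+d_3+d_4<0$ while summing the last two gives $d_1+d_2+d_3+d_4>0$, a contradiction.

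There is no real obstacle here; the main subtlety is to observe that every assignment of through-$a$/through-$b$ is automatically consistent in $K_{2,n}$, so that the cycle-like obstruction captured by the four inequalities above really is realizable as a bona fide path system. For $n>4$ the extra paths add no obstruction because we have freedom to assign them any way we like without disturbing the four key inequalities.
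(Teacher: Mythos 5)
Your argument for the failure of strict metrizability is correct and is essentially the paper's: the same four paths $c_1ac_2$, $c_3ac_4$, $c_1bc_3$, $c_2bc_4$ and the same telescoping sum of the four strict inequalities. Your observation that any through-$a$/through-$b$ assignment on a neighborly system is automatically consistent is also right, so the partial system does extend.

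The metrizability half, however, has a genuine gap. You assert that the only data in a path system of $K_{2,n}$ are $P_{a,b}$ and the choices $P_{c_i,c_j}\in\{c_iac_j,\,c_ibc_j\}$, on the grounds that these are the only $c_ic_j$-paths of length $\le 2$. But a consistent path system is \emph{not} required to consist of shortest paths in the unweighted graph --- that is the whole point of the metrizability question. In particular $P_{a,c_i}$ need not be the edge $ac_i$: already in $K_{2,2}=C_4$ the system coming from the spanning tree $a c_2 b c_1$ has $P_{a,c_1}=ac_2bc_1$ of length $3$, and analogous non-neighborly systems exist in every $K_{2,n}$. For such a system the constant weight $w\equiv 1$ does \emph{not} induce $P_{a,c_1}$ (the edge $ac_1$ is strictly shorter), so your verification only covers neighborly systems. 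The paper closes exactly this case by induction on $n$: if some edge, say $a_1b_{n+1}$, is unused by $\mathcal{P}$, then $\mathcal{P}$ lives in $K_{2,n+1}\setminus a_1b_{n+1}$, whose biconnected components are $K_{2,n}$ and a single edge, and one invokes the reduction of \Cref{rem:2conn} together with the induction hypothesis; only in the remaining neighborly case does one argue, as you do, that all paths have length $1$ or $2$ and constant weights work. You need some such reduction (or a direct construction of weights for non-neighborly systems) to complete the metrizability claim.
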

	\begin{proof}
		Let $a_1,a_2 ~;~ b_1,\dots, b_n$ be the vertex bipartition of $K_{2,n}$. We show that $K_{2,n}$ is metrizable by induction on $n$. The base case $K_{2,2}$ coincides with the $4$-cycle which we already know to be metrizable. Let $\mathcal{P}$ be a path system in $K_{2,n+1}$, $n\geq 2$. If $\mathcal{P}$ is not neighborly, then some edge, say $a_1b_{n+1}$ is not in $\mathcal{P}$, and $\mathcal{P}$ can be considered a path system in $K_{2,n+1} \setminus a_1b_{n+1}$. But $K_{2,n+1} \setminus a_1b_{n+1}$ has two biconnected components, namely, $K_{2,n}$ and the edge $a_2b_{n+1}$. By induction $K_{2,n}$ is metrizable, and since both its biconnected components are metrizable so is $K_{2,n+1} \setminus a_1b_{n+1}$. If $\mathcal{P}$ is neighborly it is easily verified that in this case every path in $\mathcal{P}$ has length $1$ or $2$ and is induced by constant edge weights.\\
		Next we show that $K_{2,4}$ is not strictly metrizable.	Again we write $V(K_{2,4})=(a_1, a_2~;~ b_1, b_2, b_3, b_4)$ and consider the path system that includes all the edges of $K_{2,4}$ and the paths
		\begin{alignat*}{3}
		&    P_{b_1,b_2} = b_1a_2b_2             \qquad \qquad  & P_{b_3,b_4} = b_3a_2b_4                 \qquad \qquad &P_{b_1,b_3} = b_1a_1b_3    \qquad\qquad & P_{b_2,b_4} = b_2a_1b_4    \qquad\qquad \\
		& P_{b_2,b_3} = b_2a_2b_3           \qquad \qquad  &   P_{b_1,b_4} = b_1a_1b_4   \qquad\qquad          & P_{a_1,a_2} = a_1b_1a_2   \qquad\qquad &\\
		\end{alignat*}
		Assume towards a contradiction that this system is strictly metrizable under some weight function $w$. The first row of paths yields the following inequalities
		\begin{equation*}
		\begin{split}
		w(a_2b_1) + w(a_2b_2) & < w(a_1b_1) + w(a_1b_2)\\
		w(a_2b_3) + w(a_2b_4) & < w(a_1b_3) + w(a_1b_4)\\
		w(a_1b_1) + w(a_1b_3) & < w(a_2b_1) + w(a_2b_3)\\
		w(a_1b_2) + w(a_1b_4) & < w(a_2b_2) + w(a_2b_4)\\
		\end{split}
		\end{equation*}
		Adding these inequalities and canceling terms we find $0<0$, a contradiction.
	\end{proof}
	
	\section{Structural Description of Metrizability}\label{sec:std}
	
	As shown above \Cref{prop:TopMinClosed}, the sets of metrizable and strictly metrizable graphs are closed under taking a topological minor. It follows that there is a minimal set of graphs $\mathcal{F}_{M}$ such that a graph $G$ is metrizable if and only if no graph in $\mathcal{F}_{M}$ is a topological minor of $G$. This set is minimal in the sense that if $H_1\neq H_2$ belong to $\mathcal{F}_{M}$, then $H_1$ is not a topological minor of $H_2$. Likewise, there is a minimal set $\mathcal{F}_{SM}$ such that $G$ is strictly if and only if no graph from $\mathcal{F}_{SM}$ is a topological minor of $G$.
	\begin{theorem}\label{thm:finiteMinGraphs}
		Both $\mathcal{F}_{M}$ and $\mathcal{F}_{SM}$ are finite.
	\end{theorem}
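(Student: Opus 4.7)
The plan is to show that every $H\in\mathcal{F}_M$ is a subdivision of a bounded-size ``skeleton'' graph, and then apply a well-quasi-ordering argument to bound the number of minimal such subdivisions. The case of $\mathcal{F}_{SM}$ proceeds identically, since every non-metrizable graph is automatically not strictly metrizable, so the structural bounds of \Cref{sec:metRare} transfer to the strict setting.

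By \Cref{rem:2conn} I may assume $H\in\mathcal{F}_M$ is $2$-connected. Let $H^*$ denote the graph obtained from $H$ by iteratively suppressing every degree-2 vertex; then $H^*$ is $2$-connected with $\delta(H^*)\geq 3$, and $H$ is realised as a subdivision of $H^*$ in which each edge $e\in E(H^*)$ is replaced by a path of some length $\ell_e\geq 1$. The first step is to bound $|V(H^*)|\leq 12$. If $H^*\neq H$ this is immediate: $H^*$ is then a proper topological minor of $H$, hence metrizable by the minimality of $H$, so \Cref{thm:deg3Met} applies to $H^*$. If instead $H = H^*$ and $|V(H)|\geq 13$, I will revisit the proof of \Cref{thm:deg3Met}: it produces, via \Cref{lem:3MinDegree3Conn} and \Cref{lem:susPathMinDeg3}, a subgraph $G'\subseteq H$ of the form $G'=H-E'$ that is itself non-metrizable. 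A routine edge count on the $3$-connected chunk yielded by \Cref{lem:3MinDegree3Conn} forces $|E'|\geq 1$, so $G'\subsetneq H$ is a \emph{proper} non-metrizable topological minor, contradicting the minimality of $H$.

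Fixing a skeleton $H^*$ with $|V(H^*)|\leq 12$, I view each subdivision of $H^*$ as a vector $(\ell_e)\in\mathbb{Z}_{\geq 1}^{E(H^*)}$ and write $H^*[(\ell_e)]$ for the corresponding graph. If $(\ell_e)\leq (\ell'_e)$ coordinate-wise, then $H^*[(\ell_e)]$ arises from $H^*[(\ell'_e)]$ by suppressing the extra internal degree-2 vertices, and is therefore a topological minor of $H^*[(\ell'_e)]$. Consequently the set
\[
\mathcal{N}_{H^*} \coloneqq \{(\ell_e)\in\mathbb{Z}_{\geq 1}^{E(H^*)} : H^*[(\ell_e)]\text{ is non-metrizable}\}
\]
is upward-closed. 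By Dickson's Lemma, $\mathbb{Z}_{\geq 1}^{E(H^*)}$ is well-quasi-ordered under coordinate-wise $\leq$, so $\mathcal{N}_{H^*}$ has only finitely many minimal elements. Every $H\in\mathcal{F}_M$ with skeleton $H^*$ corresponds to a minimal element of $\mathcal{N}_{H^*}$: any strictly smaller $(\ell'_e)$ gives a proper topological minor of $H$, which must be metrizable by the minimality of $H$. Hence only finitely many elements of $\mathcal{F}_M$ have skeleton $H^*$, and since there are only finitely many isomorphism types of $2$-connected skeletons with $\delta\geq 3$ and at most $12$ vertices, summing over $H^*$ shows $\mathcal{F}_M$ is finite.

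The main obstacle is the skeleton bound in Case A ($H=H^*$ with $\delta\geq 3$): one must extract a \emph{proper} non-metrizable subgraph from the proof of \Cref{thm:deg3Met} rather than merely invoking non-metrizability of $H$ itself. This amounts to checking $|E'|\geq 1$ in the construction there, a routine consequence of the $3$-connectedness of the chunk. The Dickson step is then purely formal, and the argument for $\mathcal{F}_{SM}$ needs only the strict analogues of the bounds of \Cref{sec:metRare}, which follow from the metrizable bounds because strict metrizability implies metrizability.
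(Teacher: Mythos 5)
Your overall architecture is the same as the paper's: bound the number of essential vertices of a minimal graph using the results of \Cref{sec:metRare} (in particular \Cref{thm:deg3Met}), then apply a well-quasi-ordering argument to the lengths of the suspended paths. However, there is a genuine gap: you treat the suppressed graph $H^*$ as a \emph{simple} graph throughout, and the case where $H^*$ is a multigraph --- i.e.\ where $H$ has two or more parallel suspended paths between the same pair of essential vertices --- is exactly where your argument breaks and where the paper has to work. This case is not exotic: $\Theta_{3,3,4}$ (\Cref{fig:graph11}) is in $\mathcal{F}_M$ and its skeleton is a two-vertex multigraph with three parallel edges. The gap surfaces in two places. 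First, in your ``Case B'' you invoke \Cref{thm:deg3Met} on $H^*$, but that theorem is about simple graphs, and a multigraph $H^*$ is not a topological minor of $H$ in the sense the paper uses, so ``metrizable by minimality'' is not available. The paper instead disposes of parallel suspended paths directly: \Cref{obs:susPathMin} (a consequence of \Cref{thm:SuspendedPath}) rules out a suspended path parallel to an edge, and \Cref{lem:fourDisjointPaths} and \Cref{lem:susPathMinDeg3} handle genuinely parallel suspended paths inside \Cref{prop:minMetDeg3}. Second, your concluding count ``there are only finitely many isomorphism types of skeletons on at most $12$ vertices'' is false for multigraphs, since the number of parallel edges is a priori unbounded; this is precisely why the paper's \Cref{prop:topologicalMinorBoundedOrder} records, for each pair of essential vertices, the \emph{multiset} of suspended-path lengths and uses Higman's lemma, rather than a vector over a fixed edge set and Dickson's lemma. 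Your Dickson step is fine for a fixed simple skeleton but does not cover the family of all minimal graphs.

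Two smaller points. In your ``Case A'' ($H=H^*$, $\delta\geq 3$, $\geq 13$ vertices) the ``routine edge count'' extracting a proper non-metrizable subgraph only addresses the connectivity-$2$ branch of the proof of \Cref{thm:deg3Met}; when $H$ is $3$-connected one falls back on \Cref{cor:3ConnMet}, and there the right observation (which the paper makes explicitly at the start of its proof) is that every non-metrizability argument in \Cref{sec:metRare} exhibits a subdivision of a graph from \Cref{fig:MinimumGraphs} on at most $9$ vertices, which is automatically a \emph{proper} topological minor of any graph on $10$ or more vertices. Finally, your reduction of $\mathcal{F}_{SM}$ to $\mathcal{F}_M$ via ``non-metrizable implies non-strictly-metrizable'' is in the right direction and matches the paper, which runs the identical argument for both families.
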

	
	\begin{comment}
	
	We start with some preliminary comments. In {\em each} of our proofs in \Cref{sec:metRare} where we show that some graph $G$ is non-metrizable we find some subgraph of $G$ that is a subdivision of a graph in \Cref{fig:MinimumGraphs}. But all graphs in \Cref{fig:MinimumGraphs} have $9$ or fewer vertices, so that this methodology cannot establish that certain graphs of order $10$ and above are minimally non-metrizable. The same difficulty applies in the strictly metrizable case.
	\end{comment}
	
We start with some preliminary comments. In {\em each} of our proofs in  \Cref{sec:metRare} where we show that some graph $G$ is non-metrizable we find some subgraph of $G$ that is a subdivision of a graph in \Cref{fig:MinimumGraphs}. But all graphs in \Cref{fig:MinimumGraphs} have $9$ or fewer vertices, so that if $G$ is a graph of order $10$ or above, and $G$ is shown to be non-metrizable by invoking one these theorems, it necessarily follows that $G$ is not minimal, i.e. $G\notin \mathcal{F}_{M}.$ Similarly, it follows that $G\notin \mathcal{F}_{SM}$ since a graph which is not metrizable is clearly also not strictly metrizable.
We make the following crucial observation which is an easy consequence of \Cref{thm:SuspendedPath}:
\begin{observation}\label{obs:susPathMin}
If $G\in \mathcal{F}_{M}$ or $G\in \mathcal{F}_{SM}$, and if $xy \in E(G)$, then $G$ cannot contain a suspended path of length greater than $1$ with endpoints $x$ and $y$. 
\end{observation}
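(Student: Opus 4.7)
The plan is to derive this observation by contradiction directly from \Cref{thm:SuspendedPath}, exploiting the minimality of $G$ in $\mathcal{F}_M$ (resp.\ $\mathcal{F}_{SM}$). Suppose $G\in \mathcal{F}_M$, that $xy \in E(G)$, and that $Q$ is a suspended path of length at least $2$ in $G$ with endpoints $x$ and $y$. I would form $G' \coloneqq G \setminus \{xy\}$, the graph obtained by deleting the edge $xy$.

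The key observations are then as follows. First, $G'$ is a proper subgraph of $G$ (since $xy$ was an edge of $G$), and subgraphs are topological minors; since $G$ is minimal in $\mathcal{F}_M$, every proper topological minor of $G$ is metrizable, so $G'$ is metrizable. Second, the path $Q$ remains a suspended path in $G'$: the interior vertices of $Q$ have degree $2$ in $G$ by hypothesis, and deleting an edge incident only to $x$ and $y$ (neither of which is an interior vertex of $Q$) leaves those degrees untouched. Now I would apply \Cref{thm:SuspendedPath} to the metrizable graph $G'$ with the suspended path $Q$: the conclusion is that $G' + xy$ is metrizable. But $G' + xy = G$, contradicting $G\in \mathcal{F}_M$.

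The argument for $G\in \mathcal{F}_{SM}$ is identical, invoking the strictly metrizable half of \Cref{thm:SuspendedPath} and the fact that the class of strictly metrizable graphs is closed under topological minors (\Cref{prop:TopMinClosed}), so that the same minimality yields $G'$ strictly metrizable and hence $G = G'+xy$ strictly metrizable.

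There is essentially no obstacle; the only points that require a moment's care are checking that $G'$ is indeed connected (guaranteed by the presence of $Q$ between $x$ and $y$, so \Cref{rem:2conn}-style concerns don't arise) and that $Q$'s suspended structure survives the edge deletion, which is immediate from the definition since interior degrees are unchanged.
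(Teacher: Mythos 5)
Your proof is correct and takes essentially the same route as the paper: the paper states it contrapositively (since $G$ is non-metrizable, \Cref{thm:SuspendedPath} forces $G\setminus xy$ to be non-metrizable, which contradicts minimality), whereas you argue directly from the metrizability of the proper topological minor $G\setminus xy$ --- the same argument in different clothing. Your additional checks (that $Q$ remains suspended after deleting $xy$ and that $G\setminus xy$ remains connected) are correct and harmless.
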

\begin{proof}
If $G\in \mathcal{F}_{M}$, then $G$ is non-metrizable. By \Cref{thm:SuspendedPath} the graph $G\setminus xy$ is non-metrizable as well. An identical argument works when $G\in \mathcal{F}_{SM}$
\end{proof}
	
Recall that we call a vertex essential if it has degree at least $3$. We prove next an upper bound on the number of essential vertices in a minimal graph. 
\begin{proposition}\label{prop:minMetDeg3}
Every graph $G$ in $\mathcal{F}_{M}$ or in $\mathcal{F}_{SM}$ has at most $12$ essential vertices.
\end{proposition}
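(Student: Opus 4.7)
The plan is to pass from $G$ to the graph $H$ obtained by suppressing every vertex of degree $2$, so that $V(H)$ is exactly the set of essential vertices of $G$, and then bound $|V(H)|$ by combining \Cref{thm:deg3Met} with the minimality of $G$. I will describe the argument for $\mathcal{F}_M$; the case $\mathcal{F}_{SM}$ is formally identical, since \Cref{thm:SuspendedPath}, \Cref{obs:susPathMin}, and the proof of \Cref{thm:deg3Met} all have strict-metrizable analogues.

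First I would make the standard $2$-connectivity reduction: by \Cref{rem:2conn} the metrizability of a graph is determined by that of its blocks, so a minimal non-metrizable graph is itself $2$-connected; otherwise one of its blocks is already non-metrizable and is a proper topological minor, contradicting minimality. Consequently $H$ is $2$-connected with minimum degree at least $3$, and $G$ is a subdivision of $H$, so $H$ is a topological minor of $G$. A minor point to verify is that $H$ is simple: \Cref{obs:susPathMin} directly rules out an edge $xy$ of $G$ being paralleled by a suspended $xy$-path of length $\geq 2$, and the remaining case of two internally disjoint suspended $xy$-paths can be handled by collapsing one of them to an edge and invoking the minimality of $G$ together with \Cref{obs:susPathMin}.

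With $H$ in hand, the heart of the argument is the following dichotomy. If $H$ is metrizable, then \Cref{thm:deg3Met} applies to $H$ and immediately gives $|V(H)| \leq 12$, which is precisely the bound on the number of essential vertices of $G$. If $H$ is non-metrizable and $H \neq G$, then $H$ is a \emph{proper} topological minor of $G$ that is itself non-metrizable, contradicting $G \in \mathcal{F}_M$. This leaves only the case $H = G$, i.e.\ $G$ already has minimum degree $\geq 3$.

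The main obstacle is this residual case $H=G$: here \Cref{thm:deg3Met} alone is uninformative, since its statement bounds only \emph{metrizable} graphs of this type while our $G$ is non-metrizable by hypothesis. To finish I would unpack the proof of \Cref{thm:deg3Met} and observe that it actually establishes the strictly stronger statement that every $2$-connected graph of minimum degree $\geq 3$ on at least $13$ vertices contains a subdivision of one of the non-metrizable graphs of \Cref{fig:MinimumGraphs}; the argument locates inside $G$ the structure produced by \Cref{lem:susPathMinDeg3} and then applies \Cref{lem:fourDisjointPaths}. Since every graph in \Cref{fig:MinimumGraphs} has at most $9$ vertices, any such subdivision is a proper topological minor of $G$ that is itself non-metrizable, once more contradicting $G \in \mathcal{F}_M$. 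Hence $|V(G)| = |V(H)| \leq 12$, completing the argument.
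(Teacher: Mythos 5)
Your overall strategy coincides with the paper's (suppress the degree-$2$ vertices and reduce to \Cref{thm:deg3Met}), and your three-way dichotomy for the simple case is sound, but there is a genuine gap at the step you dismiss as ``a minor point'': the suppressed graph $H$ need \emph{not} be simple, and your proposed argument for its simplicity cannot be repaired. \Cref{obs:susPathMin} only excludes an actual edge $xy\in E(G)$ coexisting with a suspended $xy$-path of length $\ge 2$; it says nothing about two (or three) internally disjoint suspended $xy$-paths each of length $\ge 2$, and that configuration genuinely occurs in minimal graphs --- $\Theta_{3,3,4}$ (\Cref{fig:graph11}) lies in $\mathcal{F}_M$ and suppresses to a multigraph with three parallel edges. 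Your suggested fix, collapsing one such path to an edge and ``invoking minimality together with \Cref{obs:susPathMin},'' yields no contradiction: the collapsed graph $G_1$ is a proper topological minor of $G$, hence metrizable by minimality, and that is perfectly consistent --- metrizability of a topological minor gives no information about $G$ itself, since subdivision runs in the wrong direction (indeed $K_{2,3}$ is metrizable while its subdivision $\Theta_{3,3,4}$ is not). Nor is $H$ a graph to which \Cref{thm:deg3Met} can be applied when it has multi-edges.

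This missing branch is where the paper does real work. When $G'$ (their name for $H$) has parallel edges between two distinct pairs of essential vertices, the corresponding suspended paths all have length $\ge 2$ by \Cref{obs:susPathMin}, and \Cref{lem:fourDisjointPaths} (two pairs of internally disjoint length-$\ge 2$ paths with shared endpoints, plus an essential vertex outside them, which exists because there are $\ge 13$ essential vertices) shows $G$ is non-minimal. When exactly one pair of essential vertices carries parallel edges, one re-subdivides those edges and applies \Cref{lem:susPathMinDeg3}, which needs the $\ge 11$ remaining essential vertices. Without these two cases your argument only covers graphs whose suppression is simple, so the proof as written is incomplete.
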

\begin{proof}
Let $G$ be a $2$-connected graph with at least $13$ essential vertices and let $G'$ be the multigraph obtained by suppressing all vertices of degree $2$. Clearly, $G'$ has at least $13$ vertices and it contains no loops, since $G$ is $2$-connected. If $G'$ is actually a simple graph then it is not metrizable by \Cref{thm:deg3Met} and therefore not minimal, implying that neither is $G$.
		
Suppose that $G'$ has a set of four distinct edges $\{e_1, e_1', e_2,e_2'\}$, where $e_1, e_1'$ connect the same two vertices $u_1, v_1$ and likewise for $e_2,e_2'$ and $u_2, v_2$. These edges correspond to suspended $u_1v_1$ paths $P_1$, $P_1'$ and suspended $u_2v_2$ paths $P_2$, $P_2'$, respectively. By \Cref{obs:susPathMin} all these paths have length bigger than $1$. These paths are suspended in $G$ which contains at least $13$ essential vertices, so there is an essential vertex outside of $P_1$, $P_1'$, $Q_1$ or $Q_1'$. By \Cref{lem:fourDisjointPaths} $G$ is not metrizable and therefore not minimal.
		
		In the only remaining case there is exactly one pair of essential vertices $u$ and $v$ in $G'$ with two or three parallel edges between them. These edges correspond to suspended paths in $G$ between $u$ and $v$, of length bigger than $1$ (again, by \Cref{obs:susPathMin}). By eliminating one of these paths, if necessary, we can assume that there are precisely two suspended paths between $u$ and $v$. Replacing these parallel edges in $G'$ by the corresponding suspended paths yields a graph $G''$ which is a subdivision of $G$. It is easily verified that $G''$ has at least $11$ essential vertices outside these two suspended paths. By \Cref{lem:susPathMinDeg3} $G''$ is not metrizable. Consequently $G''$ and hence $G$ is not minimal.
	\end{proof}
	
	\begin{figure}[H]
		\centering
		\begin{tikzpicture}	
		\def \radius {3}
		\def \radiusII {3.5}
		\def \n {20}
		\def \sz {2}
		\draw[line width =1pt] circle(\radius);
		
		\foreach \x in {1,..., \n } {
			\node [draw,circle,minimum size=\sz pt,fill,inner sep=\sz pt] at ( 360 * \x / \n:\radius) {}; 
			\node [draw,circle,minimum size=\sz pt,fill,inner sep=\sz pt] at ( 360 * \x / \n + 180 / \n :\radiusII) {} ; 
			\draw [line width= 1pt,-] ( 360 * \x / \n:\radius)-- ( 360 * \x / \n + 180 / \n :\radiusII);
			\draw [line width=1 pt,-] ( 360 * \x / \n:\radius)-- ( 360 * \x / \n - 180 / \n :\radiusII);
		}
		
		\draw [line width=1pt,-] (-\radius,0)-- ( \radius, 0);
		
		\node [draw,circle,minimum size=\sz pt,fill,inner sep=\sz pt] at (0,0) {} ;

		\end{tikzpicture}
		\caption{This graph is not in $\mathcal{F}_M$ by \Cref{obs:susPathMin}.}
		\label{fig:sunGraph}
	\end{figure}
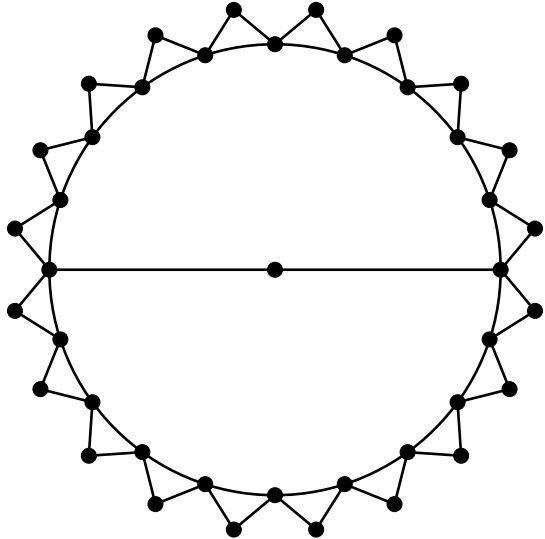
	We note that \Cref{obs:susPathMin}, and therefore \Cref{thm:SuspendedPath}, is crucial in the proof of \Cref{prop:minMetDeg3}. It allows us discount graphs we would otherwise be unsure how to deal with, see for example \Cref{fig:sunGraph}.

	A {\em quasi-order} is a binary relation which is reflexive and transitive. We say that $X$ is a {\em well-quasi-ordered set} (wqo for short) if there a quasi-order $\leq$ on $X$ such that for any sequence $x_1,x_2,x_3,\dots$ in $X$ there exists $i<j$ satisfying $x_i \leq x_j$. The set of all finite graphs is well-quasi-ordered with respect to the topological minor relation.
	We note that \Cref{thm:finiteMinGraphs} can be derived from deep results of Thomas and Liu, \Cite{LT}, who characterized collections of graphs which are well-quasi-ordered with respect to the topological minor relation. However, there is a much a more elementary route to the same goal.
	\begin{proposition}\label{prop:topologicalMinorBoundedOrder}
		Fix $n\geq 0$. If $G_1,G_2,G_2,\dots$ is an infinite sequence of graphs, each with at most $n$ essential vertices, then there exists $1\leq i' < i$ such that $G_i$ contains a subdivision of $G_{i'}$.
	\end{proposition}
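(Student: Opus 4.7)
My plan is a routine application of Higman's lemma on multisets. To each $G_i$ I would associate a labeled multigraph $H_i$: its vertex set is the set of essential vertices of $G_i$ (so $|V(H_i)|\leq n$), and for each unordered pair $\{u,v\}\subseteq V(H_i)$—allowing $u=v$ in order to record cycles attached at a single essential vertex—the label $M_i(u,v)$ is the finite multiset of positive integers giving the lengths of the maximal suspended paths of $G_i$ joining $u$ and $v$ (direct edges counting as length $1$). Components of $G_i$ with no essential vertex at all (isolated cycles, or paths ending in degree $\leq 1$ vertices) can be collected at a symbolic auxiliary vertex so that the entire graph is encoded.

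The crux is the following observation. Suppose there exist a vertex injection $\phi:V(H_{i'})\to V(H_i)$ and, for every pair $\{u,v\}$, an embedding of multisets $M_{i'}(u,v)\hookrightarrow M_i(\phi(u),\phi(v))$, meaning an injection of multisets sending each $\ell$ to some $\ell'\geq \ell$. Then $G_i$ already contains a subdivision of $G_{i'}$: map each essential vertex of $G_{i'}$ via $\phi$, and map each suspended path of length $\ell$ in $G_{i'}$ to a chosen distinct suspended path of $G_i$ of length at least $\ell$ between the appropriate images. The resulting paths are automatically internally disjoint because distinct suspended paths of $G_i$ can only share essential endpoints.

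It therefore suffices to show that the class of such labeled multigraphs on at most $n$ vertices is well-quasi-ordered under this embedding relation. First, by the pigeonhole principle pass to an infinite subsequence on which every $H_i$ has exactly $k\leq n$ vertices, and identify all vertex sets with a single common $V$ of size $k$. Each $H_i$ in the subsequence is then encoded by the tuple $\bigl(M_i(u,v)\bigr)_{\{u,v\}\subseteq V}\in \mathbb{M}^N$, where $\mathbb{M}$ denotes the set of finite multisets of positive integers under the multiset-embedding order and $N=\binom{k+1}{2}$. By Higman's lemma applied to the well-quasi-order $(\mathbb{Z}_{>0},\leq)$, $\mathbb{M}$ is a wqo; and a finite product of wqos is a wqo. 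Hence $\mathbb{M}^N$ is wqo, producing indices $i'<i$ in the subsequence for which $M_{i'}(u,v)\hookrightarrow M_i(u,v)$ holds for every pair simultaneously. The key observation above then supplies the desired topological minor embedding in the original graphs.

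The main obstacle here is simply the bookkeeping in setting up the encoding—in particular, handling loops (cycles hanging off a single essential vertex) and components without essential vertices, and verifying that an embedding of encodings lifts faithfully to an honest topological minor embedding of the underlying graphs. Once the encoding is articulated carefully, the wqo argument is entirely mechanical given Higman's lemma and closure of wqo under finite products.
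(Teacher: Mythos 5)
Your proposal is correct and follows essentially the same route as the paper: fix the (at most $n$) essential vertices, encode each graph by the tuple of multisets of suspended-path lengths between pairs of essential vertices, and apply Higman's lemma together with closure of wqo under finite products, noting that an embedding of encodings yields a topological minor. Your treatment is marginally more careful than the paper's in handling suspended cycles at a single essential vertex and components without essential vertices, but this is a refinement of the same argument rather than a different one.
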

	\begin{comment}
	We need a few preliminary observations. If $(X_1,\leq_1),(X_2,\leq_2),\dots, (X_m,\leq_m)$ are quasi-ordered sets, then their product $\prod_{i=1}^{m} X_i$ is quasi-ordered w.r.t.\ the following relation: $(x_1,x_2,\dots, x_m) \leq (y_1,y_2,\dots, y_m)$ iff $x_i \leq_i y_i$ for all $1\leq i \leq m$. We denote by $[X]^{<\omega}$ the collection of finite subsets of a set $X$ and by $[X]^{<\omega}_{m}$ the collection of finite multisets of $X$. Any quasi-ordering $\leq$ on $X$ extends to $[X]^{<\omega}$ as follows: for $A,B\in[ X]^{<\omega}$ , we say $A\leq B$ if there exists an injection $f:A\to B$, such that $a\leq f(a)$ for all $a\in A$. We extend $\leq$ to $[X]^{<\omega}_{m}$ in an identical way. We recall the following results from \cite{Hig}: \textcolor{blue}{aren't they very simple? If so, then we should say "the following simple results...."}
	\begin{lemma} \label{lem:wellQuasiOrdering}
	Let $(X_1,\leq_1),(X_2,\leq_2),\dots, (X_m,\leq_m)$, $(X,\leq)$ be quasi-ordered sets.
	\begin{enumerate}[(i)]
	\item If each $(X_1,\leq_1),(X_2,\leq_2),\dots, (X_m,\leq_m)$ is wqo then so is $\prod_{i=1}^{m} X_i$.
	\item If $X$ is wqo then so is $[X]^{<\omega}$. 
	\end{enumerate}
	\end{lemma}
	A simple corollary of the above lemma is that if $(X, \leq)$ is wqo then so is $[X]^{<\omega}_{m}$. Indeed, extend $\leq$ to the set $X\times \N$ by $(x,i) \leq (y,j)$ iff $x\leq y$. Clearly $\leq$ is a wqo also over $X\times \N$ and so by the above lemma $[X\times \N]^{<\omega}$ is wqo. From this it easily follows that  $[X]^{<\omega}_{m}$ is wqo.  
	\end{comment}

	\begin{proof}%{[\Cref{prop:topologicalMinorBoundedOrder}]}
		We start with some general observations: Let $M=(a_k\ge a_{k-1}\ge\ldots a_1\ge 1)$ and $M'=(b_{\ell}\ge b_{\ell-1}\ge\ldots b_1\ge 1)$ be multisets of positive integers. We say that $M\succ M'$ if there is an injection $\sigma:[\ell]\to[k]$ such that $b_i\le a_{\sigma(i)}$ for all $i$. We recall that multisets of positive integers are well-quasi-ordered by the relation $\succ$, and that wqo sets are closed under Cartesian product, \Cite{Hig}. 
		
		We turn to prove the proposition.
		By passing to a subsequence, if necessary we can assume that each $G_i$, $i\geq 1$, has precisely $n'\le n$ essential vertices. Assume w.l.o.g.\ that $n'=n$. For each $i$ we label $G_i$'s essential vertices i.e., we fix a bijection from these $n$ vertices to $\{1,2,\dots, n\}$. For $1\le j<k \le n$ let $N_{i,j,k}$ be the multiset consisting of the lengths of all the suspended $jk$ paths in $G_{i}$.
		Let $N_i = \prod_{1\leq j<k \leq n}  N_{i,j,k}$. Clearly $G_{i}$ contains a subdivision of $G_{i'}$ iff $N_{i} \succ N_{i'}$. The conclusion follows from the general comments on wqo sets. 
	\end{proof}
	
	With these results the proof of \Cref{thm:finiteMinGraphs} is clear.
	\begin{proof}{[\Cref{thm:finiteMinGraphs}]}
		By \Cref{prop:minMetDeg3} the set $\mathcal{F}_M$ consists of graphs with at most $12$ essential vertices. Since each graph in $\mathcal{F}_M$ is a minimal element with respect to the topological minor relation, from \Cref{prop:topologicalMinorBoundedOrder} this implies that there can only be finitely many such graphs. The proof dealing with $\mathcal{F}_{SM}$ is identical.
	\end{proof}

	\section{A Continuous Perspective }\label{sec:continuous}
	The metrizability problem for the cycle can be stated in an appealing continuous form. We remark that it is very possible that the results in this section have been previously studied under a different setting, but nonetheless, we think it is worthwhile to examine them through the lens of metrizability. We say that a map $T:S^1\to S^1$ is {\em crossing} if for any $x, y \in S^1$ the segments $[x,T(x)],[y,T(y)]$ intersect. We say a crossing map $T$ is {\em metrizable} if $T$ has a {\em compatible} non-atomic probability measure $\mu$ over $S^1$. Namely for every $x\in S^1$, the points $x$ and $T(x)$ split $S^1$ into two arcs of $\mu$-measure of $\frac 12$ each. 
	\begin{proposition}\label{prop:MeasPresCrossing}
		A crossing map $T:S^1\to S^1$ is metrizable if and only if there exists a $T$-invariant non-atomic probability measure. 
	\end{proposition}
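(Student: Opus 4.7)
The plan is to prove each direction separately, using non-atomicity of $\mu$ to ensure that the exceptional sets arising naturally (the pair $\{x,T(x)\}$ and fibers of the distribution function) are negligible.

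For the ``if'' direction, assume $\mu$ is $T$-invariant and non-atomic. Fix $x\in S^1$ and let $A,B$ be the two open arcs cut off by $\{x,T(x)\}$; non-atomicity gives $\mu(A)+\mu(B)=1$. The key observation is that the crossing property forces $T^{-1}(A)=B$ modulo a $\mu$-null set. Indeed, if $y\in B$ and $T(y)\notin\{x,T(x)\}$, then the chord $[y,T(y)]$ meets $[x,T(x)]$ only if the pair $\{y,T(y)\}$ separates $\{x,T(x)\}$ on $S^1$, and a cyclic-order check shows this forces $T(y)\in A$; the symmetric statement is analogous. The exceptional set $T^{-1}(\{x,T(x)\})$ has $\mu$-measure zero since $\mu$ is $T$-invariant and non-atomic. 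Therefore $\mu(T^{-1}(A))=\mu(B)$, and combining with $T$-invariance yields $\mu(A)=\mu(B)=\tfrac{1}{2}$, as required.

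For the ``only if'' direction, assume a compatible $\mu$ exists. Fix a basepoint $p\in S^1$, orient counterclockwise, and define the distribution function $\phi:S^1\to\mathbb{R}/\mathbb{Z}$ by $\phi(y)=\mu([p,y))$. Non-atomicity makes $\phi$ continuous with $\phi_*\mu=\lambda$, the Lebesgue measure on $\mathbb{R}/\mathbb{Z}$. A short two-case computation (depending on which of the two arcs between $y$ and $T(y)$ contains $p$) rewrites the compatibility condition as the clean identity
$$\phi(T(y))=\phi(y)+\tfrac{1}{2}\pmod{1}$$
for every $y\in S^1$. Hence $T$ is semi-conjugated, via $\phi$, to the rotation $R_{1/2}$ on $(\mathbb{R}/\mathbb{Z},\lambda)$, which trivially preserves $\lambda$. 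For any arc $I\subseteq S^1$, one computes $\mu(T^{-1}(I))=\mu(I)$ by pulling back through $\phi$ and using the invariance of $\lambda$ under $R_{1/2}$; since arcs form a generating $\pi$-system for the Borel $\sigma$-algebra, this gives $T_*\mu=\mu$.

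The only delicate point is the pullback in the ``only if'' direction when $\mu$ lacks full support: then $\phi$ is only weakly monotonic, and the fiber $\phi^{-1}(z)$ may be a nontrivial, $\mu$-null interval. This is harmless because the set of such exceptional $z$ is countable, hence $\lambda$-null, so any measurable section $\psi$ of $\phi$ satisfies $T\circ\psi=\psi\circ R_{1/2}$ off a $\lambda$-null set, and one can transfer through $\psi_*\lambda=\mu$ to obtain the invariance. I expect this to be the main technical obstacle; the rest is routine.
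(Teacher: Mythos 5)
Your proof is correct. The ``if'' direction (invariant $\Rightarrow$ compatible) is essentially the paper's argument: fix $x$, note that the crossing condition forces $T$ to exchange the two open arcs cut off by $\{x,T(x)\}$ up to the $\mu$-null set $T^{-1}(\{x,T(x)\})$, and conclude $\mu(A)=\mu(B)=\tfrac{1}{2}$ from invariance. The ``only if'' direction, however, takes a genuinely different route. The paper works directly on arcs: for an arc with endpoints $x,y$ it uses the four-arc decomposition by $x,y,T(x),T(y)$, identifies $\mu(T^{-1}A_{xy})$ with $\mu(A_{w_xw_y})$ via crossing, and then combines two instances of the compatibility identity to get $\mu(A_{xy})=\mu(A_{w_xw_y})$. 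You instead encode compatibility as the exact semiconjugacy $\phi\circ T=R_{1/2}\circ\phi$ through the distribution function and pull back the rotation-invariance of Lebesgue measure. Your route costs some extra measure-theoretic care --- the fiber issues you flag, plus the small point that arcs of $S^1$ are not literally a $\pi$-system (two arcs can intersect in two arcs), so one should pass to the algebra of finite unions of arcs, as the paper implicitly does as well --- but it buys a cleaner conceptual statement (every metrizable crossing map is measure-theoretically the half-rotation) and it silently absorbs the degenerate case $T(x)=T(y)$, which the paper's four-arc computation excludes without comment. One simplification: you do not actually need a measurable section; since $T^{-1}$ of a fiber of $\phi$ is again a fiber by the semiconjugacy, and all fibers are $\mu$-null, the invariance on arcs follows directly from invariance on sets of the form $\phi^{-1}(E)$.
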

	We start with two simple observations.
	\begin{observation}\label{Obs:CrossInvImgArc}
		If $T:S^1\to S^1$ is crossing, then for any $w\in S^{1}$ the set $T^{-1}(w)$ is either empty, a single point or a connected arc of $S^{1}$.
	\end{observation}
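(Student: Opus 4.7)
The plan is to show that $T^{-1}(w)$ is a connected subset of $S^{1}$; since the connected subsets of the circle are precisely the empty set, singletons, proper arcs, and $S^{1}$ itself, the observation then follows. The main tool will be the following claim: if $x,y\in T^{-1}(w)$ are distinct and neither equals $w$, then every point $z$ on the open arc from $x$ to $y$ avoiding $w$ also satisfies $T(z)=w$.

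To prove the claim, fix such $x,y,z$. By the crossing condition applied to $x$ and $z$, the chord $[z,T(z)]$ must meet the chord $[x,T(x)]=[x,w]$. The chord $[x,w]$ cuts $S^{1}\setminus\{x,w\}$ into two open arcs, one containing $y$ (and hence $z$) and one not. For the two chords to meet, $T(z)$ must therefore lie in the arc from $x$ to $w$ not containing $y$, or satisfy $T(z)\in\{x,w\}$. Symmetrically, applying the crossing condition with $y$ in place of $x$ forces $T(z)$ to lie in the arc from $y$ to $w$ not containing $x$, or $T(z)\in\{y,w\}$. These two restrictions intersect in the single point $w$, giving $T(z)=w$.

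Assume now that $T^{-1}(w)$ has at least two points. First handle the edge case $T(w)=w$: then $[w,T(w)]=\{w\}$, and the crossing condition forces every chord $[z,T(z)]$ to pass through the boundary point $w$, which can only happen if $z=w$ or $T(z)=w$. Hence $T$ is identically $w$ and $T^{-1}(w)=S^{1}$. Otherwise $w\notin T^{-1}(w)$, and the claim above shows that for any two points $x,y\in T^{-1}(w)$ the arc from $x$ to $y$ avoiding $w$ is entirely contained in $T^{-1}(w)$. Taking unions over such pairs realizes $T^{-1}(w)$ as a path-connected subset of $S^{1}\setminus\{w\}$, i.e.\ an arc of $S^{1}$.

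The only real subtlety I foresee is keeping the degenerate cases straight: when $w$ itself is a preimage, when $T(z)\in\{x,y\}$, or when the chord $[z,T(z)]$ is degenerate. These are handled by the separate treatment of $T(w)=w$ above and by noting that the endpoint arcs in the intersection computation only ever overlap at $w$; no genuine geometric obstacle appears.
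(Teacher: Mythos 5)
Your proof is correct and follows essentially the same route as the paper: for two preimages $x,y$ of $w$ you show every $z$ on the arc between them avoiding $w$ also maps to $w$, by intersecting the two constraints that the chord $[z,T(z)]$ must cross both $[x,w]$ and $[y,w]$, which pins $T(z)$ down to the single point $w$. The explicit treatment of the degenerate case $T(w)=w$ is a small addition the paper omits, but the argument is the same.
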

	\begin{proof}
		Say that $T(x) = T(y)=w$ for some $x\neq y$. The points $x,y,w$ split $S^1$ into three arcs $A_{xy}, A_{yw}, A_{wx}$. We claim that $f(z)=w$ for every $z \in A_{xy}$. Indeed, if $f(z)\neq w$ then $f(z)$ is in either $A_{xy}, A_{yw}, or A_{wx}$. But then $y, z$ (resp.\ $x, z$, resp. both) fails the crossing condition.
	\end{proof}
	\begin{comment}
	To avoid pathological examples, we further assume that if $T$ is crossing then for any $x\in S^{1}$, $x$ and $T^{-1} (x)$ can be separated by an open set. In lieu of \Cref{Obs:CrossInvImgArc} this is not a very strong assumption.
	\end{comment}
	\begin{observation}\label{Obs:CrossInvImgMeasZero}
		If $\mu$ is compatible with $T$ then $\mu(T^{-1} (x)) = 0$ for all $x\in S^1.$
	\end{observation}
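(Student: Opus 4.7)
The plan is to reduce to the one nontrivial case via \Cref{Obs:CrossInvImgArc} and then exploit compatibility through a short telescoping. By \Cref{Obs:CrossInvImgArc}, $T^{-1}(x)$ is either empty, a single point, or a connected arc; in the first two cases the conclusion is immediate from $\mu$ being non-atomic. So I would set $A := T^{-1}(x)$ and assume $A$ is a non-degenerate arc.

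Next I would discard the point $x$ itself: since $\mu(\{x\}) = 0$, we have $\mu(A) = \mu(A \setminus \{x\})$, and $A \setminus \{x\}$ is a disjoint union of at most two relatively open subarcs of $S^1 \setminus \{x\}$ (depending on whether $x$ lies in the interior of $A$, on its boundary, or outside it). It therefore suffices to show that $\mu(J) = 0$ for any subarc $J \subseteq A$ with $x \notin J$. Parametrize $S^1 \setminus \{x\}$ as the open interval $(0,1)$ and write $J = (a,b)$ with $0 \le a < b \le 1$.

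For every $y \in J \subseteq A$ we have $T(y) = x$, so compatibility of $\mu$ gives $\mu((0,y)) = \mu((y,1)) = \tfrac12$. For any $y_1, y_2 \in J$ with $y_1 < y_2$ the decomposition $(0, y_2) = (0, y_1) \sqcup \{y_1\} \sqcup (y_1, y_2)$ combined with non-atomicity yields
\[
\mu((y_1, y_2)) \;=\; \mu((0, y_2)) - \mu((0, y_1)) - \mu(\{y_1\}) \;=\; \tfrac12 - \tfrac12 - 0 \;=\; 0.
\]
Letting $y_1 \downarrow a$ and $y_2 \uparrow b$ and invoking continuity of measure shows $\mu(J) = 0$, whence $\mu(A) = 0$ as required.

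The argument is brief: all of the topological content (that $T^{-1}(x)$ is an arc) has already been extracted into \Cref{Obs:CrossInvImgArc}, and what remains is a two-line cancellation. I do not anticipate any serious obstacle; the only bookkeeping point worth flagging is the possibility that $T(x) = x$, which is handled transparently by the non-atomicity step that removes $\{x\}$ before the telescoping begins.
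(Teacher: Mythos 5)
Your proof is correct and follows essentially the same route as the paper's: reduce to the case where $T^{-1}(x)$ is an arc via \Cref{Obs:CrossInvImgArc}, then observe that for two points $y_1\neq y_2$ of that arc the two balanced partitions forced by compatibility differ exactly by the sub-arc between them, which must therefore have measure zero. Your version merely makes the paper's "the two partitions cannot both satisfy the compatibility requirement" explicit as the cancellation $\tfrac12-\tfrac12-0=0$ and adds the (routine) limiting step and the $T(x)=x$ bookkeeping.
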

	\begin{proof}
		in view of \Cref{Obs:CrossInvImgArc} and the fact that $\mu$ is atom-free it suffices to consider the case where $T^{-1}(x))=A$ is an arc. But if $\mu(A) > 0$, we can find some $x'\neq x''$ in $A$ so that the $(x',x'')$ sub-arc of $A$ has a positive $\mu$ measure. The partitions of $S^1$ that $x, x'$ induce and the one induced by $x, x''$ cannot both satisfy the compatibility requirement.
	\end{proof}
	\begin{comment}
	with some endpoints $y$ and $z$. 
	As $y$ and $z$ may not be in $Q$ we choose points $y'$ and $z'$ in $Q$ close to $y$ and $z$, respectively. Then $Q$ is split into three arcs $A_{yy'}$, $A_{y'z'}$ and $A_{z'z}$. Since $y',z' \in Q$ we necessarily have $x= T(y') = T(z')$. The points $y$, $z$ and $x$ split $S^1$ into three arcs $A_{xy}$, $A_{yz}$, $A_{zx}$ where $Q=A_{yz}$. In this setting, $y'$ and $T(y') =x$ split $S^1$ into the two arcs $ P_1=A_{y'z'}\cup A_{z'z}\cup A_{zx}$ and $P_2=A_{xy} \cup A_{yy'} $. Similarly, $z'$ and $T(z')=x$ split $S^1$ into the two arcs $ R_1=A_{xy}\cup A_{yy'} \cup A_{y'z'}$ and $R_2=A_{z'z} \cup A_{zx}$. Since $\mu$ is compatible with $T$, 
	$$\mu(Q)-\mu(A_{yy'})+\mu(A_{zx})=\mu(P_1) =\frac{1}{2}= \mu(R_2) = \mu(A_{zx}) + \mu(A_{z'z}),$$
	implying
	$$\mu(Q) =\mu(A_{yy'}) + \mu(A_{z'z}).$$
	Letting $y' \to y$ and $z'\to z$ and using the fact $\mu$ is non-atomic we find $\mu(Q) = 0.$

	Recall that to show a measure $\mu$ is $T$-invariant over some $\sigma$-algebra $\mathcal{B}$, it is enough to show that is invariant over a sub-algebra which generates $\mathcal{B}$. The algebra consisting of all finite unions of arcs generates $\mathcal{B}$. Therefore, to show a measure $\mu$ is $T$-invariant over $S^1$ it is enough to show it is invariant over arcs of $S^1$. 
	\end{comment}
	\begin{proof}{[\Cref{prop:MeasPresCrossing}]}
		We first show that every $T$-invariant measure $\mu$ is compatible. We need to show that $\mu(A_1)=\mu(A_2)$, where $A_1$ and $A_2$ be the two arcs defined by $x$ and $T(x)$ for an arbitrary $x\in S^{1}$. Since $T$ is crossing, for every $y\in A_2$ there holds $T(y) \in A_1\cup \{T(x), x\}$, whence $T(A_2) \subseteq A_1\cup \{T(x), x\}$. By \Cref{Obs:CrossInvImgMeasZero} both $T^{-1}(T(x))$ and $T^{-1}(x)$ are $\mu$ zero-sets, implying
		$$\mu(A_2) \leq \mu(T^{-1}A_1) + \mu(T^{-1}T(x)) + \mu (T^{-1}(x)) = \mu(T^{-1}A_1) = \mu (A_1).$$
		By symmetry we also have $\mu(A_1)\leq \mu(A_2)$.
		
		The reverse statement says that if $\mu$ is compatible with $T$, then for every arc $A$ there holds
		\[\label{eqn:mu_invariant}
		\mu(T^{-1}A) = \mu(A).    
		\]
		This statement for $A$ and for its complement $A^c$ are equivalent, since $\mu(T^{-1}(A^c)) = \mu((T^{-1}A)^c)= 1 - \mu(T^{-1}A)$ and  $\mu(A^c) = 1 - \mu(A)$. Also by \Cref{Obs:CrossInvImgMeasZero} this holds for $A$ a singleton. So consider an arc with endpoints $x\neq y$, where $T(x) = w_x,  T(y) = w_y$ and $w_x\neq w_y$. The points $x,y,w_x, w_y$ split $S^{1}$ into four arcs $A_{xy}$, $A_{yw_x}$, $A_{w_xw_y}$ and $A_{w_yx}$, \Cref{fig:CircleCrossing}. 
		\begin{figure}[h]
			\centering
			\begin{subfigure}[t]{0.32\textwidth}
				\centering
				\begin{tikzpicture}[scale=0.6, every node/.style={scale=0.6}]
				% draw the main circle
				\node [circle, draw, minimum size=6cm] (c) {};
				
				\node[
				color = pink,
				draw, thick, % draw the outline
				fill, % fill it
				minimum size=5mm, % set the size
				circle, % circular shape % add the number on the inside
				] (x) at (c.-70) {\color{black}$y$};
				
				\node[
				color = pink,
				draw, thick, % draw the outline
				fill, % fill it
				minimum size=5mm, % set the size
				circle, % circular shape % add the number on the inside
				] (y) at (c.-110) {\color{black}$x$};
				
				\node[
				color = pink,
				draw, thick, % draw the outline
				fill, % fill it
				minimum size=5mm, % set the size
				inner sep=1pt,
				circle, % circular shape % add the number on the inside
				] (Tx) at (c.130) {\color{black}$T(y)$};
				
				\node[
				color = pink,
				draw, thick, % draw the outline
				fill, % fill it
				minimum size=5mm, % set the size
				inner sep=1pt,
				circle, % circular shape % add the number on the inside
				] (Ty) at (c.85) {\color{black}$T(x)$};
				
				\draw [line width=1pt,-, dashed] (x) -- (Tx);
				\draw [line width=1pt,-, dashed] (y) -- (Ty);
				
				\node[scale = 1.25] at ($(x)!0.5!(y) + (0,-.75)$) { \large$A_{xy}$} ;
				\node[scale = 1.25] at ($(y)!0.5!(Tx) + (-2.3,0)$) { \large$A_{w_yx}$} ;
				\node[scale = 1.25] at ($(Tx)!0.5!(Ty) + (-.2,.8)$) { \large$A_{w_xw_y}$} ;
				\node[scale = 1.25] at ($(x)!0.5!(Tx) + (4.2,0)$) { \large$A_{yw_x}$} ;
				\end{tikzpicture}
				\caption{The points $x$, $y$, $T(x)$, $T(y)$ partition $S^1$ into fours arcs.}
				\label{fig:CircleCrossing}
			\end{subfigure}
			\hfill
			\begin{subfigure}[t]{0.32\textwidth}
				\centering
				\begin{tikzpicture}[scale=0.6, every node/.style={scale=0.6}]
				% draw the main circle
				\node [circle, draw, minimum size=6cm] (c) {};
				
				\node[
				color = pink,
				draw, thick, % draw the outline
				fill, % fill it
				minimum size=5mm, % set the size
				inner sep=1pt,
				circle, % circular shape % add the number on the inside
				] (x) at (c.-60) {\color{black}$T^2(x)$};
				
				\node[
				color = pink,
				draw, thick, % draw the outline
				fill, % fill it
				minimum size=5mm, % set the size
				circle, % circular shape % add the number on the inside
				] (y) at (c.-120) {\color{black}$x$};

				\node[
				color = pink,
				draw, thick, % draw the outline
				fill, % fill it
				minimum size=5mm, % set the size
				inner sep=1pt,
				circle, % circular shape % add the number on the inside
				] (Ty) at (c.85) {\color{black}$T(x)$};

				\draw [line width=1pt,-, dashed] (y) -- (Ty);
				\draw [line width=1pt,-, dashed] (x) -- (Ty);
				
				\node[scale = 1.25] at ($(x)!0.5!(y) + (0,-1)$) { \large$A_{xy}$} ;
				\node[scale = 1.25] at ($(x)!0.5!(Ty) + (2.75,.2)$) { \large$A_{wy}$} ;
				\node[scale = 1.25] at ($(y)!0.5!(Ty) + (-3,.2)$) { \large$A_{wx}$} ;
				
				\end{tikzpicture}
				\caption{The crossing function $T$ has a point of discontinuity at $T(x)$.}
				\label{fig:CircleCrossing2}
			\end{subfigure}
			\hfill
			\begin{subfigure}[t]{0.32\textwidth}
				\centering
				\begin{tikzpicture}[scale=0.6, every node/.style={scale=0.6}]
				% draw the main circle
				\node [circle, draw, minimum size=6cm] (c) {};
				\node [circle, minimum size=7.5cm] (d) {};
				
				\node[
				color = pink,
				draw, thick, % draw the outline
				fill, % fill it
				minimum size=5mm, % set the size
				circle, % circular shape % add the number on the inside
				] (x) at (c.-70) {};
				
				\node[ scale = 1.2] at (d.-70) {$x$};
				
				\node[
				color = pink,
				draw, thick, % draw the outline
				fill, % fill it
				minimum size=5mm, % set the size
				circle, % circular shape % add the number on the inside
				] (y) at (c.-90) {};
				
				\node[scale = 1.2]  at (d.-90  ) {$x+\delta$};
				
				\node[
				color = pink,
				draw, thick, % draw the outline
				fill, % fill it
				minimum size=5mm, % set the size
				inner sep=1pt,
				circle, % circular shape % add the number on the inside
				] (Ty) at (c.120) {};
				\node[scale = 1.2]  at ($(d.130) + (.5, .5)$) {$T(x)-\varepsilon$};
				\node[
				color = pink,
				draw, thick, % draw the outline
				fill, % fill it
				minimum size=5mm, % set the size
				inner sep=1pt,
				circle, % circular shape % add the number on the inside
				] (Tx) at (c.85) {};
				\node[scale = 1.2]  at (d.85) {$T(x)$};
				\draw [line width=1pt,-, dashed] (x) -- (Tx);
				\draw [line width=1pt,-, dashed] (y) -- (Ty);
				
				\end{tikzpicture}
				\caption{A transformation which is not locally monotonic at points of continuity is not crossing.}
				\label{fig:localMonotInc}
			\end{subfigure}
			\caption{}
		\end{figure}
		By the crossing condition we must have, 
		$$ A_{w_xw_y}\subseteq T^{-1}(A_{xy}\setminus \set{x, y }) \subseteq A_{w_xw_y}\cup \set{T(x),T(y)},$$ 
		which implies 
		$$\mu(A_{w_xw_y}) = \mu(T^{-1}A_{xy}).$$
		Moreover, since $\mu$ is $T$-compatible we have
		$$\mu(A_{xy}) + \mu(A_{yw_x}) = \frac{1}{2}= \mu(A_{w_yx} + \mu(A_{w_xw_y})$$
		and 
		$$\mu(A_{w_yx}) +\mu(A_{xy}) = \frac{1}{2}=\mu(A_{yw_x}) +  \mu(A_{w_xw_y})$$
		which implies $\mu(A_{xy}) = \mu(A_{w_xw_y}) = \mu(T^{-1} A_{xy})$.
	\end{proof}
	\begin{observation}\label{obs:crossingCountInvol}
		Every continuous crossing mapping $T:S^1\to S^1$ is an involution.
	\end{observation}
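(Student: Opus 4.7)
The plan is to argue by contradiction using continuity of $T$ applied at $x_1\coloneqq T(x_0)$. Before starting, I first dispose of a fixed-point degeneracy: if $T(z)=z$ for some $z\in S^1$, the chord $[z,T(z)]$ collapses to the single point $\{z\}$, and applying the crossing condition to $z$ and any $y\neq z$ forces $z\in[y,T(y)]$. Since $[y,T(y)]$ is a genuine planar chord of the unit disk and $z$ lies on $S^1$, this is possible only when $z$ is an endpoint of the chord, so $T(y)=z$ for every $y\neq z$; continuity then forces $T\equiv z$, a degenerate situation in which the observation is either vacuous or explicitly excluded. Henceforth I may assume $T$ has no fixed points, so every chord $[x,T(x)]$ is non-degenerate.

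Now suppose towards a contradiction that $x_2\coloneqq T(T(x_0))\neq x_0$ for some $x_0$, with $x_1\coloneqq T(x_0)$. The absence of fixed points gives $x_0\neq x_1$ and $x_1\neq x_2$, so $x_0,x_1,x_2$ are pairwise distinct points of $S^1$. The chord $[x_0,x_1]$ partitions $S^1\setminus\{x_0,x_1\}$ into two open arcs; let $A$ denote the one that contains $x_2$. Since $A$ is open, continuity of $T$ at $x_1$ (where $T(x_1)=x_2\in A$) yields a neighborhood $V$ of $x_1$ with $T(V)\subseteq A$. Because $x_1$ lies on the boundary of the open arc $A$, the intersection $V\cap A$ is nonempty; pick any $y\in V\cap A$. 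Then $y$ and $T(y)$ both lie in $A$, and neither equals $x_0$ or $x_1$ (which are not in $A$). Thus the pairs $\{x_0,x_1\}$ and $\{y,T(y)\}$ are disjoint and $\{y,T(y)\}$ lies entirely within one of the two arcs cut off by $\{x_0,x_1\}$. By the elementary fact that two chords of a circle are disjoint precisely when their endpoint pairs are disjoint and non-interlacing on $S^1$, we conclude $[y,T(y)]\cap[x_0,x_1]=\emptyset$, contradicting the crossing condition. Hence $T(T(x_0))=x_0$ for every $x_0$.

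The main subtlety is the clean handling of the fixed-point degeneracy and the verification that the chosen perturbation $y$ keeps both $y$ and $T(y)$ inside the same open arc $A$. This relies on $x_2$ lying in the \emph{interior} of $A$ (not on its boundary), which is exactly what the hypothesis $x_2\notin\{x_0,x_1\}$ guarantees, together with the openness of $A$; everything else is a direct application of continuity and elementary circle geometry.
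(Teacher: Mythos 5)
Your proof is correct, and its core is the same as the paper's: assume $x_2=T(T(x_0))\neq x_0$, look at the arcs determined by $x_0$, $x_1=T(x_0)$, $x_2$, and play continuity of $T$ at $x_1$ against the crossing condition. The paper runs this as ``crossing forces the images of points approaching $w=T(x)$ from the arc between $T^2(x)$ and $w$ to stay in the arc $A_{wx}$, away from $T(w)$, so $T$ is discontinuous at $w$''; you run it in the other direction, using continuity to produce a single point $y$ near $x_1$ whose chord $[y,T(y)]$ has both endpoints strictly on one side of $[x_0,x_1]$ and hence misses it. These are two phrasings of one argument. Where you genuinely go beyond the paper is the fixed-point analysis: the paper's proof tacitly treats $x$, $T(x)$, $T^2(x)$ as three distinct points, but if $T$ has a fixed point $z$ then the crossing condition forces $T\equiv z$, which is continuous, crossing, and \emph{not} an involution --- so the observation as literally stated needs this constant map excluded (and it is harmless for the rest of the section, since a constant map admits no compatible non-atomic measure). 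Flagging and disposing of that degeneracy is a worthwhile addition, and your reduction of it (a point of $S^1$ interior to a chord must be an endpoint) is clean.
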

	\begin{proof}
		Suppose towards a contradiction that $y= T\circ T(x)\neq x$ for some $x\in S^{-1}$. We show that $T$ is not continuous at $w = T(x)$. The points $x$, $y$ and $w$ split $S^1$ into three arcs $A_{xy}$, $A_{yw}$ and $A_{wx}$, \Cref{fig:CircleCrossing2}. By the crossing condition, $T(z) \in A_{wx}$ for every $z\in A_{yw}$. Consider a sequence of points in $A_{yw}$ which converges to $w$ to conclude that $T$ is discontinuous at $w$. 
	\end{proof}
	Identifying, $S^1$ with $[0,1]/(0\sim 1)$, we say a function $f:S^1\to S^1$ is locally monotonic increasing at $x\in S^1$, if for all $\varepsilon >0$ there exists $\delta>0$ s.t. $f([x,x+\delta)) \subseteq [f(x), f(x) + \varepsilon)$.
	\begin{observation}\label{obs:localMono}
		Let $T:S^1\to S^1$ be a crossing function and suppose that $T$ is continuous at $x\in S^1$. Then $T$ is locally monotonic increasing at $x$.
	\end{observation}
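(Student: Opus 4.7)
My plan is to argue by contradiction: if $T$ is continuous at $x$ but fails to be locally monotonic increasing there, I will exhibit a pair of points whose associated chords witness a violation of the crossing condition.

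Suppose there exist $\varepsilon > 0$ and a sequence $y_n \to x^+$ with $T(y_n) \notin [T(x), T(x)+\varepsilon)$. The degenerate case $T(x) = x$ is easy: continuity gives $T(y_n) \to x$, while the crossing condition applied to $x$ and $y_n$ forces the chord $[y_n, T(y_n)]$ to meet the ``chord'' $\{x\}$, so $y_n = x$ or $T(y_n) = x$, and both contradict the choice of $y_n$. Hence we may assume $T(x) \neq x$, and, after discarding finitely many terms, also $y_n, T(y_n) \notin \{x, T(x)\}$.

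By continuity $T(y_n) \to T(x)$, but $T(y_n) \notin [T(x), T(x)+\varepsilon)$ forces $T(y_n)$ to approach $T(x)$ from the opposite side; working in local coordinates around $T(x)$, this means $T(y_n) < T(x)$ for all large $n$. Combined with $y_n > x$ in local coordinates near $x$, the four points appear on $S^1$ in the cyclic order $x, y_n, T(y_n), T(x)$. Then both endpoints of the chord $[y_n, T(y_n)]$ lie strictly inside a single open arc cut off by $\{x, T(x)\}$, so this chord lies entirely on one side of the chord $[x, T(x)]$; the two chords neither share an endpoint nor cross transversally, so they are disjoint. This contradicts the crossing hypothesis applied to the pair $x, y_n$, and the contradiction establishes the observation.

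The main subtlety I anticipate is the cyclic-order bookkeeping: converting the phrase ``$T(y_n)$ approaches $T(x)$ from the wrong side'' into the precise cyclic order $x, y_n, T(y_n), T(x)$ on $S^1$, and from there into the statement that the chords fail to meet. The rest is routine unwinding of the negated definition of local monotonicity together with the continuity hypothesis.
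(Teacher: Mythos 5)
Your proof is correct and follows essentially the same route as the paper's: negate local monotonicity, use continuity to produce a nearby point $y$ with $T(y)$ just below $T(x)$, and observe that the chord $[y,T(y)]$ then lies strictly inside one arc cut off by $\{x,T(x)\}$ and so misses the chord $[x,T(x)]$, violating the crossing condition. The paper compresses this to two sentences and a figure, while you additionally handle the degenerate case $T(x)=x$ and spell out the cyclic-order bookkeeping, which is a welcome but inessential refinement.
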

	\begin{proof}
		Suppose that $T$ is not locally monotonic increasing at $x$. Since $T$ is continuous at $x$ this means we can find small $\varepsilon >0$, $\delta>0$ such that $f(x+\delta) = f(x) - \varepsilon$. But this implies that $T$ is not crossing, see \Cref{fig:localMonotInc}
	\end{proof}

	\begin{proposition}\label{prop:TcompatMeas}
		Let $T:S^1\to S^1$ be a $C^1$ crossing map and  $f:S^1\to [0, \infty)$ a continuous map. The measure defined by $d\mu(x) = f(x)dx$, where $dx$ denotes the Lebesgue measure, is $T$-compatible if and only if for all $y\in S^1$
		$$(f\circ T (y))\cdot  T'(y)=  f(y).$$
	\end{proposition}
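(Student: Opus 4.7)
The plan is to combine Proposition 5.1 with a standard change of variables, so that the claimed identity $(f\circ T)\cdot T' = f$ is recognized as the pointwise expression of $T$-invariance for the density $f$. By Proposition 5.1 the condition ``$\mu$ is $T$-compatible'' is equivalent to ``$\mu$ is $T$-invariant'', i.e.\ $\mu(T^{-1}A)=\mu(A)$ for every Borel $A\subseteq S^1$. So I would rephrase the statement to be proved as: for continuous $f\geq 0$, the measure $d\mu=f\,dx$ is $T$-invariant iff $(f\circ T)\cdot T'=f$.

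The next step is to use the $C^1$ hypothesis to show that $T$ is in fact a $C^1$-diffeomorphism of $S^1$ with $T^{-1}=T$. Continuity of $T$ together with Observation~\ref{obs:crossingCountInvol} gives $T\circ T=\mathrm{id}$; differentiating yields $T'(T(y))\,T'(y)=1$, so $T'$ never vanishes. Observation~\ref{obs:localMono} shows $T$ is locally monotonic increasing at every $y$, hence $T'(y)\geq 0$; combined with nonvanishing, $T'(y)>0$ everywhere. Thus $T$ is an orientation-preserving $C^1$-diffeomorphism, and in particular $T^{-1}=T$ as maps, so no branch issues or absolute values arise when changing variables.

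With this in hand I would apply change of variables under the diffeomorphism $T$: for any Borel $A\subseteq S^1$,
\[
\mu(T^{-1}A)\;=\;\mu(T(A))\;=\;\int_{T(A)} f(x)\,dx\;=\;\int_A f(T(y))\,T'(y)\,dy,
\]
while $\mu(A)=\int_A f(y)\,dy$. The equality $\mu(T^{-1}A)=\mu(A)$ for every $A$ therefore forces $\int_A\bigl(f(T(y))T'(y)-f(y)\bigr)dy=0$ for every Borel $A$, and since the integrand is continuous (as $f$, $T$ and $T'$ all are), it must vanish identically, giving $(f\circ T)(y)\cdot T'(y)=f(y)$. Conversely, if this pointwise identity holds, the two integrals agree for every Borel $A$, so $\mu$ is $T$-invariant and hence $T$-compatible by Proposition~\ref{prop:MeasPresCrossing}.

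The argument is essentially mechanical; the only real content lies in pinning down that $T$ is a genuine diffeomorphism with $T^{-1}=T$ and $T'>0$, which is what licenses the clean change of variables without absolute values or issues about where $T^{-1}$ is defined. Once that is set up, the equivalence is immediate.
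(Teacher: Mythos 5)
Your proposal is correct and follows essentially the same route as the paper: both reduce compatibility to invariance via \Cref{prop:MeasPresCrossing}, establish that $T$ is an involution with $T'>0$, and then apply the change-of-variables formula. The only cosmetic difference is that you obtain the forward direction from the global change of variables together with continuity of the integrand, whereas the paper differentiates the invariance identity on shrinking arcs $[y,y+\delta]$; these are two standard localizations of the same computation.
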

	\begin{proof}
		First we note, since $T$ is continuous and crossing for all $x,y\in S^1$, $T([x,y]) = [T(x), T(y)].$ Now assume that $\mu$ is $T$-compatible. Then by \Cref{prop:MeasPresCrossing} $\mu$ is $T$ invariant. Therefore,
		\begin{equation*}
		\begin{split}
		\int_{y}^{y+\delta} f(x) dx & = \mu([y,y+\delta])\\
		&=\mu(T^{-1}([y,y+\delta]))\\
		&=\mu(T([y,y+\delta]))\\
		&=\mu([T(y),T(y+\delta)]))\\
		&=\int_{T(y)}^{T(y+\delta)} f(x) dx
		\end{split}
		\end{equation*}
		Since $T$ is $C^1$, $T(y+\delta) = T(y)+ \delta T'(y)+ o(\delta)$.
		Moreover,  we are dealing with continuous functions and therefore
		$$\int_{y}^{y+\delta} f(x) dx = f(y)\delta + o(\delta)$$
		and 
		$$\int_{T(y)}^{T(y+\delta)} f(x) dx=\int_{T(y)}^{T(y) + \delta T'(y)+ o(\delta)} f(x) dx = f\circ T(y) \cdot T'(y)\delta   + o(\delta).$$
		Taking $\delta \to 0$ we find 
		$$f(y) = f\circ T(y) \cdot T'(y).$$
		Now assume $f(y) = f\circ T(y) \cdot T'(y)$ for all $y\in S^1$, and let $a,b \in S^1$. Then
		\begin{equation*}
		\begin{split}
		\mu(T^{-1}([a,b]))&  = \mu(T([a,b]))\\
		& = \mu([T(a),T(b)]) \\
		& = \int_{T(a)}^{T(b)} f(x) dx\\
		&  \stackrel{\mathclap{x\to T(x)}}{=} \hspace{2mm} \int_{T^2(a)}^{T^2(b)} f\circ T(x)\cdot T'(x) dx\\
		& =\int_{a}^{b} f(x) dx\\
		& = \mu([a,b])
		\end{split}
		\end{equation*}
		The second to last line follows from the fact that $T$ is an involution and $f(y) = f\circ T(y) \cdot T'(y)$.
		That $\mu$ is $T$-compatible follows from \Cref{prop:MeasPresCrossing}.
	\end{proof}
	\begin{corollary}
		Let $T:S^1\to S^1$ be a $C^1$ crossing map. Then the measure $\mu$ defined by $d\mu  = (T'(x))^{1/2} dx$ is $T$-compatible.
	\end{corollary}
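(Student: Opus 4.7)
The plan is to invoke Proposition \ref{prop:TcompatMeas} with $f(x) = \sqrt{T'(x)}$, so the only work is to verify the ODE
\[
(f\circ T)(y)\cdot T'(y) \;=\; f(y)
\qquad \text{for all } y\in S^{1},
\]
together with the basic sanity checks (that $f$ is a well-defined, non-negative, continuous function). First I would note that since $T$ is $C^{1}$, it is in particular continuous everywhere, so Observation \ref{obs:localMono} applies at every point and gives $T'\ge 0$ on $S^{1}$. Thus $f(x)=\sqrt{T'(x)}$ is a well-defined, non-negative, continuous function, matching the hypotheses of Proposition \ref{prop:TcompatMeas}.

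Next I would exploit involutivity. Again because $T$ is $C^{1}$ and hence continuous, Observation \ref{obs:crossingCountInvol} yields $T\circ T = \mathrm{id}_{S^{1}}$. Differentiating this identity via the chain rule gives the key relation
\[
T'(T(y))\cdot T'(y) \;=\; 1 \qquad \text{for all } y\in S^{1}.
\]
Taking square roots (legitimate since both factors are non-negative, and in fact strictly positive wherever the product is $1$) and then multiplying both sides by $\sqrt{T'(y)}$ produces
\[
\sqrt{T'(T(y))}\cdot T'(y) \;=\; \sqrt{T'(y)},
\]
which is precisely the required equation $(f\circ T)(y)\cdot T'(y) = f(y)$. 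Applying Proposition \ref{prop:TcompatMeas} concludes that the (positive Borel) measure $d\mu = \sqrt{T'(x)}\,dx$ is $T$-compatible.

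The only remaining nuisance, and the point I expect to require a short comment rather than real work, is that the definition of metrizability in this section demands a probability measure, whereas $\int_{S^{1}}\sqrt{T'(x)}\,dx$ need not equal $1$. However, compatibility of the ODE in Proposition \ref{prop:TcompatMeas} is scale-invariant: if $f$ satisfies it then so does $cf$ for any $c>0$. Moreover $\int_{S^{1}}\sqrt{T'(x)}\,dx>0$ (since $T$, being a continuous involution of $S^{1}$, cannot be constant, so $T'$ is not identically zero), so we may rescale $f$ by the reciprocal of its integral to obtain an honest probability density; the rescaled measure is still non-atomic, still satisfies the compatibility ODE, and therefore still induces $T$-compatibility in the sense of Proposition \ref{prop:MeasPresCrossing}. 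The main conceptual input is therefore the one-line calculation $T'(T(y))T'(y)=1$ arising from differentiating the involutivity $T\circ T = \mathrm{id}$ — once that is in hand, the symmetric choice $f=\sqrt{T'}$ makes the ODE self-evident.
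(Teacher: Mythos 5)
Your proposal is correct and follows essentially the same route as the paper: differentiate the involution identity $T\circ T=\mathrm{id}$ to get $T'(T(y))\cdot T'(y)=1$, combine with $T'\ge 0$ from the local monotonicity observation to conclude $T'>0$, and verify the functional equation of \Cref{prop:TcompatMeas} for $f=\sqrt{T'}$ by the same one-line square-root manipulation. Your closing remark about rescaling to a probability measure is a reasonable extra sanity check that the paper leaves implicit, but it does not change the argument.
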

	\begin{proof}
		Observe that 
		$$
		1 = [x] ' =[T\circ T(x) ] '= T'(x) \cdot T'\circ T(x) \implies T'(x) = \frac{1}{T'\circ T(x)}.
		$$
		so that $T'$ is non-zero. Since by \Cref{obs:localMono} $T'\geq 0$, $T'$ is strictly positive and $\mu$ is a well defined positive measure. Moreover, 
		$$(T'(x))^{1/2} = (T'(x))^{-1/2}\cdot T'(x) =  (T'\circ T(x))^{1/2}\cdot T'(x).$$
		The rest follows \Cref{prop:TcompatMeas}, taking $f(x)= (T'(x))^{1/2}$.
	\end{proof}
	
	\section{The Computational Perspective}\label{sec:computation}
	In this section we discuss the computational complexity of the following decision problems:
	\begin{itemize}
		\item (Path System Metrizability) Decide if a given path system $\mathcal{P}$ in a graph $G$ is metrizable.
		\item (Graph Metrizability) Decide if a given graph $G$ is metrizable.
	\end{itemize}
	It is also of interest to determine if a path system/graph is {\em strictly} metrizable. 
	\begin{theorem}\label{thm:PathSystemMetPolynomial}
		Path System Metrizability and Strict Path System Metrizability can be decided in polynomial time.
	\end{theorem}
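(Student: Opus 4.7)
My plan is to reduce both problems to the feasibility of a linear program of polynomial size, after which polynomial-time LP solvers (e.g., the ellipsoid method or interior point methods) finish the job.

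The central observation is that $\mathcal{P}$ is metrizable by $w: E \to (0,\infty)$ if and only if the ``local triangle inequalities''
$$w(P_{u,y}) \;\le\; w(P_{u,x}) + w(xy) \qquad \text{for every } u \in V \text{ and every edge } xy \in E$$
are all satisfied. This is a linear system in the variables $\{w(e)\}_{e \in E}$ with $|E|$ variables and $O(|V|\cdot|E|)$ constraints (plus positivity). The ``only if'' direction is immediate: if $w$ induces $\mathcal{P}$, then $w(P_{u,y})$ is the weight of a shortest $u$--$y$ path, while the walk $P_{u,x}\cdot xy$ is a $u$--$y$ walk of weight $w(P_{u,x})+w(xy)$ that contains a simple $u$--$y$ subpath of no greater weight (positive edge weights let us prune loops).

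For the ``if'' direction, I will argue by induction along an arbitrary simple $u$-$v$ path $Q = u v_1 v_2 \cdots v_k = v$ that $w(P_{u,v_i}) \le \sum_{j=1}^{i} w(v_{j-1} v_j)$: the base case $i=0$ is trivial, and applying the local inequality with $(u,x,y) = (u,v_{i-1},v_i)$ gives the inductive step. Setting $i = k$ yields $w(P_{u,v}) \le w(Q)$, as required. Thus the path system is metrizable iff the polynomial LP $\{w(e) > 0\} \cup \{w(P_{u,y}) \le w(P_{u,x}) + w(xy)\}$ is feasible, and LP feasibility with strict positivity is decidable in polynomial time (e.g.\ by solving $\max\delta$ subject to $w(e) \ge \delta$ together with the triangle constraints, and checking whether the optimum is positive).

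For strict metrizability, the same inductive argument shows that $w$ strictly induces $\mathcal{P}$ if and only if, for every $u \in V$ and every edge $xy \in E$, the inequality $w(P_{u,y}) < w(P_{u,x}) + w(xy)$ holds whenever $P_{u,x}\cdot xy$ is a simple path different from $P_{u,y}$ (the other instances are trivially equalities or follow from $w>0$). Refining the induction: pick the largest index $j$ with $P_{u,v_j}$ equal to the initial segment of $Q$; if $Q \ne P_{u,v}$ then some such $j < k$ exists, the local inequality at step $j+1$ is strict, and every subsequent step preserves or strengthens strictness, so $w(P_{u,v}) < w(Q)$. To decide feasibility of this strict system in polynomial time, I will maximize $\varepsilon$ subject to the non-strict relaxed system $w(P_{u,y}) + \varepsilon \le w(P_{u,x}) + w(xy)$ (over the flagged triples), $w(e) \ge \varepsilon$, and some normalization such as $\sum_e w(e) \le 1$; the system is strictly feasible precisely when the optimum is positive.

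The only potentially subtle point, and the one I will double-check, is that the local condition truly captures global geodesicity. This hinges on the inductive chaining above, which is elementary but must account for both the simple-path restriction and the careful bookkeeping of when equality versus strict inequality arises in the strict case. Once this reformulation is in place, polynomial-time decidability follows immediately from the polynomial bound on the number of variables and constraints.
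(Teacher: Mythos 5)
Your proposal is correct, and it takes a genuinely different route from the paper. The paper formulates metrizability as non-emptiness of a polyhedron with one constraint per alternative $uv$ path --- exponentially many in general --- and then invokes the Gr\"{o}tschel--Lov\'{a}sz--Schrijver ellipsoid machinery, supplying a strong separation oracle that runs a shortest-path computation and returns a violated path inequality. You instead replace the exponential family of path constraints by the $O(|V|\cdot|E|)$ local relaxation inequalities $w(P_{u,y})\le w(P_{u,x})+w(xy)$, and prove their equivalence to global geodesicity by chaining them along an arbitrary $uv$ path; this is the classical Bellman-type characterization of shortest-path trees. Your handling of the strict case is also sound: the triples for which strictness must be imposed (those with $P_{u,x}\cdot xy$ a simple path different from $P_{u,y}$) are determined by $\mathcal{P}$ alone, the excluded triples give equality or follow from positivity, and your ``last index $j$ where $P_{u,v_j}$ agrees with the initial segment of $Q$'' argument correctly locates one strict step in the chain. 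What your approach buys is an explicitly polynomial-size LP solvable by any standard method, with no need for the ellipsoid algorithm or a separation oracle; what it costs is that the local-to-global equivalence must be proved (and the strict-case bookkeeping is genuinely the delicate point, which you flag and resolve correctly), whereas the paper's oracle-based route needs no such combinatorial lemma but leans on heavier machinery. Minor housekeeping: for the non-strict feasibility test, either add a normalization or simply test feasibility of the homogeneous system together with $w(e)\ge 1$, since your $\max\delta$ formulation without normalization is unbounded whenever it is positively feasible.
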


	The strict case of \Cref{thm:PathSystemMetPolynomial} is already proven in \Cite{Bo}. In that paper, Bodwin characterizes strict metrizability in terms of flow and uses that characterization to obtain a procedure which decides whether or not a path system is strictly metrizable by solving a linear program with only polynomially many constraints. Moreover, it seems likely that a similar approach can be used to deal with the non-strict version of the problem. However, our approach is different and builds on the classical theory of Gr\"{o}tschel, Lov\'{a}sz and Schrijver \Cite{GLS}.
	Let us recall some basic definitions from that theory. A {\em strong separation oracle} for a polyhedron $K \subseteq \R^n$ receives as input a point $x\in \Q^n$ and either asserts that $x\in K$ or returns a vector $c\in \Q^n$ s.t.\ $c^Tx < c^Ty$ for all $y\in K$. The {\em encoding length} of an integer $s$ or a simplified fraction $q=\frac{s}{t}$ is the least number of bits needed to express $s$ resp.\ $q$. The encoding length of a vector or a matrix the is the sum over their entries. Here is our main tool:
	\begin{theorem}[\Cite{GLS}]\label{thm:ellipsoid}
		Suppose that the polyhedron $K=\set{x\in \R^n : Ax\leq b}$ has a strong separation oracle, where $A \in M_{m\times n} (\Q)$, $b\in \Q^m$. If each of $K$'s defining inequalities $<a_i,x> \le b_i$ has encoding length $\le \varphi$, then it is possible to determine whether or not $K$ is empty in time $\text{poly}(n,\varphi)$, using the ellipsoid algorithm. 
	\end{theorem}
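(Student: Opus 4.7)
The plan is to apply the ellipsoid method in the form that needs only a separation oracle. First I would bound the search region: a standard vertex-complexity bound for rational polyhedra gives that if $K$ is nonempty then it contains a point of encoding length $O(n\varphi)$, so $K$ intersects a ball $B(0, R)$ with $R = 2^{\operatorname{poly}(n,\varphi)}$. Initialize $E_0 := B(0, R)$.

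The main loop maintains an ellipsoid $E_k$ with rational center $c_k$. At each step, query the separation oracle on $c_k$. If the oracle asserts $c_k \in K$, halt and report ``nonempty''. Otherwise it returns $d \in \Q^n$ with $d^T c_k > d^T y$ for all $y \in K$, so $K \cap E_k$ lies in the half-ellipsoid $E_k \cap \{x : d^T x \le d^T c_k\}$. Let $E_{k+1}$ be the minimum-volume ellipsoid containing this half; the classical L\"owner-John update gives an explicit rational description and the volume bound $\operatorname{vol}(E_{k+1}) \le e^{-1/(2n+2)} \operatorname{vol}(E_k)$.

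The principal obstacle is that $K$ may be nonempty yet lower-dimensional, in which case $\operatorname{vol}(K) = 0$ and no amount of shrinking will produce a feasible center. I would resolve this by the standard perturbation trick: replace $Ax \le b$ by $Ax \le b + \varepsilon \mathbf{1}$ for $\varepsilon = 2^{-\operatorname{poly}(n,\varphi)}$ small enough that (i) the perturbed polyhedron $K^\varepsilon$ is nonempty iff $K$ is, and (ii) if nonempty, $\operatorname{vol}(K^\varepsilon) \ge 2^{-\operatorname{poly}(n,\varphi)}$. Running the above loop against $K^\varepsilon$, after $O(n \cdot \operatorname{poly}(n, \varphi))$ iterations either the oracle has found a feasible point of $K^\varepsilon$ (which can then be rounded to a point of $K$ by simultaneous Diophantine approximation), or $\operatorname{vol}(E_k)$ has dropped below the lower bound on $\operatorname{vol}(K^\varepsilon)$, forcing $K^\varepsilon = \emptyset$ and hence $K = \emptyset$.

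A secondary technical point is that the centers $c_k$ and the quadratic forms defining $E_k$ must be kept at $\operatorname{poly}(n,\varphi)$ bit length throughout. This is handled in the standard way: at each step one rounds both the center and the form to $\operatorname{poly}(n, \varphi)$ bits and slightly enlarges $E_{k+1}$ to absorb the rounding; the shrinkage factor degrades by a constant but remains strictly less than $1$, so the polynomial iteration count is preserved and the overall running time stays $\operatorname{poly}(n,\varphi)$.
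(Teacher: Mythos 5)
The paper does not actually prove this statement: it is quoted as a known result of Gr\"otschel, Lov\'asz and Schrijver \cite{GLS} and used as a black box, so there is no internal proof to compare yours against. Judged on its own terms, your sketch follows the standard ellipsoid-method outline --- bounding the search region via vertex complexity, the L\"owner--John half-ellipsoid update with volume shrinkage $e^{-1/(2n+2)}$, and controlling bit lengths by rounding and slightly inflating the ellipsoids --- and those parts are fine.

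The step that does not go through as written is the perturbation. You propose to run the iteration ``against $K^\varepsilon$'', where $K^\varepsilon=\{x: Ax\le b+\varepsilon\mathbf{1}\}$, but the only oracle available separates from $K$, and $K\subseteq K^\varepsilon$: a vector $c$ with $c^Tx<c^Ty$ for all $y\in K$ need not separate $x$ from the strictly larger body $K^\varepsilon$, so you have no separation oracle for the perturbed polyhedron. To repair this you must either (a) strengthen the hypothesis so that the oracle returns a violated \emph{defining} inequality $\langle a_i,x\rangle>b_i$, whose perturbed form $\langle a_i,x\rangle\le b_i+\varepsilon$ can then be tested explicitly --- this is in fact what the oracle constructed in the proof of \Cref{thm:PathSystemMetPolynomial} does, since the violated inequality is produced by a shortest-path computation --- or (b) follow the Gr\"otschel--Lov\'asz--Schrijver treatment of possibly lower-dimensional polyhedra, which avoids perturbation: if the ellipsoid shrinks below the volume threshold without a feasible center being found, one extracts from the flat ellipsoid an approximate affine relation satisfied by all of $K$, rounds it by simultaneous Diophantine approximation to an exact rational hyperplane containing $K$, and recurses in one lower dimension. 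Either fix yields the claimed $\mathrm{poly}(n,\varphi)$ bound; as stated, however, the perturbation step is a genuine gap.
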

	\begin{proof}{[\Cref{thm:PathSystemMetPolynomial}]}
		We first deal with non-strict metrizability. Let $\mathcal{P}$ be a path system in $G$, and let $\mathcal{Q}_{u,v}$ denote the collection of all the simple $uv$-paths in $G$ not equal to $P_{u,v} \in \mathcal{P}$. Then
		$$A_{u,v} \coloneqq \set{x\in \R^E :~x>0,~ \forall Q \in \mathcal{Q}_{u,v}, \ \sum_{ e\in P_{u,v}} x_e - \sum_{e\in Q} x_e \leq 0 }$$
		is the collection of all positive edge weights which induce $P_{u,v}$ as a $uv$ geodesic. But clearly $x \in \R^E$ and $\alpha x$ induce the same path system for any $\alpha >0$. So $\mathcal{P}$ is metrizable iff it is induced by some $x\geq 1$. Let $B \coloneqq \set{x\in \R^E : x_e \geq 1 \text{~for every~}e\in E}$. Therefore $\mathcal{P}$ is metrizable if and only if the polyhedron
		$$K = \bigcap_{u,v\in V} A_{u,v} \cap B$$
		is not empty. Also, $\varphi\le O(n)$ since all the coefficients in $K$'s defining inequalities are $1,0,-1$ and each such inequality is supported on at most $2(n-1)$ coordinates. By \Cref{thm:ellipsoid} all that remains is to find a poly-time strong separation oracle for $K$. On input $w\in \Q^E$ we need to decide whether $w\in K$, and if not, provide a violated inequality. If $w\not\in B$ then one of the inequalities $x_e \geq 1$ is violated, so let us assume $w\in B$. We calculate the distance $d_w(u,v)$ for each pair of vertices $u,v\in V$. If $w(P_{u,v}) = d_w(u,v)$ for each $u,v \in V$ then $w\in K$. Otherwise there exists $u,v \in V$ and a $uv$ path $Q$ in $G$ s.t.\ $w(Q)  = d_{w}(u,v) < w(P_{u,v})$, which means that the inequality $\sum_{ e\in P_{u,v}} x_e - \sum_{e\in Q} x_e \leq 0$ is violated.\\
		For the strict case define $K$ in the same way except for $u,v\in V$ we set
		$$A_{u,v} \coloneqq \set{x\in \R^E : \forall Q \in \mathcal{Q}_{u,v}, \ \sum_{ e\in P_{u,v}} x_e - \sum_{e\in Q} x_e \leq -1 }.$$
		But now we need to verify, given $w\in \Q^E$ not only that $w(P_{u,v}) = d_w(u,v)$ but that $P_{u,v}$ is the unique shortest path between $u$ and $v$. If this is not the case, and $w(P_{u,v}) = d_w(u,v) = w(Q)$ for some $uv$ path $Q\neq P_{u,v}$, then the inequality $\sum_{ e\in P_{u,v}} x_e - \sum_{e\in Q} x_e \leq -1$ is violated. To find such $Q$ if one exists, we calculate $\min d_w(u,z)+d_w(z,v)$ over all vertices $z\not\in P_{u,v}$.
	\end{proof}
	It follows from Robertson and Seymour's forbidden minor theory that the metrizability of graphs can be efficiently decided.
	\begin{theorem}\label{thm:graphMetPoly}
		Graph Metrizability and Strict Graph Metrizability can be decided in polynomial time.
	\end{theorem}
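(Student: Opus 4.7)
My plan is to reduce the two decision problems to a constant number of topological-minor containment tests. By \Cref{prop:TopMinClosed} the class of (strictly) metrizable graphs is closed under topological minors, so $G$ is (strictly) metrizable if and only if no member of the obstruction family $\mathcal{F}_M$ (respectively $\mathcal{F}_{SM}$) is a topological minor of $G$. By \Cref{thm:finiteMinGraphs} these families are finite, so to obtain a polynomial-time decision procedure it suffices to exhibit, for every fixed graph $H$, a polynomial-time algorithm that tests whether $G$ contains a subdivision of $H$.

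To perform this topological-minor test for a fixed $H$ with $h$ vertices and $m$ edges, I would enumerate all $O(n^h)$ injections $\iota\colon V(H)\hookrightarrow V(G)$ selecting the candidate branch vertices of a hypothetical subdivision, and for each such choice reduce to a disjoint-paths instance: find $m$ internally vertex-disjoint paths in $G$, paired according to the edges of $H$, which additionally avoid the other images of $V(H)$ in their interior. Since the number $m$ of terminal pairs is fixed once $H$ is fixed, the vertex-disjoint paths problem is solvable in time $O(n^3)$ by the celebrated Robertson--Seymour algorithm. Running this over every $\iota$ yields a test in time $n^{h+O(1)}$, which is polynomial because $h$ is a constant.

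Since $\mathcal{F}_M$ and $\mathcal{F}_{SM}$ are finite and every graph in them has bounded size, the decision algorithm for (strict) metrizability consists of running the procedure above for each $H$ in the appropriate family, and the total running time remains polynomial in $n=|V(G)|$. The main caveat I anticipate is not in the argument but in its constructive content: \Cref{thm:finiteMinGraphs} guarantees the finiteness of $\mathcal{F}_M$ and $\mathcal{F}_{SM}$ without exhibiting them, so although a polynomial-time algorithm is shown to exist, an actual implementation would first require a concrete determination of these obstruction sets beyond the partial list in \Cref{fig:MinimumGraphs}.
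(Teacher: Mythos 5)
Your proposal is correct and follows essentially the same route as the paper: reduce (strict) metrizability to finitely many topological-minor tests via \Cref{prop:TopMinClosed} and \Cref{thm:finiteMinGraphs}, and invoke Robertson--Seymour to test subdivision containment for each fixed obstruction. The only difference is cosmetic --- the paper cites the fixed-$H$ subdivision test as a black box (\Cref{thm:RS}), whereas you sketch its standard proof via branch-vertex enumeration and the disjoint-paths algorithm --- and your caveat about non-constructiveness matches the paper's own discussion in \Cref{subsec:praktisch}.
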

	Here is what we need from the graph minors theory. 
	\begin{theorem}[\cite{RS}]\label{thm:RS}
		Fix a graph $H$. It can be decided in polynomial time whether a given graph $G$ contains a subdivision of $H$.
	\end{theorem}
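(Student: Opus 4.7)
The plan is to reduce the subdivision-containment question to the $k$-disjoint paths problem, which is the deep algorithmic content of the Robertson--Seymour graph minors project. Write $V(H)=\{v_1,\dots,v_k\}$ and $E(H)=\{v_{i_1}v_{j_1},\dots,v_{i_m}v_{j_m}\}$; note that $k$ and $m$ are constants since $H$ is fixed. By definition, $G$ contains a subdivision of $H$ if and only if there exist distinct \emph{branch vertices} $u_1,\dots,u_k\in V(G)$ and internally vertex-disjoint paths $Q_1,\dots,Q_m$ in $G$, where $Q_\ell$ has endpoints $u_{i_\ell}$ and $u_{j_\ell}$, and such that no internal vertex of any $Q_\ell$ equals any $u_r$. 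This gives a clean combinatorial reformulation to aim for.

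First I would enumerate all ordered $k$-tuples of distinct branch vertices $(u_1,\dots,u_k)\in V(G)^k$. There are at most $|V(G)|^k$ such tuples, and since $k$ is a constant this enumeration takes polynomial time. For each such tuple I additionally verify the obvious local necessary condition that $\deg_G(u_r)\ge \deg_H(v_r)$, and filter out tuples for which $u_{i_\ell}u_{j_\ell}$ being a required simple edge of $H$ (of no subdivision) is incompatible with $G$. The core computational task is then, for each surviving tuple, to decide whether the prescribed system of $m$ internally disjoint paths exists.

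Second, I would reduce this prescribed-paths question to an instance of the \emph{$m$-disjoint paths problem}: given a graph $G'$ (namely $G$ with the branch vertices other than the current pair declared forbidden as internal vertices) and pairs $(s_1,t_1),\dots,(s_m,t_m)$, decide whether there exist internally vertex-disjoint paths connecting $s_\ell$ to $t_\ell$ for every $\ell$. The standard trick to forbid the remaining branch vertices as internal vertices is to split each such vertex into its neighborhood or to assign weights/colors; in any case, a polynomial-time, fixed-parameter reduction to the pure disjoint paths problem is routine. The key step here is to invoke the Robertson--Seymour theorem that, for every fixed $m$, the $m$-disjoint paths problem is solvable in time $O(n^3)$ (in fact $O(n^2)$ in refinements by Kawarabayashi--Kobayashi--Reed). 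Combining the outer enumeration with this inner algorithm yields a total running time of $n^{O(k)}$, which is polynomial for fixed $H$.

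The main obstacle is of course the $m$-disjoint paths subroutine itself: its proof is precisely the capstone of the Robertson--Seymour graph minors series, relying on the structure theorem for graphs excluding a fixed minor, the irrelevant-vertex technique, and tree-decompositions of bounded width. I would simply cite this as a black-box result. The outer layer---enumerating branch vertices and forbidding the ``wrong'' ones as internal vertices---is elementary and polynomial, so the proof reduces cleanly to the statement of the disjoint paths theorem with no further combinatorial difficulty.
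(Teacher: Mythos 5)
This statement is imported by the paper as a black box: it is attributed to \cite{RS} and no proof is given in the text, so there is no ``paper's proof'' to compare against. Your argument is the standard (and correct) derivation of polynomial-time subdivision testing for fixed $H$ from the Robertson--Seymour disjoint paths theorem: enumerate the $O(n^k)$ choices of branch vertices, and for each choice reduce the existence of the required internally disjoint linking paths to an instance of the $m$-disjoint paths problem (after the routine vertex-splitting gadget that forces the paths to avoid the other branch vertices internally and to leave each branch vertex through distinct neighbours). Since $k$ and $m$ depend only on $H$, the total running time is $n^{O(k)}$. One small remark: your proposed filter discarding tuples for which ``$u_{i_\ell}u_{j_\ell}$ being a required simple edge of $H$ (of no subdivision) is incompatible with $G$'' is vacuous here --- in a subdivision every edge of $H$ may be subdivided any number of times, including zero, so no edge of $H$ is required to appear as an actual edge of $G$; if you literally pruned tuples whose branch vertices are non-adjacent in $G$ you would wrongly reject valid embeddings. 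Dropping that filter leaves a clean and correct proof.
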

	\begin{proof}{[\Cref{thm:graphMetPoly}]}
		This follows immediately from \Cref{thm:RS}: The graph $G$ is not metrizable if and only if it contains a subdivision of some $H\in \mathcal{F}_M$, and the set $\mathcal{F}_M$ is finite, by \Cref{thm:finiteMinGraphs}. 
	\end{proof}
	
	\subsection{Poly-time vs.\ practical algorithms}\label{subsec:praktisch}
	While \Cref{thm:graphMetPoly} proves the existence of a polynomial time algorithm to decide graph metrizability, we still lack a practical algorithm that achieves this. There are several reasons for this lacuna. While we know that $\mathcal{F}_M$ and $\mathcal{F}_{SM}$ are both finite, we are far from having the complete catalog. Even if we get to know the entire list of these minimal graphs, it is not inconceivable that the sheer size of these sets makes the algorithm in the proof of \Cref{thm:graphMetPoly} impractical. Thus, the search of a viable algorithm to decide metrizability is still on. There are really two problems at hand. It is reasonable to expect (but we do not know whether or not this is the case) that current LP solvers can practically find a certificate for the non-metrizability of a given non-metrizable path system. In some cases reported throughout the paper such a certificate was found by hand. In contrast, we have only a brute force algorithm\footnote{\url{https://github.com/dcizma1/testing-graph-metrizability}} to prove that a given graph $G$ is metrizable. Namely it generates all possible consistent path systems in $G$ and checks each for metrizability using a linear program as in the proof of \Cref{thm:PathSystemMetPolynomial}. Needless to say, this is practical only with small graphs. Indeed, this is how we found the non-metrizable graphs in \Cref{fig:MinimumGraphs} as well as path systems realizing their non-metrizability. Note, however, that we do have humanly verifiable proofs that all the graph in \Cref{fig:MinimumGraphs} are non-metrizable. These proofs can be found in  \Cref{append:certificates}.

	\section{Open problems}
	
	This paper suggests numerous open problems and new avenues of research. Below we list some of those.
	
	Here is the issue that we consider most pressing. We have seen throughout the paper several certificates that certain graphs are non-metrizable. These proofs proceed by comparing the weights of chosen paths to alternative ones. These inequalities are then combined to conclude that certain edge weights are non-positive. 
	
	\begin{open}
		Do there exist humanly verifiable certificates that certain graphs are metrizable?
	\end{open}
	
\begin{open}
Can it be decided in polynomial time whether a given consistent partial path system can be extended to a full consistent path system?
\end{open}
	
\begin{open}
The graph $\Theta_{a,b,c}$ has two vertices of degree $3$ that are connected by three openly disjoint paths of $a,b,c$ edges respectively. By \Cref{cor:outerPlanarMet} $\Theta_{a,b,c}$ is metrizable when $\min\{a,b,c\}=1$. Also, $\Theta_{3,3,4}\in \mathcal{F}_{M}$ (see \Cref{fig:graph11}). However, we do not know whether $\Theta_{a,b,c}$ is metrizable or not when $\min\{a,b,c\}=2$ or $a=b=c=3$.
\end{open}
	
Can we quantify the level of a graph's non-metrizability? Let $\Pi_G$ be the collection of all consistent path systems in $G$, and let $\mathcal{M}_G\subseteq \Pi_G$ be the collection of all those which are metrizable. Whether or not $G$ is metrizable is expressed by this inclusion being proper or not.
	
\begin{open}
We suspect that there exist $n$-vertex graphs $G$ for which $|\mathcal{M}_G|=o_n(|\Pi_G|)$. Actually we even believe that this is the case for {\em most} graphs.
\end{open}

\begin{open}
Associated with every connected graph $G$ is $\cal{A}_G$, a hyperplane arrangement in $\mathbb{R}^{E(G)}$ which encodes a lot of information on metrizable path systems in $G$. If  $P$ and $Q$ are openly disjoint paths between the same two vertices in $G$, then the hyperplane $\{\sum_{e\in P}x_e =\sum_{f\in Q}x_f\}$ is in $\cal{A}_G$. It would be interesting to investigate the basic features of such arrangements.
\end{open}

\begin{open}
Find a complete list of the graphs in $\mathcal{F}_{M}$ and $\mathcal{F}_{SM}$. 
\end{open}

\begin{open}
It makes sense to speak of consistent path systems in $1$-dimensional CW complexes. The case of $S^1$ viewed as a $1$-dimensional CW complex with a single vertex and a single edge is considered in \Cref{sec:continuous}. Is there an interesting theory of metrizability in this broader context?
\end{open}

\begin{open}
Which graphs $G=(V,E)$ have the property that every {\em neighborly} consistent path system is metrizable? I.e., we assume that for every $xy\in E$, this edge is the chosen $xy$-geodesic.
\end{open}

	%\textcolor{blue}{Understanding different types of metrizability, e.g. graphs where every neighborly path system is metrizable}
\begin{figure}
		\centering
		\begin{subfigure}{0.3\textwidth}
			\centering
			\resizebox{\textwidth}{!}{
				\begin{tikzpicture}
				
				\node[draw,circle,fill] (2) at (1*360/5 +90: 5cm) {$2$};
				\node[draw,circle,fill]  (3) at (2*360/5 +90: 5cm) {$3$};
				\node[draw,circle,fill] (4) at (3*360/5 +90: 5cm) {$4$};
				\node[draw,circle,fill] (5) at (4*360/5 +90: 5cm) {$5$};
				\node[draw,circle,fill] (1) at ($(2)!0.5!(5)$) {$1$};
				\node[draw,circle,fill] (6) at ($(2)!0.5!(5) + (0,-3)$) {$6$};
				\node[draw,circle,fill] (7) at  ($(2)!0.5!(5) + (0,3)$) {$7$};

				\draw [line width=3pt,-] (1) -- (2);
				\draw [line width=3pt,-] (2) -- (3);
				\draw [line width=3pt,-] (3) -- (4);
				\draw [line width=3pt,-] (4) -- (5);
				\draw [line width=3pt,-] (5) -- (1);
				\draw [line width=3pt,-] (2) -- (6);
				\draw [line width=3pt,-] (5) -- (6);
				\draw [line width=3pt,-] (2) -- (7);
				\draw [line width=3pt,-] (5) -- (7);
				
				\end{tikzpicture}
			}
			\caption{Graph 1}
			\label[graph]{fig:graph1}
		\end{subfigure}
		\hfill
		\begin{subfigure}{0.3\textwidth}
			\centering
			\resizebox{\textwidth}{!}{
				\begin{tikzpicture}

				\node[draw,circle,fill] (1) at (0*360/5 +90: 5cm) {$1$};
				\node[draw,circle,fill] (2) at (1*360/5 +90: 5cm) {$2$};
				\node[draw,circle,fill] (3) at (2*360/5 +90: 5cm) {$3$};
				\node[draw,circle,fill] (4) at (3*360/5 +90: 5cm) {$4$};
				\node[draw,circle,fill] (5) at (4*360/5 +90: 5cm) {$5$};
				
				\node[draw,circle,fill] (6) at ($(1)!0.5!(3)$) {$6$};
				\node[draw,circle,fill] (7) at  ($(1)!0.5!(4)$) {$7$};

				\draw [line width=3pt,-] (1) -- (2);
				\draw [line width=3pt,-] (2) -- (3);
				\draw [line width=3pt,-] (3) -- (4);
				\draw [line width=3pt,-] (4) -- (5);
				\draw [line width=3pt,-] (5) -- (1);
				\draw [line width=3pt,-] (1) -- (6);
				\draw [line width=3pt,-] (3) -- (6);
				\draw [line width=3pt,-] (1) -- (7);
				\draw [line width=3pt,-] (4) -- (7);
				
				\end{tikzpicture}
			}
			\caption{Graph 2}
			\label[graph]{fig:graph2}
		\end{subfigure}
		\hfill
		\begin{subfigure}{0.3\textwidth}
			\centering
			\resizebox{\textwidth}{!}{
				\begin{tikzpicture}

				\node[draw,circle,fill] (1) at (0*360/5 +90: 5cm) {$1$};
				\node[draw,circle,fill]  (2) at (1*360/5 +90: 5cm) {$2$};
				\node[draw,circle,fill] (3) at (2*360/5 +90: 5cm) {$3$};
				\node[draw,circle,fill]  (4) at (3*360/5 +90: 5cm) {$4$};
				\node[draw,circle,fill] (5) at (4*360/5 +90: 5cm) {$5$};
				
				\node[draw,circle,fill] (6) at ($(1)!0.5!(3)$) {$6$};
				\node[draw,circle,fill] (7) at  ($(1)!0.5!(4)$) {$7$};

				\draw [line width=3pt,-] (1) -- (2);
				\draw [line width=3pt,-] (2) -- (3);
				\draw [line width=3pt,-] (3) -- (4);
				\draw [line width=3pt,-] (4) -- (5);
				\draw [line width=3pt,-] (5) -- (1);
				\draw [line width=3pt,-] (6) -- (7);
				\draw [line width=3pt,-] (2) -- (6);
				\draw [line width=3pt,-] (5) -- (7);
				\draw [line width=3pt,-] (3) -- (7);
				\draw [line width=3pt,-] (4) -- (6);
				
				\end{tikzpicture}
			}
			\caption{Graph 3}
			\label[graph]{fig:graph3}
		\end{subfigure}
		\\
		\begin{subfigure}{0.3\textwidth}
			\centering
			\resizebox{\textwidth}{!}{
				\begin{tikzpicture}

				\node[draw,circle,fill] (1) at (0*360/5 +90: 5cm) {$1$};
				\node[draw,circle,fill]  (2) at (1*360/5 +90: 5cm) {$2$};
				\node[draw,circle,fill] (3) at (2*360/5 +90: 5cm) {$3$};
				\node[draw,circle,fill] (4) at (3*360/5 +90: 5cm) {$4$};
				\node[draw,circle,fill]  (5) at (4*360/5 +90: 5cm) {$5$};
				
				\node (m1) at ($(2)!.5! (3)$) {};
				\node (m2) at ($(4)!.5! (5)$) {};
				\node[draw,circle,fill] (6) at ($(m1)!.8!270:(2)$) {$6$};
				\node[draw,circle,fill] (7) at  ($(m2)!.8!90:(5)$) {$7$};

				\draw [line width=3pt,-] (1) -- (2);
				\draw [line width=3pt,-] (2) -- (3);
				\draw [line width=3pt,-] (3) -- (4);
				\draw [line width=3pt,-] (4) -- (5);
				\draw [line width=3pt,-] (5) -- (1);
				\draw [line width=3pt,-] (6) -- (7);
				\draw [line width=3pt,-] (2) -- (6);
				\draw [line width=3pt,-] (5) -- (7);
				\draw [line width=3pt,-] (3) -- (6);
				\draw [line width=3pt,-] (4) -- (7);

				\end{tikzpicture}
			}
			\caption{Graph 4}
			\label[graph]{fig:graph4}
		\end{subfigure}
		\hfill
		\begin{subfigure}{0.3\textwidth}
			\centering
			\resizebox{\textwidth}{!}{
				\begin{tikzpicture}
				
				\node[draw,circle,fill] (1) at (0*360/5 +90: 5cm) {$1$};
				\node[draw,circle,fill] (2) at (1*360/5 +90: 5cm) {$2$};
				\node[draw,circle,fill] (3) at (2*360/5 +90: 5cm) {$3$};
				\node[draw,circle,fill] (4) at (3*360/5 +90: 5cm) {$4$};
				\node[draw,circle,fill] (5) at (4*360/5 +90: 5cm) {$5$};
				\node[draw,circle,fill] (6) at ($(2)!.5!(5)+(0,-1)$) {$6$};
				\node[draw,circle,fill] (7) at ($(2)!.5!(5) + (0,-3)$) {$7$};
				
				\draw [line width=3pt,-] (1) -- (2);
				\draw [line width=3pt,-] (2) -- (3);
				\draw [line width=3pt,-] (3) -- (4);
				\draw [line width=3pt,-] (4) -- (5);
				\draw [line width=3pt,-] (5) -- (1);
				\draw [line width=3pt,-] (6) -- (7);
				\draw [line width=3pt,-] (2) -- (6);
				\draw [line width=3pt,-] (5) -- (6);
				\draw [line width=3pt,-] (3) -- (7);
				\draw [line width=3pt,-] (4) -- (7);
				
				\end{tikzpicture}
			}
			\caption{Graph 5}
			\label[graph]{fig:graph5}
		\end{subfigure}
		\hfill
		\begin{subfigure}{0.3\textwidth}
			\centering
			\resizebox{\textwidth}{!}{
				\begin{tikzpicture}
				
				\node[draw,circle,fill] (1) at (0*360/7 +90: 5cm) {$1$};
				\node[draw,circle,fill] (2) at (1*360/7 +90: 5cm) {$2$};
				\node[draw,circle,fill] (3) at (2*360/7 +90: 5cm) {$3$};
				\node[draw,circle,fill] (4) at (3*360/7 +90: 5cm) {$4$};
				\node[draw,circle,fill] (5) at (4*360/7 +90: 5cm) {$5$};
				\node[draw,circle,fill] (6) at (5*360/7 +90: 5cm) {$6$};
				\node[draw,circle,fill] (7) at (6*360/7 +90: 5cm) {$7$};
				\node[draw,circle,fill] (8) at (0,0) {$8$};
				
				\draw [line width=3pt,-] (1) -- (2);
				\draw [line width=3pt,-] (2) -- (3);
				\draw [line width=3pt,-] (3) -- (4);
				\draw [line width=3pt,-] (4) -- (5);
				\draw [line width=3pt,-] (5) -- (6);
				\draw [line width=3pt,-] (6) -- (7);
				\draw [line width=3pt,-] (7) -- (1);
				\draw [line width=3pt,-] (1) -- (8);
				\draw [line width=3pt,-] (3) -- (8);
				\draw [line width=3pt,-] (6) -- (8);
				
				\end{tikzpicture}
			}
			\caption{Graph 6}
			\label[graph]{fig:graph6}
		\end{subfigure}
		\\
		\begin{subfigure}{0.3\textwidth}
			\centering
			\resizebox{\textwidth}{!}{
				\begin{tikzpicture}
				
				\node[draw,circle,fill] (1) at (0*360/5 +90: 5cm) {$1$};
				\node[draw,circle,fill] (2) at (1*360/5 +90: 5cm) {$2$};
				\node[draw,circle,fill] (3) at (2*360/5 +90: 5cm){$3$};
				\node[draw,circle,fill] (4) at (3*360/5 +90: 5cm) {$4$};
				\node[draw,circle,fill] (5) at (4*360/5 +90: 5cm)  {$5$};
				\node[draw,circle,fill] (6) at (0,0) {$6$};
				\node[draw,circle,fill] (7) at ($(1)!.5!(6)$) {$7$};
				\node[draw,circle,fill] (8) at ($(3)!.5!(4)$) {$8$};
				
				\draw [line width=3pt,-] (1) -- (2);
				\draw [line width=3pt,-] (1) -- (5);
				\draw [line width=3pt,-] (1) -- (7);
				\draw [line width=3pt,-] (2) -- (3);
				\draw [line width=3pt,-] (3) -- (8);
				\draw [line width=3pt,-] (4) -- (5);
				\draw [line width=3pt,-] (4) -- (8);
				\draw [line width=3pt,-] (3) -- (6);
				\draw [line width=3pt,-] (4) -- (6);
				\draw [line width=3pt,-] (6) -- (7);
				
				\end{tikzpicture}
			}
			\caption{Graph 7}
			\label[graph]{fig:graph7}
		\end{subfigure}
		\hfill
		\begin{subfigure}{0.3\textwidth}
			\centering
			\resizebox{\textwidth}{!}{
				
				\begin{tikzpicture}
				
				\node[draw,circle,fill] (1) at (0*360/5 +90: 5cm) {$1$};
				\node[draw,circle,fill] (2) at (1*360/5 +90: 5cm) {$2$};
				\node[draw,circle,fill] (3) at (2*360/5 +90: 5cm){$3$};
				\node[draw,circle,fill] (4) at (3*360/5 +90: 5cm) {$4$};
				\node[draw,circle,fill] (5) at (4*360/5 +90: 5cm)  {$5$};
				\node[draw,circle,fill] (6) at (0,0) {$6$};
				\node[draw,circle,fill] (7) at ($(1)!.33!(6)$) {$7$};
				\node[draw,circle,fill] (8) at ($(1)!.66!(6)$) {$8$};
				
				\draw [line width=3pt,-] (1) -- (2);
				\draw [line width=3pt,-] (1) -- (5);
				\draw [line width=3pt,-] (1) -- (7);
				\draw [line width=3pt,-] (2) -- (3);
				\draw [line width=3pt,-] (3) -- (4);
				\draw [line width=3pt,-] (4) -- (5);
				\draw [line width=3pt,-] (6) -- (7);
				\draw [line width=3pt,-] (7) -- (8);
				\draw [line width=3pt,-] (3) -- (6);
				\draw [line width=3pt,-] (4) -- (6);
				
				\end{tikzpicture}
			}
			\caption{Graph 8}
			\label[graph]{fig:graph8}
		\end{subfigure}
		\hfill
		\begin{subfigure}{0.3\textwidth}
			\centering
			\resizebox{\textwidth}{!}{
				\begin{tikzpicture}

				\node[draw,circle,fill] (1) at (0*360/5 +90: 5cm) {$1$};
				\node[draw,circle,fill]  (2) at (1*360/5 +90: 5cm) {$2$};
				\node[draw,circle,fill] (3) at (2*360/5 +90: 5cm) {$3$};
				\node[draw,circle,fill] (4) at (3*360/5 +90: 5cm) {$4$};
				\node[draw,circle,fill]  (5) at (4*360/5 +90: 5cm) {$5$};
				\node[draw,circle,fill]  (6) at (0,0) {$6$};
				\node[draw,circle,fill]  (7) at ($(3)!0.5!(6)$) {$7$};
				\node[draw,circle,fill] (8) at ($(4)!0.5!(6)$) {$8$};

				\draw [line width=3pt,-] (1) -- (2);
				\draw [line width=3pt,-] (1) -- (5);
				\draw [line width=3pt,-] (1) -- (6);
				\draw [line width=3pt,-] (2) -- (3);
				\draw [line width=3pt,-] (3) -- (4);
				\draw [line width=3pt,-] (3) -- (7);
				\draw [line width=3pt,-] (4) -- (5);
				\draw [line width=3pt,-] (4) -- (8);
				\draw [line width=3pt,-] (6) -- (7);
				\draw [line width=3pt,-] (6) -- (8);

				\end{tikzpicture}
			}
			\caption{Graph 9}
			\label[graph]{fig:graph9}
		\end{subfigure}
		\\
		\begin{subfigure}{0.3\textwidth}
			\centering
			\resizebox{\textwidth}{!}{
				\begin{tikzpicture}
				
				\node[draw,circle,fill] (1) at (0*360/6 +120: 5cm) {$1$};
				\node[draw,circle,fill] (2) at (1*360/6 +120: 5cm) {$2$};
				\node[draw,circle,fill] (3) at (2*360/6 +120: 5cm) {$3$};
				\node[draw,circle,fill] (5) at (3*360/6 +120: 5cm) {$5$};
				\node[draw,circle,fill] (6) at (4*360/6 +120: 5cm) {$6$};
				\node[draw,circle,fill] (7) at (5*360/6 +120: 5cm) {$7$};
				\node[draw,circle,fill] (4) at ($(3)!.5!(5)$) {$4$};
				\node[draw,circle,fill] (8) at ($(1)!.5!(3)$) {$8$};
				\node[draw,circle,fill] (9) at ($(5)!.5!(7)$) {$9$};
				
				\draw [line width=3pt,-] (1) -- (2);
				\draw [line width=3pt,-] (1) -- (7);
				\draw [line width=3pt,-] (1) -- (8);
				\draw [line width=3pt,-] (2) -- (3);
				\draw [line width=3pt,-] (3) -- (4);
				\draw [line width=3pt,-] (3) -- (8);
				\draw [line width=3pt,-] (4) -- (5);
				\draw [line width=3pt,-] (5) -- (6);
				\draw [line width=3pt,-] (5) -- (9);
				\draw [line width=3pt,-] (6) -- (7);
				\draw [line width=3pt,-] (7) -- (9);
				
				\end{tikzpicture}
			}
			\caption{Graph 10}
			\label[graph]{fig:graph10}
		\end{subfigure}
		\hfill
		\begin{subfigure}{0.3\textwidth}
			\centering
			\resizebox{\textwidth}{!}{
				\begin{tikzpicture}
				
				\node[draw,circle,fill] (1) at (0*360/6 +120: 5cm) {$1$};
				\node[draw,circle,fill] (2) at (1*360/6 +120: 5cm) {$2$};
				\node[draw,circle,fill] (3) at (2*360/6 +120: 5cm) {$3$};
				\node[draw,circle,fill] (4) at (3*360/6 +120: 5cm) {$4$};
				\node[draw,circle,fill] (5) at (4*360/6 +120: 5cm) {$5$};
				\node[draw,circle,fill] (6) at (5*360/6 +120: 5cm) {$6$};
				\node[draw,circle,fill] (7) at ($(2)!.25!(5)$) {$7$};
				\node[draw,circle,fill] (8) at ($(2)!.5!(5)$) {$8$};
				\node[draw,circle,fill] (9) at ($(2)!.75!(5)$) {$9$};
				
				\draw [line width=3pt,-] (1) -- (2);
				\draw [line width=3pt,-] (1) -- (6);
				\draw [line width=3pt,-] (2) -- (3);
				\draw [line width=3pt,-] (2) -- (7);
				\draw [line width=3pt,-] (3) -- (4);
				\draw [line width=3pt,-] (4) -- (5);
				\draw [line width=3pt,-] (5) -- (6);
				\draw [line width=3pt,-] (5) -- (9);
				\draw [line width=3pt,-] (7) -- (8);
				\draw [line width=3pt,-] (8) -- (9);
				
				\end{tikzpicture}
			}
			\caption{Graph 11}
			\label[graph]{fig:graph11}
		\end{subfigure}
		\caption{Currently known topologically minimal non-metrizable graphs}
		\label{fig:MinimumGraphs}
	\end{figure}
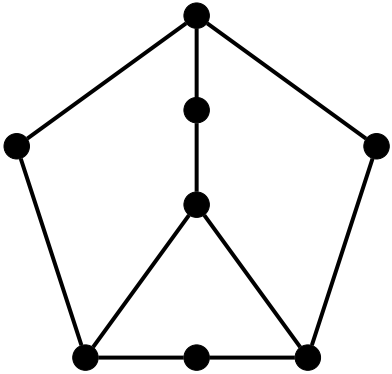
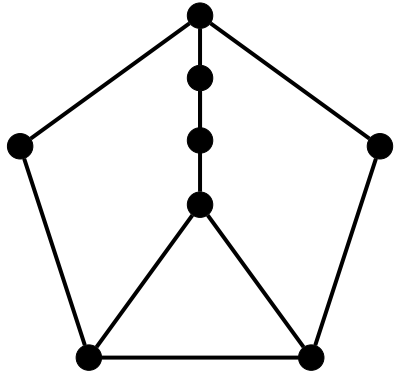
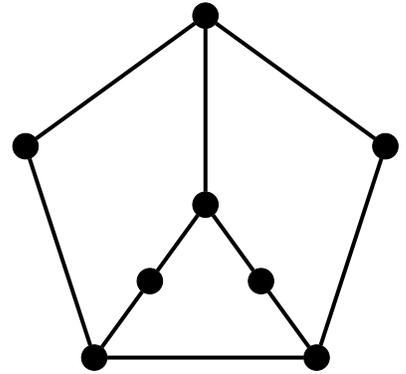
	\newpage
	\appendix
	\section{Certificates of non-metrizability}\label{append:certificates}
	For each graph $G$ in \Cref{fig:MinimumGraphs} we give a path system in $G$ along with a system of inequalities a weight function inducing this path system must satisfy. In each case, these inequalities imply at least one edge in the graph must have a non-positive weight, showing the graph in not metrizable.\\
	\vspace{5mm}
	
	\noindent
	\begin{minipage}[c]{0.4\textwidth}
		\centering
		\begin{tikzpicture}[scale=0.35, every node/.style={scale=0.35}]

		\node[draw,circle,minimum size=.5cm,inner sep=1pt] (2) at (1*360/5 +90: 5cm) [scale = 2]{$1$};
		\node[draw,circle,minimum size=.5cm,inner sep=1pt] (3) at (2*360/5 +90: 5cm)[scale = 2] {$6$};
		\node[draw,circle,minimum size=.5cm,inner sep=1pt] (4) at (3*360/5 +90: 5cm) [scale = 2]{$7$};
		\node[draw,circle,minimum size=.5cm,inner sep=1pt] (5) at (4*360/5 +90: 5cm) [scale = 2]{$2$};
		\node[draw,circle,minimum size=.5cm,inner sep=1pt] (1) at ($(2)!0.5!(5)$) [scale = 2]{$4$};
		\node[draw,circle,minimum size=.5cm,inner sep=1pt] (6) at ($(2)!0.5!(5) + (0,-3)$)[scale = 2] {$5$};
		\node[draw,circle,minimum size=.5cm,inner sep=1pt] (7) at  ($(2)!0.5!(5) + (0,3)$)[scale = 2] {$3$};

		\draw [line width=2pt,-] (1) -- (2);
		\draw [line width=2pt,-] (2) -- (3);
		\draw [line width=2pt,-] (3) -- (4);
		\draw [line width=2pt,-] (4) -- (5);
		\draw [line width=2pt,-] (5) -- (1);
		\draw [line width=2pt,-] (2) -- (6);
		\draw [line width=2pt,-] (5) -- (6);
		\draw [line width=2pt,-] (2) -- (7);
		\draw [line width=2pt,-] (5) -- (7);
		
		\end{tikzpicture}
	\end{minipage}%%%%
	\begin{minipage}[c]{0.4\textwidth}
		\centering
		\small
		\begin{equation*}
		\begin{gathered}
		(132), \ (13),\ (14), \ (15), \ (16),\ (167), \   (23), \\
		(24), \ (25), \ (276), \ (27),\ (314) \ (325),\ (316),\\
		(3167), \ (415),\ (4276),\ (427),  \  (516),\ (527),\ (67)
		\end{gathered}
		\end{equation*}
	\end{minipage}
	\\
	\vspace{4mm}
	\begin{minipage}{0.45\textwidth}
		\small
		\begin{equation*}
		\begin{split}
		w_{2,3} + w_{2,5} & \leq w_{1,3} + w_{1,5}\\
		w_{1,4} + w_{1,5} & \leq w_{2,4} + w_{2,5}\\
		w_{2,4} + w_{2,7} + w_{6,7} & \leq w_{1,4} + w_{1,6}\\
		w_{1,3} + w_{1,6} + w_{6,7} & \leq w_{2,3} + w_{2,7}
		\end{split}
		\end{equation*}
		
	\end{minipage}
	$\implies $
	\begin{minipage}{0.2\textwidth}
		\small
		$$w_{6,7} \leq 0$$
	\end{minipage}
	\begin{center}
		\line(1,0){400}
	\end{center}
	\begin{minipage}[c]{0.4\textwidth}
		\centering
		\begin{tikzpicture}[scale=0.35, every node/.style={scale=0.35}]

		\node[draw,circle,minimum size=.5cm,inner sep=1pt] (1) at (0*360/5 +90: 5cm) [scale=2]{$1$};
		\node[draw,circle,minimum size=.5cm,inner sep=1pt] (2) at (1*360/5 +90: 5cm) [scale=2]{$2$};
		\node[draw,circle,minimum size=.5cm,inner sep=1pt] (3) at (2*360/5 +90: 5cm) [scale=2]{$3$};
		\node[draw,circle,minimum size=.5cm,inner sep=1pt] (4) at (3*360/5 +90: 5cm) [scale=2]{$4$};
		\node[draw,circle,minimum size=.5cm,inner sep=1pt] (5) at (4*360/5 +90: 5cm) [scale=2]{$5$};
		
		\node[draw,circle,minimum size=.5cm,inner sep=1pt] (6) at ($(1)!0.5!(3)$)[scale=2] {$6$};
		\node[draw,circle,minimum size=.5cm,inner sep=1pt] (7) at  ($(1)!0.5!(4)$) [scale=2]{$7$};

		\draw [line width=2pt,-] (1) -- (2);
		\draw [line width=2pt,-] (2) -- (3);
		\draw [line width=2pt,-] (3) -- (4);
		\draw [line width=2pt,-] (4) -- (5);
		\draw [line width=2pt,-] (5) -- (1);
		\draw [line width=2pt,-] (1) -- (6);
		\draw [line width=2pt,-] (3) -- (6);
		\draw [line width=2pt,-] (1) -- (7);
		\draw [line width=2pt,-] (4) -- (7);
		
		\end{tikzpicture}
	\end{minipage}%%%
	\begin{minipage}[c]{0.4\textwidth}
		\small
		\begin{equation*}
		\begin{gathered}
		(12), \ (163), \ (174), \ (15), \ (16), \ (17), \ (23) \\
		(234), \ (2345), \ (216), \ (217), \ (34), \ (345), \ (36)\\
		(347), \ (45), \ (436), (47), \ (5436), \ (517), \ (6347)
		\end{gathered}
		\end{equation*}
	\end{minipage}
	\\
	\vspace{4mm}
	\begin{minipage}{0.45\textwidth}
		\small
		\begin{equation*}
		\begin{split}
		w_{2,3} + w_{3,4} +w_{4,5}& \leq w_{1,2} + w_{1,5}\\
		w_{1,2} + w_{1,6} & \leq w_{2,3} + w_{3,6}\\
		w_{1,5} + w_{1,7} & \leq w_{4,5} + w_{4,7}\\
		w_{3,6} + w_{3,4} + w_{4,7} & \leq w_{1,6} + w_{1,7}
		\end{split}
		\end{equation*}
	\end{minipage}
	$\implies $
	\begin{minipage}{0.2\textwidth}
		\small $$w_{3,4} \leq 0$$
	\end{minipage}
	\begin{center}
		\line(1,0){400}
	\end{center}
	\begin{minipage}[c]{0.4\textwidth}
		\centering
		\begin{tikzpicture}[scale=0.35, every node/.style={scale=0.35}]

		\node[draw,circle,minimum size=.5cm,inner sep=1pt] (1) at (0*360/5 +90: 5cm) [scale=2]{$7$};
		\node[draw,circle,minimum size=.5cm,inner sep=1pt] (2) at (1*360/5 +90: 5cm) [scale=2]{$1$};
		\node[draw,circle,minimum size=.5cm,inner sep=1pt] (3) at (2*360/5 +90: 5cm)[scale=2] {$6$};
		\node[draw,circle,minimum size=.5cm,inner sep=1pt] (4) at (3*360/5 +90: 5cm) [scale=2]{$3$};
		\node[draw,circle,minimum size=.5cm,inner sep=1pt] (5) at (4*360/5 +90: 5cm) [scale=2]{$4$};
		
		\node[draw,circle,minimum size=.5cm,inner sep=1pt] (6) at ($(1)!0.5!(3)$) [scale=2]{$5$};
		\node[draw,circle,minimum size=.5cm,inner sep=1pt] (7) at  ($(1)!0.5!(4)$) [scale=2]{$2$};

		\draw [line width=2pt,-] (1) -- (2);
		\draw [line width=2pt,-] (2) -- (3);
		\draw [line width=2pt,-] (3) -- (4);
		\draw [line width=2pt,-] (4) -- (5);
		\draw [line width=2pt,-] (5) -- (1);
		\draw [line width=2pt,-] (6) -- (7);
		\draw [line width=2pt,-] (2) -- (6);
		\draw [line width=2pt,-] (5) -- (7);
		\draw [line width=2pt,-] (3) -- (7);
		\draw [line width=2pt,-] (4) -- (6);
		
		\end{tikzpicture}
	\end{minipage}%%
	\begin{minipage}[c]{0.4\textwidth}
		\centering
		\small
		\begin{equation*}
		\begin{gathered}
		(1742), \ (163), \ (174), \ (15),\ (16),\ (17), \\ 
		(263),\ (24),  \ (25), \ (26),\ (247), \ (34), \ (35), \ (36), \\
		(347),\ (435), \ (436), \ (47), \ (516), \ (5347), \ (617)
		\end{gathered}
		\end{equation*}
	\end{minipage}
	\\
	\vspace{5mm}
	\begin{minipage}{0.45\textwidth}
		\small
		\begin{equation*}
		\begin{split}
		w_{2,6} + w_{3,6} & \leq w_{2,4} + w_{3,4}\\
		w_{1,5} + w_{1,6} & \leq w_{3,5} + w_{3,6}\\
		w_{1,7} + w_{4,7} + w_{2,4} & \leq w_{1,6} + w_{2,6}\\
		w_{3,5} + w_{3,4} + w_{4,7} & \leq w_{1,5} + w_{1,7}
		\end{split}
		\end{equation*}
	\end{minipage}
	$\implies $
	\begin{minipage}{0.2\textwidth}
		\small
		$$w_{4,7} \leq 0$$
	\end{minipage}
	\begin{center}
		\line(1,0){400}
	\end{center}
	\begin{minipage}[c]{0.4\textwidth}
		\centering
		\begin{tikzpicture}[scale=0.35, every node/.style={scale=0.35}]

		\node[draw,circle,minimum size=.5cm,inner sep=1pt] (1) at (0*360/5 +90: 5cm)[scale =2] {$1$};
		\node[draw,circle,minimum size=.5cm,inner sep=1pt] (2) at (1*360/5 +90: 5cm)[scale =2] {$2$};
		\node[draw,circle,minimum size=.5cm,inner sep=1pt] (3) at (2*360/5 +90: 5cm)[scale =2] {$3$};
		\node[draw,circle,minimum size=.5cm,inner sep=1pt] (4) at (3*360/5 +90: 5cm)[scale =2] {$4$};
		\node[draw,circle,minimum size=.5cm,inner sep=1pt] (5) at (4*360/5 +90: 5cm)[scale =2] {$5$};
		
		\node (m1) at ($(2)!.5! (3)$) {};
		\node (m2) at ($(4)!.5! (5)$) {};
		\node[draw,circle,minimum size=.5cm,inner sep=1pt] (6) at ($(m1)!.8!270:(2)$)[scale =2] {$6$};
		\node[draw,circle,minimum size=.5cm,inner sep=1pt] (7) at  ($(m2)!.8!90:(5)$)[scale =2] {$7$};

		\draw [line width=2pt,-] (1) -- (2);
		\draw [line width=2pt,-] (2) -- (3);
		\draw [line width=2pt,-] (3) -- (4);
		\draw [line width=2pt,-] (4) -- (5);
		\draw [line width=2pt,-] (5) -- (1);
		\draw [line width=2pt,-] (6) -- (7);
		\draw [line width=2pt,-] (2) -- (6);
		\draw [line width=2pt,-] (5) -- (7);
		\draw [line width=2pt,-] (3) -- (6);
		\draw [line width=2pt,-] (4) -- (7);

		\end{tikzpicture}
	\end{minipage}%%%
	\begin{minipage}[c]{0.4\textwidth}
		\centering
		\small
		\begin{equation*}
		\begin{gathered}
		(12), \ (123), \ (1234), \ (15), \ (126), \ (157), \ (23), \\
		(234), \ (215), \ (26), \ (2157), \ (34), \ (345), \ (36), \\
		(367), \ (45), \ (476), \ (47), \ (5126), \ (57), \ (67)
		\end{gathered}
		\end{equation*}
	\end{minipage}
	\\
	\vspace{5mm}
	\begin{minipage}{0.45\textwidth}
		\small
		\begin{equation*}
		\begin{split}
		w_{1,2} + w_{2,3} + w_{3,4} & \leq w_{1,5} +  w_{4,5}\\
		w_{1,2} + w_{1,5} + w_{5,7} & \leq w_{2,6} + w_{6,7}\\
		w_{1,5} + w_{1,2} + w_{2,6} & \leq w_{5,7} + w_{6,7}\\
		w_{3,6} + w_{6,7} & \leq w_{3,4} + w_{4,7}\\
		w_{4,7} + w_{6,7} & \leq w_{3,4} + w_{3,6}\\
		w_{3,4} + w_{4,5}  & \leq w_{1,5} + w_{1,2} + w_{2,3}
		\end{split}
		\end{equation*}
	\end{minipage}
	$\implies $
	\begin{minipage}{0.2\textwidth}
		\small
		$$w_{1,2} \leq 0$$
	\end{minipage}
	\begin{center}
		\line(1,0){400}
	\end{center}
	\begin{minipage}[c]{0.4\textwidth}
		\centering
		\begin{tikzpicture}[scale=0.35, every node/.style={scale=0.35}]

		\node[draw,circle,minimum size=.5cm,inner sep=1pt] (1) at (0*360/5 +90: 5cm)[scale=2] {$1$};
		\node[draw,circle,minimum size=.5cm,inner sep=1pt] (2) at (1*360/5 +90: 5cm)[scale=2] {$2$};
		\node[draw,circle,minimum size=.5cm,inner sep=1pt] (3) at (2*360/5 +90: 5cm)[scale=2] {$3$};
		\node[draw,circle,minimum size=.5cm,inner sep=1pt] (4) at (3*360/5 +90: 5cm)[scale=2] {$4$};
		\node[draw,circle,minimum size=.5cm,inner sep=1pt] (5) at (4*360/5 +90: 5cm)[scale=2] {$5$};
		
		\node[draw,circle,minimum size=.5cm,inner sep=1pt] (6) at ($(2)!.5!(5)+(0,-1)$)[scale=2] {$6$};
		\node[draw,circle,minimum size=.5cm,inner sep=1pt] (7) at ($(2)!.5!(5) + (0,-3)$)[scale=2] {$7$};

		\draw [line width=2pt,-] (1) -- (2);
		\draw [line width=2pt,-] (2) -- (3);
		\draw [line width=2pt,-] (3) -- (4);
		\draw [line width=2pt,-] (4) -- (5);
		\draw [line width=2pt,-] (5) -- (1);
		\draw [line width=2pt,-] (6) -- (7);
		\draw [line width=2pt,-] (2) -- (6);
		\draw [line width=2pt,-] (5) -- (6);
		\draw [line width=2pt,-] (3) -- (7);
		\draw [line width=2pt,-] (4) -- (7);

		\end{tikzpicture}
	\end{minipage}%%%
	\begin{minipage}[c]{0.4\textwidth}
		\centering
		\small
		\begin{equation*}
		\begin{gathered}
		(12), \ (1543), \ (154), \ (15), \ (156), \ (1567), \\
		(23), \ (2154), \ (215), \ (26), \ (237), \ (34), \ (345)\\
		(326), \ (37), \ (45),\ (47), \ (476), \ (56), \ (567), \ (67)
		\end{gathered}
		\end{equation*}
	\end{minipage}
	\\
	\vspace{5mm}
	\begin{minipage}{0.45\textwidth}
		\small
		\begin{equation*}
		\begin{split}
		w_{1,5} + w_{4,5} + w_{3,4} & \leq w_{1,2} + w_{2,3}\\
		w_{1,2} + w_{1,5} + w_{4,5} & \leq w_{2,3} + w_{3,4}\\
		w_{2,3} + w_{3,7}  & \leq w_{2,6} + w_{6,7}\\
		w_{2,3} + w_{2,6}  & \leq w_{3,7} + w_{6,7}\\
		w_{5,6} + w_{6,7} & \leq w_{4,5} + w_{4,7}\\
		w_{4,7} + w_{6,7}  & \leq w_{4,5} + w_{5,6}
		\end{split}
		\end{equation*}
	\end{minipage}
	$\implies $
	\begin{minipage}{0.2\textwidth}
		\small
		$$w_{1,5} \leq 0$$
	\end{minipage}
	\begin{center}
		\line(1,0){400}
	\end{center}
	\begin{minipage}[c]{0.4\textwidth}
		\centering
		\begin{tikzpicture}[scale=0.35, every node/.style={scale=0.35}]

		\node[draw,circle,minimum size=.5cm,inner sep=1pt] (1) at (0*360/5 +90: 5cm) [scale =2]{$1$};
		\node[draw,circle,minimum size=.5cm,inner sep=1pt] (2) at (-1*360/5 +90: 5cm) [scale =2]{$2$};
		\node[draw,circle,minimum size=.5cm,inner sep=1pt] (3) at (-2*360/5 +90: 5cm) [scale =2]{$3$};
		\node[draw,circle,minimum size=.5cm,inner sep=1pt] (4) at (-3*360/5 +90: 5cm)[scale =2] {$4$};
		\node[draw,circle,minimum size=.5cm,inner sep=1pt] (5) at (-4*360/5 +90: 5cm) [scale =2]{$5$};
		\node[draw,circle,minimum size=.5cm,inner sep=1pt] (6) at (0,0) [scale =2]{$6$};
		\node[draw,circle,minimum size=.5cm,inner sep=1pt] (7) at ($(4)!.5!(6)$) [scale =2]{$7$};
		\node[draw,circle,minimum size=.5cm,inner sep=1pt] (8) at ($(3)!.5!(6)$) [scale =2]{$8$};

		\draw [line width=2pt,-] (1) -- (2);
		\draw [line width=2pt,-] (1) -- (5);
		\draw [line width=2pt,-] (1) -- (6);
		\draw [line width=2pt,-] (2) -- (3);
		\draw [line width=2pt,-] (3) -- (4);
		\draw [line width=2pt,-] (3) -- (8);
		\draw [line width=2pt,-] (4) -- (5);
		\draw [line width=2pt,-] (4) -- (7);
		\draw [line width=2pt,-] (6) -- (7);
		\draw [line width=2pt,-] (6) -- (8);

		\end{tikzpicture}
	\end{minipage}%%%
	\begin{minipage}[c]{0.4\textwidth}
		\centering
		\small
		\begin{equation*}
		\begin{gathered}
		(12), \ (1543), \ (154), \ (15), \ (16), \ (167), \ (168), \ (23), \\
		(234), \ (2345), \ (216), \ (2167), \ (2168), \ (34), \ (345), \\ 
		(386), \ (347), \ (38), \ (45), \ (4516), \ (47), \ (438), \\
		(516), \ (547), \ (5168), \ (67), \ (68), \ (7438)
		\end{gathered}
		\end{equation*}
	\end{minipage}
	\\
	\vspace{5mm}
	\begin{minipage}{0.45\textwidth}
		\small
		\begin{equation*}
		\begin{split}
		w_{4,7} + w_{3,4} + w_{3,8} & \leq  w_{6,7} +w_{6,8}\\
		w_{2,3} + w_{3,4} + w_{4,5} & \leq  w_{1,2} +w_{1,5}\\
		w_{1,5} + w_{1,6} + w_{6,8} & \leq  w_{4,5} +w_{3,4}+w_{3,8}\\
		w_{1,2} + w_{1,6} + w_{6,7} & \leq  w_{2,3} +w_{3,4}+w_{4,7}
		\end{split}
		\end{equation*}
	\end{minipage}
	$\implies $
	\begin{minipage}{0.2\textwidth}
		\small
		$$w_{1,6} \leq 0$$
	\end{minipage}
	\begin{center}
		\line(1,0){400}
	\end{center}
	\begin{minipage}[c]{0.4\textwidth}
		\centering
		\begin{tikzpicture}[scale=0.34, every node/.style={scale=0.34}]

		\node[draw,circle,minimum size=.5cm,inner sep=1pt] (1) at (0*360/7 +90: 5cm)[scale =2] {$1$};
		\node[draw,circle,minimum size=.5cm,inner sep=1pt] (2) at (1*360/7 +90: 5cm)[scale =2] {$2$};
		\node[draw,circle,minimum size=.5cm,inner sep=1pt] (3) at (2*360/7 +90: 5cm) [scale =2]{$3$};
		\node[draw,circle,minimum size=.5cm,inner sep=1pt] (4) at (3*360/7 +90: 5cm) [scale =2]{$4$};
		\node[draw,circle,minimum size=.5cm,inner sep=1pt] (5) at (4*360/7 +90: 5cm) [scale =2]{$5$};
		\node[draw,circle,minimum size=.5cm,inner sep=1pt] (6) at (5*360/7 +90: 5cm) [scale =2]{$6$};
		\node[draw,circle,minimum size=.5cm,inner sep=1pt] (7) at (6*360/7 +90: 5cm) [scale =2]{$7$};
		
		\node[draw,circle,minimum size=.5cm,inner sep=1pt] (8) at (0,0) [scale =2]{$8$};

		\draw [line width=2pt,-] (1) -- (2);
		\draw [line width=2pt,-] (2) -- (3);
		\draw [line width=2pt,-] (3) -- (4);
		\draw [line width=2pt,-] (4) -- (5);
		\draw [line width=2pt,-] (5) -- (6);
		\draw [line width=2pt,-] (6) -- (7);
		\draw [line width=2pt,-] (7) -- (1);
		\draw [line width=2pt,-] (1) -- (8);
		\draw [line width=2pt,-] (3) -- (8);
		\draw [line width=2pt,-] (6) -- (8);

		\end{tikzpicture}
	\end{minipage}%%%
	\begin{minipage}[c]{0.4\textwidth}
		\centering
		\small
		\begin{equation*}
		\begin{gathered}
		(12), \ (123), \ (17654), \ (1765), \ (176),\ (17), \ (18), \\ 
		(23), (234), (2345), \ (23456), \ (217), \ (218), \ (34), \\
		(345), \ (3456), \ (3217), \ (38), \ (45), \ (456), \ (4567), \\
		(438), \  (56).\ (567), \ (5438), \ (67), \ (68), \ (768)
		\end{gathered}
		\end{equation*}
	\end{minipage}
	\\
	\vspace{5mm}
	\begin{minipage}{0.45\textwidth}
		\small
		\begin{equation*}
		\begin{split}
		w_{4,5} + w_{3,4}+w_{3,8} & \leq w_{5,6} + w_{6,8}\\
		w_{1,2} + w_{1,8} & \leq w_{2,3} + w_{3,8}\\
		w_{6,7} + w_{6,8} & \leq w_{1,7} + w_{1,8}\\
		w_{1,7}+w_{6,7} + w_{5,6} + w_{4,5} & \leq w_{1,2} + w_{2,3} + w_{3,4}\\
		w_{2,3}+w_{3,4} + w_{4,5} + w_{5,6} & \leq w_{1,2} + w_{1,7} + w_{6,7}\\
		w_{2,3} + w_{1,2} + w_{1,7} & \leq w_{3,4} + w_{4,5} + w_{5,6} + w_{6,7}
		\end{split}
		\end{equation*}
	\end{minipage}
	$\implies $
	\begin{minipage}{0.2\textwidth}
		\small
		$$w_{4,5} \leq 0$$
	\end{minipage}
	\begin{center}
		\line(1,0){400}
	\end{center}
	\begin{minipage}[c]{0.4\textwidth}
		\centering
		\begin{tikzpicture}[scale=0.34, every node/.style={scale=0.34}]
		
		\node[draw,circle,minimum size=.5cm,inner sep=1pt] (2) at (0*360/5 +90: 5cm) [scale =2]{$2$};
		\node[draw,circle,minimum size=.5cm,inner sep=1pt] (1) at (1*360/5 +90: 5cm) [scale =2]{$1$};
		\node[draw,circle,minimum size=.5cm,inner sep=1pt] (6) at (2*360/5 +90: 5cm) [scale =2]{$6$};
		\node[draw,circle,minimum size=.5cm,inner sep=1pt] (4) at (3*360/5 +90: 5cm)[scale =2] {$4$};
		\node[draw,circle,minimum size=.5cm,inner sep=1pt] (3) at (4*360/5 +90: 5cm)[scale =2] {$3$};
		\node[draw,circle,minimum size=.5cm,inner sep=1pt] (8) at (0,0)[scale =2] {$8$};
		\node[draw,circle,minimum size=.5cm,inner sep=1pt] (5) at ($(4)!.5!(6)$)[scale =2] {$5$};
		\node[draw,circle,minimum size=.5cm,inner sep=1pt] (7) at ($(2)!.5!(8)$)[scale =2] {$7$};

		\draw [line width=2pt,-] (1) -- (2);
		\draw [line width=2pt,-] (1) -- (6);
		\draw [line width=2pt,-] (2) -- (3);
		\draw [line width=2pt,-] (2) -- (7);
		\draw [line width=2pt,-] (3) -- (4);
		\draw [line width=2pt,-] (4) -- (5);
		\draw [line width=2pt,-] (4) -- (8);
		\draw [line width=2pt,-] (5) -- (6);
		\draw [line width=2pt,-] (6) -- (8);
		\draw [line width=2pt,-] (7) -- (8);

		\end{tikzpicture}
	\end{minipage}%%%
	\begin{minipage}[c]{0.4\textwidth}
		\centering
		\small
		\begin{equation*}
		\begin{gathered}
		(12), \ (123), \ (1234), \ (165), \ (16), \ (1687), \ (168),\\
		(23), \ (234), \ (2165), \ (216),\ (27),\ (278), \ (34), \\ 
		(345), \ (3456), \ (327), \ (3278), \ (45), \ (456), \ (487), \\ 
		(48),\ (56), \ (5487), \ (548), \ (67), \ (68),\ (78)
		\end{gathered}
		\end{equation*}
	\end{minipage}
	\\
	\vspace{5mm}
	\begin{minipage}{0.45\textwidth}
		\small
		\begin{equation*}
		\begin{split}
		w_{1,6} + w_{6,8} + w_{7,8}  & \leq w_{1,2} + w_{2,7}\\
		w_{2,3} + w_{2,7} + w_{7,8}  & \leq w_{3,4} + w_{4,8}\\
		w_{4,5} + w_{4,8}  & \leq w_{5,6} + w_{6,8}\\
		w_{1,2} + w_{2,3} + w_{3,4}  & \leq w_{1,6} + w_{5,6} + w_{4,5}\\
		w_{1,2} + w_{1,6} + w_{5,6}  & \leq w_{2,3} + w_{3,4} + w_{4,5}\\
		w_{3,4} + w_{4,5} + w_{5,6}  & \leq w_{2,3} + w_{1,2} + w_{1,6}
		\end{split}
		\end{equation*}
	\end{minipage}
	$\implies $
	\begin{minipage}{0.2\textwidth}
		\small
		$$w_{7,8} \leq 0$$
	\end{minipage}
	\begin{center}
		\line(1,0){400}
	\end{center}
	\begin{minipage}[c]{0.4\textwidth}
		\centering
		\begin{tikzpicture}[scale=0.34, every node/.style={scale=0.34}]

		\node[draw,circle,minimum size=.5cm,inner sep=1pt] (5) at (0*360/5 +90: 5cm) [scale =2] {$5$};
		\node[draw,circle,minimum size=.5cm,inner sep=1pt] (6) at (1*360/5 +90: 5cm) [scale =2] {$6$};
		\node[draw,circle,minimum size=.5cm,inner sep=1pt] (1) at (2*360/5 +90: 5cm) [scale =2] {$1$};
		\node[draw,circle,minimum size=.5cm,inner sep=1pt] (7) at (3*360/5 +90: 5cm)  [scale =2]{$7$};
		\node[draw,circle,minimum size=.5cm,inner sep=1pt] (8) at (4*360/5 +90: 5cm)  [scale =2]{$8$};
		\node[draw,circle,minimum size=.5cm,inner sep=1pt] (2) at (0,-1.5)  [scale =2]{$2$};
		\node[draw,circle,minimum size=.5cm,inner sep=1pt] (3) at ($(2)!.33!(5)$)  [scale =2]{$3$};
		\node[draw,circle,minimum size=.5cm,inner sep=1pt] (4) at ($(2)!.66!(5)$) [scale =2] {$4$};

		\draw [line width=2pt,-] (1) -- (2);
		\draw [line width=2pt,-] (1) -- (6);
		\draw [line width=2pt,-] (1) -- (7);
		\draw [line width=2pt,-] (2) -- (3);
		\draw [line width=2pt,-] (2) -- (7);
		\draw [line width=2pt,-] (3) -- (4);
		\draw [line width=2pt,-] (4) -- (5);
		\draw [line width=2pt,-] (5) -- (6);
		\draw [line width=2pt,-] (5) -- (8);
		\draw [line width=2pt,-] (7) -- (8);

		\end{tikzpicture}
	\end{minipage}%%%
	\begin{minipage}[c]{0.4\textwidth}
		\centering
		\small
		\begin{equation*}
		\begin{gathered}
		(12), \ (16543), \ (1654), \ (165), \ (16), \ (17), \ (178), \\ 
		(23), \ (234), \ (2345), \ (216), \ (27), \ (23458), \ (34), \\
		(345), \ (3456), \ (327), \ (3458), \ (45), \ (456), \ (4327), \\
		(458), \ (56), \ (587), \ (58), \ (6587), \ (658), \  (78)
		\end{gathered}
		\end{equation*}
	\end{minipage}
	\\
	\vspace{4mm}
	\begin{minipage}{0.45\textwidth}
		\small
		\begin{equation*}
		\begin{split}
		w_{1,6} + w_{5,6} + w_{4,5} + w_{3,4}  & \leq w_{1,2} + w_{2,3}\\
		w_{2,3} + w_{3,4} + w_{4,5} + w_{5,8} & \leq w_{2,7} +w_{7,8}\\
		w_{5,6} + w_{5,8} + w_{7,8} & \leq w_{1,6} +w_{1,7}\\
		w_{3,4}+ w_{2,3} + w_{2,7}  & \leq w_{4,5} +w_{5,8}+ w_{7,8} \\
		w_{1,7} + w_{7,8} &\leq  w_{1,6} + w_{5,6} + w_{5,8}\\
		w_{1,2} + w_{1,6} &\leq  w_{2,3} + w_{3,4}+w_{4,5} +w_{5,6}
		\end{split}
		\end{equation*}
	\end{minipage}
	$\implies $
	\begin{minipage}{0.2\textwidth}
		\small
		$$w_{3,4} \leq 0$$
	\end{minipage}
	\begin{center}
		\line(1,0){400}
	\end{center}
	\begin{minipage}[c]{0.4\textwidth}
		\centering
		\begin{tikzpicture}[scale=0.38, every node/.style={scale=0.38}]

		\node[draw,circle,minimum size=.5cm,inner sep=1pt] (1) at (-4,3.5) [scale=2]{$1$};
		\node[draw,circle,minimum size=.5cm,inner sep=1pt] (2) at (-6,0) [scale=2]{$2$};
		\node[draw,circle,minimum size=.5cm,inner sep=1pt] (3) at (-4,-3.5) [scale=2]{$3$};
		\node[draw,circle,minimum size=.5cm,inner sep=1pt] (5) at (4,-3.5) [scale=2]{$5$};
		\node[draw,circle,minimum size=.5cm,inner sep=1pt] (6) at (6,0) [scale=2]{$6$};
		\node[draw,circle,minimum size=.5cm,inner sep=1pt] (7) at (4,3.5) [scale=2]{$7$};
		\node[draw,circle,minimum size=.5cm,inner sep=1pt] (4) at ($(3)!.5!(5)$) [scale=2]{$4$};
		\node[draw,circle,minimum size=.5cm,inner sep=1pt] (8) at (-2,0) [scale=2]{$8$};
		\node[draw,circle,minimum size=.5cm,inner sep=1pt] (9) at (2,0) [scale=2]{$9$};

		\draw [line width=2pt,-] (1) -- (2);
		\draw [line width=2pt,-] (1) -- (7);
		\draw [line width=2pt,-] (1) -- (8);
		\draw [line width=2pt,-] (2) -- (3);
		\draw [line width=2pt,-] (3) -- (4);
		\draw [line width=2pt,-] (3) -- (8);
		\draw [line width=2pt,-] (4) -- (5);
		\draw [line width=2pt,-] (5) -- (6);
		\draw [line width=2pt,-] (5) -- (9);
		\draw [line width=2pt,-] (6) -- (7);
		\draw [line width=2pt,-] (7) -- (9);

		\end{tikzpicture}
	\end{minipage}%%%
	\begin{minipage}[c]{0.4\textwidth}
		\centering
		\small
		\begin{equation*}
		\begin{gathered}
		(12) , \ (183), \ (1834) , \ (1765), \ (176), \ (17), \ (18), \ (179), \\
		(23), \ (234), \ (2345), \ (23456), \ (217), \ (218), \  (2179), \\
		(34), \ (345), \ (3456), \ (3817), \ (38), \ (3459), \ (45), \\
		(456), \ (43817), \ (438), \ (459), \ (56), \ (567), \ (5438), \\ 
		(59), \ (67), \ (6718), \ (679), \ (718), \ (79), \ (83459)
		\end{gathered}
		\end{equation*}
	\end{minipage}
	\\
	\vspace{5mm}
	\begin{minipage}{0.45\textwidth}
		\small
		\begin{equation*}
		\begin{split}
		w_{2,3} + w_{3,4} + w_{4,5} + w_{5,6} & \leq w_{1,2} + w_{1,7} + w_{6,7}\\
		w_{3,4} + w_{3,8} + w_{1,8} + w_{1,7} & \leq w_{4,5} + w_{5,6} + w_{6,7}\\
		w_{3,8} + w_{3,4} + w_{4,5} + w_{5,9} & \leq w_{1,8} + w_{1,7} + w_{7,9}\\
		w_{1,7} + w_{6,7} + w_{5,6} & \leq w_{1,8} + w_{3,8} + w_{3,4} + w_{4,5}\\
		w_{1,2} + w_{1,8} & \leq w_{2,3} + w_{3,8} \\
		w_{6,7} + w_{7,9} & \leq w_{5,6} + w_{5,9}
		\end{split}
		\end{equation*}
	\end{minipage}
	$\implies $
	\begin{minipage}{0.2\textwidth}
		\small
		$$w_{3,4} \leq 0$$
	\end{minipage}
	\begin{center}
		\line(1,0){400}
	\end{center}
	\begin{minipage}[c]{0.4\textwidth}
		\centering
		\begin{tikzpicture}[scale=0.35, every node/.style={scale=0.35}]

		\node[draw,circle,minimum size=.5cm,inner sep=1pt] (1) at (0*360/6 +180: 5cm) [scale =2] {$1$};
		\node[draw,circle,minimum size=.5cm,inner sep=1pt] (2) at (3*360/6 +180: 5cm) [scale =2] {$2$};
		\node[draw,circle,minimum size=.5cm,inner sep=1pt] (3) at (1*360/6 +180: 5cm) [scale =2] {$3$};
		\node[draw,circle,minimum size=.5cm,inner sep=1pt] (4) at (2*360/6 +180: 5cm)  [scale =2]{$4$};
		\node[draw,circle,minimum size=.5cm,inner sep=1pt] (5) at (5*360/6 +180: 5cm)  [scale =2]{$5$};
		\node[draw,circle,minimum size=.5cm,inner sep=1pt] (6) at (4*360/6 +180: 5cm)  [scale =2]{$6$};
		\node[draw,circle,minimum size=.5cm,inner sep=1pt] (7) at ($(1)!.25!(2)$)  [scale =2]{$7$};
		\node[draw,circle,minimum size=.5cm,inner sep=1pt] (8) at ($(1)!.5!(2)$)  [scale =2]{$8$};
		\node[draw,circle,minimum size=.5cm,inner sep=1pt] (9) at ($(1)!.75!(2)$)  [scale =2]{$9$};

		\draw [line width=2pt,-] (1) -- (3);
		\draw [line width=2pt,-] (1) -- (5);
		\draw [line width=2pt,-] (1) -- (7);
		\draw [line width=2pt,-] (2) -- (4);
		\draw [line width=2pt,-] (2) -- (6);
		\draw [line width=2pt,-] (2) -- (9);
		\draw [line width=2pt,-] (3) -- (4);
		\draw [line width=2pt,-] (5) -- (6);
		\draw [line width=2pt,-] (7) -- (8);
		\draw [line width=2pt,-] (8) -- (9);

		\end{tikzpicture}
	\end{minipage}%%%
	\begin{minipage}[c]{0.4\textwidth}
		\centering
		\small
		\begin{equation*}
		\begin{gathered}
		(17892), \ (13), \ (134), \ (15), \ (156), \ (17), \ (178), \ (1789), \\
		(243), \ (24), \ (265),\ (26), \ (2987), \ (298), \ (29), \ (34), \\
		(315), \ (3156), \ (317), \ (3178), \ (3429), \ (4265), \ (426), \\
		(4317), \ (43178), \ (429), \ (56),\ (517), \ (5178), \\
		(51789), \ (62987), \ (6298), \ (629), \ (78), \ (789), \ (89)
		\end{gathered}
		\end{equation*}
	\end{minipage}
	\\
	\vspace{5mm}
	\begin{minipage}{0.45\textwidth}
		\small
		\begin{equation*}
		\begin{split}
		w_{2,6} + w_{2,9} + w_{8,9} + w_{7,8} & \leq w_{5,6} + w_{1,5} + w_{1,7}\\
		w_{3,4} + w_{1,3} + w_{1,7} + w_{7,8} & \leq w_{2,4} + w_{2,9} + w_{8,9}\\
		w_{1,5} + w_{1,7} + w_{7,8} + w_{8,9} & \leq w_{5,6} + w_{2,6} + w_{2,9}\\
		w_{2,4} + w_{2,6} + w_{5,6}  & \leq w_{3,4} + w_{1,3} + w_{1,5}\\
		w_{1,3} + w_{1,5} + w_{5,6}  & \leq w_{3,4} + w_{2,4} + w_{2,6}\\
		w_{3,4} + w_{2,4} + w_{2,9}  & \leq w_{1,3} + w_{1,7} + w_{7,8} + w_{8,9}
		\end{split}
		\end{equation*}
	\end{minipage}
	$\implies $
	\begin{minipage}{0.2\textwidth}
		\small
		$$w_{7,8} \leq 0$$
	\end{minipage}
	\begin{center}
		\line(1,0){400}
	\end{center}
	
	\newpage
	\printbibliography

\end{document}